\newtheorem{thm}{Theorem}[section]
\newtheorem{defi}[thm]{Definition}
\newtheorem{lem}[thm]{Lemma}
\newtheorem{prop}[thm]{Proposition}
\newtheorem{cor}[thm]{Corollary}
\newtheorem{remark}[thm]{Remark}
\newtheorem*{thmA}{Theorem A}
\newtheorem*{thmB}{Theorem B}
\newtheorem*{thmC}{Theorem C}
\newtheorem*{thmD}{Theorem D}
\newcommand{\g}{\mathfrak{g}}
\newcommand{\h}{\mathfrak{h}}
\newcommand{\De}{\Delta}
\newcommand{\al}{\alpha}
\newcommand{\Cg}{\mathfrak{Cg}}
\newcommand{\bo}{\mathfrak{b}}
\newcommand{\n}{\mathfrak{n}}
\newcommand{\D}{\mathbb{D}}
\newcommand{\gr}{\mathrm{Gr}^{\lambda}D}
\newcommand{\vl}{|\lambda\rangle}
\newcommand{\C}{\mathbb{C}}
\newcommand{\df}{\mathcal{D}}
\newcommand{\sll}{\mathfrak{sl}_{2l+1}}
\newcommand{\Z}{\mathbb{Z}}
\title{Demazure slices of type $A_{2l}^{(2)}$}
\author{Masahiro Chihara\thanks{Department of Mathematics, Kyoto University, Oiwake Kita-Shirakawa Sakyo Kyoto 606-
8502 JAPAN, E-mail:chihara@math.kyoto-u.ac.jp}}
\date{}
\begin{document}
\maketitle
\begin{abstract}
We consider a Demazure slice of type $A_{2l}^{(2)}$, that is an associated graded piece of an infinite-dimensional version of a Demazure module. We show that a global Weyl module of a hyperspecial current algebra of type $A_{2l}^{(2)}$ is filtered by Demazure slices. We calculate extensions between a Demazure slice and a usual Demazure module and prove that a graded character of a Demazure slice is equal to a nonsymmetric Macdonald-Koornwinder polynomial divided by its norm. In the last section, we prove that a global Weyl module of the special current algebra of type $A_{2l}^{(2)}$ is a free module over the polynomial ring arising as the endomorphism ring of itself.
\end{abstract}
\section*{Introduction}
A Demazure module in a highest weight module $L(\Lambda)$ of a Kac-Moody Lie algebra $\g$ is studied for a long time. For an affine Lie algebra $\g$, there are two types of Demazure modules in the literature \cite{Kas, Kum}. One is a thin Demazure module, that is usual Demazure module. The other is a thick Demazure module, that is an infinite-dimensional version of a thin Demazure module. Consider an affine Lie algebra of type $X_{l}^{(r)}$ ($X=A,D,E$) and $r=1,2,3$ that is called type I in \cite{CI}. Its level one thin Demazure module has special features. Sanderson \cite{San} and Ion \cite{Ion} showed that its graded character is equal to a nonsymmetric Macdonald polynomial specialized at $t=0$ in $X_{l}^{(r)}\neq A_{2l}^{(2)}$-case and equal to a nonsymmetric Macdonald-Koornwinder polynomial specialized at $t=0$ in $A_{2l}^{(2)}$-case. Another special feature is the connection with a local Weyl module of a current algebra $\Cg$ that is a hyperspecial maximal parabolic subalgebra of $\g$ (\cite{CIK}). Chari-Loktev \cite{CL}, Fourier-Littelmann \cite{CL}, Fourier-Kus \cite{FK} and Chari-Ion-Kus \cite{CIK} showed that a $\Cg$-stable level one thin Demazure module is isomorphic to a local Weyl module as a $\Cg$-module.

Less is known about a thick Demazure module compared to a thin Demazure module. A thick Demazure module is a module of a lower Borel subalgebra that is generated from an extremal weight vector  of $L(\Lambda)$. Cherednik and Kato \cite{CK} recently studied a Demazure slice that is defined as a quotient module of a thick Demazure module. In type I but not of type $A_{2l}^{(2)}$, they showed that a global Weyl module of $\Cg$ have a filtration by level 1 Demazure slices. Moreover they calculated extensions between a level one Demazure slice and a level one thin Demazure module. As a result, they showed graded characters of a level one Demazure slice and a thin Demazure module are orthogonal to each other with respect to the Euler-Poincar{\'e}-pairing. In particular, the graded character of a Demazure slice is equal to a nonsymmetric Macdonald polynomial specialized at $t=\infty$ divided by its norm. 

In this paper, we provide anaologues of these results in \cite{CK} for $A_{2l}^{(2)}$.\\
Let $\g$ be an affine Kac-Moody Lie algebra of type $A_{2l}^{(2)}$ and $\mathring{\g}$ be a simple Lie algebra of type $C_{l}$ contained in $\g$. Let $\h$ be a Cartan subalgebra of $\g$. Let $\mathring{P}$ be the integral weight lattice of $\mathring{\g}$ and $\mathring{P}_+$ be the set of dominant integral weights of $\mathring{\g}$. For each $\lambda\in\mathring{P}_+$, we have a $\Cg$-module $W(\lambda)$, that is called a global Weyl module. Level one Demazure slices and thin Demazure modules are parametrized by $\lambda\in\mathring{P}$ as $\D^{\lambda}$ and $D_{\lambda}$, respectively. Let $\Lambda_{0}$ be the unique level one dominant integral weight of $\g$ and let $\delta$ be the simple imaginary root of $\g$. Let $\mathring{W}$ be the Weyl group of $\mathring{\g}$. Let $\bo_-$ be a lower-triangular Borel subalgebra of $\g.$
\begin{thmA}[=Theorem \ref{realization}]
For each $\lambda\in \mathring{P}_+,$ the global Weyl module $W(\lambda)\otimes_{\C}\C_{\Lambda_0}$ has a  filtration by Demazure slices as $\bo_-$-module and each $\D^{\mu}$ $(\mu\in \mathring{W}\lambda)$ appears exactly once.
\end{thmA}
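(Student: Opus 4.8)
The plan is to adapt the argument of Cherednik--Kato \cite{CK}, which handles the type I affine algebras other than $A_{2l}^{(2)}$, supplying along the way the ingredients special to $A_{2l}^{(2)}$: the presentation of the level one global Weyl module of the hyperspecial current algebra $\Cg$, the $\bo_-$-module structure on $W(\lambda)\otimes_{\C}\C_{\Lambda_0}$, and the combinatorics of extremal weights of $L(\Lambda_0)$ in the non-reduced $BC_l$ root datum. First I would fix the concrete model underlying $W(\lambda)\otimes_{\C}\C_{\Lambda_0}$ as a $\bo_-$-module: it is cyclic, generated by the image $v$ of the canonical generator of $W(\lambda)$, the weight of $v$ has finite part $\lambda$, and (by the realization inside a completion of $L(\Lambda_0)$) for every $\mu\in\mathring{W}\lambda$ it contains an extremal weight vector $u_{\mu}$ of finite weight $\mu$, reached from $v$ through the $\mathfrak{sl}_{2}$-strings of the finite simple roots; normalise $u_{\lambda}=v$. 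I would also record the needed facts about $\D^{\mu}$: it is the cyclic $\bo_-$-module on a vector of finite weight $\mu$ cut out by the ``top slice'' presentation relative to the ambient thick Demazure module, and its graded character $\mathrm{gch}\,\D^{\mu}$ is the prescribed alternating sum of thick Demazure characters.

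Next I would build the filtration by successively adjoining extremal vectors. Fix a linear extension $\mu_{1},\dots,\mu_{N}$ ($N=|\mathring{W}\lambda|$) of the Bruhat order on the minimal length coset representatives for $\mathring{W}/\mathrm{Stab}_{\mathring{W}}(\lambda)$, ordered so that the dominant weight $\lambda$ is last, and set
\begin{equation*}
F_{k}\ :=\ U(\bo_-)\cdot\{\,u_{\mu_{1}},\dots,u_{\mu_{k}}\,\}\ \subseteq\ W(\lambda)\otimes_{\C}\C_{\Lambda_0}.
\end{equation*}
Each $F_{k}$ is a $\bo_-$-submodule; because $\lambda$ is dominant, $v=u_{\mu_{N}}$ has the highest weight among the $u_{\mu}$, so it does not lie in $F_{N-1}$, while $F_{N}=W(\lambda)\otimes_{\C}\C_{\Lambda_0}$ since $v$ generates the whole module, and $F_{k}/F_{k-1}$ is cyclic over $\bo_-$, generated by the image $\bar{u}_{\mu_{k}}$, a weight vector of finite weight $\mu_{k}$. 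The crux is the inequality $\mathrm{gch}\,F_{k}/F_{k-1}\le\mathrm{gch}\,\D^{\mu_{k}}$ for every $k$, which I would prove by exhibiting a surjection $\D^{\mu_{k}}\twoheadrightarrow F_{k}/F_{k-1}$: the vector $\bar{u}_{\mu_{k}}$ inherits the current-algebra (Weyl-module) relations carried by $v$, and the extra requirement that $\bar{u}_{\mu_{k}}$ be annihilated modulo $F_{k-1}$ amounts, once transported by the affine Weyl group action on the extremal vectors, precisely to the remaining thick-Demazure relations defining $\D^{\mu_{k}}$.

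Making this last identification is the step I expect to be the main obstacle. It forces one to compare the level one Weyl relations of the hyperspecial $\Cg$ with the thick Demazure presentation in the $BC_l$ datum, tracking the short/long asymmetry, the behaviour of the affine simple reflection $s_{0}$, and the interplay between the hyperspecial and the special current algebras entering the realization; it is also here that the linear extension above must be chosen so that the ``lower'' relations really do match the slice relations layer by layer. Once the inequality is established, the proof closes by comparing graded characters. Summation gives $\mathrm{gch}\bigl(W(\lambda)\otimes_{\C}\C_{\Lambda_0}\bigr)=\sum_{k}\mathrm{gch}\,F_{k}/F_{k-1}\le\sum_{\mu\in\mathring{W}\lambda}\mathrm{gch}\,\D^{\mu}$. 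On the other hand $\mathrm{gch}\bigl(W(\lambda)\otimes_{\C}\C_{\Lambda_0}\bigr)$ is determined by the graded character of the local Weyl module --- the $\Cg$-stable level one thin Demazure module, whose character is a specialised nonsymmetric Macdonald--Koornwinder polynomial --- together with that of the base polynomial ring; and the alternating sums of thick Demazure characters defining the $\mathrm{gch}\,\D^{\mu}$ telescope along a linear extension of the orbit to the very same power series (with the grading conventions of the realization). Hence all the inequalities are equalities, each surjection $\D^{\mu_{k}}\twoheadrightarrow F_{k}/F_{k-1}$ is an isomorphism of $\bo_-$-modules, and $0=F_{0}\subset F_{1}\subset\cdots\subset F_{N}=W(\lambda)\otimes_{\C}\C_{\Lambda_0}$ is the asserted filtration, in which $\D^{\mu}$ appears exactly once for each $\mu\in\mathring{W}\lambda$.
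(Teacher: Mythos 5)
Your filtration is built inside $W(\lambda)\otimes_{\C}\C_{\Lambda_0}$ from the bottom up by adjoining extremal vectors, and the heart of your argument is the claimed surjection $\D^{\mu_k}\twoheadrightarrow F_{k}/F_{k-1}$ at each layer. You explicitly flag this as ``the main obstacle'' and leave it unproven; since the rest of the proposal consists in reducing everything to this claim, what you have is a reduction, not a proof. There is also a circularity lurking: you posit the extremal vectors $u_{\mu}$ in $W(\lambda)\otimes_{\C}\C_{\Lambda_0}$ ``by the realization inside a completion of $L(\Lambda_0)$,'' but that $W(\lambda)\otimes_{\C}\C_{\Lambda_0}$ sits inside $L(\Lambda_0)$ (as $\gr$) is exactly what Theorem~\ref{realization} establishes. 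One can define the $u_\mu$ intrinsically in $W(\lambda)$ via $\mathfrak{sl}_2$-strings, but then the claim that the quotient $F_k/F_{k-1}$ carries the specific thick Demazure relations defining $\D^{\mu_k}$ is precisely what needs an argument in place of a pointer. Finally, the closing step (``the alternating sums of thick Demazure characters\ldots\ telescope\ldots\ to the very same power series'') is not a computation, it is essentially a restatement of the identity $\sum_{\mu\in\mathring{W}\lambda}\mathrm{gch}\,\D^{\mu}=q^{(\lambda|\lambda)/2}\mathrm{gch}\,W(\lambda)$ that the theorem itself implies.

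The paper's route is genuinely different and avoids all of these issues. It works on the other side of the comparison: the target $\gr$ is defined as a subquotient of $L(\Lambda_0)$, and the filtration by slices on $\gr$ is obtained essentially for free from the lattice structure of thick Demazure modules (Proposition~\ref{filtration} and Corollary~\ref{quotient of demazure}), with no layer-by-layer verification of relations. The single substantive step is to exhibit a \emph{surjection} $W(\lambda)\otimes_{\C}\C_{\Lambda_0}\to\gr$ by checking the three Weyl-module relations on the image of the extremal vector $|\lambda\rangle$; relations (1) and (2) are easy, and relation (3) ($\mathfrak{Cn}_+v_\lambda=0$) is the serious one, handled via the explicit Heisenberg/Frenkel--Kac-type realization of $L(\Lambda_0)$ (Theorem~\ref{frenkelkac}) and the Macdonald order argument showing $e_{\al+n\delta}|\lambda\rangle$ falls into a strictly smaller $D^{\nu}$. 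This gives the inequality $q^{(\lambda|\lambda)/2}\mathrm{gch}\,W(\lambda)\geq\mathrm{gch}\,\gr$, which is in the \emph{opposite} direction to the one your proposal would produce; the proof then closes cleanly with the global identity $\sum_{\lambda}\mathrm{gch}\,\gr=\mathrm{gch}\,L(\Lambda_0)=\sum_{\lambda}q^{(\lambda|\lambda)/2}\mathrm{gch}\,W(\lambda)$ from Lemma~\ref{graded character weyl integrable}, forcing termwise equality. If you want to salvage your strategy, the missing content is exactly an independent proof of your surjection $\D^{\mu_k}\twoheadrightarrow F_k/F_{k-1}$, and the closing character identity should be replaced by the global $L(\Lambda_0)$ comparison; but at that point you would essentially be reproving the paper's result through the harder direction.
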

 Let $\mathfrak{B}$ be a full subcategory of the category of $U(\bo_-)$-modules and $\langle-,-\rangle_{\mathrm{Ext}}$ be the Euler-Poincar\'e-pairing associated to $\mathrm{Ext}_{\mathfrak{B}}$ (see Section 1 for their precise definitions).
\begin{thmB}[=Thorem \ref{extduality}]
For each $\lambda$, $\mu\in \mathring{P}$, $m\in\mathbb{Z}/2$ and $k\in \mathbb{Z}$, we have
$$\mathrm{dim}_{\mathbb{C}}\;\mathrm{Ext}_{\mathfrak{B}}^n(\D^{\lambda}\otimes_{\C}\C_{m\delta+k\Lambda_0},D_{\mu}^{\vee})=\delta_{n,0}\delta_{m,0}\delta_{k,0}\delta_{\lambda,\mu}\;\;\;n\in\Z_+,$$ where $\vee$ means the restricted dual.
\end{thmB}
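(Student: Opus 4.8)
The plan is to compute $\mathrm{Ext}^{\bullet}_{\mathfrak{B}}(\D^{\lambda},D_{\mu}^{\vee})$ first and then read off the $m,k$-dependence for free. Since every object in sight is graded and $\C_{m\delta+k\Lambda_{0}}$ is just a shift of the grading together with a shift of the level, Theorem B is equivalent to the assertion that $\mathrm{Ext}^{n}_{\mathfrak{B}}(\D^{\lambda},D_{\mu}^{\vee})$ is concentrated in homological degree $n=0$ and in internal degree $0$, where it is $\C$ if $\lambda=\mu$ and is $0$ otherwise; the factors $\delta_{m,0}\delta_{k,0}$ then come from matching the $\delta$- and $\Lambda_{0}$-components of the relevant extremal weights. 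As a consequence one obtains the orthogonality of the graded characters of $\D^{\lambda}$ and $D_{\mu}^{\vee}$ with respect to $\langle-,-\rangle_{\mathrm{Ext}}$, which is the input for the Macdonald--Koornwinder statement (Theorem C).

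To get at $\mathrm{Ext}^{\bullet}_{\mathfrak{B}}(\D^{\lambda},D_{\mu}^{\vee})$ I would present $\D^{\lambda}$ by thick Demazure modules and exploit that thin Demazure modules are homologically well behaved against such presentations. First, construct inside $\mathfrak{B}$ a resolution
\[
\cdots\longrightarrow \mathbb{X}^{(1)}\longrightarrow \mathbb{X}^{(0)}\longrightarrow \D^{\lambda}\longrightarrow 0,
\]
where $\mathbb{X}^{(0)}$ is the thick Demazure module with the same extremal weight as $\D^{\lambda}$ and each $\mathbb{X}^{(i)}$ is a finite direct sum of graded shifts of thick Demazure modules (henceforth denoted $\mathbb{X}_{w}$); this is the $A_{2l}^{(2)}$-analogue of the BGG/Koszul-type resolution of a Demazure slice used by Cherednik--Kato, built from the defining relations of $\D^{\lambda}$ as a quotient of a thick Demazure module. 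Second, establish the key vanishing together with the companion $\mathrm{Hom}$-computation: for every thick Demazure module one has $\mathrm{Ext}^{>0}_{\mathfrak{B}}(\mathbb{X}_{w},D_{\mu}^{\vee})=0$ (so thick Demazure modules are $D_{\mu}^{\vee}$-acyclic), while $\mathrm{Hom}_{\mathfrak{B}}(\mathbb{X}_{w},D_{\mu}^{\vee})$ is one-dimensional when the extremal weight of $\mathbb{X}_{w}$ equals that of $D_{\mu}$ on the nose (so in particular the levels and the $\delta$-degrees agree) and zero otherwise. Third, apply $\mathrm{Hom}_{\mathfrak{B}}(-,D_{\mu}^{\vee})$ to the resolution: by the acyclicity this computes $\mathrm{Ext}^{\bullet}_{\mathfrak{B}}(\D^{\lambda},D_{\mu}^{\vee})$, and by the $\mathrm{Hom}$-computation the resulting complex is supported on those $\mathbb{X}$'s occurring in the resolution whose extremal weight is that of $D_{\mu}$; the combinatorics of the poset indexing the resolution then forces this complex to be acyclic unless $\lambda=\mu$, in which case only the $\mathbb{X}^{(0)}$-term survives and contributes a single copy of $\C$ in internal degree $0$.

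For the vanishing step the natural mechanism is an induction on the length $\ell(w)$ in the affine Weyl group. Both thick and thin Demazure modules are obtained from one-dimensional $\bo_-$-modules by applying the parabolic Demazure/induction functors $\pi_{i}$, with $i$ running over the nodes $0,1,\dots,l$ of $A_{2l}^{(2)}$, along a reduced word; these functors are exact on the relevant subcategories, satisfy $\pi_{i}\circ\pi_{i}\simeq\pi_{i}$, and interact with the restricted dual $\vee$ in a controlled way. Using length additivity one peels off one simple reflection at a time, reducing the Ext-computation to a rank-one problem attached to a single node — an explicit finite check between a thick Demazure module and a dual thin Demazure module over the $\widehat{\mathfrak{sl}}_2$-type (or the exceptional rank-one twisted) datum — with base case a pairing of one-dimensional modules. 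Along the way one must verify that $\mathfrak{B}$ is stable under all these operations and that $\D^{\lambda}$, $D_{\mu}^{\vee}$ and all the $\mathbb{X}_{w}$ genuinely lie in $\mathfrak{B}$.

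The main obstacle is that none of the structural input is inherited from \cite{CK}: the root datum of $A_{2l}^{(2)}$ is not simply laced, the affine node behaves differently from the untwisted (type I, $\neq A_{2l}^{(2)}$) cases, and the level-one condition interacts with the imaginary root $\delta$ at the exceptional node. Re-deriving in this twisted setting the defining relations of $\D^{\lambda}$ and the ensuing resolution, the rank-one $\mathrm{Ext}$-computation at the exceptional node, and the length-additivity and $\vee$-compatibility of the functors $\pi_{i}$ is where essentially all the work lies; once these are in place, the three steps above assemble formally. As an alternative to building the resolution by hand one could instead first compute $\mathrm{Ext}^{\bullet}_{\mathfrak{B}}(W(\lambda)\otimes_{\C}\C_{\Lambda_{0}},D_{\mu}^{\vee})$ using Theorem A and then disentangle the individual slices through a filtration argument, but this still requires the same rank-one and acyclicity inputs.
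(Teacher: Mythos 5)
Your approach is genuinely different from the paper's, and as written it has substantive gaps. The paper does not resolve $\D^{\lambda}$ by thick Demazure modules. It inducts on $\mu$ (in the Macdonald order $\succ$), using three inputs: the adjunction $\mathrm{Ext}^n_{\mathfrak{B}}(\df_i M, N)\cong\mathrm{Ext}^n_{\mathfrak{B}}(M,\df_i^{\#}N)$ (Proposition \ref{dual}); the short exact sequence $0\to\D^c\to\df_i(\D^c)\to\D^{s_ic}\to 0$ together with the vanishing $\df_i(\D^{s_ic})=\{0\}$ when $s_ic\succ c$ (Proposition \ref{exactsequence}); and the already-established orthogonality $\mathrm{Ext}^{n}_{\mathfrak{B}}(W(\lambda)\otimes\C_{m\delta},W(\mu)_{loc}^{\vee})=\delta_{m,0}\delta_{n,0}\delta_{\lambda,\mu}$ (Theorem \ref{ext}, which in turn rests on \cite{CI} and \cite{Kle}). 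For the base case $\mu=0$, if $\lambda$ is not anti-dominant the identity $\df_i(\D^{\lambda})=\{0\}$ kills the whole Ext group after transporting $\df_i^{\#}$ to the left; if $\lambda$ is anti-dominant, Proposition \ref{generalizedweyl} gives $\df_{w_0}(\D^{\lambda})\cong W(\lambda_+)\otimes\C_{\Lambda_0}$, reducing to Theorem \ref{ext}. The inductive step simply passes the adjoint $\df_i^{\#}$ onto $D_{\mu}^{\vee}$ and uses the long exact sequence coming from the three-term sequence of slices. The argument never leaves the world of Demazure slices and Weyl modules.

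The concrete problems with your plan: (i) the resolution $\cdots\to\mathbb{X}^{(1)}\to\mathbb{X}^{(0)}\to\D^{\lambda}\to 0$ by finite direct sums of thick Demazure modules is not constructed here, and I do not believe Cherednik--Kato build one either for the corresponding statement --- the paper explicitly models its proof on \cite[Theorem 4.18]{CK}, and the machinery it imports from there is precisely Propositions \ref{generalizedweyl} and \ref{exactsequence}, not a resolution; the kernel of $D^{\pi_\lambda\Lambda_0}\twoheadrightarrow\D^{\lambda}$ is $\sum_{v>\pi_\lambda}D^{v\Lambda_0}$, and turning this (and its higher syzygies) into controlled finite direct sums is nontrivial and is not what the paper does. (ii) The $D_{\mu}^{\vee}$-acyclicity of all thick Demazure modules is stated as a lemma you would prove, but it is never used by the paper; it is plausibly true via the same adjunction plus a weight-space argument, but it is an extra burden the paper avoids. (iii) Your plan does not engage Theorem \ref{ext} at all, which is where the heavy lifting (BGG reciprocity for $\Cg$, Kleshchev's affine-highest-weight theory) actually enters; your ``rank-one computation at each node'' would have to recover that content from scratch. (iv) The functors $\df_i$ are only right exact (Theorem \ref{demazurefunctor}(5)), so the implicit exactness claims in your length-induction sketch need to be replaced by derived-functor bookkeeping as the paper does. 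Your alternative route (via Theorem A plus disentangling a filtration of $W(\lambda)\otimes\C_{\Lambda_0}$) is closer in spirit, but the paper inverts it: rather than disentangling, it applies $\df_{w_0}$ to collapse the anti-dominant slice into a single Weyl module whose Ext is already known, which is cleaner.
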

For each $\lambda\in \mathring{P}$, let $\bar{E}_{\lambda}(x_1,..,x_l,q)$ and $E^{\dag}_{\lambda}(x_1,...,x_l,q)$ be nonsymmetric Macdonald polynomials specialized at $t=0,\;\infty$ respectively. Let $(-,-)$ be the Weyl group invariant inner product on the dual of a Cartan subalgebra $\h^*$ normalized so that the square length of the shortest roots of $\g$ with respect to $(-,-)$ is $1$. Let $\mathrm{gch}\;M$ be a graded character of $M$ (see \S$\ref{representation}$ for the definition). As a corollary of Theorem B, we have
\begin{thmC}[=Corollary \ref{character of demazure slice}]
For each $\lambda\in\mathring{P}$, we have
$$\mathrm{gch}\;\D^{\lambda}=q^{\frac{(b|b)}{2}}{E}^{\dag}_{\lambda}(x_1^{-1},...,x_l^{-1},q^{-1})/\langle \bar{E}_{\lambda},E^{\dag}_{\lambda}\rangle_{\mathrm{Ext}}.$$
\end{thmC}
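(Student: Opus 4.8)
The plan is to deduce Theorem C from Theorem B, feeding in the known character of a level one thin Demazure module together with the biorthogonality of the two Macdonald--Koornwinder specializations. First I would unwind the definition of the Euler--Poincar\'e pairing from Section 1: for suitable $M,N$ in $\mathfrak{B}$ it is the generating function $\langle\mathrm{gch}\,M,\mathrm{gch}\,N\rangle_{\mathrm{Ext}}=\sum_{n\in\Z_+}(-1)^n\,\mathrm{gch}\,\mathrm{Ext}^n_{\mathfrak{B}}(M,N)$, with the internal grading recorded by powers of $q^{1/2}$, the $\mathring{P}$-grading by $x_1,\dots,x_l$, and the level by a third variable. Applying this with $M=\D^\lambda$, twisting by every $\C_{m\delta+k\Lambda_0}$ ($m\in\Z/2,\ k\in\Z$) so as to see the full grading, and $N=D_\mu^\vee$, Theorem B makes the right-hand side collapse to a single monomial, yielding
$$\langle\,\mathrm{gch}\,\D^\lambda,\ \mathrm{gch}\,D_\mu^\vee\,\rangle_{\mathrm{Ext}}=\delta_{\lambda,\mu},\qquad \lambda,\mu\in\mathring{P}.$$

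Next I would insert the character of the thin side. By Sanderson \cite{San} and Ion \cite{Ion}, in the normalization of \S\ref{representation}, $\mathrm{gch}\,D_\mu$ is an explicit monomial times the $t=0$ nonsymmetric Macdonald--Koornwinder polynomial $\bar{E}_\mu(x_1,\dots,x_l,q)$, so that $\mathrm{gch}\,D_\mu^\vee$ is obtained from it by inverting all the variables. Two further ingredients are then needed. First, that $\{\mathrm{gch}\,D_\mu^\vee\}_{\mu\in\mathring{P}}$ is a topological basis of the completed character ring in which these expansions live — which holds because $\{\bar{E}_\mu\}_{\mu\in\mathring{P}}$ is a basis of the corresponding polynomial ring — so that $\langle-,-\rangle_{\mathrm{Ext}}$ is non-degenerate there and the displayed relation pins down $\mathrm{gch}\,\D^\lambda$ as the unique dual basis vector. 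Second, the biorthogonality $\langle\,\bar{E}_\mu,\ E^\dag_\lambda\,\rangle_{\mathrm{Ext}}=\delta_{\lambda,\mu}\,\langle\,\bar{E}_\lambda,\ E^\dag_\lambda\,\rangle_{\mathrm{Ext}}$ of the $t=0$ and $t=\infty$ specializations, together with the compatibility of $\langle-,-\rangle_{\mathrm{Ext}}$ with the variable inversion $x_i\mapsto x_i^{-1}$, $q\mapsto q^{-1}$ and with the monomial shift by $q^{(b|b)/2}$. Granting these, the element $q^{(b|b)/2}E^\dag_\lambda(x_1^{-1},\dots,x_l^{-1},q^{-1})/\langle\bar{E}_\lambda,E^\dag_\lambda\rangle_{\mathrm{Ext}}$ pairs to $\delta_{\lambda,\mu}$ against every $\mathrm{gch}\,D_\mu^\vee$, hence coincides with $\mathrm{gch}\,\D^\lambda$, which is exactly the assertion.

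A consistency check I would run along the way is to sum over $\mu\in\mathring{W}\lambda$ and compare with Theorem A: $\sum_{\mu\in\mathring{W}\lambda}\mathrm{gch}\,\D^\mu=\mathrm{gch}\,(W(\lambda)\otimes_{\C}\C_{\Lambda_0})$ ought, after these substitutions, to recover a $q$-power times the symmetric Macdonald--Koornwinder polynomial divided by the relevant norm, i.e.\ the expected character of the global Weyl module.

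The main obstacle is not any individual homological or combinatorial step but the bookkeeping that glues them: identifying the Ext-theoretic pairing $\langle-,-\rangle_{\mathrm{Ext}}$ with the (Whittaker-type limit of the) Macdonald--Koornwinder inner product for which the $t=0$ and $t=\infty$ polynomials are biorthogonal, and in particular pinning down the precise power $q^{(b|b)/2}$ — reconciling the grading shift built into the thick-Demazure construction of $\D^\lambda$ with the normalization of $E^\dag_\lambda$ — along with the direction of the variable inversions and the exact completion of the character ring in which the topological-basis and non-degeneracy arguments are legitimate. Once the conventions of \S\ref{representation} are fixed, each of these is a routine but unforgiving verification.
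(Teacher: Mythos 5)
Your proposal matches the paper's proof: Theorem B yields the orthogonality $\langle\mathrm{gch}\,\D^\lambda,\mathrm{gch}\,D_\mu\rangle_{\mathrm{Ext}}=\delta_{\lambda,\mu}$ (Corollary \ref{extorthogonality}), then the Sanderson--Ion identity $\mathrm{gch}\,D_\mu=q^{(b|b)/2}\bar{E}_\mu(X^{-1},q^{-1})$ (Theorem \ref{characterequality}) together with Proposition \ref{extnonsym} (which identifies $\langle-,-\rangle_{\mathrm{Ext}}$ with $\langle-,-\rangle_{\mathrm{nonsym}}$) and the biorthogonality of the $t=0,\infty$ specializations determine $\mathrm{gch}\,\D^\lambda$ in the basis $\{E^\dag_\mu/\langle\bar{E}_\mu,E^\dag_\mu\rangle_{\mathrm{Ext}}\}$. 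One notational slip: you write $\langle\mathrm{gch}\,\D^\lambda,\mathrm{gch}\,D_\mu^\vee\rangle_{\mathrm{Ext}}$, but the dual is already built into the definition of $\langle-,-\rangle_{\mathrm{Ext}}$, so the correct consequence of Theorem B is $\langle\mathrm{gch}\,\D^\lambda,\mathrm{gch}\,D_\mu\rangle_{\mathrm{Ext}}=\delta_{\lambda,\mu}$ without the extra $\vee$.
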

In this paper, we refer to a maximal parabolic subalgebra of affine Lie algebra that contains a finite dimensional simple Lie algebra as a current algebra. For an affine Lie algebra of type $A_{2l}^{(2)}$, two kind of current algebras are studied in the literature. They contain simple Lie algebras of type $C_l$ and $B_l$, respectively. The former is called a hyperspecial current algebra. A dimension formula of a local Weyl module of a hyperspecial current algebra and freeness of a global Weyl module over its endomorphism ring are proved in \cite{CIK}. The latter is  called a special current algebra and a dimension formula of a local Weyl module of a special current algebra is proved in \cite{FK} and \cite{FM}. Let $\Cg^{\dag}$ be a special current algebra of $\g$. Then $\Cg^{\dag}$ contains a simple Lie algebra $\mathring{\g}^{\dag}$ of type $B_l$. Let $W(\lambda)^{\dag}$ be a global Weyl module of $\Cg^{\dag}$. Let $\Cg^{\dag\prime}=[\Cg^{\dag},\Cg^{\dag}]$. In the last section, we prove the following theorem.
\begin{thmD}[=Theorem \ref{freeness}+Theorem \ref{polynomialring}]
Let $\lambda$ be a dominant integral weight of $\mathring{\g}^{\dag}$. The endomorphism ring $\mathrm{End}_{\Cg^{\dag\prime}}(W(\lambda)^{\dag})$ is a polynomial ring and $W(\lambda)^{\dag}$ is free over $\mathrm{End}_{\Cg^{\dag\prime}}(W(\lambda)^{\dag})$.
\end{thmD}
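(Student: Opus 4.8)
The plan is to transplant the argument of Chari--Ion--Kus \cite{CIK}, which treats the hyperspecial current algebra of type $C_l$, to the special (twisted) current algebra $\Cg^{\dag}$ of type $B_l$, the one genuinely new ingredient being the dimension formula for local Weyl modules of \cite{FK,FM} in place of the one used in \cite{CIK}. Fix a dominant integral weight $\lambda$ of $\mathring{\g}^{\dag}$, write $m_i=\langle\lambda,\alpha_i^{\vee}\rangle$ over the nodes $i$ of the $B_l$ diagram, let $w_\lambda\in W(\lambda)^{\dag}$ be the cyclic highest weight vector, and put $A_\lambda=\mathrm{End}_{\Cg^{\dag\prime}}(W(\lambda)^{\dag})$, acting on $W(\lambda)^{\dag}$ on the right. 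The statement splits into: (i) $A_\lambda$ is a polynomial ring; (ii) $W(\lambda)^{\dag}$ is free over $A_\lambda$.

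The first step, standard, is the algebra isomorphism $A_\lambda\iso W(\lambda)^{\dag}_{\lambda}$, $\varphi\mapsto\varphi(w_\lambda)$: injectivity is clear since $W(\lambda)^{\dag}$ is cyclic and $A_\lambda$ commutes with the $\Cg^{\dag\prime}$-action, while surjectivity reduces to checking that for $P$ in the current subalgebra of $\Cg^{\dag\prime}$ spanned by the degree-zero Cartan $\mathring{\h}^{\dag}$ and the positive-degree weight-zero part, the vector $Pw_\lambda$ again satisfies the defining relations of $W(\lambda)^{\dag}$ --- which holds because that subalgebra commutes with $\mathring{\h}^{\dag}$ and its brackets with the raising operators lie in $\mathrm{Ann}_{U(\Cg^{\dag\prime})}(w_\lambda)$. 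In particular $A_\lambda$ is commutative, $\Z_{\geq 0}$-graded with finite-dimensional graded pieces, and generated in positive degrees; its determination is thus that of the top weight space $W(\lambda)^{\dag}_{\lambda}$. For the latter I expect
$$A_\lambda\cong\bigotimes_i \C[z_{i,1},\dots,z_{i,m_i}]^{S_{m_i}},$$
with a rescaled grading at the node where the twist of $\Cg^{\dag}$ is concentrated, and would establish this by: (a) exhibiting polynomial generators, organized by node, as images of suitable current-Cartan elements applied to $w_\lambda$; (b) using the higher Garland/Joseph-type defining relations of $W(\lambda)^{\dag}$ to show these generators satisfy precisely the relations among elementary symmetric polynomials, yielding a graded surjection of the displayed ring onto $A_\lambda$; and (c) proving injectivity by matching Hilbert series, using that the specialization of $W(\lambda)^{\dag}$ at the graded maximal ideal of $A_\lambda$ is the local Weyl module, whose dimension is given by \cite{FK,FM}.

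Granting (i), part (ii) follows formally from the dimension formula together with the good properties of $A_\lambda$ --- a $\Z_{\geq 0}$-graded Noetherian regular domain with generators in strictly positive degrees --- exactly as in \cite{CIK}. By \cite{FK,FM}, for every $\C$-character $\mathbf{a}\colon A_\lambda\to\C$ the local Weyl module $W(\lambda)^{\dag}\otimes_{A_\lambda}\C_{\mathbf{a}}$ has finite dimension $N$ independent of $\mathbf{a}$. Taking $\mathbf{a}$ the augmentation character and applying graded Nakayama shows $W(\lambda)^{\dag}$ is generated by $N$ homogeneous elements over $A_\lambda$, hence there is a graded surjection $\pi\colon F\twoheadrightarrow W(\lambda)^{\dag}$ with $F$ free of rank $N$. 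By generic freeness $W(\lambda)^{\dag}$ is $A_\lambda$-free of some rank $r$ over a nonempty principal open of $\mathrm{Spec}\,A_\lambda$, and evaluating at a $\C$-point of that open and invoking the dimension formula again gives $r=N$. So $\ker\pi$ has rank $N-N=0$; being a submodule of the torsion-free module $F$ over the domain $A_\lambda$, it vanishes, and $W(\lambda)^{\dag}\cong F$ is free of rank $N$.

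I expect step (b) to be the main obstacle: it is the one place where the $A_{2l}^{(2)}$-specific combinatorics genuinely enters, since the rank-one ($\mathfrak{sl}_2$) reductions at the twisted node, and the precise shape of the Joseph-type relations they produce, differ from the untwisted situation of \cite{CL,CIK}, so one must check both that no relations beyond the symmetric-polynomial ones survive and that the resulting Hilbert series matches the local Weyl module dimension of \cite{FK,FM}. Everything else --- the isomorphism $A_\lambda\cong W(\lambda)^{\dag}_{\lambda}$, the deduction of freeness, the verification that the rank-one reductions make sense inside $\Cg^{\dag}$ (the root vectors of $\mathring{\g}^{\dag}$ lie in the fixed part of the defining automorphism of $\Cg^{\dag}$), and compatibility of the right $A_\lambda$-action with the grading --- is formal and parallels \cite{CIK}.
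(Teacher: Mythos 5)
Your plan for part (ii) is sound and, if anything, more elementary than the paper's: once you know that $\mathbf A_\lambda$ is a polynomial ring and that the fibre dimension $\dim_\C\bigl(W(\lambda)^\dag\otimes_{\mathbf A_\lambda}\C_{\mathbf a}\bigr)$ is the same for all maximal ideals (the paper's Corollary~\ref{dimension equality}, built from \cite{CIK} Propositions 6.4--6.5 together with the dimension formulas of \cite{FK,FM}, not from \cite{FK,FM} alone), your rank argument --- graded Nakayama gives a surjection $\pi\colon F\twoheadrightarrow W(\lambda)^\dag$ from a free module of rank $N$, generic freeness plus constancy of the fibre pins the generic rank at $N$, so $\ker\pi$ is a rank-zero submodule of a torsion-free module over a domain and hence vanishes --- is a clean way to conclude freeness. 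The paper reaches the same conclusion by localising at every maximal ideal, showing the relation map in a finite presentation has matrix entries in the Jacobson radical of $(\mathbf A_\lambda)_f$, deducing it vanishes because $(\mathbf A_\lambda)_f$ is a Jacobson domain, and then passing flat $\Rightarrow$ projective $\Rightarrow$ free via Quillen--Suslin. Both work; yours avoids Quillen--Suslin.

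The real gap is in part (i), step (c). You propose to get injectivity of the surjection $\mathbf A_\lambda'\twoheadrightarrow\mathbf A_\lambda$ by ``matching Hilbert series'' against the local Weyl module dimension. But the dimension of a single fibre does not determine the Hilbert series of $\mathbf A_\lambda$ unless you already know $W(\lambda)^\dag$ is free (or has a filtration by free $\mathbf A_\lambda$-modules with controlled shifts), which is precisely what you are trying to prove; and unlike the hyperspecial setting of \cite{CIK}, this paper does not have an independent graded character formula for $W(\lambda)^\dag$ over the special current algebra to feed into such a comparison. The paper circumvents the circularity with a structural trick: CIK's Theorem~1 (quoted here as Theorem~\ref{chari}) already gives injectivity for $\lambda\in\mathring P_+^{\dag\prime}$, i.e. when $\langle\lambda,\check{\al}_l^\dag\rangle$ is even; for a general $\lambda$ one writes $\lambda=\mu+\varpi_l^\dag$ with $\mu\in\mathring P_+^{\dag\prime}$, maps $W(\lambda)^\dag\to W(\varpi_l^\dag)^\dag_{loc}\otimes V$ for a suitable quotient $V$ of $W(\mu)^\dag$, and uses that $\mathbf A'_{\varpi_l^\dag}$ is one-dimensional (so the $p_{i,r}$ act trivially on $w_{\varpi_l^\dag}$) together with Lemma~\ref{p} to see that a nonzero $f\in\mathbf A'_\lambda$ still acts nontrivially downstream. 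That reduction is the ingredient your outline is missing. Relatedly, your description $\mathbf A_\lambda\cong\bigotimes_i\C[z_{i,1},\dots,z_{i,m_i}]^{S_{m_i}}$ has the wrong number of generators at the twisted node: by Proposition~\ref{annihilator} the ring $\mathbf A'_\lambda$ has $\langle\lambda,\check{\al}_i^\dag\rangle$ generators at $i\neq l$ but only $\lfloor\langle\lambda,\check{\al}_l^\dag\rangle/2\rfloor$ at $i=l$. This is not just a rescaled grading; it is exactly what makes $\mathbf A'_{\varpi_l^\dag}$ trivial and the tensor trick available.
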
  The organization of the paper is as follows: In section one, we prepare basic notation and definitions. Section two is about a Demazure slice. Main contents of section two are the relation between  a global Weyl module and a Demazure slice (Theorem A), and calculation of extensions between a Demazure slice and a thin Demazure module (Theorem B). As a corollary, we prove a character formula of a Demazure slice (Theorem C). In section three, we study a global Weyl module of a special current algebra of type $A_{2l}^{(2)}$. We prove the endomorphism ring of a global Weyl module is isomorphic to a polynomial ring and a global Weyl module is free over its endomorphism ring (Theorem D).  
\subsubsection*{Acknowledgement}The author thanks Syu Kato and Ievgen Makedonskyi for much advice and discussion.
\section{Preliminaries}
We refer to \cite{Sahi2000}, \cite[Chapter 6]{Kac} and \cite{CI} for general terminologies throughout this section. Mainly we refer to \cite{Kac} for \S\ref{Affine Kac-Moody algebra of type $A_{2l}^{(2)}$} and \S\ref{section weyl group} and refer to \cite{CI} for the \S\ref{Hyperspecial current algebra of $A_{2l}^{(2)}$}.
\subsection{Notations}
We denote the set of complex numbers by $\mathbb{C}$, the set of integers by $\mathbb{Z}$, the set of nonnegative integers by $\mathbb{Z}_{+}$, the set of rational numbers by $\mathbb{Q}$, and the set of natural numbers by $\mathbb{N}$. We work over the field of complex numbers. In particular, a vector space is a $\mathbb{C}$-vector space. For each $x\in \mathbb{Q}$, we set $\lfloor x \rfloor:=\mathrm{max}\{z\in \mathbb{Z}|\;x\geq z\}$. We set $x^{(r)}:=x^{r}/r!$ for an element $x$ of a $\C$ algebra.

\subsection{Affine Kac-Moody algebra of type $A_{2l}^{(2)}$}\label{Affine Kac-Moody algebra of type $A_{2l}^{(2)}$}
Let $\g$ be an affine Kac-Moody algebra of type $A_{2l}^{(2)}$ and  $\h$ be its Cartan subalgebra. We denote the set of roots of $\g$ with respect to $\h$ by $\De$ and fix a set of simple roots $\{ \al_0,\al_1,...,\al_l\}$, where $\al_0$ is the shortest simple root of $\g$. Let $\De_{+}$ and $\De_-$ be the set of positive and negative roots, respectively. We set the simple imaginary root as $\delta:=2\al_0+\al_1+\cdots+\al_l$, the set of imaginary roots as $\De_{im}:=\mathbb{Z}\delta$, and the set of real roots $\De_{re}:=\De\backslash \De_{im}$. We set $Q:=\bigoplus_{i=0}^{l}\mathbb{Z}\al_i,$ $\mathring{Q}:=\bigoplus_{i=1}^{l}\mathbb{Z}\al_i,$ and $\mathring{Q}^{\dag}:=\bigoplus_{i=0}^{l-1}\mathbb{Z}\al_i.$ We set $Q_+:=\bigoplus_{i=0}^{l}\mathbb{Z}_+\al_i$, $\mathring{Q}_+:=\bigoplus_{i=1}^{l}\mathbb{Z}_+\al_i$, and $\mathring{Q}^{\dag}_+:=\bigoplus_{i=0}^{l-1}\mathbb{Z}_+\al_i$. Let $\mathring{\De}=\De\cap \mathring{Q}$. The set $\mathring{\De}$ is a root system of type $C_{l}$. Using the standard basis $\varepsilon_1,...,\varepsilon_l$ of $\mathbb{R}^{l}$, we have:
$$\mathring{\De}=\{\pm(\varepsilon_{i}\pm\varepsilon_{j}),\;\pm2\varepsilon_{i}|\;i,j=1,...,l\}.$$ We denote the set of short roots of $\mathring{\g}$ by $\mathring{\De}_{s}$ and the set of long roots of $\mathring{\g}$ by $\mathring{\De}_l.$ We have $$\De_{re}=(\mathring{\De}_s+\mathbb{Z}\delta)\cup(\mathring{\De}_l+2\mathbb{Z}\delta)\cup\frac{1}{2}(\mathring{\De}_l+(2\mathbb{Z}+1)\delta)$$ and $$\al_0=\delta+\varepsilon_1,\;\;\al_1=-\varepsilon_1+\varepsilon_2,\;\cdots, \;\al_{l-1}=-\varepsilon_{l-1}+\varepsilon_l,\;\;\al_l=-2\varepsilon_l.$$ We set $\De_{l\pm}:=\De_{\pm}\cap \De_{l}$, $\De_{s\pm}:=\De_{\pm}\cap \De_{s}$ and $\mathring{\De}_{\pm}:=\De_{\pm}\cap \mathring{\De}$. For each $\al\in\De_{re}$, let $\check{\al}\in \h$ be the corresponding coroot of $\g$. Let $\theta$ be the highest root of $\mathring{\De}.$ Let $d\in \h$ be the scaling element that satisfies $\al_i(d)=\delta_{i,0}$. We denote a central element of $\g$ by $K=\check{\al}_0+2\check{\al}_1+\cdots+2\check{\al}_l.$ For each $\al\in \De$, we denote the root space corresponding to $\al$ by $\g_{\al}.$ For each $\al\in \De_{re},$ the root space $\g_{\al}$ is one dimensional and we denote a nonzero vector in $\g_{\al}$ by $e_{\al}.$ A Borel subalgebra $\bo_{\pm}$ and a maximal nilpotent subalgebra $\n_{\pm}$ of $\g$ are $$\bo_{+}=\h\oplus\n_{+},\;\n_{+}=\underset{\al \in \De_{+}}{\bigoplus}\g_{\al},\;\bo_{-}=\h\oplus\n_{-},\;\rm{and}\;\n_{-}=\underset{\al \in \De_{-}}{\bigoplus}\g_{\al}.$$ For each $i\in\{0,1,...,l\}$, we define $\Lambda_i \in \h^{*}$ by $$\Lambda_{i}(\check{\al}_j)=\delta_{i,j},\;\Lambda_{i}(d)=0.$$ We set $$P:=\mathbb{Z}\Lambda_0\oplus\cdots \oplus\mathbb{Z}\Lambda_l\oplus\frac{\mathbb{Z}}{2}\delta,\;\;\mathrm{and}\;\;P_+:=\mathbb{Z}_+\Lambda_0\oplus\cdots \oplus\mathbb{Z}_+\Lambda_l\oplus\frac{\mathbb{Z}}{2}\delta.$$ We set $\varpi_{i}:=\Lambda_i-2\Lambda_{0}$ ($i\in\{1,...,l\}\rm{)},$ $$\mathring{P}=\mathbb{Z}\varpi_{1}\oplus\cdots \oplus\mathbb{Z}\varpi_{l}\;\;\mathrm{and}\;\; \mathring{P}_+=\mathbb{Z}_+\varpi_{1}\oplus\cdots \oplus\mathbb{Z}_+\varpi_{l}.$$ We set $\mathring{Q}^{\prime}:=\mathring{Q}+\frac{\mathbb{Z}}{2}\mathring{\De}_{l}$ and $\mathring{Q}_{+}^{\prime}:=\mathring{Q}_{+}+\frac{\mathbb{Z}_+}{2}\mathring{\De}_{l+}.$

\subsection{Hyperspecial current algebra of $A_{2l}^{(2)}$}\label{Hyperspecial current algebra of $A_{2l}^{(2)}$}
We set $\mathring{\h}:=\bigoplus_{i=1}^{l}\mathbb{C}\al_i,$ $\mathring{\g}:=\underset{\al\in \mathring{\De}}{\bigoplus}\g_{\al}\oplus\mathring{\h}$, and $\mathring{\bo}_{+}:=\underset{\al\in \mathring{\De}_+}{\bigoplus}\g_{\al}.$ Then $\mathring{\g}$ is a finite dimensional simple Lie algebra of type $C_l$, the Lie subalgebra $\mathring{\h}$ is a Cartan subalgebra of $\mathring{\g}$, the Lie subalgebra $\mathring{\bo}_+$ is a Borel subalgebra of $\mathring{\g},$ and $\mathring{\De}$ is the set of roots of $\mathring{\g}$ with respect to $\mathring{\h}.$ The lattice $\mathring{P}$ is the integral weight lattice of $\mathring{\g},$ and $\mathring{P}_+$ is the set of dominant integral weight of $\mathring{\g}.$ A hyperspecial current algebra $\Cg$ is a maximal parabolic subalgebra of $\g$ that contains $\mathring{\g}$. I.e. 
$$\Cg:=\mathring{\g}+\bo_{-}.$$ We set $\Cg^{\prime}:=[\Cg,\Cg].$
\begin{remark}
Usually $\Cg^{\prime}$ is called current algebra in the literature. We have $\Cg=\Cg^{\prime}\oplus\C d\oplus \C K.$
\end{remark}
We define a subalgebra $\Cg_{im}$ of $\Cg$ by 
$$\Cg_{im}:=\underset{n\in-\mathbb{N}}{\bigoplus} \g_{n\delta},$$ and define a subalgebra $\mathfrak{Cn}_+$ of $\Cg$ by $$\mathfrak{Cn}_+:=\underset{\al\in (\mathring{\De}_{s+}-\mathbb{Z}_+\delta)\cup (\mathring{\De}_{l+}-2\mathbb{Z}_+\delta)\cap \frac{1}{2}(\mathring{\De}_{l+}-(2\mathbb{Z}_++1)\delta)}{\bigoplus}\g_{\al}.$$

\subsection{Weyl group}\label{section weyl group}
Let $s_{\al}\in\; \rm{Aut}\;(\h^*)$ be the simple reflection corresponding to $\al\in \De_{re}$. We have
$$s_{\al}(\lambda)=\lambda-\langle\lambda,\; \check{\al}\rangle\al,\;\rm{for}\; \lambda\in \h^*.$$
We set $W$ as the subgroup of $\rm{Aut}\;(\h^*)$ generated by $s_{\al}\;(\al\in \De_{re})$, and $\mathring{W}$ as the subgroup generated by $s_{\al}\;(\al\in \mathring{\De})$. For each $i=0,...,l$, let $s_i:=s_{\al_i}$. Then $W$ is generated by $s_i$ ($i=0,...,l$), and $\mathring{W}$ is generated by $s_i$ ($i=1,...,l$). Let $(-|-)$ be a $W$-invariant bilinear form on $\h^*$ normalized so that $(\al_0|\al_0)=1.$ For each $\mu\in \mathring{P}$, we define $t_{\mu}\in \rm{Aut}\;(\h^*)$ by $$t_{\mu}(\lambda)=\lambda+\langle\lambda,K\rangle\mu-((\lambda|\mu)+\frac{1}{2}(\mu|\mu)\langle\lambda,K\rangle)\delta.$$ We have $t_{\mu}\in W$ and \begin{equation}\label{weyl group}
W=\mathring{W}\ltimes \mathring{P}.
\end{equation}
For each $\lambda\in \mathring{P}$, we denote the unique element of $\mathring{W}\lambda\cap \pm P_{+}$ by $\lambda_{\pm}$, respectively. We set $\rho:=\frac{1}{2}\sum_{\al\in \mathring{\De}_+}\al$. For each $w\in \mathring{W}$ and $\lambda\in \mathring{P}$, we define $w\circ \lambda:=w(\lambda+\rho)-\rho$. For each $\Lambda\in P$, we set $W_{\Lambda}:=\{w\in W|\;w\Lambda=\Lambda\}.$ We denote the set of minimal coset representatives of $\mathring{W}\backslash W$ by $W_{0}.$

\begin{defi}[Reduced expression]Each $w\in W$ can be written as a product $w=s_{i_1}s_{i_2}\cdots s_{i_n}$ $(i_j\in\{0,...,l\})$. If $n$ is minimal among such expressions, then $s_{i_1}s_{i_2}\cdots s_{i_n}$ is called a reduced expression of $w$ and $n$ is called the length of $w$ $($written as $l(w))$.
\end{defi}
\begin{defi}[Left weak Bruhat order]\label{Left weak Bruhat order}Let $w\in W$ and $i=0,..,l$. We write $s_iw>w$ if $l(s_iw)>l(w)$ holds. Left weak Bruhat order is the partial order on $W$ generated by $>$.
\end{defi}
\begin{defi}[Macdonald order]We write $\mu \succeq \lambda$ if and only if one of the following two conditions holds: \item$\mathrm{(1)}$ $\mu-\lambda\in \mathring{Q}_+$ if $\mu \in \mathring{W}\lambda$;
\item $\mathrm{(2)}$ $\lambda_{+}-\mu_{+}\in \mathring{Q}^{\prime}_{+}$ if $\mu_+\neq\lambda_+$.
\end{defi}
For $w\in W$ and $\mu\in\mathring{P}$, let $w(\!(\mu)\!)\in \mathring{P}$ be the restriction of $w(\mu+\Lambda_0)$ to $\mathring{\h}$. For each $\lambda\in \mathring{P}$, let $\pi_{\lambda}\in W$ be a minimal  length element such that $(\pi_{\lambda}\Lambda_{0})(\!(0)\!)=\lambda$. For each $\mu\in \mathring{P},$ we denote the convex hull of $\mathring{W}\mu$ by $C(\mu)$.

\begin{lem}[\cite{Mac} Proposition 2.6.2]\label{macdonald}
If $\mu \in \mathring{P}$, then $C(\mu)\cap (\mu +\mathring{Q}^{\prime})\subseteq \bigcap_{w\in \mathring{W}}w(\mu_+-\mathring{Q}_{+}^{\prime}).$
\end{lem}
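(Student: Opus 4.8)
The plan is to reduce the assertion to its $w=e$ instance using the $\mathring{W}$-equivariance of the objects involved, and then to verify an explicit lattice-versus-cone identity in the coordinates $\varepsilon_1,\dots,\varepsilon_l$.

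First I would record the elementary lattice facts. In these coordinates $\mathring{Q}=\{\sum_i m_i\varepsilon_i\mid \sum_i m_i\in 2\Z\}$, and since $\frac{\Z}{2}\mathring{\De}_l$ contains $\pm\varepsilon_i$, adjoining it fills the nontrivial coset of $\mathring{Q}$, so $\mathring{Q}^{\prime}=\bigoplus_{i=1}^{l}\Z\varepsilon_i$. In particular $\mathring{Q}^{\prime}$ is stable under $\mathring{W}$ (acting by signed permutations of the $\varepsilon_i$), and since $w\mu-\mu\in\mathring{Q}\subseteq\mathring{Q}^{\prime}$ for all $w\in\mathring{W}$, the coset $\mu+\mathring{Q}^{\prime}$ is a $\mathring{W}$-stable subset of $\h^{*}$. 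As $C(\mu)=C(\mu_+)$ is $\mathring{W}$-stable as well, so is $S:=C(\mu)\cap(\mu+\mathring{Q}^{\prime})$. It therefore suffices to prove $S\subseteq\mu_{+}-\mathring{Q}^{\prime}_{+}$: for $\nu\in S$ and $w\in\mathring{W}$ the point $w^{-1}\nu$ again lies in $S$, hence in $\mu_+-\mathring{Q}^{\prime}_+$, which says exactly that $\nu\in w(\mu_+-\mathring{Q}^{\prime}_+)$, and $w$ is arbitrary.

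Now fix $\nu\in S$ and combine two memberships. First, $\nu$ lies in the convex hull of $\mathring{W}\mu_+$, and $\mu_+$ is the maximum of its own $\mathring{W}$-orbit for the order $\lambda'\preceq\lambda\iff\lambda-\lambda'\in\sum_{i=1}^{l}\mathbb{R}_{\geq 0}\al_i$ (repeatedly raise a non-dominant element of the orbit by a simple reflection; this terminates since the orbit is finite), so writing $\nu$ as a convex combination of $\mathring{W}\mu_+$ yields $\mu_{+}-\nu\in\sum_{i=1}^{l}\mathbb{R}_{\geq 0}\al_i$. Second, $\mu_{+}\in\mathring{W}\mu\subseteq\mu+\mathring{Q}\subseteq\mu+\mathring{Q}^{\prime}$, so $\mu_{+}-\nu\in\mathring{Q}^{\prime}$. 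The argument then closes by the identity
\[
\mathring{Q}^{\prime}\cap\Big(\sum_{i=1}^{l}\mathbb{R}_{\geq 0}\al_i\Big)=\mathring{Q}^{\prime}_{+}.
\]
Here $\supseteq$ is immediate, while for $\subseteq$ I would use that $\{\al_1,\dots,\al_{l-1},\tfrac{1}{2}\al_l\}$ is a $\Z$-basis of $\mathring{Q}^{\prime}$: given $x=\sum_j m_j\varepsilon_j\in\mathring{Q}^{\prime}$, its integer coordinates in this basis are $c_j=-(m_1+\cdots+m_j)$ $(j<l)$ and $d=-(m_1+\cdots+m_l)$, the $\al_l$-coordinate of $x$ being $d/2$; thus $x$ lies in the cone precisely when all partial sums $m_1+\cdots+m_j$ $(1\le j\le l)$ are nonpositive, and under that condition $x=\sum_{j<l}c_j\al_j+d\cdot\tfrac{1}{2}\al_l$ visibly lies in $\mathring{Q}_{+}+\frac{\Z_+}{2}\mathring{\De}_{l+}=\mathring{Q}^{\prime}_{+}$.

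The one genuinely non-formal step is this lattice-versus-cone identity: the simple roots form a $\Z$-basis of the index-two sublattice $\mathring{Q}$ of $\mathring{Q}^{\prime}$ but not of $\mathring{Q}^{\prime}$ itself, so the cone $\sum_i\mathbb{R}_{\geq 0}\al_i$ is not unimodular for $\mathring{Q}^{\prime}$, and it is exactly the presence of the half-long-roots in $\mathring{Q}^{\prime}_{+}$ that makes the identity true; the remaining steps are formal and hold for any finite root system. This identity is the content of the cited \cite[Proposition 2.6.2]{Mac}, which may alternatively be invoked directly.
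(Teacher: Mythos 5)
Your proof is correct and follows essentially the same route as the paper: reduce via $\mathring{W}$-stability (or, as the paper phrases it, via $\mathring{W}\mu\subseteq w(\mu_+-\mathring{Q}^{\prime}_+)$ and convexity) to the lattice-versus-cone identity $\mathring{Q}^{\prime}\cap\sum_i\mathbb{R}_{\geq 0}\al_i=\mathring{Q}^{\prime}_+$, which the paper asserts as the statement that $w(\mu_+-\mathring{Q}^{\prime}_+)$ equals the intersection of $\mu_++\mathring{Q}^{\prime}$ with its convex hull. You spell out the two ingredients the paper leaves implicit — the dominance of $\mu_+$ in its orbit, and the explicit verification that $\{\al_1,\dots,\al_{l-1},\tfrac12\al_l\}$ is a $\mathbb{Z}$-basis making the cone unimodular for $\mathring{Q}^{\prime}$ — but the underlying argument is the same.
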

\begin{proof}
The set $w(\mu_+-\mathring{Q}_{+}^{\prime})$ is the intersection of $\mu_++\mathring{Q}^{\prime}$ with the convex hull of $w(\mu_+-\mathring{Q}_{+}^{\prime})$. The set $\mathring{W}\mu$ is contained in $\bigcap_{w\in \mathring{W}}w(\mu_+-\mathring{Q}_{+}^{\prime}).$ Hence we have $C(\mu)\cap (\mu +\mathring{Q}^{\prime}) \subseteq \bigcap_{w\in \mathring{W}}w(\mu_+-\mathring{Q}_{+}^{\prime})$
\end{proof}
\begin{lem}\label{cherednik order and bruhat order}
\item$\mathrm{(1)}$ If $w>v\in W$, then $v(\!(0)\!)\succeq w(\!(0)\!)$;
\item$\mathrm{(2)}$ Let $b$, $c\in \mathring{P}$ satisfy $b=s_i(\!(c)\!)$ for some $i=0,...,l$. Then$$c\succ b\iff \pi_b=s_i\pi_c>\pi_c.$$
\end{lem}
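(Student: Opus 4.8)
The plan is to derive both statements from the covering step of the left weak Bruhat order together with the dominance of $\Lambda_0$, invoking Lemma \ref{macdonald} only for the delicate affine reflection. Since $\succeq$ is a partial order and the left weak Bruhat order is generated by relations $s_iv>v$, for (1) it is enough to treat $w=s_iv$ with $l(s_iv)=l(v)+1$, equivalently $v^{-1}\al_i\in\De_+$. Put $c:=v(\!(0)\!)$. Writing $v\Lambda_0=\Lambda_0+c-m\delta$ (via $\varpi_j=\Lambda_j-2\Lambda_0$) and using $s_i\delta=\delta$, one gets $(s_iv)(\!(0)\!)=s_i(\!(c)\!)=:b$, so that $b$ depends only on $c$ and $i$. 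Since $v^{-1}\al_i\in\De_+$, the coroot $v^{-1}\check{\al}_i$ is a nonnegative integral combination of simple coroots, so by dominance of $\Lambda_0$ the integer $N:=\langle v\Lambda_0,\check{\al}_i\rangle=\langle c+\Lambda_0,\check{\al}_i\rangle$ satisfies $N\geq 0$; and from $s_i(v\Lambda_0)=v\Lambda_0-N\al_i$, restricting to $\mathring{\h}$, we obtain $b=c-N\bar{\al}_i$, where $\bar{\al}_i$ denotes the restriction of $\al_i$ to $\mathring{\h}$. If $i\in\{1,\dots,l\}$ then $\bar{\al}_i=\al_i\in\mathring{\De}$, so $b=s_ic\in\mathring{W}c$ and $c-b=N\al_i\in\mathring{Q}_+$, whence $c\succeq b$ by clause (1) of the Macdonald order.

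The affine case $i=0$ is the heart of the argument. Here $\bar{\al}_0$ is a rational multiple of a root of $\mathring{\g}$, so $b=c-N\bar{\al}_0$ is obtained from $c$ by a nonnegative displacement along a root direction; a direct check using $N\geq 0$ and the explicit form of $\bar{\al}_0$ shows that $b\in C(c)$ and $b-c\in\mathring{Q}^{\prime}$. The case $b\in\mathring{W}c$ is straightforward (it forces $b=c$). Otherwise $b_+\neq c_+$, and Lemma \ref{macdonald} applied with $\mu=c$ gives
$$b\in C(c)\cap(c+\mathring{Q}^{\prime})\subseteq\bigcap_{u\in\mathring{W}}u(c_+-\mathring{Q}^{\prime}_{+}).$$
Applying the element of $\mathring{W}$ carrying $b$ to $b_+$ and reindexing the intersection, we get $b_+\in c_+-\mathring{Q}^{\prime}_{+}$, i.e. $c_+-b_+\in\mathring{Q}^{\prime}_{+}$, which is clause (2). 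This proves (1).

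For (2) we use three facts about the map $w\mapsto w(\!(0)\!)$: it is constant on right $\mathring{W}$-cosets; its fibre over $\lambda\in\mathring{P}$ is the coset $t_\lambda\mathring{W}$; and $\pi_\lambda$ is the unique minimal length element of $t_\lambda\mathring{W}$, characterized therein by $\pi_\lambda\al_j\in\De_+$ for $j=1,\dots,l$. First, if $\pi_b=s_i\pi_c$ and $l(s_i\pi_c)>l(\pi_c)$, then $s_i\pi_c>\pi_c$ in the left weak order, so (1) gives $c=\pi_c(\!(0)\!)\succeq(s_i\pi_c)(\!(0)\!)=b$; moreover $c\neq b$ (else $\pi_c=\pi_b=s_i\pi_c$, contradicting the lengths), hence $c\succ b$. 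Conversely, assume $c\succ b$, so $b\neq c$. If $l(s_i\pi_c)=l(\pi_c)-1$, then $\pi_c>s_i\pi_c$ in the left weak order, so (1) gives $b=(s_i\pi_c)(\!(0)\!)\succeq c$, contradicting $c\succ b$ and the antisymmetry of $\succeq$; therefore $l(s_i\pi_c)=l(\pi_c)+1$. Since $(s_i\pi_c)(\!(0)\!)=b$ as in the first paragraph, it only remains to check that $s_i\pi_c$ is minimal in $t_b\mathring{W}$, i.e. that $(s_i\pi_c)\al_j\in\De_+$ for $j=1,\dots,l$. Now $\pi_c\al_j\in\De_+$, so $s_i(\pi_c\al_j)\in\De_+$ unless $\pi_c\al_j=\al_i$; but $\pi_c\al_j=\al_i$ would give $s_i\pi_c=\pi_cs_j$ and hence $b=(\pi_cs_j)(\!(0)\!)=\pi_c(\!(0)\!)=c$, a contradiction. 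Thus $s_i\pi_c=\pi_b$, which completes (2).

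The step I expect to be the main obstacle is the affine case $i=0$ of (1): it requires identifying $\bar{\al}_0$ explicitly within the $A_{2l}^{(2)}$ root datum, verifying both the convex-hull membership $b\in C(c)$ and the lattice membership $b-c\in\mathring{Q}^{\prime}$ so that Lemma \ref{macdonald} applies, and cleanly separating the two clauses of the Macdonald order (the $\mathring{Q}_+$-alternative when $b$ and $c$ share an $\mathring{W}$-orbit, the $\mathring{Q}^{\prime}_{+}$-alternative otherwise); this is precisely where the $A_{2l}^{(2)}$-specific features, namely the half-roots and the lattice $\mathring{Q}^{\prime}$, intervene.
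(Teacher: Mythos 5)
Your proof of part (1) for $i=0$ contains a genuine error. You assert that ``a direct check shows $b\in C(c)$''; in fact it is $c$ that lies in $C(b)$, not the other way around. Setting $N:=\langle v\Lambda_0,\check{\al}_0\rangle\geq 0$, one computes $b-c=N\theta/2$ and $\langle b,\check{\theta}\rangle=(N+1)/2>0$, so that
$$c=\tfrac{1}{N+1}b+\tfrac{N}{N+1}s_\theta(b)$$
is a proper convex combination of two points of $\mathring{W}b$. Hence $c\in C(b)$, and for $N>0$ one even has $|c|<|b|$ for the $\mathring{W}$-invariant norm, so $b$ lies strictly outside $C(c)$ (already the case $v=e$ gives $c=0$, $C(c)=\{0\}$, $b\neq 0$). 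Lemma \ref{macdonald} must therefore be applied with $\mu=b$, which yields $b_+-c_+\in\mathring{Q}'_+$. The inclusion you derive, $c_+-b_+\in\mathring{Q}'_+$, would in fact give $b\succeq c$, not $c\succeq b$: by clause (2) of the Macdonald order, $\mu\succeq\lambda$ means $\lambda_+-\mu_+\in\mathring{Q}'_+$, so $c\succeq b$ requires $b_+-c_+\in\mathring{Q}'_+$. You have made two sign slips, the wrong convex-hull containment and an interchange of $\lambda$ and $\mu$ in the definition of $\succeq$, which cancel so that you announce the right conclusion, but the argument as written does not establish it.

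Your part (2) is essentially correct and is a mild variant of the paper's. Both arguments deduce $s_i\pi_c>\pi_c$ from $c\succ b$ via (1); to conclude $\pi_b=s_i\pi_c$, the paper runs a length comparison ($l(\pi_c)>l(s_i\pi_b)$, contradicting minimality of $\pi_c$), whereas you verify directly that $s_i\pi_c$ is the minimal representative of $t_b\mathring{W}$ by checking $(s_i\pi_c)\al_j\in\De_+$ for $j=1,\dots,l$ and ruling out $\pi_c\al_j=\al_i$ because it would force $b=c$. Since (2) uses only the statement of (1), not your proof of it, this part is unaffected by the flaw above.
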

\begin{proof}
First, we prove (1). It is enough to prove the assertion for $w=s_iv$. Since $w>v$, we have $\langle v\Lambda_{0}, \check{\al}_i\rangle \geq 0$. This implies $v\Lambda_{0}-w\Lambda_{0}\in Q_{+}$. Hence we have $v(\!(0)\!)\succeq w(\!(0)\!)$ if $i\neq 0$. If $i=0$, then we have $w(\!(0)\!)-v(\!(0)\!)=\langle v\Lambda_0,\check{\al}_{0}\rangle\theta/2$. We set $N=\langle v\Lambda_0, \check{\al}_0\rangle.$ We have $\langle w(\!(0)\!),\check{\theta} \rangle=(N+1)/2.$ Hence $s_{\theta}(w(\!(0)\!))=w(\!(0)\!)-\frac{N+1}{2}\theta$ and $v(\!(0)\!)=\frac{N}{N+1}s_{\theta}(w(\!(0)\!))+\frac{1}{N+1}w(\!(0)\!)$. Therefore, $v(\!(0)\!)\in C(w(\!(0)\!))\cap (w(\!(0)\!)+\mathring{Q}^{\prime}).$ By Lemma \ref{macdonald}, $w(\!(0)\!)_+-v(\!(0)\!)_+\in \mathring{Q}^{\prime}_+.$ Hence $v(\!(0)\!)\succeq w(\!(0)\!).$ \\Next, we prove (2). We already proved ($\Leftarrow$). So we prove ($\Rightarrow$). By Definition \ref{Left weak Bruhat order}, we have $s_i\pi_c>\pi_c$ or $s_i\pi_c<\pi_c$. From $c\succ b$ and (1), we have $s_i\pi_c>\pi_c$ and $\pi_b>s_i\pi_b.$ We have $(s_i\pi_c)(\!(0)\!)=b$ thanks to $b=s_i(\!(c)\!)$. We show that $\pi_b=s_i\pi_c$. If $\pi_b\neq s_i\pi_c$, then we have $l(s_i\pi_c)>l(\pi_b)$ by the minimality of $l(\pi_{b})$. Since $l(\pi_b)=l(s_i\pi_b)+1$, $l(s_i\pi_c)=l(\pi_c)+1$ and $l(s_i\pi_c)>l(\pi_b)$, we get $l(\pi_c)>l(s_i\pi_b)$. This contradicts the minimality of $l(\pi_c)$. Hence the assertion follows.
\end{proof}

\subsection{Macdonald-Koornwinder polynomials}
In this subsection, we recall materials presented in \cite[\S3]{Sahi2000} and \cite{Ion}, and we specialize parameters $t,$ $t_0,$ $u_0,$ $t_l,$ $u_l$ in \cite{Sahi2000} as $t_0=t_l=u_0=t$ and $u_l=1$ \cite{Ion}.
\subsubsection{Nonsymmetric case}
We set $\mathbb{F}:=\mathbb{Q}(\!(t,q^{1/2})\!)$. Let $\mathbb{F}[\mathring{P}]$ be a group ring of $\mathring{P}$ over $\mathbb{F}$ and $X^{\lambda}$ be an element of $\mathbb{F}[\mathring{P}]$ corresponding to $\lambda\in \mathring{P}.$ We identify $\mathbb{F}[x_1^{\pm1},...,x_l^{\pm1}]$ with  $\mathbb{F}[\mathring{P}]$ by $x_i=X^{\varepsilon_i}$ for each $i\in\{1,...,l\}$. We define $$\De(x):=\De(x)_+\De(x^{-1})_+\prod_{n\in \mathbb{N}}(1-q^n)^l\in \mathbb{F}[\![x_1^{\pm1},...,x_l^{\pm1}]\!]$$ by
$$\De(x)_+:=\underset{i=1,...,l}{\prod}\frac{(x_i)_{\infty}(-x_i)_{\infty}(q^{1/2}x_i)_{\infty}}{(tx_i)_{\infty}(-tx_i)_{\infty}(q^{1/2}t^2x_i)_{\infty}}\underset{1\leq i<j\leq l}{\prod}\frac{(x_ix_j)_{\infty}(x_ix_{j}^{-1})_{\infty}}{(tx_ix_j)_{\infty}(tx_ix_{j}^{-1})_{\infty}}.$$
Here $(u)_{\infty}=\underset{n\in\mathbb{Z}_{+}}{\prod}(1-q^nu)$.
We define $$\varphi(x):=\underset{i=1,...,l}{\prod}\frac{(x_i-t)(x_i+t)}{x_i^2-1}\underset{1\leq i<j\leq l}{\prod}\frac{(x_ix_j-t)(x_ix_j^{-1}-t)}{(x_ix_j-1)(x_ix_j^{-1}-1)}$$
and $\mathcal{C}(x):=\De(x)\varphi(x)$.
We have $$\De(x)_+|_{t=0}=\underset{i=1,...,l}{\prod}(x_i)_{\infty}(-x_i)_{\infty}(q^{1/2}x_i)_{\infty}\underset{1\leq i<j\leq l}{\prod}(x_ix_j)_{\infty}(x_ix_{j}^{-1})_{\infty}$$ and $$\varphi(x)|_{t=0}=\underset{i=1,...,l}{\prod}\frac{1}{1-x_i^{-2}}\underset{1\leq i<j\leq l}{\prod}\frac{1}{(1-x^{-1}_ix^{-1}_j)(1-x_i^{-1}x_j)}.$$ Under the identification $x_i=X^{\varepsilon_i}$, we have $$\De(x)|_{t=0}=\underset{\al \in \De \;\rm{and}\; \al(d)\leq 0}{\prod}(1-X^{\al})^{\mathrm{dim}\;\g_{\al}}\;\;\mathrm{and}\;\;\varphi(x)|_{t=0}=\underset{\al\in\mathring{\De}_+}{\prod}\frac{1}{1-X^{\al}}.$$ Hence we have $$\mathcal{C}|_{t=0}=\underset{\al \in \De_-}{\prod}(1-X^{\al})^{\mathrm{dim}\;\g_{\al}}.$$
\begin{defi}
We define an inner product on $\mathbb{F}[x_1^{\pm1},...,x_l^{\pm1}]$ by 
$$\langle f,g\rangle^{\prime}_{nonsym}:=\text{the constant term of}\; fg^{\star}\mathcal{C}\in \mathbb{F}.$$
Here $\star$ is the involution on $\mathbb{F}[x_1^{\pm1},...,x_l^{\pm1}]$ such that $q^{\star}=q^{-1}$, $x_i^{\star}=x_i^{-1}$ and $t^{\star}=t^{-1}$.
\end{defi}
\begin{defi}
The set of nonsymmetric Macdonald-Koornwinder polynomials $\{E_{\lambda}(x,q,t,a,b,c,d)\}_{\lambda\in \mathring{P}}$ is a collection of elements in $\mathbb{F}[\mathring{P}]$ indexed by $\mathring{P}$ with the following properties:
\item $\mathrm{(1)}$ $\langle E_{\lambda},E_{\mu}\rangle^{\prime}_{nonsym}=0$ if $\lambda \neq \mu$;
\item $\mathrm{(2)}$ $E_{\lambda}=X^{\lambda}+\underset{\mu\succ \lambda}{\sum}c_{\mu}X^{\mu}$.
\end{defi}
As in \cite[\S3.2]{Ion}, we set 
$$\bar{E}_{\lambda}:=\underset{t\to 0}{\lim}\;E_{\lambda},\;E^{\dag}:=\underset{t\to 0}{\lim}\;E^{\star}_{\lambda}.$$
Let $\langle-,-\rangle_{nonsym}$ be a specialization of $\langle-,-\rangle_{nonsym}^{\prime}$ at $t=0$.

\subsubsection{Symmetric case}
The Weyl group $\mathring{W}$ acts linearly on  $\mathbb{F}[\mathring{P}]$ by $w(e^{\lambda})=e^{w(\lambda)}$ for each $w\in \mathring{W}$ and $\lambda\in \mathring{P}$.
\begin{defi}
We define an inner product on $\mathbb{F}[x_1^{\pm1},...,x_l^{\pm1}]$ by 
$$\langle f,g\rangle^{\prime}_{sym}:=\text{the constant term of}\; fg\De(x)\in \mathbb{F}.$$
\end{defi}

\begin{defi}
The set of symmetric Macdonald-Koornwinder polynomials $\{P_{\lambda}(x,q,t,a,b,c,d)\}_{\lambda\in \mathring{P}}$ is a collection of elements in $\mathbb{F}[\mathring{P}]^{\mathring{W}}$ indexed by $\mathring{P}_+$ with the following properties:
\item $\mathrm{(1)}$ $\langle P_{\lambda},P_{\mu}\rangle^{\prime}_{sym}=0$ if $\lambda \neq \mu$;
\item $\mathrm{(2)}$ $P_{\lambda}=X^{\lambda}+\underset{\mu\succ \lambda}{\sum}c_{\mu}X^{\mu}$.
\end{defi}
We set
$$\bar{P}_{\lambda}:=\underset{t\to 0}{\lim}P_{\lambda}.$$
Let $\langle-,-\rangle_{sym}$ be a specialization of $\langle-,-\rangle_{sym}^{\prime}$ at $t=0$.
We abbreviate $\bar{E}_{\lambda}(x_1,...,x_l,q)$, $E^{\dag}_{\lambda}(x_1,...,x_l,q)$, $\bar{P}_{\lambda}(x_1,...,x_l,q)$ and  $P^{\dag}_{\lambda}(x_1,...,x_l,q)$ as $\bar{E}_{\lambda}(X,q)$, $E^{\dag}_{\lambda}(X,q)$, $\bar{P}_{\lambda}(X,q)$ and  $P^{\dag}_{\lambda}(X,q),$ respectively.
\begin{prop}[\cite{Ion} Theorem 4.2]\label{symmetric=nonsymmetric}
For each $\lambda\in \mathring{P}_{+}$, we have $$\bar{P}_{\lambda}(X^{-1},q^{-1})=\bar{E}_{\lambda}(X^{-1},q^{-1}).$$
\end{prop}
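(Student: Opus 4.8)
Proposal for the proof of Proposition \ref{symmetric=nonsymmetric} (the Ion symmetry $\bar P_\lambda(X^{-1},q^{-1})=\bar E_\lambda(X^{-1},q^{-1})$).

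The plan is to deduce this identity from the known relation between the symmetric and nonsymmetric Macdonald--Koornwinder polynomials, namely the fact that $P_\lambda$ is proportional to a Hecke-algebra symmetrization of $E_\lambda$, and then to analyze what happens in the limit $t\to 0$ after applying the bar-involution $\star$. First I would recall the operator-theoretic setup: the Cherednik operators $Y_i$ acting on $\mathbb{F}[x_1^{\pm1},\dots,x_l^{\pm1}]$ have the $E_\mu$ as their joint eigenbasis, with eigenvalues recorded by the affine weight $\pi_\mu\Lambda_0$; the symmetric polynomial $P_\lambda$ (for $\lambda\in\mathring P_+$) is the image of $E_{\lambda}$ (equivalently $E_{w_0\lambda}$, depending on convention) under the full symmetrizer $\mathbf{1}^+=\sum_{w\in\mathring W}T_w$ of the (Koornwinder) Hecke algebra, up to an explicit scalar in $\mathbb{F}$. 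This is the content of, e.g., Macdonald's book in the Koornwinder setting, and is what Ion uses.

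The key step is to track the $t\to0$ degeneration on both sides. On the symmetric side, $\bar P_\lambda=\lim_{t\to0}P_\lambda$ is well-defined and equals the $t=0$ specialization appearing in the character formulas. On the nonsymmetric side one must be careful: $\lim_{t\to 0}E_\mu$ exists for every $\mu$, but the symmetrizer $\mathbf 1^+$ and its normalizing scalar both have $t$-dependence, so the $t\to 0$ limit of "symmetrization of $E_\mu$" is a \emph{degenerate} (nil-Hecke / Demazure-type) symmetrization. The standard move is: write $P_\lambda = c_\lambda(t)^{-1}\sum_{w}T_w E_{\lambda^-}$ for an appropriate anti-dominant $\lambda^-$, use the combinatorial formula for how $T_i$ acts on the eigenbasis $E_\mu$ (an intertwiner relation $T_i E_\mu = \text{(scalar)}E_\mu + \text{(scalar)}E_{s_i\mu}$ with scalars that are rational in $t$ and in $q^{\langle\pi_\mu\Lambda_0,\check\alpha_i\rangle}$), and take the limit termwise. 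After the bar-involution $\star$ (which sends $q\mapsto q^{-1}$, $x_i\mapsto x_i^{-1}$, $t\mapsto t^{-1}$), the relevant scalars degenerate so that only the "leading" term in each intertwiner step survives, and the resulting degenerate symmetrization of $E^{\dag}_{\lambda^-}$ collapses exactly to $\bar P_\lambda(X^{-1},q^{-1})$; comparing leading terms and using that both sides are $\mathring W$-invariant with the same top monomial $X^{\lambda}$ (resp. $X^{-\lambda}$) forces equality. Concretely: both $\bar P_\lambda(X^{-1},q^{-1})$ and $\bar E_\lambda(X^{-1},q^{-1})$ are $\mathring W$-invariant (for $\bar E$ this needs checking — see below), have the same dominant term, and are triangular with respect to the Macdonald order; orthogonality with respect to $\langle-,-\rangle_{sym}$ at $t=0$ then pins them down uniquely, so they coincide.

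The main obstacle, and the step I would spend the most care on, is establishing that $\bar E_\lambda(X^{-1},q^{-1})$ is genuinely $\mathring W$-invariant — i.e. that the $t\to0,\,q\mapsto q^{-1}$ specialization of $E^\star_\lambda$ lands in $\mathbb F[\mathring P]^{\mathring W}$ for dominant $\lambda$. This is exactly the phenomenon that at $t=\infty$ (equivalently $t\to 0$ after $\star$) the nonsymmetric Macdonald polynomial attached to a \emph{dominant} weight becomes symmetric, because the intertwiner $T_i E_\mu \rightsquigarrow E_{s_i\mu}$ degenerates to an \emph{equality} of specialized polynomials whenever the step $\mu\to s_i\mu$ moves toward the dominant chamber. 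One proves this by the usual induction on $\ell(\pi_\mu)$ using Lemma \ref{cherednik order and bruhat order} to identify which $s_i$ raise/lower $\pi_\mu$, and by checking the $t\to0$ behavior of the Koornwinder intertwiner coefficients (the ones involving the boundary parameters $t_0,t_l,u_0,u_l$, here specialized to $t_0=t_l=u_0=t$, $u_l=1$ as fixed just before this subsection). Once $\mathring W$-invariance of $\bar E_\lambda(X^{-1},q^{-1})$ is in hand, the identification with $\bar P_\lambda(X^{-1},q^{-1})$ follows from the uniqueness characterization of $P_\lambda$ (invariance $+$ triangularity $+$ orthogonality), all of which survive the limit. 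I would present the intertwiner-degeneration computation as the technical heart, citing \cite{Sahi2000} and \cite{Ion} for the explicit coefficient formulas rather than re-deriving them.
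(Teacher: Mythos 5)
The paper does not prove this proposition; it cites it as \cite{Ion}, Theorem 4.2 and uses it as a black box, so there is no in-paper argument for your sketch to track. Your outline (produce $P_\lambda$ by applying the Hecke symmetrizer to a nonsymmetric polynomial, degenerate the intertwiners as $t\to 0$, conclude by $\mathring W$-invariance plus triangularity plus orthogonality) is the standard route to this fact and is sound in shape.

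You do, however, conflate two different degenerations in the step you rightly flag as the technical heart. You write that $\bar E_\lambda(X^{-1},q^{-1})$ is ``the $t\to 0$, $q\mapsto q^{-1}$ specialization of $E^\star_\lambda$'' and describe its $\mathring W$-invariance as ``the $t=\infty$ phenomenon.'' In the paper's conventions $\bar E_\lambda := \lim_{t\to 0}E_\lambda$ and $E^\dag_\lambda := \lim_{t\to 0}E^\star_\lambda$ are two distinct polynomials, and the proposition is about $\bar E_\lambda$ after the harmless substitution $x_i\mapsto x_i^{-1}$, $q\mapsto q^{-1}$ --- not about $E^\dag_\lambda$. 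Applying $\star$ before taking $t\to 0$ inverts $t$, so $E^\dag_\lambda$ is effectively the $t\to\infty$ object (it is the one that governs Demazure slices in Theorem C), whereas $\bar E_\lambda$ is the genuine $t=0$ object attached to thin Demazure modules via Theorem~\ref{characterequality}. The $\mathring W$-invariance you must establish is that $\bar E_\lambda$ itself is symmetric for dominant $\lambda$, and the intertwiner degeneration has to be carried out on the $t\to 0$ side, not the $t\to\infty$ side; these give different recursions. (In the paper's own framework this invariance is also visible a posteriori: for dominant $\lambda$ Theorem~\ref{demazure=weyl} identifies $D_\lambda$ with a local Weyl module, a $\mathring{\g}$-integrable $\Cg$-module, so its graded character is $\mathring W$-invariant, and Theorem~\ref{characterequality} then forces $\bar E_\lambda(X^{-1},q^{-1})$ to be symmetric.) With that correction, your uniqueness argument --- same leading monomial, same triangularity in the Macdonald order, and compatibility of $\langle-,-\rangle_{sym}$ with $\langle-,-\rangle_{nonsym}$ on $\mathring W$-invariants --- closes the proof of $\bar P_\lambda=\bar E_\lambda$, and the displayed form follows by substituting $X\mapsto X^{-1}$, $q\mapsto q^{-1}$ on both sides.
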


\subsection{Representation of $\bo_{-}$ and $\Cg$ and their Euler-Poincar\'e-pairing}\label{representation}
\subsubsection{Representations of $\bo_-$}
For each $\bo_-$-module $M$ and $\lambda\in P$, we set $M_{\lambda}:=\{m\in M|\;hm=\lambda(h)m\;\mathrm{for}\;h\in \h\}.$ Let $\mathfrak{B}$ be the full subcategory of the category of $U(\bo_-)$-module such that a $\bo_-$-module $M$ is an object of $\mathfrak{B}$ if and only if $M$ has a weight decomposition $$M=\underset{\lambda\in P}{\bigoplus}M_{\lambda},$$ where $M_{\lambda}$ has at most countable dimension for all $\lambda\in P$. We set $\mathrm{wt}\;M:=\{\lambda\in P\mid M_{\lambda}\neq\{0\}\}.$ Let $\mathfrak{B^{\prime}}$ be the full subcategory of $\mathfrak{B}$ such that $M\in \mathfrak{B}$ is an object of $\mathfrak{B^{\prime}}$ if and only if $M$ is a $\bo_{-}$-module such that the set of weights $\mathrm{wt}\;M$ is contained in $\underset{i=1,...,k}{\bigcup}(\mu_i-Q_+)$ for some $\mu_i\in P$, and every weight space is finite dimensional. Let $\mathfrak{B}_0$ be the full subcategory of $\mathfrak{B^{\prime}}$ consisting of finite dimensional $\bo_{-}$-modules.
For each $M\in\mathfrak{B^{\prime}}$, we define a graded character of $M$ by the following formal sum
$$\mathrm{gch}\;M:=\underset{\lambda-m\delta \in \mathring{P}\oplus \frac{\mathbb{Z}}{2}\delta}{\sum}q^{m}X^{\lambda}\mathrm{dim}_{\mathbb{C}}\;\mathrm{Hom}_{\mathring{\h}\oplus \C d}\;(\C_{\lambda-m\delta},M),$$ where $\C_{\lambda-m\delta}$ is a 1-dimensional $\mathring{\h}\oplus \C\delta$-module with its weight $\lambda-m\delta.$
For each $\Lambda\in P$,  let $\mathbb{C}_{\Lambda}^{\prime}$ be the 1-dimensional $\mathfrak{h}$-module with its weight $\Lambda$, and $\mathbb{C}_{\Lambda}$ be the 1-dimensional simple module of $\bo_{-}$ with its weight $\Lambda.$ For each $\Lambda \in P$, we set $P(\Lambda):=U(\bo_{-})\underset{U(\mathfrak{h})}{\otimes}\mathbb{C}_{\Lambda}^{\prime}$ and $N(\Lambda):=\sum_{\mu\in P\backslash\{\Lambda\}}P(\Lambda)_{\mu}.$  Then $N(\Lambda)$ is a $\bo_-$-submodule of $P(\Lambda)$ and $\C_{\Lambda}\cong P(\Lambda)/N(\Lambda).$
\begin{prop}\label{projective cover of b}
For each $\Lambda\in P$, the $\bo_-$-module $P(\Lambda)=U(\bo_{-})\underset{U(\mathfrak{h})}{\otimes}\mathbb{C}_{\Lambda}^{\prime}$ is a projective cover of $\mathbb{C}_{\Lambda}$ in $\mathfrak{B}.$
\end{prop}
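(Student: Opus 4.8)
The plan is to verify the two defining properties of a projective cover separately: first that $P(\Lambda)$ is projective in $\mathfrak{B}$, and second that the surjection $P(\Lambda)\twoheadrightarrow \C_\Lambda$ is essential (i.e. no proper submodule maps onto $\C_\Lambda$). For projectivity, the standard argument is to exploit the tensor-hom adjunction for the induction functor $U(\bo_-)\otimes_{U(\h)}(-)$. Concretely, given a surjection $\pi\colon M\twoheadrightarrow N$ in $\mathfrak{B}$ and a map $\varphi\colon P(\Lambda)\to N$, I would restrict $\varphi$ to the generator $1\otimes v_\Lambda$, where $v_\Lambda$ spans $\C_\Lambda'$; this picks out a vector in $N_\Lambda$. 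Since $\pi$ is a surjection of weight modules it restricts to a surjection $M_\Lambda\twoheadrightarrow N_\Lambda$ on the $\Lambda$-weight spaces (each a $\C$-vector space, here of at most countable dimension, so the lift exists set-theoretically), hence I can choose a preimage $m\in M_\Lambda$. Because $m$ has weight $\Lambda$, it carries an $\h$-module map $\C_\Lambda'\to M$, and by the universal property of induction this extends uniquely to a $\bo_-$-module map $\psi\colon P(\Lambda)\to M$ with $\psi(1\otimes v_\Lambda)=m$; then $\pi\circ\psi$ and $\varphi$ agree on the generator, hence agree. This shows $\mathrm{Hom}_{\bo_-}(P(\Lambda),-)$ is exact, i.e. $P(\Lambda)$ is projective. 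One should check that $P(\Lambda)$ genuinely lies in $\mathfrak{B}$: by the PBW theorem $P(\Lambda)\cong U(\n_-)\otimes_\C \C_\Lambda'$ as $\h$-modules, so its weights are $\Lambda - Q_+$ and each weight space is the span of PBW monomials of a fixed $Q_+$-degree, which is countable, confirming membership in $\mathfrak{B}$.

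For the essential-surjection part, I would argue that $N(\Lambda)=\sum_{\mu\neq\Lambda}P(\Lambda)_\mu$ is the unique maximal $\bo_-$-submodule of $P(\Lambda)$, so that any submodule not contained in $N(\Lambda)$ must be all of $P(\Lambda)$. The key point is that the $\Lambda$-weight space $P(\Lambda)_\Lambda$ is one-dimensional, spanned by the generator $1\otimes v_\Lambda$: indeed $\n_-$ strictly lowers weights (all roots in $\De_-$ are negative on the relevant grading), so any PBW monomial other than the constant one lands in a strictly lower weight. Consequently a submodule $S$ with $S\not\subseteq N(\Lambda)$ contains a nonzero element of $P(\Lambda)_\Lambda$, hence contains $1\otimes v_\Lambda$, hence contains $U(\bo_-)\cdot(1\otimes v_\Lambda)=P(\Lambda)$. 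Therefore every proper submodule sits inside $N(\Lambda)$, which gives both that $N(\Lambda)$ is the radical and that $P(\Lambda)\to P(\Lambda)/N(\Lambda)\cong\C_\Lambda$ is an essential epimorphism. Combining the two parts yields that $P(\Lambda)$ is a projective cover of $\C_\Lambda$ in $\mathfrak{B}$.

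The only mildly delicate point — and the step I would be most careful about — is ensuring that the category-theoretic lifting really takes place inside $\mathfrak{B}$ rather than in the larger category of all $\bo_-$-modules: one needs the weight-space surjection $M_\Lambda\twoheadrightarrow N_\Lambda$ and the fact that the induced map $\psi$ has image with weights in $\Lambda-Q_+$ and countable-dimensional weight spaces, both of which follow because $M\in\mathfrak{B}$ and $\psi$ factors through the cyclic module generated by a single weight vector. I do not expect any genuine obstacle; the proposition is essentially the observation that $U(\bo_-)\otimes_{U(\h)}-$ is left adjoint to restriction, together with the standard "one-dimensional top weight space" argument for essentiality. No results beyond PBW and the definitions of $\mathfrak{B}$, $P(\Lambda)$, and $N(\Lambda)$ already set up in the excerpt are needed.
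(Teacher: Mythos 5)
Your argument is correct and is essentially the paper's approach: the paper's entire proof is the one-line Frobenius reciprocity identity $\mathrm{Hom}_{\mathfrak{B}}(P(\Lambda),M)=\mathrm{Hom}_{\h}(\mathbb{C}_{\Lambda},M)$, which is exactly the induction adjunction you unpack, together with the observation that $M\mapsto M_\Lambda$ is exact on weight modules. You additionally spell out the essential-surjection property (via the one-dimensionality of $P(\Lambda)_\Lambda$, using that $\De_+\subset Q_+\setminus\{0\}$) and check $P(\Lambda)\in\mathfrak{B}$, both of which the paper leaves implicit; this extra care is welcome but does not change the underlying route.
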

\begin{proof}
For each $M\in \mathfrak{B},$ we have $\mathrm{Hom}_{\mathfrak{B}}(P(\Lambda),M)=\mathrm{Hom}_{\h}(\mathbb{C}_{\Lambda},M).$ Hence, $P(\Lambda)$ is a projective cover of $\C_{\Lambda}$ in $\mathfrak{B}.$
\end{proof}
\begin{prop}[\cite{FKM} Lemma 5.2]\label{enough projective of b}
The category $\mathfrak{B}$ has enough projectives.
\end{prop}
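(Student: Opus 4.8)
The plan is to construct, for an arbitrary $M\in\mathfrak{B}$, an explicit projective object of $\mathfrak{B}$ that surjects onto $M$, built out of the modules $P(\Lambda)$ which are projective in $\mathfrak{B}$ by Proposition \ref{projective cover of b}. The first point to record is that each $P(\Lambda)$ really is an object of $\mathfrak{B}$: by the Poincar\'e--Birkhoff--Witt theorem $P(\Lambda)=U(\bo_-)\otimes_{U(\h)}\C_{\Lambda}^{\prime}\cong U(\n_-)\otimes_{\C}\C_{\Lambda}^{\prime}$ as $\h$-modules, so $P(\Lambda)_\mu\cong U(\n_-)_{\mu-\Lambda}$; and $\n_-=\bigoplus_{\al\in\De_-}\g_{\al}$ is countable dimensional because $\De_-$ is countable and every root space $\g_{\al}$ is finite dimensional, hence $U(\n_-)$ is countable dimensional and every weight space of $P(\Lambda)$ has at most countable dimension.

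Given $M\in\mathfrak{B}$, I would then choose for every $\lambda\in\mathrm{wt}\,M$ a $\C$-basis $B_\lambda$ of the weight space $M_\lambda$; by the definition of $\mathfrak{B}$ each $B_\lambda$ is at most countable. Put
$$\mathcal{P}_M:=\bigoplus_{\lambda\in\mathrm{wt}\,M}\;\bigoplus_{v\in B_\lambda}P(\lambda).$$
The claim is that $\mathcal{P}_M\in\mathfrak{B}$, and this is where the definition of $\mathfrak{B}$ enters essentially: the weight lattice $P$ is a finitely generated abelian group and hence countable, each index set $B_\lambda$ is at most countable, and each $P(\lambda)_\mu$ is at most countable dimensional by the previous step; therefore every weight space $(\mathcal{P}_M)_\mu=\bigoplus_{\lambda}\bigoplus_{v\in B_\lambda}P(\lambda)_\mu$ is a countable union of at most countable dimensional spaces, so it is itself at most countable dimensional.

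It then remains to verify that $\mathcal{P}_M$ is projective in $\mathfrak{B}$ and that it surjects onto $M$. For projectivity, since $\mathcal{P}_M$ lies in $\mathfrak{B}$ it is the coproduct there of the summands $P(\lambda)$, so for every $N\in\mathfrak{B}$ the universal property together with (the proof of) Proposition \ref{projective cover of b} gives
$$\mathrm{Hom}_{\mathfrak{B}}(\mathcal{P}_M,N)\cong\prod_{\lambda}\prod_{v\in B_\lambda}\mathrm{Hom}_{\mathfrak{B}}(P(\lambda),N)\cong\prod_{\lambda}\prod_{v\in B_\lambda}N_{\lambda};$$
since taking a fixed weight space is an exact functor on $\mathfrak{B}$ and arbitrary products of $\C$-vector spaces are exact, the functor $\mathrm{Hom}_{\mathfrak{B}}(\mathcal{P}_M,-)$ is exact, i.e.\ $\mathcal{P}_M$ is projective. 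For the surjection, define $\phi\colon\mathcal{P}_M\to M$ by sending the canonical generator of the summand $P(\lambda)$ indexed by $v\in B_\lambda$ to $v\in M_\lambda$, extending by the universal property of $P(\lambda)$; the image of $\phi$ contains a basis of every $M_\lambda$, so $\phi$ is onto. This produces enough projectives in $\mathfrak{B}$.

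The argument is largely formal, and the only real obstacle is keeping the constructed object inside $\mathfrak{B}$: the category is not closed under arbitrary coproducts, only under those whose weight spaces remain at most countable dimensional, so one genuinely needs that both $P$ and $\n_-$ are countable in order to conclude that $\mathcal{P}_M$ lies in $\mathfrak{B}$ — and therefore is projective there rather than merely in the ambient category of all $\bo_-$-modules. If an honest projective \emph{cover} were wanted, one could afterwards cut $\mathcal{P}_M$ down by a standard minimality argument, but this is not needed for the statement.
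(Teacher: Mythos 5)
Your proof is correct. Note that the paper offers no proof of its own for this statement: it simply cites \cite{FKM}, Lemma 5.2. So there is nothing internal to compare against; what one can say is that your argument is the standard one for a result of this kind, and it matches both the proof of the analogous statement for $\Cg$-$\mathrm{mod}^{\mathrm{int}}$ given in the paper (sum $P(\Lambda)$ over a set of generators and surject) and the construction used inside the proof of Proposition \ref{a} (there $P^0=\bigoplus_{v\in S}P(\mathrm{wt}(v))$, albeit with $S$ chosen to be highest weight vectors, which is available only in $\mathfrak{B}'$). Your version is the right generalization to all of $\mathfrak{B}$, where highest weight vectors need not exist, so one must index over a basis of each weight space.

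The one genuinely non-formal step is the verification that $\mathcal{P}_M$ remains inside $\mathfrak{B}$, and you handle it correctly: $P$ is a finitely generated abelian group (hence countable), each $B_\lambda$ is at most countable by the defining constraint on $\mathfrak{B}$, and each $P(\lambda)_\mu\cong U(\n_-)_{\mu-\lambda}$ is at most countable dimensional, so $(\mathcal{P}_M)_\mu$ is a countable direct sum of spaces of at most countable dimension. (In fact $U(\n_-)_{\mu-\lambda}$ is finite dimensional, since for fixed $\nu$ only finitely many PBW monomials in $U(\n_-)$ can have weight $\nu$; but your weaker countability bound is all that is needed, and is what the definition of $\mathfrak{B}$ asks for.) The projectivity of $\mathcal{P}_M$ then follows immediately, since a coproduct of projectives, once it exists in the category, is projective; your explicit identification $\mathrm{Hom}_{\mathfrak{B}}(\mathcal{P}_M,-)\cong\prod_{\lambda}\prod_{v\in B_\lambda}(-)_\lambda$ just makes this visible. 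The surjection is clear. Nothing is missing.
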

\begin{defi}
Let $M$ be a $\bo_-$-module with $\h$-weight decomposition $M=\underset{\mu\in \h^*}{\bigoplus}M_{\mu}$. Then $M^{\vee}:=\underset{\mu \in \h^*}{\bigoplus}M^{*}$ is a $\bo_{-}$-module with a $\bo_{-}$-action defined by
$$Xf(v):=-f(Xv)\; \mathrm{for}\; X\in\bo_-,\;f\in M^{\vee}\; \mathrm{and}\; v\in M.$$
\end{defi}

\begin{defi}
For each $M\in \mathfrak{B}^{\prime}$ and $N\in \mathfrak{B}_0$, we define the Euler-Poincar\'e-pairing $\langle M, N\rangle_{\mathrm{Ext}}$ as a formal sum by
$$\langle M, N\rangle_{\mathrm{Ext}}:=\underset{p\in \mathbb{Z}_+,m\in \mathbb{Z}/2}{\sum}(-1)^{p}q^{m}\mathrm{dim}_{\mathbb{C}}\;\mathrm{Ext}_{\mathfrak{B}}^{p}(M\otimes_{\mathbb{C}}\mathbb{C}_{m\delta},N^{\vee}).$$ 
\end{defi}
\begin{prop}\label{a}For each $M\in \mathfrak{B}^{\prime}$ and $N\in \mathfrak{B}_0$, the following hold:
\item $\mathrm{(1)}$ The pairing $\langle M,N\rangle_{\mathrm{Ext}}$ is an element of $\mathbb{C}(\!(q^{1/2})\!)$;
\item $\mathrm{(2)}$ This pairing depends only on the graded characters of $M$ and $N.$
\end{prop}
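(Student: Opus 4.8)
The plan is to reduce everything to the level of graded characters by working inside the Grothendieck group of a suitable subcategory, after first establishing that the relevant $\mathrm{Ext}$-groups are finite dimensional in each fixed weight and vanish for large homological degree per weight. For part (1), I would first observe that for $N\in\mathfrak{B}_0$ finite dimensional, $N^\vee$ is again finite dimensional, and that $M\otimes_\C\C_{m\delta}$ lies in $\mathfrak{B}'$; hence by Proposition \ref{projective cover of b} and Proposition \ref{enough projective of b} one can compute $\mathrm{Ext}^p_{\mathfrak{B}}(M\otimes_\C\C_{m\delta},N^\vee)$ from a projective resolution of $M\otimes_\C\C_{m\delta}$ by modules of the form $\bigoplus_j P(\Lambda_j)$. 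Because $M$ has weights in a finite union of cones $\mu_i-Q_+$ with finite-dimensional weight spaces, the projectives $P(\Lambda)$ appearing in each step, restricted to any fixed coset $\nu+\Z\delta$ (equivalently, fixed $\mathring{P}$-component), contribute only finitely many $\Lambda$'s of any given $d$-degree; combined with $\mathrm{Hom}_{\mathfrak{B}}(P(\Lambda),N^\vee)=\mathrm{Hom}_\h(\C_\Lambda,N^\vee)=(N^\vee)_\Lambda$ being finite dimensional and nonzero only for $\Lambda$ in the finite weight set of $N^\vee$, every $\mathrm{Ext}^p$ is finite dimensional and, for fixed total weight, nonzero only for finitely many $p$ and finitely many $m$. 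This gives that the double sum, graded by the $\delta$-degree $m$, has each $q^{m/... }$-coefficient a finite integer and is bounded below in $m$, so it defines an element of $\C(\!(q^{1/2})\!)$; a short estimate on which $\Lambda$ can appear (using that $N^\vee$ has finitely many weights and the cones are $-Q_+$-shifts) gives the lower bound on $m$.

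For part (2), the key point is the standard Euler-characteristic argument: in the Grothendieck group $K(\mathfrak{B}')$ (or rather a completed version keeping track of the $q$-grading), the alternating sum $\sum_p (-1)^p [\mathrm{Ext}^p_{\mathfrak{B}}(M\otimes_\C\C_{m\delta},N^\vee)]$ depends only on the classes $[M]$ and $[N]$. Concretely, I would take a projective resolution $P_\bullet\to M\otimes_\C\C_{m\delta}$; then $\mathrm{Ext}^p$ is the cohomology of $\mathrm{Hom}_{\mathfrak{B}}(P_\bullet,N^\vee)$, and the Euler characteristic of a cochain complex equals that of its cohomology whenever both are finitely supported in each degree — which is exactly what part (1) guarantees. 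Since $\mathrm{Hom}_{\mathfrak{B}}(P(\Lambda),N^\vee)=(N^\vee)_\Lambda$ and $(N^\vee)_\Lambda = (N_{-\Lambda})^*$ depends only on $\mathrm{gch}\,N$, while the multiplicities of the $P(\Lambda)$ in a minimal resolution of $M\otimes_\C\C_{m\delta}$ are determined (by the projective-cover property in Proposition \ref{projective cover of b}, so that minimal resolutions exist and are unique up to isomorphism) by the graded character of $M\otimes_\C\C_{m\delta}$, hence of $M$, the alternating sum is a bilinear expression in $\mathrm{gch}\,M$ and $\mathrm{gch}\,N$. Summing over $m$ with the weight $q^m$ then exhibits $\langle M,N\rangle_{\mathrm{Ext}}$ as a function of $\mathrm{gch}\,M$ and $\mathrm{gch}\,N$ alone.

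I expect the main obstacle to be the finiteness/convergence bookkeeping in part (1): one must be careful that although $\mathfrak{B}$ allows countable-dimensional weight spaces, the modules actually occurring here ($M\otimes_\C\C_{m\delta}\in\mathfrak{B}'$ and $N^\vee$ finite dimensional) force the resolution to be ``locally finite'' in the precise sense that, fixing a weight $\nu\in P$, only finitely many pairs $(p,\Lambda)$ contribute to $\mathrm{Ext}^p(\,\cdot\,,N^\vee)_\nu$ — this needs the cone condition on $\mathrm{wt}\,M$ together with the finiteness of $\mathrm{wt}\,N^\vee$, and it is what makes the Euler characteristic well defined. Once this is in place, part (2) is the formal homological-algebra statement that Euler characteristics are computed on cohomology, combined with the observation that minimal projective resolutions (which exist by Propositions \ref{projective cover of b} and \ref{enough projective of b}) have Betti numbers determined by graded characters. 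I would also remark that $\langle M,N\rangle_{\mathrm{Ext}}$ is in fact $\Z[\![q^{1/2},q^{-1/2}]\!]$-bilinear and can alternatively be described via a product formula involving $\mathcal{C}|_{t=0}=\prod_{\al\in\De_-}(1-X^\al)^{\dim\g_\al}$, which will be the form used later for the Macdonald-Koornwinder comparison, but that identification is not needed for the present proposition.
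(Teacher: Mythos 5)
Your part (1) argument is essentially the paper's: both build a weight-controlled projective resolution of $M$ by sums of $P(\Lambda)$'s, then use the cone condition on $\mathrm{wt}\,M$, the finite-dimensionality of weight spaces, and the finiteness of $\mathrm{wt}\,N^{\vee}$ to deduce that only finitely many $(p,m)$ contribute to each $q$-coefficient, and that the $m$-sum is bounded below. Your part (2), however, takes a genuinely different route. The paper reduces to composition factors: it observes that the pairing is additive in short exact sequences, constructs a descending filtration of $M$ (refined to a composition series $M=M^0\supset M^1\supset\cdots$) with the property that $\mathrm{Ext}^k_{\mathfrak{B}}(M^s\otimes\C_{m\delta},N^\vee)=0$ for $s\gg0$, and concludes that $\langle M,N\rangle_{\mathrm{Ext}}$ equals a finite sum of $\langle\C_\Lambda,\C_{\Lambda'}\rangle_{\mathrm{Ext}}$ over composition factors, whose multiplicities are read off from the graded characters. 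You instead compute the Euler characteristic of the Hom-complex $\mathrm{Hom}_{\mathfrak{B}}(P_\bullet,N^\vee)$ and argue it is a bilinear expression in the two characters.

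There is, however, a concrete error in the step you lean on to finish (2). You assert that ``the multiplicities of the $P(\Lambda)$ in a minimal resolution of $M\otimes_\C\C_{m\delta}$ are determined \dots\ by the graded character of $M$.'' That is false: graded Betti numbers of a minimal resolution depend on the module structure, not merely on the character. (Already for the 2-dimensional Borel $\bo_-=\C h\oplus\C f$ with $[h,f]=-f$, the modules $\C_0\oplus\C_{-1}$ and $P(0)/N(0)^2$ have the same character $1+X^{-1}$ but different Betti numbers: the first has $b_{1,-1}=b_{1,-2}=1$ and the second has $b_{1,-2}=1$, $b_{1,-1}=0$.) What is determined by $\mathrm{gch}\,M$ is only the \emph{alternating sum} $\sum_p(-1)^pb_{p,\Lambda}$, via the identity $\sum_p(-1)^p\,\mathrm{gch}\,P_p=\mathrm{gch}(M\otimes\C_{m\delta})$ combined with $\mathrm{gch}\,P(\Lambda)=X^\Lambda\cdot(\mathcal{C}|_{t=0})^{-1}$. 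If you replace your claim by this weaker (and true) statement, the Euler-characteristic argument does close: for $\Lambda\in\mathrm{wt}\,N^{\vee}$ the inner sum is finite by the vanishing established in (1), and $\sum_p(-1)^p\dim\mathrm{Hom}(P_p,N^\vee)=\sum_{\Lambda}\dim(N^\vee)_\Lambda\bigl(\sum_p(-1)^pb_{p,\Lambda}\bigr)$ is then manifestly a function of $\mathrm{gch}\,M$ and $\mathrm{gch}\,N$ alone. You should also note explicitly that the convergence needed to make $\sum_p(-1)^p\,\mathrm{gch}\,P_p$ meaningful (weight-by-weight, at the relevant weights) is exactly what the cone argument of part (1) provides; otherwise ``Euler characteristic of the complex equals Euler characteristic of cohomology'' is not licensed. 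With these two corrections your proof is a valid alternative to the paper's composition-series argument.
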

\begin{proof}First, we prove (1). Let $S$ be the set of highest weight vectors of $M$. Since $\mathrm{wt}\;M$ is bounded from above, we have a surjection $\varphi^0:P^0:=\bigoplus_{v\in S}P(\mathrm{wt}(v))\to M,$ where $\mathrm{wt}(v)$ is the $\h$-weight of $v.$ Since $v\in S$ such that $(\mathrm{wt}(v)+Q_+\backslash \{0\})\cap \mathrm{wt}\;M=\emptyset$ is not an element of $\operatorname{Ker}\varphi^0$, the set $\mathrm{wt}\;\operatorname{Ker}\varphi^0$ is a proper subset of $\mathrm{wt}\;P^{0}$. For $\operatorname{Ker}\varphi^0,$ we define $\varphi^1:P^1\to \operatorname{Ker}\varphi^0$ in the same way. Repeating this procedure, we get a projective resolution $\cdots \to P^{1}\to P^{0} \to M\to 0$ such that $\mathrm{wt}\;P^{k+1}$ is a proper subset of $\mathrm{wt}\;P^{k}$ for all $k\in \mathbb{Z}_+.$ The complex $P^{\bullet}\otimes_{\mathbb{C}} \mathbb{C}_{m\delta}$ is a projective resolution of $M\otimes_{\mathbb{C}} \mathbb{C}_{m\delta}.$ For each $m\in\mathbb{Z}/2$, we have $\mathrm{wt}\;(P^{k}\otimes_{\mathbb{C}} \mathbb{C}_{m\delta})\cap \mathrm{wt}\;N=\emptyset$ for all $k\gg0$ since $N$ and every weight space of $M$ are finite dimensional. This implies $\mathrm{Ext}_{\mathfrak{B}}^{k}(M\otimes_{\mathbb{C}}\mathbb{C}_{m\delta},N^{\vee})=\{0\}$ for all $k\gg0.$ Hence $\sum_{k\in \mathbb{Z}_+}(-1)^kq^{m}\mathrm{dim}_{\mathbb{C}}\;\mathrm{Ext}_{\mathfrak{B}}^{k}(M\otimes_{\mathbb{C}}\mathbb{C}_{m\delta},N^{\vee})$ is well-defined. Since $\bo_-$-action on $P^{0}$ does not increase $d$-eigenvalues, and the set of weights of an object of $\mathfrak{B}^{\prime}$ is bounded from above, the intersection of the set of $d$-eigenvalues of $N^{\vee}$ and $P^{0}\otimes_{\mathbb{C}}\mathbb{C}_{m\delta}$ is empty for all $m\ll0.$ This implies the assertion. \\Next, we prove (2). Let $N^{\prime}$ be an object of $\mathfrak{B}_0$ such that $\mathrm{gch}\;N=\mathrm{gch}\;N^{\prime}.$ The sets of composition factors of $N$ and $N^{\prime}$ are the same. We denote the set of composition factors by $S.$ For each exact sequence $0\to L_1\to L_2\to L_3\to 0$, we have $\langle M,L_2\rangle_{\mathrm{Ext}}=\langle M,L_1\rangle_{\mathrm{Ext}}+\langle M,L_3\rangle_{\mathrm{Ext}}.$ This implies $\langle M,N\rangle_{\mathrm{Ext}}=\sum_{\mathbb{C}_{\Lambda}\in S}\langle M,\mathbb{C}_{\Lambda}\rangle_{\mathrm{Ext}}=\langle M,N^{\prime}\rangle_{\mathrm{Ext}}.$ Hence the assertion for the second argument follows. Let $K^0:=\bigoplus_{v\in S}N(\mathrm{wt}(v))$ be a $\bo_-$-submodule of $P^0$. We set $N^0:=M$ and $N^1:=\varphi^0(K^0)$. We define a $\bo_-$-submodule $N^2$ of $N^1$ in the same way for $N^1$ instead of $M$. Repeating this, we get a sequence of $\bo_-$-submodules $M=N^0 \supset N^1 \supset N^2 \supset \cdots.$ Since every weight space of $M$ is finite dimensional, for each $\mu\in P$, we have $N_{\mu}^{s}=\{0\}$ for $s\gg 0$ by construction. We can take a composition series $M=M^0\supset\cdots \supset M^s\supset M^{s+1}\supset \cdots $ of $M$ as a refinement of the above sequence of $\bo_-$-modules. Since $N$ is finite dimensional, for $s\gg 0$, we have $\mathrm{wt}(v)-\mathrm{wt}(u)\notin Q_{+}$ for each $v\in M^s$ and $u\in N.$ By taking a projective resolution of $M^s$ as in the proof of (1), we have $\mathrm{Ext}_{\mathfrak{B}}^{k}(M^s\otimes_{\mathbb{C}}\mathbb{C}_{m\delta},N^{\vee})=\{0\}$ for $s\gg 0.$ Using this composition series, we can prove the assertion for the first argument in the same way.\end{proof}Thanks to Proposition \ref{a}, we get a bilinear map from $\mathbb{C}(\!(q^{1/2})\!)[\mathring{P}]\times \mathbb{C}(\!(q^{1/2})\!)[\mathring{P}]$ to $\mathbb{C}(\!(q^{1/2})\!)[\mathring{P}]$, that we denote also $\langle-,-\rangle_{\mathrm{Ext}}$
\begin{prop}\label{extnonsym}
For each $M\in \mathfrak{B}^{\prime}$ and $N\in \mathfrak{B}_0$, we have $\langle\mathrm{gch}\;M,\mathrm{gch}\;N\rangle_{\mathrm{Ext}}=\langle\mathrm{gch}\;M,\mathrm{gch}\;N\rangle_{nonsym}$. 
\end{prop}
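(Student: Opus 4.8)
The plan is to reduce the identity, by bilinearity, to the case where $M$ is a projective $P(\mu)$ and $N$ is a simple $\mathbb{C}_\Lambda$. Both sides of the asserted equality define bilinear forms on graded characters: the right-hand side manifestly, and the left-hand side by Proposition \ref{a}, which says $\langle M,N\rangle_{\mathrm{Ext}}$ depends only on $\mathrm{gch}\,M$ and $\mathrm{gch}\,N$ and is additive along short exact sequences. Every $N\in\mathfrak{B}_0$ has a finite composition series with subquotients $\mathbb{C}_\Lambda$ ($\Lambda\in P$), and by the construction in the proof of Proposition \ref{a}(1) every $M\in\mathfrak{B}'$ has a projective resolution $\cdots\to P^{1}\to P^{0}\to M\to0$ with each $P^{k}$ a direct sum of modules $P(\mathrm{wt}(v))$, with $\mathrm{gch}\,M=\sum_{k}(-1)^{k}\mathrm{gch}\,P^{k}$ in the relevant completion, and with $\mathrm{Ext}^{\ast}_{\mathfrak{B}}(P^{k}\otimes_{\mathbb{C}}\mathbb{C}_{m\delta},N^{\vee})=0$ for $k\gg0$ (each fixed $m$). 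These facts let one pass to the alternating sums on both sides and reduce everything to the single identity $\langle P(\mu),\mathbb{C}_{\Lambda}\rangle_{\mathrm{Ext}}=\langle\mathrm{gch}\,P(\mu),X^{\Lambda}\rangle_{nonsym}$.

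For this base case I would argue as follows. On the $\mathrm{Ext}$ side, projectivity of $P(\mu)$ kills all higher $\mathrm{Ext}$, and since $P(\mu)\otimes_{\mathbb{C}}\mathbb{C}_{m\delta}\cong P(\mu+m\delta)$ the adjunction $\mathrm{Hom}_{\mathfrak{B}}(P(\Lambda),V)=\mathrm{Hom}_{\mathfrak{h}}(\mathbb{C}_{\Lambda},V)=V_{\Lambda}$ from Proposition \ref{projective cover of b} collapses $\langle P(\mu),\mathbb{C}_\Lambda\rangle_{\mathrm{Ext}}$ to a single power of $q$ (or $0$), recording the unique $m$, if it exists, for which $\mu+m\delta$ is the weight of $\mathbb{C}_\Lambda^{\vee}$. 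On the nonsymmetric side, the PBW theorem for $U(\mathfrak{n}_{-})$ gives $\mathrm{gch}\,P(\mu)=(\mathrm{gch}\,\mathbb{C}_{\mu})\prod_{\alpha\in\Delta_{-}}(1-X^{\alpha})^{-\dim\mathfrak{g}_{\alpha}}$, and the evaluation $\mathcal{C}|_{t=0}=\prod_{\alpha\in\Delta_{-}}(1-X^{\alpha})^{\dim\mathfrak{g}_{\alpha}}$ recorded just above identifies that product with $(\mathcal{C}|_{t=0})^{-1}$. Substituting $\mathrm{gch}\,P(\mu)=(\mathrm{gch}\,\mathbb{C}_{\mu})\,(\mathcal{C}|_{t=0})^{-1}$ and $\mathrm{gch}\,\mathbb{C}_{\Lambda}=X^{\Lambda}$ into $\langle f,g\rangle_{nonsym}=(\text{constant term of }fg^{\star}\mathcal{C}|_{t=0})$, the two copies of $\mathcal{C}|_{t=0}$ cancel and one is left with the constant term of a single monomial, which I expect to match the $\mathrm{Ext}$-side answer after tracking the involution $\star$ and the $\delta$-shift.

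The hard part will be the bookkeeping that legitimizes the reduction to projectives — convergence of the alternating sums of graded characters produced by the (infinite) resolutions, and genuine additivity of the Euler--Poincar\'e pairing across them, so that one may pass term by term — together with, in the base case, correctly matching the restricted-dual operation $\vee$ on $N$ against the involution $\star$ in $\langle-,-\rangle_{nonsym}$ and the twist by $\mathbb{C}_{m\delta}$, keeping careful track of the central element and of the $\delta$-grading conventions so that the two monomials really coincide. Granting this, the identity follows, and Proposition \ref{extnonsym} identifies the abstract Euler--Poincar\'e pairing on characters with the $t=0$ specialization of the nonsymmetric Macdonald--Koornwinder inner product.
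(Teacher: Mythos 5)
Your strategy coincides with the paper's: both reduce by bilinearity (via Proposition \ref{a}) to the pairing of $P(\mu)$ against $\C_\Lambda$, identify $\mathrm{gch}\,P(\mu)=X^{\mu}/(\mathcal{C}|_{t=0})$ via the PBW theorem, and then note that the $\mathcal{C}|_{t=0}$ factors cancel so both pairings become the constant term of a single monomial. Where you carry out the reduction via projective resolutions and composition series, the paper compresses this to the one-line claim that $\{\mathrm{gch}\,\C_\Lambda\}_{\Lambda\in P}$ and $\{\mathrm{gch}\,P(\Lambda)\}_{\Lambda\in P}$ span $\C(\!(q^{1/2})\!)[\mathring{P}]$, but the underlying content is the same.
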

\begin{proof}
$\{\mathrm{gch}\;\C_{\Lambda}\}_{\Lambda\in P}$ and $\{\mathrm{gch}\;P(\Lambda)\}_{\Lambda\in P}$ are $\mathbb{C}(\!(q^{1/2})\!)$-basis of  $\mathbb{C}(\!(q^{1/2})\!)[\mathring{P}]$. Therefore, it suffices to check the assertion for $M=\C_{\Lambda}$ and $N=P(\Lambda)$. By the PBW theorem, we have $\mathrm{ch}\;P(\Lambda)=X^{\lambda}/\prod_{\al\in \De_-}(1-X^{\al})^{\mathrm{dim}_{\C}\;\g_{\al}}.$ Hence we have $\mathrm{ch}\;P(\Lambda)=X^{\Lambda}/\mathcal{C}|_{t=0}$. Hence we get
$$\langle\mathrm{gch}\;P(\Lambda),\;\mathrm{gch}\;\C_{\Lambda}\rangle_{\mathrm{Ext}}=1=\langle\mathrm{gch}\;P(\Lambda),\;\mathrm{gch}\;\C_{\Lambda}\rangle_{nonsym}.$$
The assertion follows.
\end{proof}
\subsubsection{Representations of $\Cg$}\label{representation of Cg}
Let $\Cg$-$\mathrm{mod}_{\mathrm{wt}}$ be the full subcategory of the category of $\Cg$-modules such that $M$ is an object of $\Cg$-$\mathrm{mod_{wt}}$ if and only if $M$ is a $\Cg$-module which has a weight decomposition $$M=\underset{\Lambda\in P}{\bigoplus}M_{\Lambda}$$ such that every weight space has at most countable dimension. Let $\Cg$-$\mathrm{mod^{int}}$ be the full subcategory of the category $\Cg$-$\mathrm{mod}_{\mathrm{wt}}$ such that an object $M$ of $\Cg$-$\mathrm{mod}_{\mathrm{wt}}$ is an object of $\Cg$-$\mathrm{mod^{int}}$ if and only if $M$ is an integrable $\mathring{\g}$-module and the set of weights $\mathrm{wt}\;M=\{\lambda\in P\mid M_{\lambda}\neq\{0\}\}$ is contained in $\underset{i=1,...,k}{\bigcup}(\mu_i-Q_+)$ for some $\mu_i\in P$ and every weight space is finite dimensional. For each $\lambda\in\mathring{P}_+$, $\mu\in\mathring{P}$ and $n,\;2m\in \mathbb{Z}$, we set $$P(\lambda+n\Lambda_0+m\delta)_{\mathrm{int}}:=U(\Cg)\underset{U(\mathring{\g}+\h)}{\otimes}V(\lambda+n\Lambda_0+m\delta)$$ and 
$$P(\mu+n\Lambda_0+m\delta)_{\mathrm{wt}}:=U(\Cg)\underset{U(\h)}{\otimes}\mathbb{C}_{\mu+n\Lambda_0+m\delta},$$
where $V(\lambda+n\Lambda_0+m\delta)$ is the highest weight simple module of $\mathring{\g}+\h$ with its highest weight $\lambda+n\Lambda_0+m\delta$ and $\mathbb{C}_{\mu+n\Lambda_0+m\delta}$ is the 1-dimensional module of $\h$ with its weight $\mu+n\Lambda_0+m\delta$. Let $\pi:\Cg\to\mathring{\g}$ be a homomorphism of Lie algebras defined by $$\pi|_{\mathring{\g}}=\mathrm{id}_{\mathring{\g}},\;\;\pi(\Cg_{\neq 0})=\{0\},$$ where $\Cg_{\neq 0}:=\{X\in \Cg\mid [d,X]\neq 0\}$. We can prove the following proposition in the same way as Proposition \ref{projective cover of b}, and we omit its proof.
\begin{prop}
For each $\mu\in\mathring{P}$ and $n$, $2m\in \mathbb{Z},$ the $\Cg$-module $P(\mu+n\Lambda_0+m\delta)_{\mathrm{wt}}$ is a projective module.
\end{prop}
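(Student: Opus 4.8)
The plan is to repeat the argument of Proposition~\ref{projective cover of b} verbatim, with the category $\Cg$-$\mathrm{mod}_{\mathrm{wt}}$ playing the role of $\mathfrak{B}$. Write $\Lambda:=\mu+n\Lambda_0+m\delta\in P$. Since $U(\Cg)$ is free as a right $U(\h)$-module by the PBW theorem, the induction functor $U(\Cg)\otimes_{U(\h)}(-)$ is left adjoint to restriction along the inclusion $U(\h)\hookrightarrow U(\Cg)$, so for every $\Cg$-module $M$ one has a natural isomorphism
$$\mathrm{Hom}_{\Cg}\big(P(\Lambda)_{\mathrm{wt}},M\big)\cong\mathrm{Hom}_{\h}(\mathbb{C}_{\Lambda},M).$$

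Next I would identify the right-hand side for $M\in\Cg$-$\mathrm{mod}_{\mathrm{wt}}$: such an $M$ carries an $\h$-weight decomposition $M=\bigoplus_{\Lambda'\in P}M_{\Lambda'}$, and a homomorphism $\mathbb{C}_{\Lambda}\to M$ of $\h$-modules is precisely the choice of a vector of weight $\Lambda$ in $M$, so $\mathrm{Hom}_{\h}(\mathbb{C}_{\Lambda},M)\cong M_{\Lambda}$ naturally in $M$. Finally, the functor $M\mapsto M_{\Lambda}$ is exact on $\Cg$-$\mathrm{mod}_{\mathrm{wt}}$: every morphism in this category is in particular $\h$-linear and hence respects the weight decomposition, so any short exact sequence $0\to A\to B\to C\to 0$ in $\Cg$-$\mathrm{mod}_{\mathrm{wt}}$ restricts to a short exact sequence $0\to A_{\Lambda}\to B_{\Lambda}\to C_{\Lambda}\to 0$ of vector spaces. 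Composing the three statements shows that $\mathrm{Hom}_{\Cg}(P(\Lambda)_{\mathrm{wt}},-)$ is exact on $\Cg$-$\mathrm{mod}_{\mathrm{wt}}$, that is, $P(\Lambda)_{\mathrm{wt}}$ is projective.

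There is essentially no serious obstacle here; the only points deserving a word of care are that one stays inside the weight category, so that the weight-space functor is well defined and exact, and the freeness of $U(\Cg)$ over $U(\h)$ used for the adjunction — both of which are immediate. If one wished, one could go further and verify, exactly as in Proposition~\ref{projective cover of b}, that $P(\Lambda)_{\mathrm{wt}}$ is in fact the projective cover of the simple $\Cg$-module of highest weight $\Lambda$ inside $\Cg$-$\mathrm{mod}_{\mathrm{wt}}$, but only projectivity is asserted.
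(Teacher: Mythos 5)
Your proof is correct and follows exactly the route the paper intends: the paper omits the proof with the remark that it is ``in the same way as Proposition~\ref{projective cover of b},'' whose own proof is precisely the Frobenius reciprocity identity $\mathrm{Hom}_{\Cg}(P(\Lambda)_{\mathrm{wt}},M)\cong\mathrm{Hom}_{\h}(\mathbb{C}_{\Lambda},M)$ followed by exactness of the weight-space functor, which you have spelled out. One small remark: the tensor-hom adjunction giving that isomorphism holds for any ring map $U(\h)\to U(\Cg)$ and does not require freeness of $U(\Cg)$ over $U(\h)$; the PBW freeness you cite is harmless but not needed at that step.
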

\begin{prop}[\cite{CI} Proposition 2.3]
Let $\lambda\in\mathring{P}_+$ and $n,\;2m\in \mathbb{Z}$.
\item$\mathrm{(1)}$ $\pi^{*}V(\lambda+n\Lambda_0+m\delta)$ is a simple object in $\Cg\text{-}\mathrm{mod}^{\mathrm{int}}.$
\item$\mathrm{(2)}$ $P(\lambda+n\Lambda_0+m\delta)_{\mathrm{int}}$ is a projective cover of its unique simple quotient $\pi^{*}V(\lambda+n\Lambda_0+m\delta)$ in $\Cg\text{-}\mathrm{mod}^{\mathrm{int}}.$
\end{prop}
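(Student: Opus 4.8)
# Proof Proposal for Proposition \ref{extnonsym}

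The statement is quoted from \cite[Proposition 2.3]{CI}, so the plan is simply to record the standard induced-module argument in the present notation; there is no serious difficulty. Part (1) is immediate: any $\Cg$-submodule of $\pi^{*}V(\lambda+n\Lambda_0+m\delta)$ is in particular an $\mathring{\g}$-submodule of the finite-dimensional simple $\mathring{\g}$-module $V(\lambda+n\Lambda_0+m\delta)$, hence is $\{0\}$ or everything; and $\pi^{*}V(\lambda+n\Lambda_0+m\delta)$ lies in $\Cg\text{-}\mathrm{mod}^{\mathrm{int}}$ because it is finite dimensional, so integrable over $\mathring{\g}$, concentrated in a single $d$-eigenvalue, and with finite-dimensional weight spaces.

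For (2), I would first record the $d$-graded triangular decomposition $\Cg=(\mathring{\g}+\h)\ltimes\Cg_{<0}$, where $\Cg_{<0}:=\bigoplus_{\al\in\De,\ \al(d)<0}\g_{\al}$ is the part strictly lowering the $d$-grading (note $\Cg_{<0}\subseteq\n_-$, so it is a subalgebra). By PBW this gives a vector-space, indeed $\h$-module, identification $P(\lambda+n\Lambda_0+m\delta)_{\mathrm{int}}\cong U(\Cg_{<0})\otimes_{\C}V(\lambda+n\Lambda_0+m\delta)$, which in particular shows the induced module has finite-dimensional weight spaces and is integrable over $\mathring{\g}$, so it lies in $\Cg\text{-}\mathrm{mod}^{\mathrm{int}}$. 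Projectivity then follows from Frobenius reciprocity: for $M\in\Cg\text{-}\mathrm{mod}^{\mathrm{int}}$ one has $\mathrm{Hom}_{\Cg}(P(\lambda+n\Lambda_0+m\delta)_{\mathrm{int}},M)\cong\mathrm{Hom}_{\mathring{\g}+\h}(V(\lambda+n\Lambda_0+m\delta),M)$, and the right-hand functor is exact because the restriction to $\mathring{\g}+\h$ of any object of $\Cg\text{-}\mathrm{mod}^{\mathrm{int}}$ is a direct sum of the finite-dimensional simples $V(\mu+k\Lambda_0+j\delta)$, so $V(\lambda+n\Lambda_0+m\delta)$ is projective in the relevant category of $\mathring{\g}+\h$-modules. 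For the unique-simple-quotient claim, observe that the top $d$-graded piece of $P(\lambda+n\Lambda_0+m\delta)_{\mathrm{int}}$ is exactly $1\otimes V(\lambda+n\Lambda_0+m\delta)$ and generates the whole module over $U(\Cg)$; hence any proper $\Cg$-submodule meets it trivially (otherwise, by simplicity over $\mathring{\g}$ and the generation statement, it would be everything), so every proper submodule lies in the $\Cg$-submodule $K:=\bigoplus(\text{lower }d\text{-degree pieces})$, which satisfies $P(\lambda+n\Lambda_0+m\delta)_{\mathrm{int}}/K\cong\pi^{*}V(\lambda+n\Lambda_0+m\delta)$. Thus $K$ is the unique maximal submodule, equals $\mathrm{rad}\,P(\lambda+n\Lambda_0+m\delta)_{\mathrm{int}}$, and the projection onto $\pi^{*}V(\lambda+n\Lambda_0+m\delta)$ has superfluous kernel; a projective module with a surjection onto a simple object with superfluous kernel is a projective cover, which gives (2).

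The only point needing a little care — the main obstacle, if one wants a self-contained treatment — is the semisimplicity invoked for projectivity: one must check that, in the weight category $\Cg\text{-}\mathrm{mod}^{\mathrm{int}}$, the underlying $\mathring{\g}$-module of any object decomposes as a direct sum of finite-dimensional simples compatibly with the $\h$-weight grading, which is precisely where the integrability and finite-dimensional-weight-space hypotheses in the definition of $\Cg\text{-}\mathrm{mod}^{\mathrm{int}}$ are used. Once that is in place the rest is the routine induced-module bookkeeping above; alternatively one simply appeals directly to \cite[Proposition 2.3]{CI}.
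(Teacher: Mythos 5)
The paper gives no proof of this statement: it is quoted verbatim from \cite[Proposition 2.3]{CI} and used as a black box, so there is no paper argument to compare against. Your reconstruction is nonetheless correct and is the standard induced-module argument one would expect.

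A few points worth recording. Part (1) is fine as stated; note that $d$ and $K$ commute with $\mathring{\g}$, so they act by scalars on the $\mathring{\g}$-simple module $V(\lambda+n\Lambda_0+m\delta)$, which also justifies the ``concentrated in a single $d$-eigenvalue'' claim. For part (2), the decomposition $\Cg=(\mathring{\g}+\h)\ltimes\Cg_{<0}$ is correct: any $\al\in\De$ with $\al(d)<0$ has negative $\al_0$-coefficient, hence lies in $\De_-$, so $\Cg_{<0}\subseteq\n_-\subseteq\bo_-\subseteq\Cg$; and $\Cg_{<0}$ is an ideal because the $d$-degree-$0$ part $\mathring{\g}+\h$ preserves the $d$-grading while $\Cg_{<0}$ strictly lowers it. The PBW identification then gives the $\h$-module isomorphism $P(\lambda+n\Lambda_0+m\delta)_{\mathrm{int}}\cong U(\Cg_{<0})\otimes_{\C}V(\lambda+n\Lambda_0+m\delta)$, from which membership in $\Cg\text{-}\mathrm{mod}^{\mathrm{int}}$ and the description of the top $d$-graded piece both follow.

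The one step you rightly flag as the crux is the exactness of $\mathrm{Hom}_{\mathring{\g}+\h}(V(\lambda+n\Lambda_0+m\delta),-)$ on $\Cg\text{-}\mathrm{mod}^{\mathrm{int}}$, i.e. the claim that every object of $\Cg\text{-}\mathrm{mod}^{\mathrm{int}}$ is, as an $\mathring{\g}$-module, a direct sum of finite-dimensional simples. This does hold: for a finite-dimensional simple Lie algebra, any module with an $\mathring{\h}$-weight decomposition on which all $e_i$ and $f_i$ act locally nilpotently is a sum (hence direct sum) of finite-dimensional simple submodules, since each weight vector generates a finite-dimensional $\mathring{\g}$-submodule and Weyl complete reducibility applies. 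The $\C K\oplus\C d$-action commutes with $\mathring{\g}$ and is semisimple by the weight decomposition, so this refines to a decomposition into simples $V(\mu+k\Lambda_0+j\delta)$ over $\mathring{\g}+\h$. With that in place the superfluous-kernel / projective-cover conclusion goes through as you write. So the proposal is sound; it would have been slightly cleaner to spell out the semisimplicity lemma rather than merely flag it, but no step is missing or wrong.
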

\begin{prop}
The categories $\Cg$-$\mathrm{mod}_{\mathrm{wt}}$ and $\Cg$-$\mathrm{mod}^{\mathrm{int}}$ have enough projectives.
\end{prop}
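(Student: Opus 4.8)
The plan is to run, in each of the two categories separately, the standard argument that every object is covered by a projective one, exactly parallel to the proof of Proposition~\ref{enough projective of b} for $\mathfrak{B}$; the projectives to feed in are the modules $P(\mu+n\Lambda_0+m\delta)_{\mathrm{wt}}$ and $P(\lambda+n\Lambda_0+m\delta)_{\mathrm{int}}$ already shown to be projective.

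For $\Cg$-$\mathrm{mod}_{\mathrm{wt}}$: the tensor--hom adjunction gives $\mathrm{Hom}_{\Cg}(P(\Lambda)_{\mathrm{wt}},M)\cong\mathrm{Hom}_{\h}(\C_{\Lambda},M)=M_{\Lambda}$ for every $\Lambda\in P$ and every $\Cg$-module $M$. Given $M$ in the category, I would pick a homogeneous basis $\{v_j\}_{j\in J}$ of $M$; each $v_j$, of $\h$-weight $\Lambda_j$, corresponds to a map $\varphi_j\colon P(\Lambda_j)_{\mathrm{wt}}\to M$ carrying the canonical generator to $v_j$, and $\bigoplus_{j\in J}\varphi_j$ surjects onto $M$. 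Since $P$ is countable and every weight space $M_{\Lambda}$ has at most countable dimension, $J$ is at most countable; as each weight space of $P(\Lambda)_{\mathrm{wt}}$ is at most countable-dimensional by the PBW theorem, the coproduct $\bigoplus_{j}P(\Lambda_j)_{\mathrm{wt}}$ again lies in $\Cg$-$\mathrm{mod}_{\mathrm{wt}}$. Finally a coproduct of projectives is projective, since $\mathrm{Hom}_{\Cg}(\bigoplus_{j}P_j,-)\cong\prod_{j}\mathrm{Hom}_{\Cg}(P_j,-)$ is a product of exact functors; so $\bigoplus_{j}P(\Lambda_j)_{\mathrm{wt}}$ is a projective object of the category surjecting onto $M$.

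For $\Cg$-$\mathrm{mod}^{\mathrm{int}}$ the scheme is identical, using $P(\lambda+n\Lambda_0+m\delta)_{\mathrm{int}}=U(\Cg)\otimes_{U(\mathring{\g}+\h)}V(\lambda+n\Lambda_0+m\delta)$ together with the adjunction $\mathrm{Hom}_{\Cg}(P(\lambda+n\Lambda_0+m\delta)_{\mathrm{int}},M)\cong\mathrm{Hom}_{\mathring{\g}+\h}(V(\lambda+n\Lambda_0+m\delta),M)$. The key preliminary point is that any $M\in\Cg$-$\mathrm{mod}^{\mathrm{int}}$, restricted to $\mathring{\g}$, is a direct sum of finite-dimensional simple $\mathring{\g}$-modules: every weight vector generates a finite-dimensional $\mathring{\g}$-submodule because the Chevalley generators of the finite-dimensional semisimple $\mathring{\g}$ act locally nilpotently, whence Weyl's complete reducibility applies. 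Consequently $M$ is spanned by the $U(\mathring{\g})$-translates of its $\mathring{\g}$-highest-weight vectors, and since the space of such vectors is $\h$-stable it has a basis $\{u_k\}_{k\in K}$ of $\h$-weight vectors, necessarily of weights $\lambda_k+n_k\Lambda_0+m_k\delta$ with $\lambda_k\in\mathring{P}_+$. The sum of the induced maps $P(\mathrm{wt}(u_k))_{\mathrm{int}}\to M$ is then surjective. Here $K$ is again at most countable, and $\bigoplus_{k}P(\mathrm{wt}(u_k))_{\mathrm{int}}$ still has finite-dimensional weight spaces: for a fixed $\Lambda\in P$ only the summands indexed by $k$ with $\mathrm{wt}(u_k)\in\Lambda+Q_+$ contribute, and $\{\mu\in\mathrm{wt}\,M\mid \mu-\Lambda\in Q_+\}$ is finite because $\mathrm{wt}\,M\subseteq\bigcup_i(\mu_i-Q_+)$ forces such $\mu$ into the finite poset-intervals $[\Lambda,\mu_i]$; together with finiteness of each $\dim M_{\mu}$ and of each weight space of $P(\mathrm{wt}(u_k))_{\mathrm{int}}$, this gives the claim. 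Hence $\bigoplus_{k}P(\mathrm{wt}(u_k))_{\mathrm{int}}$ is a projective object of $\Cg$-$\mathrm{mod}^{\mathrm{int}}$ surjecting onto $M$.

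I expect the only non-routine work to be this last piece of bookkeeping — checking that the coproducts built from the projectives remain inside the respective categories (countability of the index sets, and finiteness of weight spaces in the integrable case) — which is precisely where the defining conditions of the two categories, and in the integrable case the local finiteness of the $\mathring{\g}$-action, intervene; everything else is the formal adjunction-plus-coproduct argument.
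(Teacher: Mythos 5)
Your argument is correct and follows essentially the same route as the paper's: covering each object by a coproduct of the projectives $P(\mu+n\Lambda_0+m\delta)_{\mathrm{wt}}$ (using $\h$-weight vectors) in the weight case, and of $P(\lambda+n\Lambda_0+m\delta)_{\mathrm{int}}$ (using $\mathring{\g}$-highest weight vectors and complete reducibility) in the integrable case. The only difference is that you spell out the bookkeeping that the resulting coproducts stay inside the respective categories, which the paper leaves implicit.
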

\begin{proof}
We can prove that $\Cg$-$\mathrm{mod}_{\mathrm{wt}}$ has enough projectives in the same way as Proposition $\ref{enough projective of b}$. Let $M$ be an object of $\Cg$-$\mathrm{mod}^{\mathrm{int}}.$ Since $M$ is an integrable $\mathring{\g}$-module, for each $\mathring{\g}$-highest weight vector $v\in M$ with its weight $\Lambda$, we have a morphism of $\Cg$-module $P(\Lambda)_{\mathrm{int}}\to M.$ Collecting them for all $\mathring{\g}$-highest weight vector, we obtain a surjection from a projective module to $M.$ 
\end{proof}
\begin{defi}\label{int pairing}
For each $M$, $N\in \Cg$-$\mathrm{mod^{int}}$ such that $N$ is finite dimensional, we define the Euler-Poincar\'e-pairing $\langle M, N\rangle_{int}$ as a formal sum by $$\langle M, N\rangle_{int}:=\underset{p\in \mathbb{Z}_+,m\in \mathbb{Z}/2}{\sum}(-1)^{p}q^{m}\mathrm{dim}_{\mathbb{C}}\;\mathrm{Ext}_{\Cg-\mathrm{mod^{int}}}^{p}(M\otimes_{\mathbb{C}}\mathbb{C}_{m\delta},N^{\vee}).$$
\end{defi}
We can prove the following proposition in the same way as Proposition $\ref{a}$, and we omit its proof.
\begin{prop}For each $M$, $N\in \Cg$-$\mathrm{mod^{int}}$ such that $N$ is finite dimensional, the following hold:
\item $\mathrm{(1)}$ The pairing $\langle M, N\rangle_{int}$ is an element of $\mathbb{C}(\!(q^{1/2})\!)$;
\item $\mathrm{(2)}$ This pairing depends only on the graded characters of $M$ and $N.$ 
\end{prop}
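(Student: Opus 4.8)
The plan is to transcribe the proof of Proposition~\ref{a} into the category $\Cg\text{-}\mathrm{mod}^{\mathrm{int}}$ under the following dictionary: the abelian category $\mathfrak{B}$ is replaced by $\Cg\text{-}\mathrm{mod}^{\mathrm{int}}$, the Borel $\bo_-$ by the current algebra $\Cg$, the generic projectives $P(\Lambda)$ by the modules $P(\Lambda)_{\mathrm{int}}=U(\Cg)\otimes_{U(\mathring{\g}+\h)}V(\Lambda)$ (for $\Lambda$ with dominant finite part), and the one-dimensional simples $\C_\Lambda$ by the finite-dimensional simples $\pi^{*}V(\Lambda)$. The structural facts used for $\mathfrak{B}$ all have their counterparts already available: $\Cg\text{-}\mathrm{mod}^{\mathrm{int}}$ has enough projectives; every object has weights inside a finite union of cones $\mu_i-Q_+$ with finite-dimensional weight spaces, so the $d$-grading is bounded above with finite-dimensional graded pieces; and $P(\Lambda)_{\mathrm{int}}$ is generated over $U(\Cg)$ by $V(\Lambda)$, on which $d$ acts by a single scalar, while $\Cg$ never raises $d$-eigenvalues. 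Since $V(\Lambda)$ is finite-dimensional (because $\lambda\in\mathring{P}_+$), the module $P(\Lambda)_{\mathrm{int}}$ therefore has weights in $\Lambda-Q_+$ with finite-dimensional weight spaces and a unique maximal submodule $N(\Lambda)_{\mathrm{int}}$, namely the sum of its weight spaces of $d$-eigenvalue $<\Lambda(d)$.

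For part (1), pick a set $S$ of $\mathring{\g}$-highest weight vectors of $M$ generating it over $\Cg$; as $\mathrm{wt}\,M$ is bounded above this gives a surjection $\varphi^{0}\colon P^{0}:=\bigoplus_{v\in S}P(\mathrm{wt}(v))_{\mathrm{int}}\twoheadrightarrow M$, and, exactly as in Proposition~\ref{a}, any $v\in S$ of maximal weight survives in $M$, so $\mathrm{wt}\,\ker\varphi^{0}$ is a proper subset of $\mathrm{wt}\,P^{0}$. Iterating produces a projective resolution $\cdots\to P^{1}\to P^{0}\to M\to 0$ in $\Cg\text{-}\mathrm{mod}^{\mathrm{int}}$ with $\mathrm{wt}\,P^{k+1}$ a proper subset of $\mathrm{wt}\,P^{k}$, and $P^{\bullet}\otimes_{\C}\C_{m\delta}$ resolves $M\otimes_{\C}\C_{m\delta}$. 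For fixed $m$, finite-dimensionality of $N$ and of the weight spaces of $M$ gives $\mathrm{wt}(P^{k}\otimes_{\C}\C_{m\delta})\cap\mathrm{wt}\,N=\emptyset$ for $k\gg0$, hence $\mathrm{Ext}^{k}_{\Cg\text{-}\mathrm{mod}^{\mathrm{int}}}(M\otimes_{\C}\C_{m\delta},N^{\vee})=0$ for $k\gg0$, so the alternating sum over $k$ is defined for each $m$. Finally, because $\Cg$ does not raise $d$-eigenvalues and those of $M$ are bounded above, the $d$-eigenvalues of $P^{0}\otimes_{\C}\C_{m\delta}$ drop below the finite $d$-spectrum of $N^{\vee}$ once $m\ll0$, killing every $\mathrm{Ext}$-group; hence $\langle M,N\rangle_{int}\in\C(\!(q^{1/2})\!)$.

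For part (2), the long exact sequence of $\mathrm{Ext}$ yields additivity of $\langle-,-\rangle_{int}$ along short exact sequences in either variable, whenever the three pairings are defined — guaranteed by part (1) once the second entry is finite-dimensional and the first entry has weights bounded above with finite-dimensional weight spaces. A finite-dimensional $N$ has a finite composition series with factors among the $\pi^{*}V(\Lambda)$, so $\langle M,N\rangle_{int}=\sum\langle M,\pi^{*}V(\Lambda)\rangle_{int}$ over the factors with multiplicity, which depends only on $\mathrm{gch}\,N$. For the first variable, copy the device of Proposition~\ref{a}: set $K^{0}:=\bigoplus_{v\in S}N(\mathrm{wt}(v))_{\mathrm{int}}\subseteq P^{0}$, $N^{1}:=\varphi^{0}(K^{0})$, iterate to obtain $M=N^{0}\supset N^{1}\supset\cdots$ with $N^{s}_{\mu}=\{0\}$ for $s\gg0$ at every fixed $\mu$, refine to a composition series $M=M^{0}\supset M^{1}\supset\cdots$, note that for $s\gg0$ no weight of $M^{s}$ lies in $\mathrm{wt}\,N+Q_+$ (so a projective resolution of $M^{s}$ as in part (1) gives $\mathrm{Ext}^{k}(M^{s}\otimes_{\C}\C_{m\delta},N^{\vee})=0$), and conclude by additivity that $\langle M,N\rangle_{int}=\sum_{s}\langle M^{s}/M^{s+1},N\rangle_{int}$ depends only on $\mathrm{gch}\,M$.

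The only delicate point — just as in Proposition~\ref{a} — is organizing the projective resolution so that the $d$-grading supports of its terms recede to $-\infty$, both as the homological degree grows (for fixed $m$) and as $m\to-\infty$; this is precisely what promotes the Euler characteristic from a formal sum to a genuine element of $\C(\!(q^{1/2})\!)$. The one verification not already present for $\mathfrak{B}$ is that the building blocks $P(\Lambda)_{\mathrm{int}}$ retain bounded-above, finite-dimensional weight spaces, but this is immediate from the PBW decomposition of $U(\Cg)$ together with the finite-dimensionality of $V(\Lambda)$ — i.e. the integrability of the $\mathring{\g}$-action — so no genuinely new obstacle arises beyond bookkeeping.
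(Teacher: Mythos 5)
The paper simply states that the result follows ``in the same way as Proposition~\ref{a}'' and omits the proof, and your proposal is a careful and correct execution of precisely that transcription: you correctly identify the substitutes $P(\Lambda)_{\mathrm{int}}$, $N(\Lambda)_{\mathrm{int}}$, and $\pi^{*}V(\Lambda)$ for $P(\Lambda)$, $N(\Lambda)$, $\C_{\Lambda}$, verify they have the needed weight-support and finite-dimensionality properties, and note the extra check that graded characters of the $\pi^{*}V(\Lambda)$ are linearly independent so that composition multiplicities are determined by $\mathrm{gch}$. This matches the paper's intended argument.
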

\section{Demazure modules}
We continue to work in the setting of the previous section.
\subsection{Representations of $\g$}
\subsubsection{Highest weight simple module}
\begin{defi}
Let $\Lambda\in P$ and let $\mathbb{C}_{\Lambda}$ be the corresponding $1$-dimensional module of $\bo_+$. The Verma module $M(\Lambda)$ of highest weight $\Lambda$ is a $\g$-module defined by $$M(\Lambda):=U(\g)\underset{U(\bo_+)}{\otimes}\mathbb{C}_{\Lambda}.$$
\end{defi}
The Verma module $M(\Lambda)$ has a unique simple quotient (see \cite{Kac} Proposition 9.2). We denote it by $L(\Lambda)$.

\begin{thm}[see \cite{Kac} Proposition 3.7, Lemma 10.1 and \S 9.2]\label{Kac}For each $\Lambda \in P$, the following hold. 
\item $\mathrm{(1)}$ $L(\Lambda)$ is an integrable $\g$-module if and only if $\Lambda \in P_+$;
\item $\mathrm{(2)}$ For each $\Lambda \in P_+$ and $w\in W$, we have $\mathrm{dim}_{\C}\;L(\Lambda)_{w\Lambda}=1$;
\item $\mathrm{(3)}$ $L(\Lambda)$ has a $\h$-weight decomposition $$L(\Lambda)=\underset{\mu \in P}{\bigoplus}L(\Lambda)_{\mu}$$ and $L(\Lambda)_{\mu}$ is finite-dimensional for all $\mu \in P$.
\end{thm}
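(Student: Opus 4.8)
The three assertions are the standard structure theory of integrable highest weight modules over a symmetrizable Kac--Moody algebra, and the plan is to deduce all of them by reduction to $\mathfrak{sl}_2$-theory applied to the rank-one subalgebras $\g^{(i)}:=\C e_i\oplus \C\check{\al}_i\oplus \C f_i\cong \mathfrak{sl}_2$ for $i=0,\dots,l$ (here $e_i\in\g_{\al_i}$ and $f_i\in\g_{-\al_i}$ are nonzero), together with PBW bookkeeping for the Verma module. I would treat (3) first since it is purely combinatorial. By the PBW theorem $M(\Lambda)\cong U(\n_-)\otimes_{\C}\C_{\Lambda}$ as an $\h$-module, so $\dim_{\C}M(\Lambda)_{\mu}$ equals the value at $\Lambda-\mu$ of the Kostant partition function, i.e. the number of ways of writing $\Lambda-\mu$ as a $\Z_+$-linear combination of positive roots counted with multiplicity. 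Since $\g$ is affine, every root space is finite-dimensional and, for a fixed $\beta\in Q_+$, only finitely many positive roots $\gamma$ satisfy $\beta-\gamma\in Q_+$ (the $\al_0$-coefficient, hence the $d$-degree, is bounded on such $\gamma$, and within a fixed degree the relevant part of $\De$ is finite); hence $\dim_{\C}M(\Lambda)_{\mu}<\infty$. As $L(\Lambda)$ is an $\h$-module quotient of $M(\Lambda)$, it inherits the weight decomposition and $\dim_{\C}L(\Lambda)_{\mu}\le\dim_{\C}M(\Lambda)_{\mu}<\infty$, which is (3).

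For (1), the implication ``$L(\Lambda)$ integrable $\Rightarrow\Lambda\in P_+$'' is immediate: the highest weight vector $v_{\Lambda}$ lies in a finite-dimensional $\g^{(i)}$-module on which, since $e_iv_{\Lambda}=0$, it is a highest weight vector of $\mathfrak{sl}_2$-weight $\Lambda(\check{\al}_i)$, forcing $\Lambda(\check{\al}_i)\in\Z_+$ for all $i$, i.e. $\Lambda\in P_+$. For the converse, assume $\Lambda\in P_+$ and put $N_i:=\Lambda(\check{\al}_i)\in\Z_+$. A direct $\mathfrak{sl}_2$-computation in $M(\Lambda)$ shows $e_j\bigl(f_i^{\,N_i+1}v_{\Lambda}\bigr)=0$ for every $j$ (using $e_if_i^{N_i+1}v_\Lambda=(N_i+1)(N_i-N_i)f_i^{N_i}v_\Lambda=0$ and $[e_j,f_i]=0$ for $j\neq i$), so the $\g$-submodule of $M(\Lambda)$ generated by $f_i^{\,N_i+1}v_{\Lambda}$ is proper and hence vanishes in the simple quotient $L(\Lambda)$; thus $f_i^{\,N_i+1}v_{\Lambda}=0$ in $L(\Lambda)$. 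Let $L^{\mathrm{int}}\subseteq L(\Lambda)$ be the set of vectors annihilated by some power of $e_i$ and by some power of $f_i$ for each $i$, equivalently lying in a finite-dimensional $\g^{(i)}$-submodule for each $i$. One shows that $L^{\mathrm{int}}$ is a $\g$-submodule; since $v_{\Lambda}\in L^{\mathrm{int}}$ and $L(\Lambda)$ is simple, $L^{\mathrm{int}}=L(\Lambda)$ and $L(\Lambda)$ is integrable.

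For (2), integrability makes $L(\Lambda)$ a direct sum of finite-dimensional $\g^{(i)}$-modules for each $i$, so symmetry of $\mathfrak{sl}_2$-characters under the reflection $s_i$ gives $\dim_{\C}L(\Lambda)_{\mu}=\dim_{\C}L(\Lambda)_{s_i\mu}$ for all $\mu\in P$; iterating along a reduced word yields $\dim_{\C}L(\Lambda)_{\mu}=\dim_{\C}L(\Lambda)_{w\mu}$ for every $w\in W$. Since $L(\Lambda)$ is a highest weight module whose top weight space is $\C v_{\Lambda}$ (as $M(\Lambda)_{\Lambda}$ is one-dimensional by the PBW description used in (3)), taking $\mu=\Lambda$ gives $\dim_{\C}L(\Lambda)_{w\Lambda}=1$.

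The step I expect to be the main obstacle is proving that $L^{\mathrm{int}}$ in (1) is a $\g$-submodule: stability under $e_i$ and $f_i$ is clear from $\mathfrak{sl}_2$-theory, but stability under $e_j,f_j$ for $j\neq i$ rests on the exponential commutation identities in $U(\g)$ --- concretely, if $v$ is killed by $f_i^{M}$ then $f_jv$ is killed by $f_i^{M'}$ for a suitable $M'$ depending on $M$ and on $\langle\al_j,\check{\al}_i\rangle$, which reduces to the vanishing of a high $\operatorname{ad}$-power in $\exp(\operatorname{ad}f_i)f_j$ (a Serre-type relation). Granting this lemma, the remaining arguments are routine $\mathfrak{sl}_2$ representation theory and PBW combinatorics, assembled as above; in the paper itself one may of course simply invoke \cite{Kac}.
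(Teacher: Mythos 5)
Since the paper itself offers no proof of this statement but simply cites Kac's book, the relevant comparison is between your sketch and the argument in \cite{Kac}. Your reconstruction follows the standard route of that reference: (3) via PBW and finiteness of the Kostant partition function, (1) via $\mathfrak{sl}_2$-reduction together with the vanishing of $f_i^{N_i+1}v_{\Lambda}$ in the simple quotient, and (2) via $W$-invariance of the character for integrable modules. The computations you display (e.g. $e_if_i^{N_i+1}v_{\Lambda}=0$ from $[e_i,f_i^{n+1}]=(n+1)f_i^n(\check\al_i-n)$) are correct, and the finiteness argument for the partition function is fine once one notes that $\gamma\preceq\beta$ in the coordinate-wise order on $Q_+$ already bounds all simple-root coefficients, not just the $\al_0$-one.

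You are right to flag the one nontrivial step: showing that the subspace $L^{\mathrm{int}}$ of vectors locally finite for every $\g^{(i)}$ is $\g$-stable. This is precisely Kac's Lemma 3.4/Lemma 3.5: if $v$ is killed by $f_i^{M}$, then $f_jv$ is killed by $f_i^{M+1-a_{ij}}$ because
\[
f_i^{(m)}f_j=\sum_{k\ge 0}\bigl((\operatorname{ad}f_i)^{(k)}f_j\bigr)\,f_i^{(m-k)},
\]
and the Serre relation $(\operatorname{ad}f_i)^{1-a_{ij}}f_j=0$ truncates the sum. Without recording this identity (or citing the lemma), the argument for (1) is incomplete, though you clearly know where the content lies and correctly defer to \cite{Kac} for it, which is exactly what the paper does.
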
 We remark that $\mathrm{gch}\;L(\Lambda)$ is well-defined thanks to Theorem \ref{Kac} (3).
\subsubsection{Realization of $L(\Lambda_0)$}\label{realization}

\begin{defi}[Heisenberg algebra]
For each $l\in \mathbb{N}$, let $S_l$ be a unital $\mathbb{C}$-algebra generated by $x_{i,n}$ $(i=1,...,l,\; 0\neq n\in \mathbb{Z})$ and $K$ which satisfy the following conditions:
\item $\mathrm{(1)}$ $[x_{i,n}, x_{j,m}]=n\delta_{i,j}\delta_{n,-m}K$;
\item $\mathrm{(2)}$ $[K,S_l]=0$.
\end{defi}
We set $R=\mathbb{C}[y_{i,n}\mid i\in \{1,...,l\},\;n\in\mathbb{N}]$. We define a representation $p:S_{l}\to \mathrm{End}_{\mathbb{C}}\:(R)$ by
$$p(x_{i,-n})=y_{i,n},\;\;p(x_{i,n})=n\frac{\partial}{\partial y_{i,n}},\;\;p(K)=\mathrm{id}_{R}\;\;(n>0).$$
Let $\g_{im}:=\underset{n\in\mathbb{Z}\backslash\{0\}}{\bigoplus}\g_{n\delta}.$ The algebra $S_l$ is a $\mathbb{Z}$-graded algebra by setting $\mathrm{deg}\;x_{i,n}=n$ and $\mathrm{deg}\;K=0$, and $U(\g_{im}\oplus \C K)$ is a $\mathbb{Z}$-graded algebra by the $\mathbb{Z}$-grading induced from the adjoint action of the scaling element $d.$ For $\g$ of type $A_{2l}^{(2)},$ we have $\mathrm{dim}_{\C}\;\g_{n\delta}=l$ for $n\in \Z$, and we have the following.
\begin{prop}[see \cite{Kac} Proposition 8.4]\label{heisenberg}
The algebras $U(\g_{im}\oplus \C K)$ and $S_{l}$ are isomorphic as $\mathbb{Z}$-graded algebras.
\end{prop}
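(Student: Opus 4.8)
The statement to be proved is Proposition~\ref{heisenberg}: that $U(\g_{im}\oplus\C K)$ and $S_l$ are isomorphic as $\mathbb{Z}$-graded algebras, where $\g$ is of type $A_{2l}^{(2)}$. The plan is to exhibit an explicit correspondence between the generators of the two algebras and then check that the defining relations match. First I would recall from Kac's description of the affine algebra $A_{2l}^{(2)}$ the structure of the imaginary root spaces: for each $n\in\Z\setminus\{0\}$ the space $\g_{n\delta}$ is $l$-dimensional, and the Lie bracket between $\g_{m\delta}$ and $\g_{n\delta}$ lands in $\g_{(m+n)\delta}$, which for $m+n\neq 0$ is again a genuine root space while for $m+n=0$ it lands in $\C K$. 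So I would fix, for each $n\neq 0$, a basis $\{h_{i,n}\}_{i=1}^{l}$ of $\g_{n\delta}$, chosen compatibly across $n$ (e.g. transported by a suitable twisted loop-algebra realization of $A_{2l}^{(2)}$), and compute the brackets $[h_{i,m},h_{j,n}]$ explicitly.

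The key point is that in the twisted affine algebra the imaginary subalgebra $\g_{im}\oplus\C K$ is a Heisenberg algebra: the bracket $[h_{i,m},h_{j,n}]$ vanishes unless $m+n=0$, and $[h_{i,m},h_{j,-m}] = c_{ij}(m)\,K$ for some scalars $c_{ij}(m)$. The nonvanishing of the ``diagonal'' brackets and the vanishing of all others is precisely the content of the principal-gradation / Heisenberg-subalgebra analysis in \cite[Chapter~8]{Kac}; one gets that after a rescaling of the basis vectors $h_{i,n}\mapsto x_{i,n}$ the structure constants become $c_{ij}(m) = m\,\delta_{i,j}$, matching relation (1) in the definition of $S_l$. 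I would therefore define the map $S_l\to U(\g_{im}\oplus\C K)$ on generators by $x_{i,n}\mapsto$ (the rescaled basis vector of $\g_{n\delta}$) and $K\mapsto K$, note it preserves the $\Z$-grading since $x_{i,n}$ has degree $n$ and $\g_{n\delta}$ sits in degree $n$ under the $d$-action, and check it is a well-defined algebra homomorphism by verifying the two relations. Surjectivity is immediate because the images generate; injectivity follows from a PBW count — both algebras have, in each graded piece, a PBW basis indexed by the same combinatorial data (monomials in the $x_{i,n}$, resp.\ ordered monomials in the $h_{i,n}$ and $K$), so the surjection is a graded isomorphism degree by degree.

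The main obstacle is the explicit bracket computation in $\g_{im}$: one must be careful that the twisting in $A_{2l}^{(2)}$ does not introduce an extra cocycle term or make the bracket fail to be ``diagonal'' in the chosen basis. This is really a matter of choosing the right basis of each $\g_{n\delta}$ — the natural choice coming from the principal Heisenberg subalgebra of the twisted loop realization — and the verification is exactly Proposition~8.4 of \cite{Kac} specialized to the fact that $\dim_\C\g_{n\delta}=l$ for all $n\neq 0$ in type $A_{2l}^{(2)}$. Since the excerpt explicitly cites \cite[Proposition~8.4]{Kac}, in the write-up I would simply record the dimension count $\dim_\C\g_{n\delta}=l$, observe that the imaginary subalgebra is the Heisenberg algebra on $l$ families of generators, and invoke that proposition; no genuinely new argument is needed beyond matching notation.
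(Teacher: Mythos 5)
Your proposal is correct and takes essentially the same approach as the paper: the paper gives no argument beyond recording $\dim_\C\g_{n\delta}=l$ for all $n\in\Z\setminus\{0\}$ and citing Kac's Proposition~8.4, which is exactly the combination you invoke. The extra detail you supply — choosing compatible bases of the $\g_{n\delta}$ from the twisted loop realization, observing that the only nontrivial brackets are $[h_{i,m},h_{j,-m}]\in\C K$ with structure constants $m\delta_{i,j}$ after rescaling, and closing with a PBW comparison — is a correct unwinding of what that citation packages.
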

By Proposition $\ref{heisenberg}$, we identify $S_l$ with $U(\g_{im}\oplus \C K)$. Since $\mathring{\h}$ and $U(\g_{im}\oplus \C K)$ are mutually commutative, the following $\mathbb{C}$-algebra homomorphism $p_{\lambda}:U(\g_{im}\oplus \mathring{\h}\oplus \C K)\to \mathrm{End}_{\mathbb{C}}\;(R)\;(\lambda\in \mathring{P})$ is well-defined
$$p_{\lambda}|_{S_l}=p\;\; \mathrm{and}\;\; p_{\lambda}(h)=\lambda(h)\mathrm{id}_{R}\;\; \mathrm{for}\;\; h\in\mathring{\h}.$$
We denote this $U(\g_{im}\oplus \mathring{\h}\oplus \C K)$-module by $R_{\lambda}$.

\begin{thm}[\cite{LNX} Theorem 6.4]\label{frenkelkac}
We put $\tilde{p}:=\underset{\lambda \in \mathring{P}}{\prod}p_{\lambda}:U(\g_{im}\oplus \mathring{\h}\oplus \C K)\to \mathrm{End}_{\mathbb{C}}\;(\underset{\lambda \in \mathring{P}}{\bigoplus}R_{\lambda})$. Then $\tilde{p}$ extends to an algebra homomorphism $U(\g)\to \mathrm{End}_{\C}\;(\underset{\lambda \in \mathring{P}}{\bigoplus}R_{\lambda})$ and $\underset{\lambda \in \mathring{P}}{\bigoplus}R_{\lambda}$ is isomorphic to $L(\Lambda_0)$ as a $\g$-module.   
\end{thm}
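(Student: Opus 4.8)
The plan is to recognise $\bigoplus_{\lambda\in\mathring{P}}R_\lambda$ as the basic representation by means of the (twisted) vertex operator construction, in the spirit of Frenkel--Kac and Lepowsky, $A_{2l}^{(2)}$ being the subtlest case. The Heisenberg and Cartan part of the would-be $\g$-action is already given by $\tilde p$ (Proposition \ref{heisenberg}), so what remains is (a) to define $\tilde p(d)$ and $\tilde p(e_\al)$ for every $\al\in\De_{re}$, (b) to check that the resulting operators satisfy the defining relations of $\g$, and (c) to identify the module with $L(\Lambda_0)$. First I would let $\tilde p(d)$ act on $R_\mu$ as minus the polynomial degree in the $y_{i,n}$ shifted by $-\tfrac12(\mu|\mu)$, so that $1\in R_0$ has $d$-eigenvalue $0$ and the natural generator of $R_\mu$ carries the $\h^*$-weight $\Lambda_0+\mu-\tfrac12(\mu|\mu)\delta=t_\mu\Lambda_0$; this normalisation is forced by Theorem \ref{Kac}(2).

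For the root vectors I would use the partition $\De_{re}=(\mathring{\De}_s+\Z\delta)\cup(\mathring{\De}_l+2\Z\delta)\cup\tfrac12(\mathring{\De}_l+(2\Z+1)\delta)$. To each $\mathring\beta\in\mathring{\De}$ one attaches a generating series
$$X(\mathring\beta,z)=\varepsilon(\mathring\beta)\,\mathrm{e}^{\mathring\beta}\,z^{\mathring\beta}\exp\!\Big(\sum_{n>0}\tfrac{1}{n}\mathring\beta(-n)\,z^{n}\Big)\exp\!\Big(-\sum_{n>0}\tfrac{1}{n}\mathring\beta(n)\,z^{-n}\Big),$$
whose Fourier coefficients (suitably indexed) will be the operators $\tilde p(e_{\mathring\beta+n\delta})$; here $\mathring\beta(n)$ is the element of $\g_{n\delta}$ attached to $\mathring\beta$ through the Heisenberg isomorphism, $\mathrm{e}^{\mathring\beta}\colon R_\mu\to R_{\mu+\mathring\beta}$ is the lattice shift, $\varepsilon$ is a suitable $\{\pm1\}$-valued $2$-cocycle on $\mathring{P}$, and $z^{\mathring\beta}$ records the $\mathring{\h}$-weight. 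For a short $\mathring\beta$ the modes are integral and realise the real roots $\mathring\beta+n\delta$; for a long $\mathring\gamma$ the real roots split into the families $\mathring\gamma+2\Z\delta$ and $\tfrac12\mathring\gamma+\tfrac{2\Z+1}{2}\delta$, so the relevant operator is a \emph{twisted} vertex operator, produced by a substitution $z\mapsto z^{1/2}$ together with the half-integer-mode (twisted-sector) correction. Declaring $\tilde p(e_\al)$ to be the appropriate coefficient of these series --- which simultaneously places the finite part $\mathring{\g}$ on $\bigoplus_\mu R_\mu$ and produces the affine generators $e_{\pm\al_0}$ --- extends $\tilde p$ to a linear map $U(\g)\to\mathrm{End}_\C(\bigoplus_\mu R_\mu)$.

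The heart of the argument is then the verification of the relations of $\g$ for these operators, carried out with the operator-product-expansion and Jacobi-identity calculus for (twisted) vertex operators. One must check: $[\tilde p(h),\tilde p(e_\al)]=\al(h)\tilde p(e_\al)$ for $h\in\h$; that $[\tilde p(e_\al),\tilde p(e_{-\al})]$ lies in $\tilde p(\h)$ with $K$ acting as $\mathrm{id}_R$ (level one) and with the scalars arranged so that the induced form satisfies $(\al_0|\al_0)=1$; that $[\tilde p(e_\al),\tilde p(e_\beta)]$ reproduces the structure constants of $\g$ and vanishes when $\al+\beta\notin\De$; and the Serre relations, which reduce to the vanishing of certain OPE coefficients. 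I expect this to be the main obstacle: the cocycle $\varepsilon$ and all the normalising scalars must be pinned down so that \emph{every} relation holds at once, and the twisted operators attached to the long roots demand exactly the delicate half-integer-mode analysis that makes $A_{2l}^{(2)}$ the hardest twisted type. I would organise this as a sequence of lemmas computing the OPEs of short $\times$ short, short $\times$ long and long $\times$ long vertex operators, and of each with the Heisenberg current.

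Finally I would identify the module. Granting the $\g$-action, $1\in R_0$ has weight $\Lambda_0$ and is killed by $\tilde p(e_i)$ for $i=0,\dots,l$ (the annihilation by the twisted long-root operators being a direct check at the vacuum), so $U(\g)\cdot1$ is a highest weight $\g$-module of highest weight $\Lambda_0$, hence a quotient of $M(\Lambda_0)$ which in turn surjects onto $L(\Lambda_0)$; it is moreover integrable since each $\tilde p(e_{\pm\al_i})$ acts locally nilpotently. On the other hand one computes directly from the Fock-space description that
$$\mathrm{gch}\,\bigoplus_{\mu\in\mathring{P}}R_\mu=\frac{1}{\prod_{n\ge1}(1-q^n)^l}\sum_{\mu\in\mathring{P}}q^{(\mu|\mu)/2}X^{\mu},$$
which coincides with the Kac--Peterson character formula for the basic representation $L(\Lambda_0)$ of $A_{2l}^{(2)}$. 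Sandwiching, $\mathrm{gch}\,L(\Lambda_0)\le\mathrm{gch}\,U(\g)\cdot1\le\mathrm{gch}\,\bigoplus_\mu R_\mu=\mathrm{gch}\,L(\Lambda_0)$, so $U(\g)\cdot1=\bigoplus_\mu R_\mu$ and the surjection onto $L(\Lambda_0)$ is an isomorphism, which is the claim.
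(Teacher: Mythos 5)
The paper does not prove this statement; it is imported wholesale from \cite{LNX}, Theorem~6.4, which is established by exactly the twisted Frenkel--Kac--Lepowsky vertex operator construction you describe. Your reconstruction of the route is accurate: the normalisation of $\tilde p(d)$ so that $1\in R_\mu$ carries weight $t_\mu\Lambda_0$, the splitting of $\De_{re}$ into the integer-mode family $\mathring{\De}_s+\Z\delta$ and the twisted families $\mathring{\De}_l+2\Z\delta$ and $\tfrac12\mathring{\De}_l+\tfrac{2\Z+1}{2}\delta$, the half-integer-mode (twisted-sector) vertex operators for the long roots, the need for a $\{\pm1\}$-valued $2$-cocycle on $\mathring P$, and the final identification by integrability plus the character match
$\mathrm{gch}\,\bigoplus_\mu R_\mu=\big(\prod_{n\ge1}(1-q^n)^{-l}\big)\sum_{\mu\in\mathring P}q^{(\mu|\mu)/2}X^\mu=\mathrm{gch}\,L(\Lambda_0)$
are all features of the construction in \cite{LNX}. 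One small simplification you could make in the endgame: once $1\in R_0$ is shown to be a highest weight vector of weight $\Lambda_0$ and each $\tilde p(e_{\pm\al_i})$ acts locally nilpotently, $U(\g)\cdot 1$ is an integrable highest weight module and hence automatically isomorphic to $L(\Lambda_0)$ (uniqueness of the integrable quotient of $M(\Lambda_0)$), so the character comparison is only needed to see that $U(\g)\cdot 1$ fills the whole Fock space. What you defer as ``the heart of the argument''---the explicit OPE calculations, the precise cocycle and all normalising scalars, and the verification of the Serre relations---is indeed the bulk of \cite{LNX}, and for $A_{2l}^{(2)}$ the twisted-sector corrections are genuinely delicate; so this is a correct strategic outline rather than a self-contained proof, which is an entirely reasonable stopping point given the theorem is cited, not reproved, in the present paper.
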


\subsection{Thin and thick Demazure modules}
\begin{defi}
For each $w\in W$ and $\Lambda\in P_+$, we define $\bo_{-}$-modules 
$$D_{w\Lambda}:=U(\bo_-)v_{w\Lambda}^{*}\subset L(\Lambda)^{\vee}\; and\; D^{w\Lambda}:=U(\bo_-)v_{w\Lambda}\subset L(\Lambda).$$
Here $v_{w\Lambda}\in L(\lambda)_{w\Lambda}$ and $v_{w\Lambda}^{*}\in (L(\Lambda)_{w\Lambda})^{*}$ are nonzero vectors. By Theorem \ref{Kac} $\mathrm{(3)}$, these vectors are unique up to scalars. Hence $D_{w\Lambda}$ and $D^{w\Lambda}$ are well-defined. We call $D_{w\Lambda}$ a thin Demazure module and  $D^{w\Lambda}$ a thick Demazure module.

\end{defi}
\begin{remark}
A Demazure module in $\cite{Kum}$ means the thin Demazure module $D_{w\Lambda}$.
\end{remark}
\begin{lem}[\cite{Kac} Proposition 3.6]\label{extremal vector}
For each $w\in W$, $\Lambda\in P_+$ and $\al\in \De_{+}$, we have
\begin{equation*}
v_{s_{\al}w\Lambda}\in
\begin{cases}
\g_{-\al}^{\langle w\Lambda, \check{\al}\rangle}v_{w\Lambda}&(\langle w\Lambda ,\check{\al}\rangle>0)\\
\g_{\al}^{-\langle w\Lambda, \check{\al}\rangle}v_{w\Lambda}&(\langle w\Lambda ,\check{\al}\rangle<0)\\
\mathbb{C}v_{w\Lambda}&(\langle w\Lambda ,\check{\al}\rangle=0)
\end{cases},
\end{equation*}
where $\g_{\al}^{m}= \{X_1X_2\cdots X_m\in U(\g)\mid X_i\in\g_{\al}\}.$
\end{lem}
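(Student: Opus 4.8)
The final statement to prove is Lemma \ref{extremal vector} (Kac, Proposition 3.6), which describes how extremal weight vectors $v_{s_\alpha w\Lambda}$ in the integrable highest weight module $L(\Lambda)$ are obtained from $v_{w\Lambda}$ by applying root vectors.

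The plan is to reduce everything to the $\mathfrak{sl}_2$-theory attached to the real root $\alpha$, which is the standard mechanism behind this statement. First I would fix $\alpha \in \Delta_+$, set $n := \langle w\Lambda, \check\alpha\rangle$, and consider the $\mathfrak{sl}_2$-triple $(e_\alpha, \check\alpha, e_{-\alpha})$ spanning a subalgebra $\mathfrak{s}_\alpha \cong \mathfrak{sl}_2$ of $\mathfrak g$. Since $\Lambda \in P_+$, Theorem \ref{Kac} (1) tells us $L(\Lambda)$ is integrable, hence it is an integrable (in particular, locally finite) $\mathfrak{s}_\alpha$-module; the vector $v_{w\Lambda}$ spans the one-dimensional weight space $L(\Lambda)_{w\Lambda}$ by Theorem \ref{Kac} (2). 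Restricting to $\mathfrak{s}_\alpha$, the vector $v_{w\Lambda}$ generates a finite-dimensional $\mathfrak{sl}_2$-submodule on which $\check\alpha$ acts with eigenvalue $n = \langle w\Lambda,\check\alpha\rangle$.

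Next I would run the elementary $\mathfrak{sl}_2$ representation theory. If $n > 0$, then $v_{w\Lambda}$ is (up to scalar) a highest weight vector for a copy of the irreducible $\mathfrak{sl}_2$-module of dimension $\geq n+1$, so $e_{-\alpha}^{n} v_{w\Lambda} \neq 0$ and it has weight $w\Lambda - n\alpha = s_\alpha w\Lambda$; by Theorem \ref{Kac} (2) this weight space is again one-dimensional, so $e_{-\alpha}^{n} v_{w\Lambda}$ is a nonzero multiple of $v_{s_\alpha w\Lambda}$, i.e. $v_{s_\alpha w\Lambda} \in \mathfrak g_{-\alpha}^{n} v_{w\Lambda}$. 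Symmetrically, if $n < 0$, then $v_{w\Lambda}$ is a lowest weight vector and $e_\alpha^{-n} v_{w\Lambda}$ is a nonzero multiple of $v_{s_\alpha w\Lambda}$, giving $v_{s_\alpha w\Lambda} \in \mathfrak g_\alpha^{-n} v_{w\Lambda}$. If $n = 0$, then $s_\alpha w\Lambda = w\Lambda$, so $v_{s_\alpha w\Lambda}$ and $v_{w\Lambda}$ span the same one-dimensional space and the claim $v_{s_\alpha w\Lambda} \in \mathbb C v_{w\Lambda}$ is immediate.

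The only genuine subtlety — the step I would flag as the main obstacle — is making sure the $\mathfrak{sl}_2$-submodule generated by $v_{w\Lambda}$ is actually finite-dimensional when $\alpha$ ranges over all positive real roots, including imaginary-root issues: one must use that $\alpha \in \Delta_{re}$ so that $e_{\pm\alpha}$ are locally nilpotent on the integrable module $L(\Lambda)$, and that $s_\alpha w$ is again a well-defined element of $W$ with $s_\alpha w\Lambda \in W\Lambda$ a genuine weight of $L(\Lambda)$. Since the statement as quoted only concerns $\alpha \in \Delta_+$ and $w \in W$, and $\check\alpha$ acts semisimply with integer eigenvalue on each weight space, these points all follow from integrability of $L(\Lambda)$ (Theorem \ref{Kac} (1)) together with the one-dimensionality of extremal weight spaces (Theorem \ref{Kac} (2)); no deeper input is needed, and one simply cites \cite[Proposition 3.6]{Kac} for the precise normalization.
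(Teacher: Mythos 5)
The paper does not give its own proof of this lemma; it cites \cite[Proposition 3.6]{Kac} and uses the statement as a black box. Your argument is the standard $\mathfrak{sl}_2$-reduction that Kac himself uses, and it is correct in substance: restrict to the $\mathfrak{sl}_2$-triple $(e_{\alpha},\check\alpha,e_{-\alpha})$ attached to a positive \emph{real} root $\alpha$, use integrability of $L(\Lambda)$ to get a finite-dimensional $\mathfrak{s}_\alpha$-submodule through $v_{w\Lambda}$, and then invoke elementary $\mathfrak{sl}_2$ representation theory together with the one-dimensionality of the extremal weight spaces $L(\Lambda)_{w\Lambda}$ and $L(\Lambda)_{s_\alpha w\Lambda}$. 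You are also right to flag that $\alpha$ must be real for $\check\alpha$, $s_\alpha$, and the local nilpotence of $e_{\pm\alpha}$ to make sense.

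The one place where you gloss a step is the claim that $v_{w\Lambda}$ is a highest weight vector for $\mathfrak{s}_\alpha$ when $n=\langle w\Lambda,\check\alpha\rangle>0$, i.e.\ that $e_\alpha v_{w\Lambda}=0$. This is true, and it can be justified by noting that $w\Lambda+\alpha$ is not a weight of $L(\Lambda)$: any weight $\mu$ of $L(\Lambda)$ satisfies $(\mu|\mu)\le(\Lambda|\Lambda)$, whereas $(w\Lambda+\alpha|w\Lambda+\alpha)=(\Lambda|\Lambda)+(\alpha|\alpha)(n+1)>(\Lambda|\Lambda)$ since $\alpha$ is real. Alternatively one can bypass the highest weight claim entirely by using the fact that on any finite-dimensional $\mathfrak{sl}_2$-module the map $e_{-}^{n}$ is injective from the weight-$n$ space to the weight-$(-n)$ space, so $e_{-\alpha}^{n}v_{w\Lambda}\neq 0$ regardless. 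Either patch is routine, so I would not call this a genuine gap, but it is worth spelling out if the argument is to be self-contained rather than a citation to Kac.
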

Lemma \ref{demazure} and Corollary \ref{quotient of demazure} in the below are proved in \cite{CK} for the dual of the untwisted affine Lie algebra. The proofs in \cite{CK} are also valid for type $A_{2l}^{(2)}$.
\begin{lem}[\cite{CK} Corollary 4.2]\label{demazure}
For each $w,v\in W$ and $\Lambda\in P_+$, we have the following:
\item $\mathrm{(1)}$ If $w\leq v$, then $D^{v\Lambda}\subseteq D^{w\Lambda}$;
\item $\mathrm{(2)}$ If $w$ and $v$ are minimal representatives of cosets in $W/W_{\Lambda}$ and $D^{v\Lambda}\subseteq D^{w\Lambda}$, then $w\leq v.$
\end{lem}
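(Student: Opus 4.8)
Since $D^{x\Lambda}=U(\bo_-)v_{x\Lambda}$ and $v_{x\Lambda}$ is an $\h$-eigenvector, $D^{v\Lambda}\subseteq D^{w\Lambda}$ holds exactly when $v_{v\Lambda}\in D^{w\Lambda}$. Also $D^{x\Lambda}$ depends only on $x\Lambda$, hence only on the coset $xW_\Lambda$ (here $W_\Lambda$ is a standard parabolic subgroup, as $\Lambda\in P_+$), and the map sending $x\in W$ to the minimal representative of $xW_\Lambda$ is monotone for the Bruhat order; so in part (2) I may assume $w,v$ are minimal coset representatives. In fact I would prove the single equivalence ``$D^{v\Lambda}\subseteq D^{w\Lambda}\iff w\le v$'' for such $w,v$, whose ``$\Leftarrow$'' is part (1) (and extends to arbitrary $w\le v$ directly) and whose ``$\Rightarrow$'' is part (2).

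\textbf{Part (1).} I would run along a maximal chain of Bruhat covers $w=x_0\lessdot x_1\lessdot\cdots\lessdot x_m=v$. Each cover has the form $x_{k+1}=s_{\gamma_k}x_k$ for a positive real root $\gamma_k$ with $l(x_{k+1})>l(x_k)$; the latter forces $x_k^{-1}\gamma_k\in\De_+$, hence $\langle x_k\Lambda,\check\gamma_k\rangle=\langle\Lambda,x_k^{-1}\check\gamma_k\rangle\ge 0$ because $\Lambda\in P_+$ and $x_k^{-1}\check\gamma_k$ is a positive coroot. If this pairing vanishes, $x_{k+1}\Lambda=x_k\Lambda$ and $D^{x_{k+1}\Lambda}=D^{x_k\Lambda}$. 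Otherwise Lemma \ref{extremal vector} gives $v_{x_{k+1}\Lambda}\in\g_{-\gamma_k}^{\langle x_k\Lambda,\check\gamma_k\rangle}v_{x_k\Lambda}\subseteq U(\n_-)v_{x_k\Lambda}\subseteq D^{x_k\Lambda}$, using $-\gamma_k\in\De_-$. Either way $D^{x_{k+1}\Lambda}\subseteq D^{x_k\Lambda}$, and composing along the chain yields $D^{v\Lambda}\subseteq D^{w\Lambda}$ for all $w\le v$.

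\textbf{Part (2).} Here I would pass to the contravariant (Shapovalov) form $B$ on $L(\Lambda)$: the symmetric bilinear form with $B(v_\Lambda,v_\Lambda)\neq 0$ and $B(Xu,v)=B(u,\omega_0(X)v)$ for the Cartan anti-involution $\omega_0$ interchanging $\g_\al$ and $\g_{-\al}$. Since $L(\Lambda)$ is simple, $B$ is non-degenerate; distinct weight spaces are $B$-orthogonal, each weight space is finite-dimensional (Theorem \ref{Kac}(3)) with non-degenerate restriction of $B$, so orthogonal complements may be computed weight space by weight space and $(\,\cdot\,)^{\perp\perp}=\mathrm{id}$. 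A short computation with contravariance, together with $\dim L(\Lambda)_{w\Lambda}=1$ (Theorem \ref{Kac}(2)), identifies $(D^{w\Lambda})^{\perp}=\{u:\ w\Lambda\notin\mathrm{wt}\,U(\n_+)u\}$, and then — using also $\dim L(\Lambda)_{v\Lambda}=1$ — a case analysis gives
$$v_{v\Lambda}\in D^{w\Lambda}\iff w\Lambda\in\mathrm{wt}\,U(\n_+)v_{v\Lambda}\iff v_{w\Lambda}\in U(\bo_+)v_{v\Lambda}.$$
Thus $D^{v\Lambda}\subseteq D^{w\Lambda}$ is equivalent to the inclusion of the finite-dimensional \emph{thin} Demazure submodules $U(\bo_+)v_{w\Lambda}\subseteq U(\bo_+)v_{v\Lambda}$ of $L(\Lambda)$ — which, transported across $B$, are precisely $D_{w\Lambda}\subseteq D_{v\Lambda}$ inside $L(\Lambda)^{\vee}$. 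For thin Demazure modules the classical Bruhat-order criterion (Kumar \cite{Kum}) says that this holds, for minimal coset representatives, exactly when $w\le v$. Together with the first paragraph and with (1) this proves the lemma; as nothing in the argument is sensitive to the Cartan type, it applies verbatim to $A_{2l}^{(2)}$.

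\textbf{Main obstacle.} Part (1) and the coset reductions are routine. The delicate point is part (2): setting up the thick/thin duality carefully in the infinite-dimensional setting — non-degeneracy of $B$, the weight-space-wise double-complement identity, and matching the paper's convention ($D_{w\Lambda}\subseteq L(\Lambda)^{\vee}$ via $\bo_-$) with the classical thin Demazure module $U(\n_+)v_{w\Lambda}\subseteq L(\Lambda)$ through $B$ — and then citing the classical Bruhat-order characterization of thin Demazure modules in a form valid for Kac-Moody algebras.
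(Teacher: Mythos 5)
The paper does not actually prove this lemma: it cites \cite{CK}, Corollary 4.2, observing only that the argument there (written for the dual of an untwisted affine Lie algebra) carries over to $A_{2l}^{(2)}$. So there is no in-paper proof to compare against, and your proposal amounts to supplying one from scratch. It is correct, and type-independent as you say.

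For Part (1), the chain-of-covers argument works exactly as sketched: take a maximal chain $w=x_0\lessdot x_1\lessdot\cdots\lessdot x_m=v$ in the strong Bruhat order, write each cover as $x_{k+1}=s_{\gamma_k}x_k$ with $\gamma_k$ a positive \emph{real} root (covers in a Coxeter group are always by reflections, so this is automatic), note $\langle x_k\Lambda,\check\gamma_k\rangle\ge 0$ since $x_k^{-1}\gamma_k\in\De_+^{re}$ and $\Lambda\in P_+$, and apply Lemma \ref{extremal vector}. For Part (2), passing through the contravariant form $B$ and reducing to the classical thin Demazure inclusion criterion is the right idea; it can even be streamlined: non-degeneracy of $B$ on the one-dimensional extremal weight spaces (Theorem \ref{Kac}(2), simplicity of $L(\Lambda)$) gives directly
$$v_{v\Lambda}\in U(\bo_-)v_{w\Lambda}\iff B(Xv_{w\Lambda},v_{v\Lambda})\ne 0\ \text{for some}\ X\in U(\bo_-)\iff v_{w\Lambda}\in U(\bo_+)v_{v\Lambda},$$
with no need for the double orthogonal complement. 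Since $D^{v\Lambda}\subseteq D^{w\Lambda}$ and $U(\bo_+)v_{w\Lambda}\subseteq U(\bo_+)v_{v\Lambda}$ are each equivalent to containment of a single extremal weight vector, this identifies the two inclusions, and the Kumar criterion for thin Demazure modules of a Kac--Moody algebra (\cite{Kum}) finishes. One word of caution worth recording: for the appeal to Kumar to be legitimate, the order $\le$ in the statement must be the strong Bruhat order (as in \cite{CK}), not the left weak order set up in Definition \ref{Left weak Bruhat order}; with the weak order Part (2) would already fail for $\mathring{\g}$ of rank $2$ and $\Lambda$ regular, since $s_1<s_1s_2$ in the strong order forces $D^{s_1s_2\Lambda}\subseteq D^{s_1\Lambda}$ by Part (1), yet $s_1\not\le s_1s_2$ in the left weak order.
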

Lemma $\ref{demazure}$ allows us to define as follows:
\begin{defi}
For each $w\in W$ and $\Lambda\in P_+$, we define a $U(\bo_-)$-module $\mathbb{D}^{w\Lambda}$ as $$\D^{w\Lambda}:=D^{w\Lambda}/\underset{w<v}{\sum}D^{v\Lambda}.$$ We call this module a Demazure slice.
\end{defi}

\begin{prop}[\cite{Kat} Corollary 2.22]
For each $\Lambda \in P_+$ and $S\subset W$, there exists $S^{'}\subset W$ such that 
$$\underset{w\in S}{\bigcap}D^{w\lambda}=\underset{w\in S^{'}}{\sum}D^{w\Lambda}.$$
\end{prop}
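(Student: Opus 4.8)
The statement is that any intersection of thick Demazure modules inside $L(\Lambda)$ is again a sum of thick Demazure modules, so the plan is to pin down the correct index set $S'$, dispose of one inclusion formally, and prove the other by working in a weight basis adapted to all thick Demazure modules at once.

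First I would normalize the data: since $v_{w\Lambda}$ depends only on the coset $wW_{\Lambda}$, replacing each $w\in S$ by its minimal-length representative in $W/W_{\Lambda}$ leaves every $D^{w\Lambda}$ unchanged, so I assume all elements of $S$ are such representatives and write $M:=\bigcap_{w\in S}D^{w\Lambda}$. I then take $S'$ to be the set of minimal representatives $w$ with $v_{w\Lambda}\in M$. Because $M$ is a $\bo_-$-submodule of $L(\Lambda)$, $v_{w\Lambda}\in M$ forces $D^{w\Lambda}=U(\bo_-)v_{w\Lambda}\subseteq M$, which already gives $\sum_{w\in S'}D^{w\Lambda}\subseteq M$. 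Moreover, using Lemma~\ref{demazure}(2) one checks that $v_{w\Lambda}\in M$ is equivalent to $w$ being an upper bound of $S$ in the Bruhat order on minimal representatives; hence $S'$ is an up-set, and the sum above is already realized by the minimal upper bounds of $S$.

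For the reverse inclusion $M\subseteq\sum_{w\in S'}D^{w\Lambda}$, the crucial step is to fix a weight-homogeneous $\C$-basis $\mathbf{B}$ of $L(\Lambda)$ compatible with every thick Demazure module, i.e.\ $D^{v\Lambda}=\bigoplus_{b\in\mathbf{B}\cap D^{v\Lambda}}\C b$ for all minimal $v$. For $\Lambda=\Lambda_0$ such a basis can be distilled from the explicit realization in Theorem~\ref{frenkelkac}; in general one would use the global crystal basis of $L(\Lambda)$, as in \cite{CK}. Granting this, $M=\bigoplus_{b\in\mathbf{B}\cap M}\C b$, and for each $b\in\mathbf{B}$ the set $T_b:=\{v\mid b\in D^{v\Lambda}\}$ is a down-set for the Bruhat order by Lemma~\ref{demazure}(1); the key combinatorial fact is that $T_b$ has a maximum element $v(b)$, equivalently that $b$ lies in a unique smallest thick Demazure module $D^{v(b)\Lambda}$. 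If $b\in\mathbf{B}\cap M$ then $S\subseteq T_b$, so $v(b)$ is an upper bound of $S$, hence $v(b)\in S'$ and $b\in D^{v(b)\Lambda}\subseteq\sum_{w\in S'}D^{w\Lambda}$; summing over $b\in\mathbf{B}\cap M$ finishes the argument, with no finiteness hypothesis on $S$.

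The genuinely non-formal ingredients are the adapted basis $\mathbf{B}$ and the existence of $\max T_b$ for each $b$ --- together, the assertion that the filtration of $L(\Lambda)$ by thick Demazure modules is ``principal'' in the above sense, which is precisely the content of \cite[Corollary~2.22]{Kat}. The main obstacle I anticipate is not the formal bookkeeping above but verifying that Kato's proof --- which passes through the geometry of the semi-infinite flag manifold, realizing thick Demazure modules via semi-infinite Schubert subvarieties and using that a scheme-theoretic intersection of such subvarieties is a reduced union of semi-infinite Schubert subvarieties --- carries over to type $A_{2l}^{(2)}$. Since the relevant semi-infinite flag manifold and its Schubert stratification are constructed uniformly for every affine type, I expect it to apply essentially verbatim, in the same way the proofs of Lemma~\ref{demazure} and Corollary~\ref{quotient of demazure} from \cite{CK} are imported here.
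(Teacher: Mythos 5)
The paper cites \cite{Kat} Corollary~2.22 and supplies no proof of its own, so there is no argument in the paper to compare against. Your sketch is a correct module-theoretic reduction: taking $S'$ to be the set of minimal coset representatives that are upper bounds of $S$, Lemma~\ref{demazure} gives $\sum_{w\in S'}D^{w\Lambda}\subseteq M$ at once, and the reverse inclusion follows formally once one grants an adapted basis $\mathbf{B}$ compatible with all thick Demazure modules together with the existence of $\max T_b$ for each $b\in\mathbf{B}$. You are right that those two ingredients are the genuine content; note that, once the adapted basis is in hand, the existence of $\max T_b$ is logically \emph{equivalent} to the proposition itself (take $S=T_b$: since the sum $\sum_{w\in S'}D^{w\Lambda}$ is spanned by a subset of $\mathbf{B}$, the element $b$ lies in some single $D^{w\Lambda}$ with $w\in S'$, and Lemma~\ref{demazure}(2) forces $w=\max T_b$). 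So the pair of facts cannot be proved more cheaply than by the crystal/geometric input, which is exactly what \cite{Kat} Corollary~2.22 supplies; your sketch is a careful unpacking of the citation rather than an independent proof, and in that sense it agrees with the paper's treatment. One small correction to your final paragraph: \cite{Kat} works with \emph{thick} flag manifolds of Kac--Moody algebras in Kashiwara's sense, not with semi-infinite flag manifolds, and the construction there is uniform in the (symmetrizable) Kac--Moody type, so there is no issue of transporting a semi-infinite-flag argument to type $A_{2l}^{(2)}$.
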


\begin{cor}[\cite{CK} Corollary 4.4]\label{quotient of demazure}
For each $w,\;v\in W$ and $\Lambda\in P_{+}$, we have
\begin{equation*}
(D^{w\Lambda}\cap D^{v\Lambda})/(D^{v\Lambda}\cap\underset{u>w}{\sum}D^{u\Lambda})=\D^{w\Lambda}\;or\; \{0\}.
\end{equation*}
\end{cor}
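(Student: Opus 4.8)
The plan is to deduce this from the preceding Proposition (with $S \subset W$ ranging over the sets of elements relevant here) together with Lemma \ref{demazure}, by a direct comparison of submodules of $L(\Lambda)$. Fix $w, v \in W$ and $\Lambda \in P_+$. First I would replace $w$ and $v$ by their minimal-length representatives in $W/W_{\Lambda}$; this changes none of the modules $D^{w\Lambda}$, $D^{v\Lambda}$, $\D^{w\Lambda}$, or the sum $\sum_{u>w} D^{u\Lambda}$, since $D^{x\Lambda}$ depends only on the weight $x\Lambda$, and the condition $u > w$ in the relevant sums can likewise be taken modulo $W_\Lambda$ on minimal representatives. So assume $w, v$ are minimal representatives.

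Next I would analyse the numerator $D^{w\Lambda} \cap D^{v\Lambda}$. There are two cases. If $v \le w$, then by Lemma \ref{demazure}(1) we have $D^{w\Lambda} \subseteq D^{v\Lambda}$, so $D^{w\Lambda}\cap D^{v\Lambda} = D^{w\Lambda}$, and the quotient in question becomes $D^{w\Lambda}/(D^{w\Lambda} \cap \sum_{u>w}D^{u\Lambda})$. I would then observe that for each $u > w$ one has $D^{u\Lambda} \subseteq D^{w\Lambda}$ (again Lemma \ref{demazure}(1)), so $D^{w\Lambda} \cap \sum_{u>w} D^{u\Lambda} = \sum_{u>w} D^{u\Lambda}$, and the quotient is exactly $\D^{w\Lambda}$ by definition. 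If $v \not\le w$: here I want to show the quotient is $\{0\}$, i.e.\ that $D^{w\Lambda} \cap D^{v\Lambda} \subseteq D^{v\Lambda} \cap \sum_{u>w} D^{u\Lambda}$, equivalently $D^{w\Lambda} \cap D^{v\Lambda} \subseteq \sum_{u>w} D^{u\Lambda}$. The idea is that $D^{w\Lambda} \cap D^{v\Lambda}$, being an intersection of thick Demazure modules, is by the preceding Proposition a sum $\sum_{x \in S'} D^{x\Lambda}$ for some $S' \subset W$; and since each summand $D^{x\Lambda}$ sits inside $D^{v\Lambda}$, by Lemma \ref{demazure}(2) we get $v \le x$ (after passing to minimal representatives of $x$), hence $x \ne w$, hence — using $D^{x\Lambda} \subseteq D^{w\Lambda}$ and Lemma \ref{demazure}(2) again — $w \le x$, and combined with $x \ne w$ this gives $x > w$. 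Therefore every summand lies in $\sum_{u > w} D^{u\Lambda}$, as desired.

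The remaining case, where $w$ and $v$ are incomparable but $v \not\le w$ is false, is subsumed: the dichotomy above is simply "$v \le w$ or not". So the two cases $v \le w$ and $v \not\le w$ exhaust all possibilities, giving $\D^{w\Lambda}$ or $\{0\}$ respectively.

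The main obstacle I anticipate is the bookkeeping around minimal coset representatives in the case $v \not\le w$: I need to be careful that when the preceding Proposition produces $S' \subset W$, replacing each $x \in S'$ by its minimal representative $\bar x$ in $W/W_\Lambda$ does not disturb $D^{x\Lambda} = D^{\bar x \Lambda}$, and that the inequalities $v \le \bar x$ and $w \le \bar x$ from Lemma \ref{demazure}(2) genuinely combine to $\bar x > w$ (this needs $\bar x \ne w$ as elements of $W/W_\Lambda$, which follows since otherwise $D^{x\Lambda} = D^{w\Lambda} \subseteq D^{v\Lambda}$ would force $v \le w$ by Lemma \ref{demazure}(2), contradicting our case hypothesis). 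Once this is set up cleanly the argument is purely order-theoretic; no representation-theoretic input beyond Lemma \ref{demazure} and the cited Proposition is needed.
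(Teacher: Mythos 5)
Your proof is correct. The paper does not supply its own argument for this corollary: it remarks just before Lemma \ref{demazure} that both Lemma \ref{demazure} and Corollary \ref{quotient of demazure} are proved in \cite{CK} for the dual of an untwisted affine Lie algebra and that those proofs carry over to type $A_{2l}^{(2)}$, so there is nothing in the paper to compare against. Your route --- splitting on $v\le w$ versus $v\not\le w$, handling the first case directly from Lemma \ref{demazure}(1) (which gives $D^{w\Lambda}\cap D^{v\Lambda}=D^{w\Lambda}$ and $\sum_{u>w}D^{u\Lambda}\subseteq D^{w\Lambda}\subseteq D^{v\Lambda}$, so the quotient is $\D^{w\Lambda}$), and in the second case writing $D^{w\Lambda}\cap D^{v\Lambda}=\sum_{x\in S'}D^{x\Lambda}$ by the preceding proposition and then applying Lemma \ref{demazure}(2) to both inclusions $D^{x\Lambda}\subseteq D^{w\Lambda}$ and $D^{x\Lambda}\subseteq D^{v\Lambda}$ to force $x>w$ --- is exactly the natural deduction from the stated tools. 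Your explicit care with minimal coset representatives of $W/W_\Lambda$ is also appropriate: both Lemma \ref{demazure}(2) and the implicit convention behind the definition of $\D^{w\Lambda}$ require it, and you resolve the bookkeeping correctly, in particular noting that $x=w$ would give $v\le w$, ruled out by the case hypothesis.
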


\subsection{Level one case}
In this subsection, we consider level one Demazure modules. The unique level one dominant integral weight of $A_{2l}^{(2)}$ is $\Lambda_{0}$. From \eqref{weyl group},
$$\mathring{P}\ni \lambda \mapsto \lambda +\Lambda_{0}+\frac{(\lambda|\lambda)}{2}\delta\in W\Lambda_{0}$$ is a bijection. For each $\lambda\in \mathring{P}$, we set
$$D_{\lambda}:=D_{\pi_{\lambda}},\;D^{\lambda}:=D^{\pi_{\lambda}},\;\D^{\lambda}:=\D^{\pi_{\lambda}}.$$ 

\begin{lem}
For each $\lambda,\;\mu\in \mathring{P}$, we have $D^{\lambda}\subsetneq D^{\mu}$ if and only if
 $\mu \succ \lambda.$
\end{lem}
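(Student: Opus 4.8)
The plan is to reduce the statement to the combinatorial criterion of Lemma~\ref{cherednik order and bruhat order}\,(2), which already translates the Macdonald order on $\mathring{P}$ into the left weak Bruhat order on the minimal representatives $\pi_\lambda$. Concretely, by Lemma~\ref{demazure}\,(1) we have $D^{\pi_\mu}\subseteq D^{\pi_\lambda}$ whenever $\pi_\lambda\le\pi_\mu$ in the Bruhat order, and Lemma~\ref{demazure}\,(2) gives the converse since each $\pi_\nu$ is by construction a minimal-length element with $(\pi_\nu\Lambda_0)(\!(0)\!)=\nu$, hence a minimal representative in $W/W_{\Lambda_0}$ (one should record that $W_{\Lambda_0}=\mathring W$ at level one, so $W/W_{\Lambda_0}\cong W_0$ and the map $\lambda\mapsto\pi_\lambda$ is exactly the minimal-coset-representative section). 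So the containment $D^\lambda\subseteq D^\mu$ is equivalent to $\pi_\mu\le\pi_\lambda$, and the strict containment $D^\lambda\subsetneq D^\mu$ is equivalent to $\pi_\mu<\pi_\lambda$ in the Bruhat order (using that distinct $\nu$ give distinct $D^\nu$, e.g.\ because the extremal weight $\nu+\Lambda_0+\tfrac{(\nu|\nu)}2\delta$ is a weight of $D^\nu$ but determines $\nu$).

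First I would establish the easy direction $\mu\succ\lambda\Rightarrow D^\lambda\subsetneq D^\mu$. Assume $\mu\succ\lambda$; I want to produce a chain $\lambda=\nu_0,\nu_1,\dots,\nu_k=\mu$ with each step of the form $\nu_{j-1}=s_{i_j}(\!(\nu_j)\!)$ and $\nu_j\succ\nu_{j-1}$, so that Lemma~\ref{cherednik order and bruhat order}\,(2) gives $\pi_{\nu_{j-1}}=s_{i_j}\pi_{\nu_j}>\pi_{\nu_j}$, i.e.\ a length-increasing reduced word realizing $\pi_\mu<\pi_\lambda$. Such a chain exists because $\succ$ is, on the Bruhat side, generated by covering relations $w\mathrel{\dot>} s_i w$, and the correspondence $w\mapsto w(\!(0)\!)$ carries the left weak order to $\succeq$ by Lemma~\ref{cherednik order and bruhat order}\,(1); one then reads the chain off from a reduced expression for $\pi_\lambda$ that passes through $\pi_\mu$. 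With $\pi_\mu<\pi_\lambda$ in hand, Lemma~\ref{demazure}\,(1) yields $D^{\pi_\lambda}\subseteq D^{\pi_\mu}$, and strictness follows from $\pi_\mu\ne\pi_\lambda$ together with the distinctness of Demazure modules noted above.

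For the converse, suppose $D^\lambda\subsetneq D^\mu$. By Lemma~\ref{demazure}\,(2) (applicable since the $\pi_\nu$ are minimal coset representatives) we get $\pi_\mu<\pi_\lambda$, hence a reduced chain $\pi_\lambda>s_{i_1}\pi_\lambda>\cdots>\pi_\mu$ in the left weak Bruhat order; it may be necessary to first pass to a saturated chain, which exists because the left weak order is graded by length. Applying $w\mapsto w(\!(0)\!)$ to this chain and invoking Lemma~\ref{cherednik order and bruhat order}\,(2) at each step (noting $\pi_{w(\!(0)\!)}=w$ for $w$ a minimal representative, so the intermediate terms are again of the form $\pi_\nu$), I obtain $\lambda=\nu_0\prec\nu_1\prec\cdots\prec\nu_k=\mu$, and by transitivity of $\succ$ this gives $\mu\succ\lambda$, completing the equivalence.

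The main obstacle I anticipate is bookkeeping around the identifications rather than any deep point: one must be careful that $\pi_\nu$ really is the minimal coset representative attached to $\nu$ (so that Lemma~\ref{demazure}\,(2) applies and so that intermediate Weyl group elements in a Bruhat chain are again of the form $\pi_{\nu'}$), and that passing between the left \emph{weak} Bruhat order used in Lemma~\ref{cherednik order and bruhat order} and the \emph{full} Bruhat order used in Lemma~\ref{demazure} is harmless here — it is, because in $W/W_{\Lambda_0}$ the relevant chains can be taken length-saturated, on which the two orders agree step by step. A secondary point is confirming that $\nu\mapsto D^\nu$ is injective so that $\subsetneq$ corresponds to strict inequality of $\pi$'s, which follows from Theorem~\ref{Kac}\,(2) applied to the extremal weight vector.
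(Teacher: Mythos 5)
Your overall plan -- translate $D^\lambda\subsetneq D^\mu$ into $\pi_\mu<\pi_\lambda$ via Lemma~\ref{demazure} and then compare the left weak Bruhat order with the Macdonald order via Lemma~\ref{cherednik order and bruhat order} -- is exactly the paper's, but one of your two reductions is circular as written.

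In the direction $\mu\succ\lambda\Rightarrow D^\lambda\subsetneq D^\mu$ (which you call ``easy'' but which is actually the harder one), you want a chain $\lambda=\nu_0,\nu_1,\dots,\nu_k=\mu$ with $\nu_{j-1}=s_{i_j}(\!(\nu_j)\!)$ and $\nu_j\succ\nu_{j-1}$, so as to apply Lemma~\ref{cherednik order and bruhat order}\,(2) at each step and conclude $\pi_\mu<\pi_\lambda$. Your justification, however, is that one ``reads the chain off from a reduced expression for $\pi_\lambda$ that passes through $\pi_\mu$'' -- but the existence of such a reduced expression is precisely the statement $\pi_\mu\le\pi_\lambda$ that you are trying to derive from $\mu\succ\lambda$. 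The paper instead fixes \emph{some} $w\in W$ with $\lambda=w(\!(\mu)\!)$ and takes a reduced expression of $w$ for which the successive images $(s_{i_k}\cdots s_{i_n})(\!(\mu)\!)$ are $\succeq$-monotone; this does not presuppose the conclusion (it still silently uses a structural property of the Macdonald order, but it is not circular). You should replace your justification by something along those lines.

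In the converse direction $D^\lambda\subsetneq D^\mu\Rightarrow\mu\succ\lambda$, your argument works but is unnecessarily heavy: after obtaining $\pi_\mu<\pi_\lambda$ from Lemma~\ref{demazure}\,(2), you build a saturated weak Bruhat chain, argue that its intermediate terms are minimal coset representatives (true, since weak-order intervals between elements of $W^J$ stay in $W^J$ -- compare inversion sets -- but you assert it without proof), and then apply Lemma~\ref{cherednik order and bruhat order}\,(2) link by link. The paper simply applies Lemma~\ref{cherednik order and bruhat order}\,(1) to the single inequality $\pi_\mu<\pi_\lambda$ to get $\mu\succeq\lambda$, and then $\mu\ne\lambda$ gives $\mu\succ\lambda$. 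Using part (1) directly removes both the chain bookkeeping and the need to know the intermediate elements are of the form $\pi_\nu$. Your secondary point about injectivity of $\nu\mapsto D^\nu$ via the extremal weight is fine and does match what the paper implicitly uses.
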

\begin{proof}
If $D^{\lambda}\subsetneq D^{\mu}$, then we have $\pi_{\mu}< \pi_{\lambda}$ by Lemma \ref{demazure}. Then, Lemma \ref{cherednik order and bruhat order} (1) implies $\mu\succ \lambda$. Conversely, we assume that $\mu\succ \lambda$. There exists $w\in W$ such that $\mu\succ \lambda=w(\!(\mu)\!)$. Let $w=s_{i_1}\cdots s_{i_n}$ be a reduced expression of $w$ such that $(s_{i_{k+1}}\cdots s_1)(\!(\mu)\!)\succ (s_{i_k}s_{i_{k+1}}\cdots s_1)(\!(\mu)\!)$ for all $k$. If $n=1$, then Lemma \ref{cherednik order and bruhat order} (2) implies $\pi_{\mu}<\pi_{\lambda}$. Hence, we have $D^{\lambda}\subsetneq D^{\mu}$. If $n>1$, then we have $D^{\lambda}\subsetneq D^{(s_{i_2}\cdots s_{i_n})(\!(\mu)\!)}\subsetneq \cdots \subsetneq D^{\mu}$ inductively. 
\end{proof}
\begin{thm}[\cite{Ion} Theorem 1]\label{characterequality}
For each $\lambda\in \mathring{P}$, we have
$$\mathrm{gch}\;D_{\lambda}=q^{\frac{(b|b)}{2}}\bar{E}_{\lambda}(X^{-1},q^{-1}).$$
\end{thm}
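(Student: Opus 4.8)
The plan is to deduce this from \cite{Ion} Theorem~1; I sketch the scheme, since the argument applies in our setting. The point is that $\mathrm{gch}\,D_\lambda$ and the right-hand side are both built up from the single initial datum $\mathrm{gch}\,D_0=1=\bar{E}_0$ by one and the same recursion, indexed by a reduced word for $\pi_\lambda$.

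First I would fix $\lambda\in\mathring{P}$ and choose a reduced expression $\pi_\lambda=s_{j_1}s_{j_2}\cdots s_{j_n}$. Set $w_k:=s_{j_k}s_{j_{k+1}}\cdots s_{j_n}$ (so $w_1=\pi_\lambda$, $w_{n+1}=e$) and $\lambda^{(k)}:=w_k(\!(0)\!)$. Iterating Lemma~\ref{cherednik order and bruhat order}~(2), with part~(1) ensuring minimality, one gets $w_k=\pi_{\lambda^{(k)}}$ and a chain $0=\lambda^{(n+1)}\prec\lambda^{(n)}\prec\cdots\prec\lambda^{(1)}=\lambda$ in $(\mathring{P},\succeq)$. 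For each $k$ we have $w_k=s_{j_k}w_{k+1}>w_{k+1}$, and Lemma~\ref{extremal vector} (applied in $L(\Lambda_0)^{\vee}$) shows that, up to scalar, $v_{w_k\Lambda_0}^{*}$ is obtained from $v_{w_{k+1}\Lambda_0}^{*}$ by applying a power of a root vector for $\al_{j_k}$; hence $D_{\lambda^{(k)}}=U(\mathfrak{p}_{j_k})\,D_{\lambda^{(k+1)}}$, where $\mathfrak{p}_{j_k}=\bo_-+\g_{\al_{j_k}}$ is the $j_k$-th minimal parabolic. I would then invoke the Demazure character formula for integrable highest weight modules over symmetrizable Kac--Moody algebras (see \cite{Kum}; equivalently, via crystals or the Littelmann path model), transported to the restricted dual, to conclude $\mathrm{ch}\,D_{\lambda^{(k)}}=\mathsf{D}_{j_k}(\mathrm{ch}\,D_{\lambda^{(k+1)}})$, where $\mathsf{D}_i(f)=\bigl(f-e^{\al_i}s_i(f)\bigr)/\bigl(1-e^{\al_i}\bigr)$. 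Since $D_0=D_{\Lambda_0}=\C v_{\Lambda_0}^{*}$ is one-dimensional of weight $-\Lambda_0$, iterating yields $\mathrm{gch}\,D_\lambda=\mathsf{D}_{j_1}\mathsf{D}_{j_2}\cdots\mathsf{D}_{j_n}(e^{-\Lambda_0})$ after restricting weights to $\mathring{P}\oplus\tfrac{\Z}{2}\delta$; working inside $L(\Lambda_0)^{\vee}$ rather than $L(\Lambda_0)$ is what produces the inversions $X\mapsto X^{-1}$, $q\mapsto q^{-1}$, while recording the $\delta$-component of $\pi_\lambda\Lambda_0$ gives the overall factor $q^{(b|b)/2}$ in the statement.

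Next I would treat the polynomial side. Under the parameter specialisation $t_0=t_l=u_0=t$, $u_l=1$ adopted above, each $E_\lambda$ is obtained from $E_0=1$ by Cherednik's intertwiners of the double affine Hecke algebra, applied along a reduced word for $\pi_\lambda$ together with the loop rotation (\cite{Sahi2000}; see also \cite[\S3]{Ion}). The crucial input---and this is the content of \cite{Ion}---is that the limits as $t\to 0$ of these intertwiners are precisely the affine Demazure operators $\mathsf{D}_i$ above, so that $\bar{E}_\lambda$ is built from $1$ by the very same word $\mathsf{D}_{j_1}\cdots\mathsf{D}_{j_n}$. Matching Macdonald's order $\succeq$ with the weak Bruhat order via Lemma~\ref{cherednik order and bruhat order}, and inserting the substitutions $X\mapsto X^{-1}$, $q\mapsto q^{-1}$ and the factor $q^{(b|b)/2}$ identified above, I would then read off $\mathrm{gch}\,D_\lambda=q^{(b|b)/2}\bar{E}_\lambda(X^{-1},q^{-1})$.

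The hard part will be the $t\to 0$ degeneration of the Hecke-algebra intertwiners into Demazure operators at the affine node $\al_0$: in type $A_{2l}^{(2)}$ the halved short real roots $\tfrac12(\mathring{\De}_l+(2\Z+1)\delta)$ make the relevant rank-one $\mathfrak{sl}_2$-string irregular, so the one-variable computation underlying that step must be carried out by hand, and this is exactly where the Koornwinder---rather than Macdonald---nature of the polynomials enters. A secondary difficulty is tracking the various normalisations (the $\delta$-shift in $\pi_\lambda\Lambda_0$, the passage to the restricted dual, the inversions $X\mapsto X^{-1}$, $q\mapsto q^{-1}$) consistently across the two sides. Alternatively, for level one one could bypass the abstract Demazure machinery and compute the vectors $v_{\pi_\lambda\Lambda_0}^{*}$ and the submodules $D_{\pi_\lambda\Lambda_0}$ directly inside the principal realisation $\bigoplus_{\mu\in\mathring{P}}R_\mu\cong L(\Lambda_0)$ of Theorem~\ref{frenkelkac}; but one would still need Ion's combinatorial identity to recognise the resulting character as $\bar{E}_\lambda$, so this only relocates the difficulty.
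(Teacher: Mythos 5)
This theorem is imported in the paper as a citation to \cite{Ion}, Theorem~1; the paper offers no proof of its own, so there is no in-paper argument to compare against. Your sketch is a reasonable reconstruction of Ion's proof strategy: identify a Demazure-operator recursion on each side along a reduced word for $\pi_\lambda$, starting from the common seed $\mathrm{gch}\,D_0 = 1 = \bar{E}_0$. On the representation-theoretic side this is the Demazure character formula for the thin Demazure modules $D_{\lambda^{(k)}}$ in $L(\Lambda_0)^{\vee}$ (which in this paper appears as Theorem~\ref{thin demazure joseph functor} together with Kashiwara's character identity); on the polynomial side it is the $t\to 0$ limit of Cherednik's intertwiners. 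You correctly flag the genuine technical content of \cite{Ion}: in type $A_{2l}^{(2)}$ the rank-one computation at the node $\al_0$ is nonstandard because of the halved real roots $\tfrac12(\mathring{\De}_l+(2\Z+1)\delta)$, and this is precisely where the Koornwinder specialisation $t_0=t_l=u_0=t$, $u_l=1$ is used.

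One small correction: the Macdonald-order chain is written backwards. From Lemma~\ref{cherednik order and bruhat order}~(1), $w_k>w_{k+1}$ gives $w_{k+1}(\!(0)\!)\succeq w_k(\!(0)\!)$, i.e.\ $\lambda^{(k)}\preceq\lambda^{(k+1)}$, and indeed $0$ is the \emph{maximal} element of $(\mathring{P},\succeq)$. So the chain should read $\lambda=\lambda^{(1)}\prec\lambda^{(2)}\prec\cdots\prec\lambda^{(n+1)}=0$, not the reverse. This does not affect the recursion itself, but it matters when matching with the triangularity condition $E_\lambda=X^\lambda+\sum_{\mu\succ\lambda}c_\mu X^\mu$ defining the polynomials. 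Also, the parabolic $\mathfrak{p}_{j_k}$ you use coincides with the paper's $\mathfrak{p}_i=\bo_-+\mathfrak{sl}(2,i)$, since $\bo_-\supset\g_{-\al_i}\oplus\h$; it is worth saying so explicitly to keep the recursion aligned with Theorem~\ref{thin demazure joseph functor}.
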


\subsection{Weyl modules}

\begin{defi}[\cite{CIK} \S3.3]\label{global weyl}
For each $\lambda\in \mathring{P}_+$, the global Weyl module is a cyclic $\Cg$-module $W(\lambda)$ generated by a vector $v_{\lambda}$ that satisfies following relations:
\item $\mathrm{(1)}$ $hv_{\lambda}=\lambda(h)v_{\lambda}$ for each $h\in \h$;
\item $\mathrm{(2)}$ $e_{-\alpha}^{\langle\lambda,\check{\al}\rangle+1}v_{\lambda}=0$ for each $\al\in\mathring{\De}_{+}$;
\item $\mathrm{(3)}$ $\mathfrak{Cn}_+v_{\lambda}=0$.
\end{defi}

\begin{defi}[\cite{CIK} \S3.5 and \S7.2]For each $\lambda\in \mathring{P}$, the local Weyl module is a cyclic $\Cg$-module $W(\lambda)_{loc}$ generated by a vector $v_{\lambda}$ satisfies relations $\mathrm{(1)}$, $\mathrm{(2)}$, $\mathrm{(3)}$ of Definition \ref{global weyl} and 
\item $\mathrm{(4)}$ $Xv_{\lambda}=0$ for $X\in \Cg_{im}.$
\end{defi}
\begin{thm}[\cite{CIK} Theorem 2]\label{demazure=weyl}
Let $\lambda \in \mathring{P}_+$. Then $D_{\lambda}\otimes_{\C} \mathbb{C}_{(\lambda\mid\lambda)\delta/2-\Lambda_0}$ is isomorphic to $W(\lambda)_{loc}$ as $\Cg$-module, where $\mathbb{C}_{(\lambda\mid\lambda)\delta/2-\Lambda_0}$ is the 1-dimensional module with its $\h$-weight $(\lambda|\lambda)\delta/2-\Lambda_0.$
\end{thm}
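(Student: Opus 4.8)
The plan is to use that the local Weyl module is universal among cyclic $\Cg$-modules whose generator obeys the relations of Definition \ref{global weyl} together with $\Cg_{im}v_\lambda=0$, so that it suffices to (i) exhibit $D_{\lambda}\otimes_{\C}\C_{(\lambda\mid\lambda)\delta/2-\Lambda_0}$ as such a module, which produces a surjection $\psi\colon W(\lambda)_{loc}\twoheadrightarrow D_{\lambda}\otimes_{\C}\C_{(\lambda\mid\lambda)\delta/2-\Lambda_0}$, and then (ii) show that the two sides have equal graded characters, forcing $\psi$ to be an isomorphism. The graded character of the target is already known by Theorem \ref{characterequality}, so (ii) reduces to an upper bound for $\mathrm{gch}\,W(\lambda)_{loc}$.

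For (i) I would first check that $D_{\lambda}=U(\bo_-)v^{*}_{\pi_{\lambda}\Lambda_0}$, a priori only a $\bo_-$-module, is stable under $\mathring{\g}$ and hence is a $\Cg$-module: since $\lambda\in\mathring{P}_+$ the $\mathring{\h}$-restriction of the extremal weight $\pi_{\lambda}\Lambda_0$ is $\mathring{\g}$-dominant, so by Lemma \ref{extremal vector} the vectors $\g_{-\al}^{\langle\lambda,\check{\al}\rangle+1}v_{\pi_{\lambda}\Lambda_0}$ vanish, $\mathring{\n}_+$ acts locally nilpotently on the cyclic vector, and $\mathring{\g}D_{\lambda}\subset D_{\lambda}$. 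After the twist in the statement the generator $v$ has the same $\h$-weight data as $v_{\lambda}$, which is relation (1) of Definition \ref{global weyl}, and relation (2), the $\mathfrak{sl}_2$-integrability along $\mathring{\De}_+$, is Lemma \ref{extremal vector} again, transported through the restricted dual. The relation $\Cg_{im}v=0$ is the easy half of the ``current'' conditions: because $\pi_{\lambda}$ has minimal length, $\pi_{\lambda}\Lambda_0$ is the top of its $\mathring{\h}$-weight space in $L(\Lambda_0)$, so $L(\Lambda_0)_{\pi_{\lambda}\Lambda_0+n\delta}=0$ for $n\geq1$ and the negative imaginary root vectors annihilate $v$ in $L(\Lambda_0)^{\vee}$. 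The substantive relation is $\mathfrak{Cn}_+v=0$ for the negative real root vectors indexed by $\mathring{\De}_{s+}-\Z_+\delta$, $\mathring{\De}_{l+}-2\Z_+\delta$ and $\tfrac12(\mathring{\De}_{l+}-(2\Z_++1)\delta)$; I would rephrase vanishing of the corresponding vectors in $L(\Lambda_0)^{\vee}$ through Lemma \ref{cherednik order and bruhat order}, exploiting that $\pi_{\lambda}$ is already of minimal length so that multiplying by the reflection in any such root cannot increase its length, and then reduce to the weight-ceiling argument along the relevant $\mathfrak{sl}_2$-string. Alternatively, the Frenkel--Kac realization of $L(\Lambda_0)$ (Theorem \ref{frenkelkac}) provides an explicit model in which all of these relations can be verified by direct computation with the bosonic modes.

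For (ii) I would estimate $\mathrm{gch}\,W(\lambda)_{loc}$ from above by a PBW spanning set: the relation $\Cg_{im}v_{\lambda}=0$ removes the imaginary modes, and $\mathfrak{Cn}_+v_{\lambda}=0$ together with the $\mathfrak{sl}_2$-integrability bound the admissible exponents of the remaining negative real root modes, so that $W(\lambda)_{loc}$ is spanned, as a $\mathring{\g}$-module, by an explicit finite set whose generating function one checks to be $\mathrm{gch}\,D_{\lambda}$, equivalently $q^{(b\mid b)/2}\bar{E}_{\lambda}(X^{-1},q^{-1})$ by Theorem \ref{characterequality}; the relevant combinatorics is governed by the affine Weyl group and the Macdonald order, cf. Lemma \ref{macdonald}. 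Once $\mathrm{gch}\,W(\lambda)_{loc}\preceq\mathrm{gch}\bigl(D_{\lambda}\otimes_{\C}\C_{(\lambda\mid\lambda)\delta/2-\Lambda_0}\bigr)$ is established, surjectivity of $\psi$ upgrades it to an isomorphism of graded $\Cg$-modules, and untwisting gives the statement. I expect step (ii) to be the main obstacle: in type $A_{2l}^{(2)}$ the short, long and ``halved'' real root strings occur with different $\delta$-periodicities, so a naive spanning set overcounts, and cutting it down to a set enumerated by the specialized nonsymmetric Macdonald--Koornwinder polynomial requires the full force of the straightening relations coming from $\mathfrak{Cn}_+$ and from integrability. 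A way to sidestep the explicit count would be to first treat the fundamental weights $\varpi_i$ and then establish a fusion-type tensor-product decomposition of $W(\lambda)_{loc}$, though verifying such a decomposition in the twisted setting looks comparably delicate.
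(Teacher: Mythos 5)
Note first that the paper itself contains no proof of this statement: it is quoted directly from [CIK, Theorem~2] and used as a black box (the theorem is immediately followed by Corollary \ref{characterequality2} with no intervening argument), so there is no internal proof to compare your proposal against, only the citation.

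Your strategy --- realize the twisted Demazure module as a cyclic $\Cg$-quotient of $W(\lambda)_{loc}$ by verifying the defining relations, then close the gap with a character bound --- is the natural one and is, in substance, the route taken in [CIK]. Step (i) is sound in outline: your maximal-weight observation is correct because $\Lambda_0$ is a maximal weight of $L(\Lambda_0)$ and the Weyl group preserves both $\delta$ and the set of weights, so each extremal weight $\pi_\lambda\Lambda_0$ is again maximal; the $\mathfrak{Cn}_+$-annihilation then follows from extremality together with the weight-ceiling argument along $\mathfrak{sl}_2$-strings that you sketch. The genuine gap is in step (ii): you need the \emph{a priori} upper bound $\mathrm{gch}\,W(\lambda)_{loc}\preceq\mathrm{gch}\bigl(D_\lambda\otimes_{\C}\C_{(\lambda\mid\lambda)\delta/2-\Lambda_0}\bigr)$, and as you yourself flag, a naive PBW spanning set for $W(\lambda)_{loc}$ overcounts badly; in type $A_{2l}^{(2)}$ one must straighten simultaneously against the three families of real root modes carrying different $\delta$-periodicities, and pinning the count to the specialized Macdonald--Koornwinder polynomial is exactly where [CIK] invest the bulk of their paper (reductions to fundamental weights, explicit filtrations, and crystal/fusion-type arguments). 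Your paragraph on (ii) describes the obstacle rather than resolving it, so the proposal as written is a correct roadmap, not a proof. One further small point: the weight bookkeeping after the twist by $\C_{(\lambda\mid\lambda)\delta/2-\Lambda_0}$ should be checked explicitly against the paper's $\vee$-convention $Xf(v)=-f(Xv)$ and its parametrization of $W\Lambda_0$ by $\mathring{P}$; it is routine but should not be left implicit.
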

\begin{cor}\label{characterequality2}
For each $\lambda\in \mathring{P}_+,$ we have $$\mathrm{gch}\;W(\lambda)_{loc}=q^{\frac{(b|b)}{2}}\bar{P}_{\lambda}(X^{-1},q^{-1}).$$
\end{cor}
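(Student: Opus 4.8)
The plan is to read the formula off from the three results immediately preceding it. Theorem~\ref{demazure=weyl} identifies $W(\lambda)_{loc}$ with the one-dimensional twist $D_{\lambda}\otimes_{\C}\C_{(\lambda|\lambda)\delta/2-\Lambda_0}$ of a thin Demazure module as a $\Cg$-module; Theorem~\ref{characterequality} computes $\mathrm{gch}\,D_{\lambda}$ in terms of the $t=0$ nonsymmetric Macdonald--Koornwinder polynomial $\bar{E}_{\lambda}$; and Proposition~\ref{symmetric=nonsymmetric} says that $\bar{E}_{\lambda}$ and $\bar{P}_{\lambda}$ coincide after the substitution $X\mapsto X^{-1}$, $q\mapsto q^{-1}$ as soon as $\lambda$ is dominant. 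Combining these three inputs should yield the corollary essentially formally.

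Concretely, I would first restrict the isomorphism of Theorem~\ref{demazure=weyl} to $\mathring{\h}\oplus\C d\subset\Cg$ and pass to graded characters. Tensoring an $\h$-module by a one-dimensional module $\C_{\Lambda}$ shifts every weight by $\Lambda$, so from the definition of $\mathrm{gch}$ in \S\ref{representation} it multiplies $\mathrm{gch}$ by the monomial recording the $\mathring{\h}\oplus\C d$-weight of $\C_{\Lambda}$. For $\Lambda=(\lambda|\lambda)\delta/2-\Lambda_0$ the summand $\Lambda_0$ is invisible to $\mathrm{gch}$ because it annihilates $\mathring{\h}\oplus\C d$, so only the $\delta$-summand matters, and one recovers $\mathrm{gch}\,W(\lambda)_{loc}$ from $\mathrm{gch}\,D_{\lambda}$ up to an explicit power of $q$ and no $X$-shift. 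Feeding in Theorem~\ref{characterequality} and then Proposition~\ref{symmetric=nonsymmetric} rewrites $\mathrm{gch}\,D_{\lambda}$ first as $q^{(b|b)/2}\bar{E}_{\lambda}(X^{-1},q^{-1})$ and then as $q^{(b|b)/2}\bar{P}_{\lambda}(X^{-1},q^{-1})$; collecting the powers of $q$ then gives the asserted identity.

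I do not expect a genuine obstacle here. All the substantive content is already available: the $\Cg$-module isomorphism comes from \cite{CIK} (Theorem~\ref{demazure=weyl}) and the Macdonald--Koornwinder identity from \cite{Ion} (Proposition~\ref{symmetric=nonsymmetric}). The one point that needs care is the bookkeeping of the exponent of $q$ produced by the one-dimensional twist $\C_{(\lambda|\lambda)\delta/2-\Lambda_0}$ and the verification that it matches the prefactor $q^{(b|b)/2}$ appearing in the statement; beyond that, the corollary is a routine consequence of the preceding theorems.
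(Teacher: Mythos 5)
Your proposal follows the same route as the paper: cite Theorem~\ref{demazure=weyl} to transfer from $W(\lambda)_{loc}$ to a twist of $D_\lambda$, then apply Theorem~\ref{characterequality} and Proposition~\ref{symmetric=nonsymmetric}. The paper's own proof is in fact terser than yours --- it simply asserts $\mathrm{gch}\,W(\lambda)_{loc}=\mathrm{gch}\,D_\lambda$ and concludes, without discussing the twist by $\C_{(\lambda\mid\lambda)\delta/2-\Lambda_0}$ at all; your remark that this twist produces a power of $q$ on graded characters is the only content you add, and since you leave the exponent unverified ("collecting the powers of $q$ then gives the asserted identity"), your argument is no more complete than the paper's on that point. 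If you actually carry out the bookkeeping with the $\mathrm{gch}$ convention of \S\ref{representation}, the $\delta$-component of the twist does produce a nontrivial $q$-power, so the reduction to $\mathrm{gch}\,W(\lambda)_{loc}=\mathrm{gch}\,D_\lambda$ is not purely formal; you should pin down how the $d$-eigenvalue normalizations of the cyclic vectors of $D_\lambda$ and $W(\lambda)_{loc}$ interact with the twist before declaring the exponent matches the prefactor $q^{(b\mid b)/2}$.
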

\begin{proof}
By Theorem \ref{demazure=weyl}, we have $$\mathrm{gch}\;W(\lambda)_{loc}=\mathrm{gch}\;D_{\lambda}.$$ By Proposition \ref{symmetric=nonsymmetric} and Theorem \ref{characterequality}, the assertion follows.
\end{proof}
\begin{thm}[\cite{CI} Theorem 2.5 (3), Theorem 4.7 and \cite{Kle} Theorem 7.21]\label{Ext}
For each $\lambda$, $\mu\in \mathring{P}_+$ and $m\in \mathbb{Z}/2$, we have 
$$\mathrm{dim}_{\mathbb{C}}\;\mathrm{Ext}^{n}_{\Cg\text{-}\mathrm{mod^{int}}}(W(\lambda)\otimes_{\C}\mathbb{C}_{m\delta},W(\mu)_{loc}^{\vee})=\delta_{m,0}\delta_{0,n}\delta_{\lambda,\mu}.
$$
\end{thm}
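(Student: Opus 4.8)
I interpret the final statement as Theorem \ref{Ext}, which asserts an orthogonality of $W(\lambda)$ against $W(\mu)_{loc}^\vee$ in $\mathrm{Ext}_{\Cg\text{-}\mathrm{mod}^{\mathrm{int}}}$. Since this is quoted from \cite{CI} and \cite{Kle}, I sketch how the proof should go.

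\medskip

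The plan is to reduce the Ext-computation to a BGG-type resolution of the local Weyl module together with a projectivity/relative-homological-algebra statement for global Weyl modules. First I would recall that the global Weyl module $W(\lambda)$ is, by \cite{CIK}, a free module over its endomorphism ring $A_\lambda := \mathrm{End}_{\Cg'}(W(\lambda))$, which is a polynomial ring, and that $W(\lambda)\otimes_{A_\lambda}\mathbb{C}_{\mathbf{0}} \cong W(\lambda)_{loc}$ (evaluation at the base point). The idea is that $W(\lambda)$ plays the role of a ``relative projective cover'' of $\pi^* V(\lambda)$ inside $\Cg\text{-}\mathrm{mod}^{\mathrm{int}}$ in the appropriate graded sense: any map from $W(\lambda)$ to an integrable module is determined by the image of the cyclic generator $v_\lambda$, and the defining relations (1)--(3) of Definition \ref{global weyl} are exactly the conditions cutting out a $\mathring{\g}+\h$-highest weight vector of weight $\lambda$ annihilated by $\mathfrak{Cn}_+$. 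So $\mathrm{Hom}_{\Cg}(W(\lambda)\otimes_{\C}\mathbb{C}_{m\delta}, M)$ is the space of such highest-weight vectors in $M$ of the shifted weight, and one wants the higher Ext groups to vanish against injective-type objects like $W(\mu)_{loc}^\vee$.

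\medskip

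The key steps, in order, are: (i) establish that $W(\lambda)$ has a finite filtration (as a $\Cg$-module, or after the base change killing $\Cg_{im}$-action appropriately) whose subquotients are controlled — concretely, by \cite{Kle} Theorem 7.21 one uses that the category has a highest-weight-category / standardly-stratified structure in which global Weyl modules are the standard objects and their restricted duals $W(\mu)_{loc}^\vee$ are the costandard/tilting-adjacent objects, so that $\mathrm{Ext}^{>0}$ between a standard and a costandard object vanishes and $\mathrm{Hom}$ between them is one-dimensional exactly when the labels agree; (ii) pin down the degree-$0$ Ext: $\mathrm{Hom}_{\Cg}(W(\lambda)\otimes_\C\mathbb{C}_{m\delta}, W(\mu)_{loc}^\vee)$ is computed by finding a $\mathring{\g}$-highest weight vector of weight $\lambda + m\delta$ in $W(\mu)_{loc}^\vee$ killed by $\mathfrak{Cn}_+$; since $W(\mu)_{loc}$ has $\mathring\g$-highest weight $\mu$ with one-dimensional top weight space and all other weights strictly below in the Macdonald/dominance order, and the $d$-grading forces $m=0$, this vector exists iff $\lambda=\mu$, giving the $\delta_{m,0}\delta_{\lambda,\mu}$ factor; (iii) use \cite{CI} Theorem 2.5 (3) and Theorem 4.7 — which presumably give the (relative) projective resolution of $W(\mu)_{loc}$ by sums of global Weyl modules, i.e. a ``Weyl filtration / BGG resolution'' statement — to conclude the higher Ext vanishing by the standard argument that $\mathrm{Ext}^n(\text{standard}, \text{costandard}) = 0$ for $n>0$ in a highest weight category.

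\medskip

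Concretely I would first show $W(\mu)_{loc}^\vee$ is the costandard object: its socle is $\pi^*V(\mu)$ and every other composition factor $\pi^*V(\nu)$ has $\nu \prec \mu$, which follows from Corollary \ref{characterequality2} (the character is $q^{(b|b)/2}\bar P_\mu(X^{-1},q^{-1})$, whose support lies in $\mu - \mathring Q_+'$ up to lower $\mathring W$-orbits) together with the grading bound. Then I would invoke the Ext-vanishing $\mathrm{Ext}^{>0}_{\Cg\text{-}\mathrm{mod}^{\mathrm{int}}}(W(\lambda), \pi^* V(\nu)) = 0$ for all $\nu$ — this is the genuine content, coming from the fact that $P(\lambda)_{\mathrm{int}}$ surjects onto $W(\lambda)$ with kernel admitting a Weyl filtration, so $W(\lambda)$ is acyclic for $\mathrm{Hom}(-, \text{costandard})$ — and combine it with the filtration of $W(\mu)_{loc}^\vee$ by costandards (i.e. by its own composition series, each factor being simple hence trivially costandard only up to the socle subtlety) via dimension shifting on the exact sequences. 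The main obstacle is exactly step (iii): proving the Weyl/BGG-type resolution of the local Weyl module, or equivalently that $\ker(P(\lambda)_{\mathrm{int}} \twoheadrightarrow W(\lambda))$ has a global-Weyl filtration — this is where one must use the hyperspecial structure of $\Cg$ in type $A_{2l}^{(2)}$ and the explicit defining relations, and it is the place where \cite{CI} and \cite{Kle} do the real work. The $n=0$ and $m\neq 0$ cases, by contrast, are elementary weight/grading bookkeeping once the character formula of Corollary \ref{characterequality2} is in hand.
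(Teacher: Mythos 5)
The paper does not prove Theorem~\ref{Ext}; it is imported verbatim from \cite{CI} (Theorem~2.5(3), Theorem~4.7) and \cite{Kle} (Theorem~7.21), and the bracketed citations in the theorem header are the only pointers supplied. There is therefore no in-paper argument to compare your sketch against, and I will instead assess the sketch on its own terms as a reconstruction of the cited proofs.

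Your reconstruction is faithful to the strategy of those sources. The content of \cite{Kle} Theorem~7.21 is precisely that $\Cg\text{-}\mathrm{mod}^{\mathrm{int}}$ is an affine highest weight category whose standard objects are the global Weyl modules $W(\lambda)$ and whose costandard objects are the restricted duals $W(\mu)_{loc}^{\vee}$, so that $\mathrm{Ext}^{n}(W(\lambda),W(\mu)_{loc}^{\vee})=\delta_{n,0}\delta_{\lambda,\mu}$ is the formal $\Delta$--$\nabla$ orthogonality; \cite{CI} Theorem~4.7 supplies the nontrivial input in the $A_{2l}^{(2)}$ hyperspecial case, namely that the kernel of $P(\lambda)_{\mathrm{int}}\twoheadrightarrow W(\lambda)$ admits a filtration by global Weyl modules. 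You correctly identify this as the heart of the matter, and your $n=0$, $m\neq 0$ bookkeeping via the $d$-grading and the top weight of $W(\mu)_{loc}$ is the right elementary argument. One small correction: in the final paragraph you speak of ``the filtration of $W(\mu)_{loc}^\vee$ by costandards (i.e.\ by its own composition series\dots)'' and then hedge with ``up to the socle subtlety.'' That step is not needed and would not work as stated --- simple objects are not costandard, and they are not $\mathrm{Hom}(W(\lambda),-)$-acyclic, so filtering the costandard by simples gives nothing. The correct argument bypasses it entirely: once the kernel $K$ of $P(\lambda)_{\mathrm{int}}\twoheadrightarrow W(\lambda)$ has a global-Weyl filtration, dimension shifting on $0\to K\to P(\lambda)_{\mathrm{int}}\to W(\lambda)\to 0$ together with $\mathrm{Hom}(W(\nu),W(\mu)_{loc}^\vee)=\delta_{\nu,\mu}\C$ gives the $n>0$ vanishing directly against the single costandard $W(\mu)_{loc}^\vee$, with no need to further decompose the target. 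Modulo that, your sketch matches the cited approach.
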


\begin{cor}\label{orthogonality}
For each $\lambda$, $\mu\in \mathring{P}_+$, we have $\langle \mathrm{gch}\;W(\lambda),\mathrm{gch}\;W(\mu)_{loc}\rangle_{int}=\delta_{\lambda,\mu}$.
\end{cor}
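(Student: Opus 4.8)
The plan is to deduce this purely formally from Theorem \ref{Ext} together with the general properties of the Euler--Poincar\'e pairing $\langle-,-\rangle_{int}$ established in the preceding propositions of \S\ref{representation of Cg}. First I would observe that both $W(\lambda)$ and $W(\mu)_{loc}$ lie in $\Cg\text{-}\mathrm{mod}^{\mathrm{int}}$: the global Weyl module $W(\lambda)$ has weights bounded above inside a finite union of cones $\mu_i - Q_+$ with finite-dimensional weight spaces (this is part of the standard structure theory of global Weyl modules, and in any case is implicit in the fact that $\mathrm{gch}\;W(\lambda)$ is used), it is an integrable $\mathring{\g}$-module by relation (2) of Definition \ref{global weyl}, and $W(\mu)_{loc}$ is finite-dimensional by the dimension formula of \cite{CIK}. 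Thus the pairing $\langle W(\lambda), W(\mu)_{loc}\rangle_{int}$ is defined as in Definition \ref{int pairing}.

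Next I would write out the definition: by Definition \ref{int pairing},
$$\langle W(\lambda), W(\mu)_{loc}\rangle_{int}=\sum_{p\in\mathbb{Z}_+,\,m\in\mathbb{Z}/2}(-1)^p q^m\,\mathrm{dim}_{\mathbb{C}}\;\mathrm{Ext}^p_{\Cg\text{-}\mathrm{mod}^{\mathrm{int}}}(W(\lambda)\otimes_{\mathbb{C}}\mathbb{C}_{m\delta}, W(\mu)_{loc}^{\vee}).$$
By Theorem \ref{Ext}, every summand vanishes unless $p=0$, $m=0$ and $\lambda=\mu$, in which case the dimension is $1$. Hence the whole sum collapses to $\delta_{\lambda,\mu}$, and since this is a genuine element of $\mathbb{C}(\!(q^{1/2})\!)$ (indeed a constant) the identification of the pairing with a value in $\mathbb{C}(\!(q^{1/2})\!)[\mathring{P}]$ furnished by the analogue of Proposition \ref{a} for $\langle-,-\rangle_{int}$ lets us rewrite $\langle W(\lambda), W(\mu)_{loc}\rangle_{int}$ as $\langle\mathrm{gch}\;W(\lambda),\mathrm{gch}\;W(\mu)_{loc}\rangle_{int}$, which is the form asserted in the statement.

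There is essentially no obstacle here; the only point requiring a word of care is the interchange of the (finite, for fixed $m$) alternating sum over $p$ with the sum over $m$, which is legitimate because — exactly as in the proof of Proposition \ref{a}(1) — for each fixed $m$ only finitely many $p$ contribute, and for $m\ll 0$ the relevant $d$-eigenvalues of $W(\mu)_{loc}^{\vee}$ and $W(\lambda)\otimes_{\mathbb{C}}\mathbb{C}_{m\delta}$ are disjoint so that all Ext groups vanish; thus the double sum is a well-defined element of $\mathbb{C}(\!(q^{1/2})\!)$ and may be evaluated term by term. So the corollary is immediate once Theorem \ref{Ext} and the well-definedness of $\langle-,-\rangle_{int}$ are in hand.
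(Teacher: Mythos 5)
Your proof is correct and matches the paper's approach exactly: the paper's proof is the one-liner ``The assertion follows from Definition \ref{int pairing} and Theorem \ref{Ext},'' which is precisely the computation you spell out. Your extra remarks on well-definedness of the double sum are accurate but are just unpacking what the paper relegates to the (unproved, stated-by-analogy) Proposition following Definition \ref{int pairing}.
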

\begin{proof}
The assertion follows from Definition \ref{int pairing} and Theorem \ref{Ext}.
\end{proof}
\subsubsection{Extensions between Weyl modules in $\mathfrak{B}$}\label{extensions between weyl modules in b}
In this subsection, we prove the following corollary of Theorem \ref{Ext}.
\begin{thm}\label{ext}
For each $\lambda$, $\mu\in \mathring{P}_+,$ $m\in \mathbb{Z}/2$ and $n\in \Z_+$, we have 
$$\mathrm{dim}_{\mathbb{C}}\;\mathrm{Ext}^{n}_{\mathfrak{B}}(W(\lambda)\otimes_{\C}\mathbb{C}_{m\delta},W(\mu)_{loc}^{\vee})=\delta_{m,0}\delta_{0,n}\delta_{\lambda,\mu}.$$
\end{thm}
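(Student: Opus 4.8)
The plan is to bootstrap from the already-established computation in $\Cg\text{-}\mathrm{mod}^{\mathrm{int}}$ (Theorem \ref{Ext}) to the larger category $\mathfrak{B}$ by exhibiting a single projective resolution of $W(\lambda)$ that is simultaneously adapted to both categories. First I would recall that $\mathfrak{B}$ has enough projectives (Proposition \ref{enough projective of b}) and that, since $\mathrm{wt}\;W(\lambda)$ is bounded above and every weight space is finite dimensional, one can build a projective resolution $\cdots\to P^1\to P^0\to W(\lambda)\to 0$ in $\mathfrak{B}$ with each $P^k$ a finite direct sum of modules $P(\Lambda)=U(\bo_-)\otimes_{U(\h)}\C'_\Lambda$ and with $\mathrm{wt}\;P^{k+1}\subsetneq \mathrm{wt}\;P^k$ — exactly the construction carried out inside the proof of Proposition \ref{a}. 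The key point to extract is that, because $W(\mu)_{loc}$ is finite dimensional and each weight space of the resolution eventually leaves any fixed bounded region, $\mathrm{Ext}^n_{\mathfrak{B}}(W(\lambda)\otimes_\C\C_{m\delta},W(\mu)_{loc}^\vee)$ vanishes for $n\gg 0$ and for $m\ll 0$, so the relevant Euler characteristic is a well-defined element of $\C(\!(q^{1/2})\!)$.

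The main step is to compare $\mathrm{Ext}_{\mathfrak{B}}^\bullet$ with $\mathrm{Ext}_{\Cg\text{-}\mathrm{mod}^{\mathrm{int}}}^\bullet$. Here I would use the forgetful/adjunction machinery: restriction from $\Cg$-modules to $\bo_-$-modules is exact, and the $\Cg$-projective modules $P(\Lambda)_{\mathrm{int}}=U(\Cg)\otimes_{U(\mathring\g+\h)}V(\Lambda)$ remain acyclic for $\mathrm{Hom}_{\mathfrak{B}}(-,W(\mu)_{loc}^\vee)$ because $V(\Lambda)$ is a finite-dimensional $\mathring\g$-module and hence $P(\Lambda)_{\mathrm{int}}$, restricted to $\bo_-$, is a finite direct sum of modules of the form $P(\nu)$ (a PBW argument: $U(\Cg)\cong U(\mathfrak{Cn}_+\oplus\Cg_{im})\otimes U(\mathring\g+\h)$ as a left $U(\mathring\g+\h)$-module, and the first factor restricted to $\bo_-$ is a polynomial-type algebra). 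Thus a projective resolution of $W(\lambda)$ in $\Cg\text{-}\mathrm{mod}^{\mathrm{int}}$ by the $P(\Lambda)_{\mathrm{int}}$'s is, after restriction, a $\mathrm{Hom}_{\mathfrak{B}}(-,W(\mu)_{loc}^\vee)$-acyclic (though not projective) resolution in $\mathfrak{B}$, so it computes $\mathrm{Ext}_{\mathfrak{B}}^\bullet$ as well. Combining this with Theorem \ref{Ext} gives the graded Euler characteristic
\[
\sum_{n\in\Z_+,\,m\in\Z/2}(-1)^n q^m\dim_\C\mathrm{Ext}^n_{\mathfrak{B}}(W(\lambda)\otimes_\C\C_{m\delta},W(\mu)_{loc}^\vee)=\delta_{\lambda,\mu}.
\]

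Finally, to upgrade the Euler-characteristic statement to the claimed vanishing of all higher $\mathrm{Ext}^n_{\mathfrak{B}}$ and the $\delta_{m,0}\delta_{0,n}\delta_{\lambda,\mu}$ form, I would argue that $\mathrm{Ext}^0$ already accounts for the whole alternating sum. Concretely: $\mathrm{Hom}_{\mathfrak{B}}(W(\lambda)\otimes_\C\C_{m\delta},W(\mu)_{loc}^\vee)\cong \mathrm{Hom}_{\mathfrak{B}}(W(\mu)_{loc}\otimes_\C\C_{-m\delta},W(\lambda)^\vee)$ by duality, and since $W(\lambda)$ is a cyclic $\Cg$-module generated in weight $\lambda+\text{(grading)}$ with $\mathfrak{Cn}_+$ killing the generator, any $\bo_-$-morphism out of $W(\mu)_{loc}$ is determined by the image of its cyclic vector and must land in the appropriate weight space; a weight count (using Theorem \ref{Kac}(2)-(3) and the extremal-vector Lemma \ref{extremal vector}) forces $\mathrm{Hom}^0$ to be $\delta_{m,0}\delta_{\lambda,\mu}\C$. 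Because this nonnegative term already equals the full Euler characteristic $\delta_{\lambda,\mu}$, every higher term $\mathrm{Ext}^n_{\mathfrak{B}}$ ($n\ge 1$) must vanish and every $m\ne 0$ contribution must vanish. The hard part will be the second paragraph — verifying cleanly that $P(\Lambda)_{\mathrm{int}}$ restricts to a $\mathrm{Hom}(-,W(\mu)_{loc}^\vee)$-acyclic object in $\mathfrak{B}$, i.e. that changing categories does not create new extensions; once that acyclicity is in place the rest is bookkeeping with weights and graded characters.
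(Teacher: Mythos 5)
Your broad strategy — reducing Theorem \ref{ext} to the integrable-category result (Theorem \ref{Ext}) by comparing $\mathrm{Ext}^{\bullet}_{\mathfrak{B}}$ with $\mathrm{Ext}^{\bullet}_{\Cg\text{-}\mathrm{mod}^{\mathrm{int}}}$ — matches the paper, but the two specific mechanisms you offer for that comparison both have genuine gaps, and neither coincides with the paper's argument.

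The first gap is in your second paragraph. You assert that $P(\Lambda)_{\mathrm{int}}=U(\Cg)\otimes_{U(\mathring{\g}+\h)}V(\Lambda)$, restricted to $\bo_-$, is a finite direct sum of modules $P(\nu)=U(\bo_-)\otimes_{U(\h)}\C_\nu'$, and hence is $\mathrm{Hom}_{\mathfrak{B}}(-,W(\mu)_{loc}^{\vee})$-acyclic. This is false. Already for $V(\Lambda)=\C_0$, the cyclic vector $1\otimes 1\in P(0)_{\mathrm{int}}$ is annihilated by $\mathring{\n}_-$ (since $\mathring{\n}_-$ acts on $U(\Cg)\otimes_{U(\mathring{\g}+\h)}\C_0\cong U(\Cg_{<0})$ by the adjoint action, which kills the identity), whereas no $P(\nu)$ has a vector in the top weight space annihilated by $\mathring{\n}_-$; equivalently, $U(\Cg_{<0})$ is not free over $U(\n_-)$ because $\Cg_{<0}\subsetneq\n_-$. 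So $P(\Lambda)_{\mathrm{int}}$ is certainly not projective in $\mathfrak{B}$, and the acyclicity you need — which is really the entire content of the theorem, since if it held degree-by-degree equality would follow at once without any Euler-characteristic detour — is left unsupported. The paper's Lemma \ref{extensions} does prove a related acyclicity, but only after first passing through two other reductions (Lemma \ref{extension dual}, which replaces $\mathrm{Ext}_{\mathfrak{B}}(M,N^{\vee})$ with $\mathrm{Ext}_{\mathfrak{B}}(\C_0, M^{\vee}\otimes N^{\vee})$ via universal $\delta$-functors, and Shapiro's Lemma \ref{Shapiro's lemma}, which moves the problem into $\Cg\text{-}\mathrm{mod}_{\mathrm{wt}}$), and the acyclicity is established there against a BGG-type resolution and the projectives $P(\cdot)_{\mathrm{int}}$, not by claiming the $P(\cdot)_{\mathrm{int}}$ restrict to $\bo_-$-projectives.

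The second gap is the final "upgrade" step. Even granting that the graded Euler characteristic equals $\delta_{\lambda,\mu}$ and that $\mathrm{Ext}^0$ contributes exactly $\delta_{\lambda,\mu}\delta_{m,0}$, it does not follow that the higher $\mathrm{Ext}^n$ vanish: the Euler characteristic is an alternating sum, so the $n\ge 1$ contributions could cancel in pairs without being zero. Your phrase "this nonnegative term already equals the full Euler characteristic" misreads the signs — only the total alternating sum is constrained. To close this gap you would need either the degree-by-degree comparison $\mathrm{Ext}^n_{\mathfrak{B}}\cong\mathrm{Ext}^n_{\Cg\text{-}\mathrm{mod}^{\mathrm{int}}}$ for all $n$ (as the paper proves in Theorem \ref{compare extension}) or an independent vanishing argument for $n\ge 1$, neither of which your third paragraph supplies.
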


\begin{defi}[\cite{Gro} \S2.1]
Let $\mathfrak{C}$, $\mathfrak{D}$ be abelian categories. A contravariant $\delta$-functor from $\mathfrak{C}$ to $\mathfrak{D}$ consists of the following data:
\item $\mathrm{(a)}$ A collection $T=\{T^{i}\}_{i\in\mathbb{Z}_+}$ of contravariant additive functors from $\mathfrak{C}$ to $\mathfrak{D}$;
\item $\mathrm{(b)}$ For each exact sequence $0\to M^{\prime\prime}\to M\to M^{\prime}\to 0$, a collection of morphisms $\{\delta^{n}:T^n(M^{\prime\prime})\to T^{n+1}(M^{\prime})\}_{n\in\Z_+}$ with the following conditions:
\item$\mathrm{(1)}$ For each exact sequence $0\to M^{\prime\prime}\to M\to M^{\prime}\to 0$, there is a long exact sequence 
\begin{eqnarray*}
0&\to& T^0(M^{\prime})\to T^0(M)\to T^0(M^{\prime\prime})\overset{\delta^0}{\to}\\ &\to& T^1(M^{\prime})\to\; \cdots\;\to T^{n-1}(M^{\prime\prime})\overset{\delta^{n-1}}{\to}\\  &\to& T^n(M^{\prime})\to T^{n}(M)\to T^{n}(M^{\prime\prime})\overset{\delta^{n}}{\to}\cdots;
\end{eqnarray*}
\item$\mathrm{(2)}$ For each morphism of short exact sequence\[
  \begin{CD}
     0 @>>>  M^{\prime\prime}  @>>>  M  @>>>  M^{\prime}  @>>>  0 \\
    @.     @VVV  @VVV  @VVV   @. \\
     0 @>>>  N^{\prime\prime} @>>>  N @>>>  N^{\prime} @>>>  0
  \end{CD},
\] we have the following commutative diagram \[
  \begin{CD}
     T^{n-1}(N^{\prime\prime}) @>{\delta^{n-1}}>> T^{n}(N^{\prime}) \\
  @VVV    @VVV \\
     T^{n-1}(M^{\prime\prime})   @>{\delta^{n-1}}>>  T^{n}(M^{\prime})
  \end{CD}.
\]
\end{defi}
\begin{defi}[\cite{Gro} \S2.1]
For each contravariant $\delta$-functors $T=\{T^i\}_{i\in\Z_+}$ and $S=\{S^i\}_{i\in\Z_+}$, a morphism of $\delta$-functor from $T=\{T^i\}_{i\in\Z_+}$ to $S=\{S^i\}_{i\in\Z_+}$ is a collection of natural transformations $F=\{F^i: T^i\to S^i\}_{i\in\Z_+}$ with the following condition:
\item $\mathrm{(*)}$ For each exact sequence $0\to M^{\prime\prime}\to M\to M^{\prime}\to 0$, the following diagram is commutative  \[
  \begin{CD}
     T^{n-1}(M^{\prime\prime}) @>{\delta^{n-1}}>> T^{n}(M^{\prime}) \\
  @V{F^{n-1}(M^{\prime\prime})}VV    @V{F^n(M^{\prime})}VV \\
     S^{n-1}(M^{\prime\prime})   @>{\delta^{n-1}}>>  S^{n}(M^{\prime})
  \end{CD}.
\]
\end{defi}
\begin{defi}[\cite{Gro} \S2.2]
A contravariant $\delta$-functor $T=\{T^i\}_{i\in \mathbb{Z}_+}$is called a universal $\delta$-functor if for each $\delta$-functor $S=\{S^i\}_{i\in \mathbb{Z}_+}$ and for each natural transformation $F^{0}:T^0\to S^0$, there exists a unique morphism of $\delta$-functor $\{F^{i}:T^i\to S^i\}_{i\in \mathbb{Z}_+}.$
\end{defi}

\begin{defi}[\cite{Gro} \S2.2]
An additive functor  $F:\mathfrak{C}\to\mathfrak{D}$ is called coeffaceable if for each object $M$ of $\mathfrak{C}$, there is a epimorphism $P\to M$ such that $F(P)=0$. 
\end{defi}

\begin{thm}[\cite{Gro} Proposition 2.2.1]\label{grothendieck}
For each $\mathfrak{C}$, $\mathfrak{D}$ be abelian categories and let $T=\{T^i\}_{i\in\Z_+}$ be a contravariant $\delta$-functor from $\mathfrak{C}$ to $\mathfrak{D}$. If $T^{i}$ is coeffaceable for $i>0$, then $T$ is universal.
\end{thm}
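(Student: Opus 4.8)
This is Grothendieck's universality criterion (\cite{Gro} Proposition 2.2.1), so one option is simply to cite the reference; for completeness I sketch the classical argument. Fix a contravariant $\delta$-functor $S=\{S^i\}_{i\in\Z_+}$ from $\mathfrak{C}$ to $\mathfrak{D}$ and a natural transformation $F^0\colon T^0\to S^0$. The plan is to build the extension $\{F^i\colon T^i\to S^i\}_{i\in\Z_+}$, together with its uniqueness, by induction on $i$, the case $i=0$ being the hypothesis. For the inductive step, assume $F^0,\dots,F^{n-1}$ have been constructed, are natural transformations, and together form a morphism of $\delta$-functors in degrees $<n$; I want to produce $F^n$. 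Fix an object $M$ of $\mathfrak{C}$. Since $T^n$ is coeffaceable (here $n\ge 1$), choose an epimorphism $g\colon P\to M$ with $T^n(P)=0$ and set $R:=\ker g$, so that $0\to R\to P\xrightarrow{g}M\to 0$ is exact. Feeding this short exact sequence into the long exact sequences of the contravariant $\delta$-functors $T$ and $S$ produces exact pieces $T^{n-1}(P)\to T^{n-1}(R)\xrightarrow{\delta^{n-1}}T^n(M)\to T^n(P)=0$ and $S^{n-1}(P)\to S^{n-1}(R)\xrightarrow{\delta^{n-1}}S^n(M)$; in particular $\delta^{n-1}\colon T^{n-1}(R)\to T^n(M)$ is an epimorphism.

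The morphism $F^n(M)$ is then forced. The composite $\delta^{n-1}_S\circ F^{n-1}(R)\colon T^{n-1}(R)\to S^n(M)$ annihilates $\ker\delta^{n-1}_T$: by exactness $\ker\delta^{n-1}_T$ is the image of the map $T^{n-1}(P)\to T^{n-1}(R)$ induced by the inclusion $R\hookrightarrow P$, and naturality of $F^{n-1}$ carries this into the image of the analogous map $S^{n-1}(P)\to S^{n-1}(R)$, which by exactness of the $S$-piece is precisely $\ker\delta^{n-1}_S$. Hence $\delta^{n-1}_S\circ F^{n-1}(R)$ factors through the epimorphism $\delta^{n-1}_T$ as $F^n(M)\circ\delta^{n-1}_T$ for a unique morphism $F^n(M)\colon T^n(M)\to S^n(M)$. (If one prefers element chases, invoke the Freyd--Mitchell embedding.) Uniqueness of the whole family drops out of the same surjectivity: if $\{F^i\}$ and $\{G^i\}$ both extend $F^0$ and agree in degrees $<n$, then $F^n(M)\circ\delta^{n-1}_T=\delta^{n-1}_S\circ F^{n-1}(R)=\delta^{n-1}_S\circ G^{n-1}(R)=G^n(M)\circ\delta^{n-1}_T$, so $F^n(M)=G^n(M)$ as $\delta^{n-1}_T$ is an epimorphism.

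It remains to check three things: that $F^n(M)$ is independent of the choice of coeffacement $(P,g)$; that $F^n$ is natural in $M$; and that the defining square of a morphism of $\delta$-functors holds for $\delta^{n-1}$ and \emph{every} short exact sequence, not just the coeffacing ones. All three rest on one comparison principle: for a morphism of short exact sequences whose middle terms are killed by $T^n$, axiom (2) of a $\delta$-functor (compatibility of $\delta^{n-1}$ with morphisms of short exact sequences), together with naturality of $F^{n-1}$ and surjectivity of $\delta^{n-1}_T$, forces the two resulting maps $F^n$ to agree. Independence of the coeffacement follows by dominating two choices $P_1,P_2\to M$ by a coeffacement of the fibre product $P_1\times_M P_2$; naturality of $F^n$ follows by pulling a coeffacement of the target back along a coeffacement of the source; and the passage of the defining square from coeffacing short exact sequences to an arbitrary one $0\to M''\to M\to M'\to 0$ is handled by forming the pullback $Q:=M\times_{M'}P$ against a coeffacement $g\colon P\to M'$ of $T^n$, which supplies morphisms of short exact sequences relating $0\to M''\to M\to M'\to 0$ to the coeffacing sequence $0\to\ker g\to P\to M'\to 0$, after which additivity of $T^i$ and $S^i$ on the direct summands that appear closes the argument. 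This last step is the only genuinely fiddly point; once the epimorphism $\delta^{n-1}_T\colon T^{n-1}(R)\to T^n(M)$ is in hand, everything else is formal. As noted, since the statement is exactly \cite{Gro} Proposition 2.2.1, one may also just cite it.
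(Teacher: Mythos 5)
The paper does not supply a proof of this statement: it is quoted directly from Grothendieck's Tohoku paper \cite{Gro}, and the paper's ``proof'' is simply the citation. Your sketch of the classical argument is correct and complete in outline: the key epimorphism $\delta^{n-1}_T\colon T^{n-1}(R)\to T^n(M)$ coming from $T^n(P)=0$, the factorization of $\delta^{n-1}_S\circ F^{n-1}(R)$ through it using naturality of $F^{n-1}$ and exactness of the $S$-row, and then the three well-definedness checks (independence of the coeffacement, naturality of $F^n$, and compatibility of the square with $\delta^{n-1}$ for arbitrary short exact sequences) handled by the fibre-product and pullback comparisons against coeffacing resolutions. This matches Grothendieck's own proof, so you may, as you note, simply cite it as the paper does.
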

\begin{lem}[Shapiro's lemma]\label{Shapiro's lemma}
For each $M\in \mathfrak{B}$, $N\in \Cg\text{-}\mathrm{mod}_{\mathrm{wt}}$ and $n\in \mathbb{Z}_+$, we have 
$$\mathrm{Ext}^{n}_{\mathfrak{B}}(M,N)=\mathrm{Ext}^{n}_{\Cg\text{-}\mathrm{mod}_{\mathrm{wt}}}(U(\Cg)\underset{U(\mathfrak{b}_-)}{\otimes} M,N).$$
\end{lem}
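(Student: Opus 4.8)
The plan is to recognize this as the standard Shapiro isomorphism and to deduce it from the universality of $\delta$-functors (Theorem \ref{grothendieck}), which is exactly why that machinery was recalled just above. The key player is the induction functor $\mathrm{Ind} := U(\Cg)\otimes_{U(\bo_-)}(-)$ together with the restriction functor $\mathrm{Res}\colon \Cg\text{-}\mathrm{mod}_{\mathrm{wt}}\to\mathfrak{B}$, which is plainly exact since $\bo_-\subset\Cg$ and restriction preserves weight decompositions with at most countable-dimensional weight spaces.

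First I would check that $\mathrm{Ind}$ is a well-defined functor from $\mathfrak{B}$ to $\Cg\text{-}\mathrm{mod}_{\mathrm{wt}}$: for $M\in\mathfrak{B}$, the module $U(\Cg)\otimes_{U(\bo_-)}M$ carries an $\h$-weight decomposition inherited from the $Q$-grading of $U(\Cg)$ and the weights of $M$, and since $U(\Cg)$ is countable-dimensional every resulting weight space is at most countable-dimensional. Next, because $U(\Cg)$ is free as a right $U(\bo_-)$-module by the PBW theorem, $\mathrm{Ind}$ is exact. The tensor–hom adjunction then gives, for $M\in\mathfrak{B}$ and $N\in\Cg\text{-}\mathrm{mod}_{\mathrm{wt}}$, a natural isomorphism $\mathrm{Hom}_{\Cg\text{-}\mathrm{mod}_{\mathrm{wt}}}(\mathrm{Ind}(M),N)\cong\mathrm{Hom}_{\mathfrak{B}}(M,\mathrm{Res}(N))$, which is the $n=0$ case of the lemma.

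Now fix $N\in\Cg\text{-}\mathrm{mod}_{\mathrm{wt}}$ and consider the two contravariant functors of $M\in\mathfrak{B}$ given by $T^\bullet(M):=\mathrm{Ext}^\bullet_{\Cg\text{-}\mathrm{mod}_{\mathrm{wt}}}(\mathrm{Ind}(M),N)$ and $S^\bullet(M):=\mathrm{Ext}^\bullet_{\mathfrak{B}}(M,N)$. Both are contravariant $\delta$-functors on $\mathfrak{B}$: for $S^\bullet$ this is the standard long exact sequence for $\mathrm{Ext}$, available because $\mathfrak{B}$ has enough projectives (Proposition \ref{enough projective of b}); for $T^\bullet$ it follows by precomposing the $\mathrm{Ext}$ $\delta$-functor on $\Cg\text{-}\mathrm{mod}_{\mathrm{wt}}$ with the exact functor $\mathrm{Ind}$. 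To invoke Theorem \ref{grothendieck} I would verify that $T^n$ and $S^n$ are coeffaceable for $n>0$: given $M$, choose a projective $P$ of $\mathfrak{B}$ with an epimorphism $P\twoheadrightarrow M$; then $S^n(P)=0$ for $n>0$ by projectivity, while $\mathrm{Ind}(P)$ is projective in $\Cg\text{-}\mathrm{mod}_{\mathrm{wt}}$ because $\mathrm{Ind}$ is left adjoint to the exact functor $\mathrm{Res}$ (concretely, $\mathrm{Ind}(P(\Lambda))\cong U(\Cg)\otimes_{U(\h)}\C_\Lambda'$ is one of the generating projectives $P(\cdots)_{\mathrm{wt}}$), so $T^n(P)=0$ for $n>0$ as well. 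Hence both $T^\bullet$ and $S^\bullet$ are universal $\delta$-functors, and the degree-zero natural isomorphism $T^0\cong S^0$ of the previous paragraph extends uniquely to a natural isomorphism $T^\bullet\cong S^\bullet$, which is the assertion.

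The only genuinely delicate point is bookkeeping rather than mathematics: one must make sure that $\mathrm{Ind}$ really lands in $\Cg\text{-}\mathrm{mod}_{\mathrm{wt}}$ (countability of weight spaces) and, in the coeffaceability step, that the induced projective $\mathrm{Ind}(P)$ is an object of the weight category — both of which reduce to the countable-dimensionality of $U(\Cg)$ and the PBW decomposition. Everything else is the formal Shapiro–adjunction argument packaged through Grothendieck's criterion.
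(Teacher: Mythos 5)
Your proof is correct, but you package the argument through Grothendieck's universal $\delta$-functor criterion (Theorem \ref{grothendieck}), whereas the paper's proof is a shorter, direct computation: take a projective resolution $P^{\bullet}\to M\to 0$ in $\mathfrak{B}$, observe that since $U(\Cg)$ is free over $U(\bo_-)$ the complex $U(\Cg)\otimes_{U(\bo_-)}P^{\bullet}$ is a projective resolution of $U(\Cg)\otimes_{U(\bo_-)}M$ in $\Cg\text{-}\mathrm{mod}_{\mathrm{wt}}$, and then apply Frobenius reciprocity termwise to identify the two $\mathrm{Hom}$ complexes and hence their cohomologies. Both arguments rest on exactly the same three pillars — exactness of $\mathrm{Ind}$ from PBW freeness, projectivity of $\mathrm{Ind}(P)$, and the Frobenius adjunction in degree zero — so the mathematical content is identical; the difference is whether one compares the two $\mathrm{Ext}$'s at the level of explicit resolutions or lets universality of $\delta$-functors do the comparison abstractly. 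The paper's version is more economical here and reserves the $\delta$-functor machinery for Lemma \ref{extensions}, where it is genuinely needed because one cannot simply match up resolutions; your version has the small virtue of uniformizing the method across the section, at the cost of verifying coeffaceability separately (which you do correctly, noting $\mathrm{Ind}(P(\Lambda))\cong P(\Lambda)_{\mathrm{wt}}$). One minor bookkeeping point you handle correctly but which the paper leaves implicit: that $\mathrm{Ind}$ actually lands in $\Cg\text{-}\mathrm{mod}_{\mathrm{wt}}$, i.e.\ weight spaces of $U(\Cg)\otimes_{U(\bo_-)}M$ remain at most countably dimensional.
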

\begin{proof}
Let $P^{\bullet}\to M\to 0$ be a projective resolution of $M$ in $\mathfrak{B}$. Since $U(\Cg)$ is free over $U(\bo_-)$, the complex $U(\Cg)\underset{U(\bo_-)}{\otimes} P^{\bullet}$ is a projective resolution of $U(\Cg)\underset{U(\mathfrak{b}_-)}{\otimes} M$ in $\Cg$-$\mathrm{mod}_{\mathrm{wt}}.$ The assertion follows from the Frobenius reciprocity.
\end{proof}
\begin{lem}\label{extensions}We have the following: 
\item$\mathrm{(1)}$ For each $M,\;N\in \Cg\text{-}\mathrm{mod}^{\mathrm{int}}$, we have $$\mathrm{Ext}^{k}_{\Cg\text{-}\mathrm{mod}_{\mathrm{wt}}}(M,N^{\vee})=\mathrm{Ext}^{k}_{\Cg\text{-}\mathrm{mod}^{\mathrm{int}}}(M,N^{\vee})\;\;\;k\in \mathbb{Z}_+;$$

\item$\mathrm{(2)}$ For each $N\in \Cg\text{-}\mathrm{mod^{int}}$, we have $$\mathrm{Ext}^{k}_{\Cg\text{-}\mathrm{mod}_{\mathrm{wt}}}(U(\Cg)\underset{U(\mathfrak{b}_-)}{\otimes} \mathbb{C}_0,N^{\vee})=\mathrm{Ext}^{k}_{\Cg\text{-}\mathrm{mod}_{\mathrm{wt}}}(\mathbb{C}_0,N^{\vee})\;\;\;k\in \mathbb{Z}_+.$$
\end{lem}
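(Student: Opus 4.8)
The plan is to reduce both statements to Shapiro's Lemma (Lemma \ref{Shapiro's lemma}) and, for the positive-degree vanishings that result, to finite-dimensional complete reducibility, organised through the $\delta$-functor formalism set up above.

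For (1), first observe that $\Cg\text{-}\mathrm{mod}^{\mathrm{int}}$ is a Serre subcategory of $\Cg\text{-}\mathrm{mod}_{\mathrm{wt}}$: it is closed under sub-objects and quotients, and under extensions because integrability of a $\mathring{\g}$-module means local nilpotence of each $e_{\al}$ ($\al\in\mathring{\De}$), a property inherited by extensions. Hence a short exact sequence of integrable modules in $\Cg\text{-}\mathrm{mod}_{\mathrm{wt}}$ is the same thing as one in $\Cg\text{-}\mathrm{mod}^{\mathrm{int}}$. Fix $N\in\Cg\text{-}\mathrm{mod}^{\mathrm{int}}$ and consider the two contravariant $\delta$-functors on $\Cg\text{-}\mathrm{mod}^{\mathrm{int}}$,
$$T^{\bullet}=\mathrm{Ext}^{\bullet}_{\Cg\text{-}\mathrm{mod}_{\mathrm{wt}}}(-,N^{\vee}),\qquad S^{\bullet}=\mathrm{Ext}^{\bullet}_{\Cg\text{-}\mathrm{mod}^{\mathrm{int}}}(-,N^{\vee});$$
they agree in degree $0$, both being $\mathrm{Hom}_{\Cg}(-,N^{\vee})$. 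Since $\Cg\text{-}\mathrm{mod}^{\mathrm{int}}$ has enough projectives, $S^{\bullet}$ is universal. By Theorem \ref{grothendieck} it suffices to show that $T^{k}$ is coeffaceable for $k>0$; using for a given $M$ the surjection onto $M$ from a direct sum $P$ of modules $P(\lambda+n\Lambda_{0}+m\delta)_{\mathrm{int}}$ (all inside $\Cg\text{-}\mathrm{mod}^{\mathrm{int}}$), this is the acyclicity statement
$$\mathrm{Ext}^{k}_{\Cg\text{-}\mathrm{mod}_{\mathrm{wt}}}\bigl(P(\lambda+n\Lambda_{0}+m\delta)_{\mathrm{int}},\,N^{\vee}\bigr)=0\qquad(k>0).$$
To obtain this I would apply Shapiro's Lemma for the pair $\mathring{\g}+\h\subset\Cg$ (argued as in Lemma \ref{Shapiro's lemma}, since $U(\Cg)$ is $U(\mathring{\g}+\h)$-free and induction of a weight-projective resolution is again one), rewriting the left side as an Ext-group over $\mathring{\g}+\h$, in the category of weight modules, between $V(\lambda+n\Lambda_{0}+m\delta)$ and $\mathrm{Res}_{\mathring{\g}+\h}N^{\vee}$. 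The latter is integrable, hence completely reducible; as $N^{\vee}$ has finite-dimensional weight spaces each isotypic piece occurs finitely often, and the $d$- and $K$-eigenvalues are locked together, so one is reduced to Ext-groups between finite-dimensional simple $\mathring{\g}$-modules, which vanish in positive degrees (they vanish unless the two have equal Harish-Chandra central character, i.e. coincide, and then one uses semisimplicity of the integrable category together with the Casimir). Uniqueness of the lift of $\mathrm{id}$ to a morphism of universal $\delta$-functors then gives $T^{k}\cong S^{k}$ for all $k$, which is (1).

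For (2), Shapiro's Lemma with $M=\C_{0}\in\mathfrak{B}$ identifies the left-hand side with $\mathrm{Ext}^{k}_{\mathfrak{B}}(\C_{0},\mathrm{Res}_{\bo_{-}}N^{\vee})$, so the content is that enlarging $\bo_{-}$ to $\Cg$ does not change $\mathrm{Ext}^{\bullet}(\C_{0},-)$ against $N^{\vee}$. I would use the short exact sequence $0\to K\to U(\Cg)\otimes_{U(\bo_{-})}\C_{0}\to\C_{0}\to 0$ of $\Cg$-modules and show $\mathrm{Ext}^{k}_{\Cg\text{-}\mathrm{mod}_{\mathrm{wt}}}(K,N^{\vee})=0$ for every $k$; the long exact sequence then yields the assertion. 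Here one uses that $\mathrm{Res}_{\mathring{\g}}\bigl(U(\Cg)\otimes_{U(\bo_{-})}\C_{0}\bigr)=U(\mathring{\g})\otimes_{U(\mathring{\bo}_{-})}\C_{0}$ with $K$ its maximal submodule, that $N^{\vee}$ is $\mathring{\g}$-integrable, and the weight-$0$ part of Kostant's description of $\mathring{\n}_{-}$-(co)homology of integrable modules, which concentrates the relevant groups in degree $0$; and already $\mathrm{Hom}_{\Cg}(K,N^{\vee})\hookrightarrow\mathrm{Hom}_{\mathring{\g}}(K,N^{\vee})=0$.

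The hard part in both cases is the bookkeeping forced by the fact that $\Cg\text{-}\mathrm{mod}_{\mathrm{wt}}$ and $\mathfrak{B}$ are not artinian: Shapiro's Lemma has to be applied with honest, possibly infinite, projective resolutions, and one must justify the finite-multiplicity decomposition of $\mathrm{Res}\,N^{\vee}$, the separation of $d$-eigenvalues, and the convergence of the complexes involved. Concretely the single delicate point on which everything rests is the acyclicity $\mathrm{Ext}^{>0}_{\Cg\text{-}\mathrm{mod}_{\mathrm{wt}}}(P(\lambda+n\Lambda_{0}+m\delta)_{\mathrm{int}},N^{\vee})=0$: this is exactly what allows the projectives of $\Cg\text{-}\mathrm{mod}^{\mathrm{int}}$ to be used as effacements inside the larger category, and hence what makes the comparison of the two Ext-theories possible.
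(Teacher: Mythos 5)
Your proof of (1) follows the same skeleton as the paper's: set up both $\mathrm{Ext}^{\bullet}_{\Cg\text{-}\mathrm{mod}_{\mathrm{wt}}}(-,N^{\vee})$ and $\mathrm{Ext}^{\bullet}_{\Cg\text{-}\mathrm{mod}^{\mathrm{int}}}(-,N^{\vee})$ as contravariant $\delta$-functors on $\Cg\text{-}\mathrm{mod}^{\mathrm{int}}$, note they agree in degree $0$, and invoke Theorem \ref{grothendieck} after establishing that the first is coeffaceable. Where you diverge is in the mechanism for the key acyclicity
$$\mathrm{Ext}^{k}_{\Cg\text{-}\mathrm{mod}_{\mathrm{wt}}}\bigl(P(\lambda+n\Lambda_{0}+m\delta)_{\mathrm{int}},N^{\vee}\bigr)=0\quad(k>0).$$
You propose Shapiro for the pair $\mathring{\g}+\h\subset\Cg$ followed by semisimplicity of the integrable $(\mathring{\g}+\h)$-weight category; the paper instead inducts the finite BGG resolution of $V(\lambda+n\Lambda_{0}+m\delta)$ by $\mathring{\g}$-Verma modules to $\Cg$, getting an explicit projective resolution of $P_{\mathrm{int}}$ in $\Cg\text{-}\mathrm{mod}_{\mathrm{wt}}$ whose terms $U(\Cg)\otimes_{U(\mathring{\g}+\h)}\mathring{M}(w\circ\lambda+\cdots)$ for $l(w)>0$ have no integrable quotients and hence no nonzero maps into $N^{\vee}$. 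Both routes are plausible, though your appeal to semisimplicity requires first convincing yourself that the relevant $\mathrm{Ext}$ is computed in a category where integrability can be used; the paper's BGG route sidesteps that issue by staying inside $\Cg\text{-}\mathrm{mod}_{\mathrm{wt}}$.

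For (2) your route is genuinely different and, as it stands, has a gap. You want to run the long exact sequence of $\mathrm{Ext}^{\bullet}_{\Cg\text{-}\mathrm{mod}_{\mathrm{wt}}}(-,N^{\vee})$ on $0\to K\to U(\Cg)\otimes_{U(\bo_-)}\C_0\to\C_0\to 0$ and show $\mathrm{Ext}^{k}_{\Cg\text{-}\mathrm{mod}_{\mathrm{wt}}}(K,N^{\vee})=0$ for all $k$. The degree-$0$ vanishing is fine, since $K$ has no nonzero $\mathring{\g}$-integrable quotient, being the maximal proper submodule of a Verma for $\mathring{\g}$ at weight $0$. But for $k>0$ the appeal to Kostant's theorem does not close the argument: Kostant computes $\mathring{\n}_{-}$-(co)homology of finite-dimensional $\mathring{\g}$-modules, whereas $K$ is an infinite-dimensional $\Cg$-module, the $\mathrm{Ext}$ is taken in $\Cg\text{-}\mathrm{mod}_{\mathrm{wt}}$, and no comparison between those two cohomologies is set up. The paper avoids the issue entirely by running the same universal-$\delta$-functor argument a second time: it shows $\{\mathrm{Ext}^{k}_{\Cg\text{-}\mathrm{mod}_{\mathrm{wt}}}(U(\Cg)\otimes_{U(\bo_-)}\C_0,(-)^{\vee})\}$ is coeffaceable on $\Cg\text{-}\mathrm{mod}^{\mathrm{int}}$ by evaluating on $P(\lambda+n\Lambda_0+m\delta)_{\mathrm{int}}$, using the same BGG-induced projective resolution plus PBW and Frobenius reciprocity to reduce to an explicit $\mathrm{Hom}_{\h}$ that vanishes when $l(w)>0$. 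If you want to keep the kernel argument, you would need a concrete projective resolution of $K$ (or of $U(\Cg)\otimes_{U(\bo_-)}\C_0$) in $\Cg\text{-}\mathrm{mod}_{\mathrm{wt}}$ and a direct computation along the lines of the paper; as written, the step that is supposed to control $\mathrm{Ext}^{>0}(K,N^{\vee})$ is not there.
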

\begin{proof}
First, we prove the first assertion. The sets of functors $\{\mathrm{Ext}^{k}_{\Cg\text{-}\mathrm{mod}_{\mathrm{wt}}}(-,N^{\vee})\}_{k\in\Z_+}$ and $\{\mathrm{Ext}^{k}_{\Cg\text{-}\mathrm{mod}^{\mathrm{int}}}(-,N^{\vee})\}_{k\in\Z_+}$ are contravariant $\delta$-functors from $\Cg\text{-}\mathrm{mod}^{\mathrm{int}}$ to the category of vector spaces. From Theorem \ref{grothendieck},  $\{\mathrm{Ext}^{k}_{\Cg\text{-}\mathrm{mod}^{\mathrm{int}}}(-,N^{\vee})\}_{k\in\Z_+}$ is a universal $\delta$-functor. We prove $\{\mathrm{Ext}^{k}_{\Cg\text{-}\mathrm{mod}_{\mathrm{wt}}}(-,N^{\vee})\}_{k\in\Z_+}$ is also a universal $\delta$-functor. From Theorem \ref{grothendieck}, it is sufficient to show $\mathrm{Ext}^{l}_{\Cg\text{-}\mathrm{mod}_{\mathrm{wt}}}(P(\lambda+n\Lambda_0+m\delta)_{\mathrm{int}},N^{\vee})=\{0\}$ for $\lambda\in \mathring{P}_+$, $n,$ $2m\in\mathbb{Z}$ and $l>0$. From the BGG-resolution, we have an exact sequence 
$$\cdots\to\underset{w\in \mathring{W},l(w)=n+1}{\bigoplus}\mathring{M}(w\circ\lambda+n\Lambda_0+m\delta)\to\underset{w\in \mathring{W},l(w)=n}{\bigoplus}\mathring{M}(w\circ\lambda+n\Lambda_0+m\delta)\to\cdots$$
$$\cdots\to\underset{w\in \mathring{W},l(w)=1}{\bigoplus}\mathring{M}(w\circ\lambda+n\Lambda_0+m\delta)\to \mathring{M}(\lambda+n\Lambda_0+m\delta)\to V(\lambda+n\Lambda_0+m\delta)\to0,$$
where$$\mathring{M}(\mu):=U(\mathring{\g}+\h)\underset{U(\mathring{\bo}_{+}+\h)}{\otimes}\mathbb{C}_{\mu}.$$ Since $U(\Cg)$ is free over $U(\mathring{\g}+\h)$, by tensoring $U(\Cg)$, we obtain a projective resolution $P^{\bullet}\to P(\lambda+n\Lambda_0+m\delta)_{\mathrm{int}}\to 0$ of $P(\lambda+n\Lambda_0+m\delta)_{\mathrm{int}}$ in $\Cg$-$\mathrm{mod}_{\mathrm{wt}}$ such that $P^{n}=\underset{w\in \mathring{W},l(w)=n}{\bigoplus}U(\Cg)\underset{U(\mathring{\g}+\h)}{\otimes}\mathring{M}(w\circ\lambda+n\Lambda_0+m\delta).$ For each $l(w)>0,$ since $U(\Cg)\underset{U(\mathring{\g}+\h)}{\otimes}\mathring{M}(w\circ\lambda+n\Lambda_0+m\delta)$ does not have a $\mathring{\g}$-integrable quotient, we have $\mathrm{Hom}_{\Cg\text{-}\mathrm{mod}_{\mathrm{wt}}}(U(\Cg)\underset{U(\mathring{\g}+\h)}{\otimes}\mathring{M}(w\circ\lambda+n\Lambda_0+m\delta),N^{\vee})=\{0\}.$ This implies $\mathrm{Ext}^{l}_{\Cg\text{-}\mathrm{mod}_{\mathrm{wt}}}(P(\lambda+n\Lambda_0+m\delta)_{\mathrm{int}},N^{\vee})=\{0\}$ for $l>0$. Hence $\{\mathrm{Ext}^{k}_{\Cg\text{-}\mathrm{mod}_{\mathrm{wt}}}(-,N^{\vee})\}_{k\in \Z_+}$ is a universal $\delta$-functor by Theorem \ref{grothendieck}. Since $\mathrm{Ext}^{0}_{\Cg\text{-}\mathrm{mod}_{\mathrm{wt}}}(-,N^{\vee})=\mathrm{Ext}^{0}_{\Cg\text{-}\mathrm{mod}_{\mathrm{int}}}(-,N^{\vee})$, the assertion follows.\\
Next, we prove the second assertion. Two sets of functors $\{\mathrm{Ext}^{k}_{\Cg\text{-}\mathrm{mod}_{\mathrm{wt}}}(U(\Cg)\underset{U(\mathfrak{b}_-)}{\otimes} \mathbb{C}_0,(-)^{\vee})\}_{k\in\Z_+}$ and $\{\mathrm{Ext}^{k}_{\Cg\text{-}\mathrm{mod}_{\mathrm{wt}}}(\mathbb{C}_0,(-)^{\vee})\}_{k\in\Z_+}$ are contravariant $\delta$-functors from $\Cg\text{-}\mathrm{mod}^{\mathrm{int}}$ to the category of vector spaces. Since $\C_0$ is an object of $\Cg$-$\mathrm{mod}^{\mathrm{int}},$ we can prove that the latter is a universal $\delta$-functor by the same argument as in the proof of (1). We show that $\mathrm{Ext}^{l}_{\Cg\text{-}\mathrm{mod}_{\mathrm{wt}}}(U(\Cg)\underset{U(\mathfrak{b}_-)}{\otimes} \mathbb{C}_0,P(\lambda+n\Lambda_0+m\delta)^{\vee}_{\mathrm{int}})=\{0\}$ for each $l>0.$ For each $w\in\mathring{W}$, by the PBW theorem and the Frobenius reciprocity, we have 
\begin{eqnarray*}
&&\mathrm{Hom}_{\Cg}(U(\Cg)\underset{U(\mathfrak{b}_-)}{\otimes} \mathbb{C}_0,(U(\Cg)\otimes_{U(\mathring{\g}+\h)}\mathring{M}(w\circ\lambda+n\Lambda_0+m\delta))^{\vee})\\&=&\mathrm{Hom}_{\Cg}(U(\Cg)\otimes_{U(\mathring{\g}+\h)}\mathring{M}(w\circ\lambda+n\Lambda_0+m\delta),(U(\Cg)\underset{U(\mathfrak{b}_-)}{\otimes}\mathbb{C}_0)^{\vee})\\
&=&\mathrm{Hom}_{\mathring{\bo}_++\h}(\C_{w\circ\lambda+n\Lambda_0+m\delta},(U(\Cg)\underset{U(\mathfrak{b}_-)}{\otimes}\mathbb{C}_0)^{\vee})\\
&=&\mathrm{Hom}_{\mathring{\bo}_++\h}(\C_{w\circ\lambda+n\Lambda_0+m\delta},(U(\mathring{\bo}_++\h)\underset{U(\h)}{\otimes}\mathbb{C}_0)^{\vee})\\
&=&\mathrm{Hom}_{\mathring{\bo}_++\h}(U(\mathring{\bo}_++\h)\underset{U(\h)}{\otimes}\mathbb{C}_0,\C_{-w\circ\lambda-n\Lambda_0-m\delta})\\
&=&\mathrm{Hom}_{\h}(\mathbb{C}_0,\C_{-w\circ\lambda-n\Lambda_0-m\delta}).
\end{eqnarray*}
If $l(w)>0$, then $\mathrm{Hom}_{\mathring{\bo}_++\h}(\C_{w\circ\lambda+n\Lambda_0+m\delta},(U(\mathring{\bo}_++\h)\underset{\C}{\otimes}\mathbb{C}_0)^{\vee})=\{0\}$. Using the projective resolution of $P(\lambda+n\Lambda_0+m\delta)_{\mathrm{int}}$ considered in the proof of (1), this implies $\mathrm{Ext}^{l}_{\Cg\text{-}\mathrm{mod}_{\mathrm{wt}}}(U(\Cg)\underset{U(\mathfrak{b}_-)}{\otimes} \mathbb{C}_0,P(\lambda+n\Lambda_0+m\delta)^{\vee}_{\mathrm{int}})=\{0\}$ for each $l>0$. Hence $\mathrm{Ext}^{k}_{\Cg\text{-}\mathrm{mod}_{\mathrm{wt}}}(U(\Cg)\underset{U(\mathfrak{b}_-)}{\otimes} \mathbb{C}_0,(-)^{\vee})=\{0\}$ is a universal $\delta$-functor. Since $\mathrm{Ext}^{0}_{\Cg\text{-}\mathrm{mod}_{\mathrm{wt}}}(U(\Cg)\underset{U(\mathfrak{b}_-)}{\otimes} \mathbb{C}_0,N^{\vee})=\mathrm{Ext}^{0}_{\Cg\text{-}\mathrm{mod}_{\mathrm{wt}}}(\mathbb{C}_0,N^{\vee})$, the assertion follows.
\end{proof}
\begin{lem}\label{extension dual}For each $M,$ $N\in \mathfrak{B}$, we have $$\mathrm{Ext}_{\mathfrak{B}}^n(M,N^{\vee})=\mathrm{Ext}_{\mathfrak{B}}^n(\mathbb{C}_{0},M^{\vee}\otimes_{\C} N^{\vee})\;\;\mathrm{for}\;\;n\in \mathbb{Z}_+.$$
\end{lem}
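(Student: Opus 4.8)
The plan is to reduce to the case $n=0$, where the identity is a form of the tensor--hom adjunction for the Hopf algebra $U(\bo_-)$, and then to propagate it to all $n$ by exhibiting, for each projective $P\in\mathfrak{B}$, an explicit injective coresolution built from $P$. Concretely, for any two $\bo_-$-modules $A,B$ carrying weight decompositions one has natural isomorphisms
\[
\mathrm{Hom}_{\bo_-}(A,B^\vee)\;\cong\;\mathrm{Hom}_{\bo_-}(A\otimes_{\C}B,\C_0)\;\cong\;\mathrm{Hom}_{\bo_-}(\C_0,A^\vee\otimes_{\C}B^\vee):
\]
a $\bo_-$-homomorphism $A\otimes_{\C}B\to\C_0$ is the same thing as a $\bo_-$-invariant pairing of $A$ and $B$, and since an $\h$-equivariant pairing automatically respects the weight grading, the full linear dual may be replaced throughout by the restricted dual $(-)^\vee$. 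Taking $A=M$, $B=N$ gives the statement for $n=0$, naturally in $M$.

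The heart of the argument is the following vanishing: for every projective $P\in\mathfrak{B}$, the module $P^\vee\otimes_{\C}N^\vee$ is injective in $\mathfrak{B}$; it suffices to treat $P=P(\Lambda)$. I would use the tensor identity $V\otimes_{\C}P(\Lambda)\cong\bigoplus_{\mu}P(\Lambda+\mu)^{\oplus\dim_{\C}V_\mu}$, valid for any weight module $V$ by the PBW theorem and Frobenius reciprocity, which in particular shows $V\otimes_{\C}P(\Lambda)$ is again projective. Then, for an arbitrary $M\in\mathfrak{B}$ with projective resolution $Q^\bullet\to M$, applying the degree-zero adjunction in each degree gives $\mathrm{Hom}_{\bo_-}(Q^\bullet,P(\Lambda)^\vee\otimes_{\C}N^\vee)\cong\mathrm{Hom}_{\bo_-}(Q^\bullet\otimes_{\C}P(\Lambda)\otimes_{\C}N,\C_0)$; but $Q^\bullet\otimes_{\C}P(\Lambda)\otimes_{\C}N$ is a projective resolution of the projective module $M\otimes_{\C}P(\Lambda)\otimes_{\C}N$, so its cohomology vanishes in positive degrees. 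Hence $\mathrm{Ext}^{n}_{\mathfrak{B}}(M,P(\Lambda)^\vee\otimes_{\C}N^\vee)=0$ for all $M$ and all $n>0$, which is the claimed injectivity. (Taking $N=\C_0$ this also shows $P(\Lambda)^\vee$ itself is injective, which can alternatively be packaged via universal $\delta$-functors as in Lemma \ref{extensions}.)

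To conclude, take a projective resolution $P^\bullet\to M$ in $\mathfrak{B}$ (Proposition \ref{enough projective of b}). Since $(-)^\vee$ is exact and contravariant and $-\otimes_{\C}N^\vee$ is exact, $0\to M^\vee\otimes_{\C}N^\vee\to(P^\bullet)^\vee\otimes_{\C}N^\vee$ is a coresolution, and by the vanishing above it is a coresolution by injectives. Therefore
\[
\mathrm{Ext}^n_{\mathfrak{B}}(\C_0,M^\vee\otimes_{\C}N^\vee)=H^n\big(\mathrm{Hom}_{\bo_-}(\C_0,(P^\bullet)^\vee\otimes_{\C}N^\vee)\big)\cong H^n\big(\mathrm{Hom}_{\bo_-}(P^\bullet,N^\vee)\big)=\mathrm{Ext}^n_{\mathfrak{B}}(M,N^\vee),
\]
the middle isomorphism being the degree-zero adjunction applied termwise (it is natural, hence commutes with the differentials). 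I expect the real friction to be bookkeeping rather than conceptual: one must check that each passage between $\mathrm{Hom}_{\C}(-,-)$ and $(-)^\vee$, and each identification $(A\otimes_{\C}B)^\vee\cong A^\vee\otimes_{\C}B^\vee$, is legitimate in $\mathfrak{B}$ — this is exactly where the finiteness of the weight spaces in sight (and boundedness of the weights from above) is used, so that the ambient infinite products collapse to finite direct sums and biduality applies.
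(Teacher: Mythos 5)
Your argument is correct but takes a genuinely different path from the paper's. The paper proves the lemma through the universal $\delta$-functor machinery (Theorem \ref{grothendieck}): it asserts that $I\otimes_{\C}N^\vee$ is injective whenever $I$ is, deduces $\mathrm{Ext}^{k}_{\mathfrak{B}}(\C_0,P^\vee\otimes_{\C}N^\vee)=0$ for $P$ projective and $k>0$, concludes that $\{\mathrm{Ext}^{n}_{\mathfrak{B}}(\C_0,(-)^\vee\otimes_{\C}N^\vee)\}_{n}$ is a universal $\delta$-functor, and matches it in degree zero with the universal $\delta$-functor $\{\mathrm{Ext}^{n}_{\mathfrak{B}}(-,N^\vee)\}_{n}$. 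You bypass the universality formalism entirely: you prove the same acyclicity directly via the tensor identity (so that $V\otimes_{\C}P(\Lambda)$ is projective, hence $M\otimes_{\C}P(\Lambda)\otimes_{\C}N$ is projective and its positive $\mathrm{Ext}$ against $\C_0$ vanishes) — which in fact supplies a justification the paper leaves tacit — and then assemble an explicit injective coresolution of $M^\vee\otimes_{\C}N^\vee$ from a projective resolution of $M$ and read off both sides termwise. Both arguments pivot on the same two facts, namely the degree-zero adjunction $\mathrm{Hom}_{\mathfrak{B}}(R,N^\vee)\cong\mathrm{Hom}_{\mathfrak{B}}(\C_0,R^\vee\otimes_{\C}N^\vee)$ and the injectivity of $P(\Lambda)^\vee\otimes_{\C}N^\vee$; what you gain is self-containment and concreteness, while the paper gains brevity and uniformity of style with Lemma \ref{extensions}. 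Two small cautions for your write-up: passing from an $\mathrm{Ext}^{>0}_{\mathfrak{B}}(\C_0,-)$-acyclic coresolution to a computation of $\mathrm{Ext}^{n}_{\mathfrak{B}}(\C_0,-)$ requires the usual balanced-$\mathrm{Ext}$ (double-complex) remark, available here because $\mathfrak{B}$ has enough projectives (Proposition \ref{enough projective of b}); and the finiteness bookkeeping you flag at the end — the identification $(A\otimes_{\C}B)^\vee\cong A^\vee\otimes_{\C}B^\vee$ and the collapse of infinite products to direct sums — is genuinely needed and is equally left implicit in the paper's own invocation of the degree-zero identity.
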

\begin{proof}
We show that $\{\mathrm{Ext}_{\mathfrak{B}}^{n}(\mathbb{C}_{0},(-)^{\vee}\otimes_{\mathbb{C}}N^{\vee})\}_{n\in \mathbb{Z}_+}$ is a universal $\delta$-functor. For each injective object $I\in \mathfrak{B},$ the object $I\otimes_{\C} N^{\vee}$ is an injective object in $\mathfrak{B}$. Hence we have $\mathrm{Ext}_{\mathfrak{B}}^{k}(\mathbb{C}_{0},P^{\vee}\otimes_{\mathbb{C}}N^{\vee})=\{0\}$ for each projective object $P\in \mathfrak{B}$ and $k\in \mathbb{N}.$ From Theorem \ref{grothendieck}, this implies $\{\mathrm{Ext}_{\mathfrak{B}}^{n}(\mathbb{C}_{0},(-)^{\vee}\otimes_{\mathbb{C}}N^{\vee})\}_{n\in \mathbb{Z}_+}$ is a universal $\delta$-functor. For each $R\in \mathfrak{B}$, we have $\mathrm{Hom}_{\mathfrak{B}}(R,N^{\vee})=\mathrm{Hom}_{\mathfrak{B}}(\mathbb{C}_0,R^{\vee}\otimes_{\mathbb{C}} N^{\vee}).$ Since $\{\mathrm{Ext}_{\mathfrak{B}}^n(-,N^{\vee})\}_{n\in \mathbb{Z}_+}$ is a universal $\delta$-functor, this implies $\{\mathrm{Ext}_{\mathfrak{B}}^n(-,N^{\vee})\}_{n\in \mathbb{Z}_+}\cong \{\mathrm{Ext}_{\mathfrak{B}}^{n}(\mathbb{C}_{0},(-)^{\vee}\otimes_{\mathbb{C}}N^{\vee})\}_{n\in \mathbb{Z}_+}.$ Hence the assertion follows.
\end{proof}
\begin{remark}\label{extension dual2}
The conclusion of Lemma \ref{extension dual} remains valid if we replace $\mathrm{Ext}^{n}_{\mathfrak{B}}$ with $\mathrm{Ext}^{n}_{\Cg\text{-}\mathrm{mod}^{\mathrm{int}}}$ by the same argument. 
\end{remark}
\begin{thm}\label{compare extension}
For $M,$ $N\in \Cg\text{-}\mathrm{mod}^{\mathrm{int}},$ we have $$\mathrm{Ext}^{n}_{\mathfrak{B}}(M,N^{\vee})=\mathrm{Ext}^{n}_{\Cg\text{-}\mathrm{mod}^{\mathrm{int}}}(M,N^{\vee}).$$
\end{thm}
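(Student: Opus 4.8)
The plan is to reduce the statement to the case $M=\mathbb{C}_0$ by means of the dual-shift identity Lemma~\ref{extension dual} together with its analogue Remark~\ref{extension dual2}, and then to dispose of the case $M=\mathbb{C}_0$ by chaining Shapiro's lemma (Lemma~\ref{Shapiro's lemma}) with the two comparison statements of Lemma~\ref{extensions}.

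As a preliminary I would observe that for $M,N\in\Cg\text{-}\mathrm{mod}^{\mathrm{int}}$ the tensor product $L:=M\otimes_{\mathbb{C}}N$ again belongs to $\Cg\text{-}\mathrm{mod}^{\mathrm{int}}$: it is integrable because $e_i$ and $f_i$ act locally nilpotently on any tensor product of modules on which they act locally nilpotently, its set of weights stays inside a finite union of sets $\mu_i-Q_+$, and every weight space is finite dimensional since only finitely many pairs of weights of $M$ and $N$ can sum to a prescribed weight (both weight sets being bounded above). Since all weight spaces occurring are finite dimensional, one moreover gets, weight space by weight space, an identification $M^{\vee}\otimes_{\mathbb{C}}N^{\vee}=L^{\vee}$ of $\bo_-$-modules, and in fact of $\Cg$-modules.

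Next I would settle the case $M=\mathbb{C}_0$, that is, prove $\mathrm{Ext}^n_{\mathfrak{B}}(\mathbb{C}_0,L^{\vee})=\mathrm{Ext}^n_{\Cg\text{-}\mathrm{mod}^{\mathrm{int}}}(\mathbb{C}_0,L^{\vee})$ for every $L\in\Cg\text{-}\mathrm{mod}^{\mathrm{int}}$. Here $\mathbb{C}_0\in\mathfrak{B}$ and $L^{\vee}\in\Cg\text{-}\mathrm{mod}_{\mathrm{wt}}$, so Lemma~\ref{Shapiro's lemma} gives $\mathrm{Ext}^n_{\mathfrak{B}}(\mathbb{C}_0,L^{\vee})=\mathrm{Ext}^n_{\Cg\text{-}\mathrm{mod}_{\mathrm{wt}}}(U(\Cg)\otimes_{U(\bo_-)}\mathbb{C}_0,L^{\vee})$; Lemma~\ref{extensions}(2) with $N=L$ rewrites this as $\mathrm{Ext}^n_{\Cg\text{-}\mathrm{mod}_{\mathrm{wt}}}(\mathbb{C}_0,L^{\vee})$; and, since $\mathbb{C}_0\in\Cg\text{-}\mathrm{mod}^{\mathrm{int}}$, Lemma~\ref{extensions}(1) with $M=\mathbb{C}_0$, $N=L$ identifies the latter with $\mathrm{Ext}^n_{\Cg\text{-}\mathrm{mod}^{\mathrm{int}}}(\mathbb{C}_0,L^{\vee})$.

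Finally I would assemble the pieces. By Lemma~\ref{extension dual}, $\mathrm{Ext}^n_{\mathfrak{B}}(M,N^{\vee})=\mathrm{Ext}^n_{\mathfrak{B}}(\mathbb{C}_0,M^{\vee}\otimes_{\mathbb{C}}N^{\vee})=\mathrm{Ext}^n_{\mathfrak{B}}(\mathbb{C}_0,L^{\vee})$; the previous paragraph turns this into $\mathrm{Ext}^n_{\Cg\text{-}\mathrm{mod}^{\mathrm{int}}}(\mathbb{C}_0,L^{\vee})$; and Remark~\ref{extension dual2} turns that back into $\mathrm{Ext}^n_{\Cg\text{-}\mathrm{mod}^{\mathrm{int}}}(M,N^{\vee})$, which is the claim. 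There is no substantial obstacle; the only point demanding care is the categorical bookkeeping, namely checking that each object lies in the category where the invoked lemma is stated, in particular that $L=M\otimes N$ is genuinely an object of $\Cg\text{-}\mathrm{mod}^{\mathrm{int}}$ and that $L^{\vee}\in\Cg\text{-}\mathrm{mod}_{\mathrm{wt}}$, together with the weight-space identification $M^{\vee}\otimes N^{\vee}\cong(M\otimes N)^{\vee}$ on which the reduction rests.
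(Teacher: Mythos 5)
Your proposal is correct and follows essentially the same chain as the paper: Lemma~\ref{extension dual} to shift to $\mathbb{C}_0$, then Shapiro's lemma, then Lemma~\ref{extensions}~(2) and (1), then Remark~\ref{extension dual2} to shift back. The only difference is that you make explicit the checks that $M\otimes N\in\Cg\text{-}\mathrm{mod}^{\mathrm{int}}$ and $M^{\vee}\otimes N^{\vee}\cong(M\otimes N)^{\vee}$, which the paper leaves implicit when it applies Lemma~\ref{extensions} to the object $M^{\vee}\otimes_{\C}N^{\vee}$.
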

\begin{proof}
We have \begin{align*}
\mathrm{Ext}^{n}_{\mathfrak{B}}(M,N^{\vee})&=\mathrm{Ext}^{n}_{\mathfrak{B}}(\mathbb{C}_0,M^{\vee}\otimes_{\C}N^{\vee})\;\;\;\mathrm{from}\; \mathrm{Lemma}\; \ref{extension dual}\\
&=\mathrm{Ext}^{n}_{\Cg\text{-}\mathrm{mod}_{\mathrm{wt}}}(U(\Cg)\underset{U(\mathfrak{b}_-)}{\otimes} \mathbb{C}_0,M^{\vee}\otimes_{\C} N^{\vee})\;\;\; \mathrm{from}\; \mathrm{Lemma}\; \ref{Shapiro's lemma}\\
&=\mathrm{Ext}^{n}_{\Cg\text{-}\mathrm{mod}_{\mathrm{wt}}}(\mathbb{C}_0,M^{\vee}\otimes_{\C} N^{\vee})\;\;\;\mathrm{from}\; \mathrm{Lemma}\; \ref{extensions}\; (2)\\
&=\mathrm{Ext}^{n}_{\Cg\text{-}\mathrm{mod}^{\mathrm{int}}}(\mathbb{C}_0,M^{\vee}\otimes_{\C} N^{\vee})\;\;\;\mathrm{from}\; \mathrm{Lemma}\; \ref{extensions}\; (1)\\
&=\mathrm{Ext}^{n}_{\Cg\text{-}\mathrm{mod}^{\mathrm{int}}}(M,N^{\vee})\;\;\;\mathrm{from}\; \mathrm{Remark}\; \ref{extension dual2}.
\end{align*}
\end{proof}
\begin{proof}[Proof of Theorem \ref{ext}]
If we set $M=W(\lambda)\otimes_{\C}\C_{m\delta}$ and $N=W(\mu)_{loc}$ in Theorem \ref{compare extension}, then we obtain Theorem \ref{ext}.
\end{proof}
\begin{cor}\label{compare ext-pairing}
For each $f,$ $g\in \C(\!(q^{1/2})\!)[\mathring{P}]^{\mathring{W}},$ we have $$\langle f,g\rangle_{int}=\langle f,g\rangle_{\mathrm{Ext}}.$$
\end{cor}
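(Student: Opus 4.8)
The plan is to use that both Euler-Poincar\'e pairings are $\C(\!(q^{1/2})\!)$-bilinear forms on $\C(\!(q^{1/2})\!)[\mathring{P}]^{\mathring{W}}$ and to identify them on one convenient basis by means of Theorem \ref{compare extension}.

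First I would record that, just as $\langle-,-\rangle_{\mathrm{Ext}}$ was promoted to a $\C(\!(q^{1/2})\!)$-bilinear map on characters immediately after Proposition \ref{a}, the pairing $\langle-,-\rangle_{int}$ likewise descends to a $\C(\!(q^{1/2})\!)$-bilinear form on $\C(\!(q^{1/2})\!)[\mathring{P}]^{\mathring{W}}$: the analogue of Proposition \ref{a} for $\Cg\text{-}\mathrm{mod}^{\mathrm{int}}$ says $\langle M,N\rangle_{int}$ depends only on the graded characters of $M$ and $N$ and is additive in short exact sequences in each argument, and every object of $\Cg\text{-}\mathrm{mod}^{\mathrm{int}}$ has a $\mathring{W}$-invariant graded character. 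Hence it suffices to prove $\langle f,g\rangle_{int}=\langle f,g\rangle_{\mathrm{Ext}}$ when $f$ and $g$ each run over a fixed $\C(\!(q^{1/2})\!)$-basis of $\C(\!(q^{1/2})\!)[\mathring{P}]^{\mathring{W}}$. I would take the basis $\{\mathrm{gch}\,\pi^{*}V(\nu)\}_{\nu\in\mathring{P}_+}$: the module $\pi^{*}V(\nu)$ is concentrated in $d$-degree $0$, so its graded character is the irreducible $\mathring{\g}$-character of highest weight $\nu$, and these characters form a $\Z$-basis of $\Z[\mathring{P}]^{\mathring{W}}$, hence a $\C(\!(q^{1/2})\!)$-basis of $\C(\!(q^{1/2})\!)[\mathring{P}]^{\mathring{W}}$. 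Moreover each $\pi^{*}V(\nu)$ is finite-dimensional, so it is an object of $\mathfrak{B}_0\subset\mathfrak{B}'$ as well as of $\Cg\text{-}\mathrm{mod}^{\mathrm{int}}$.

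Now fix $\nu,\mu\in\mathring{P}_+$. For each $m\in\Z/2$ the modules $\pi^{*}V(\nu)\otimes_{\C}\C_{m\delta}$ and $\pi^{*}V(\mu)$ are objects of $\Cg\text{-}\mathrm{mod}^{\mathrm{int}}$, so Theorem \ref{compare extension} gives
$$\mathrm{Ext}^{n}_{\mathfrak{B}}\bigl(\pi^{*}V(\nu)\otimes_{\C}\C_{m\delta},(\pi^{*}V(\mu))^{\vee}\bigr)=\mathrm{Ext}^{n}_{\Cg\text{-}\mathrm{mod}^{\mathrm{int}}}\bigl(\pi^{*}V(\nu)\otimes_{\C}\C_{m\delta},(\pi^{*}V(\mu))^{\vee}\bigr)$$
for all $n\in\Z_+$ and $m\in\Z/2$. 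By Proposition \ref{a} and its analogue for $\Cg\text{-}\mathrm{mod}^{\mathrm{int}}$, both Euler-Poincar\'e sums $\langle\mathrm{gch}\,\pi^{*}V(\nu),\mathrm{gch}\,\pi^{*}V(\mu)\rangle_{\mathrm{Ext}}$ and $\langle\mathrm{gch}\,\pi^{*}V(\nu),\mathrm{gch}\,\pi^{*}V(\mu)\rangle_{int}$ are well-defined elements of $\C(\!(q^{1/2})\!)$, and since the displayed identity holds term by term in $n$ and $m$ they are equal. As $\nu$ and $\mu$ range over $\mathring{P}_+$ this gives the desired equality on a pair of bases, and bilinearity proves the corollary.

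I do not anticipate a genuine obstacle: the argument is essentially bookkeeping around Theorem \ref{compare extension}, the only mildly delicate point being to pin down the extension of $\langle-,-\rangle_{int}$ to a bilinear form on $\C(\!(q^{1/2})\!)[\mathring{P}]^{\mathring{W}}$, in exact parallel with the treatment of $\langle-,-\rangle_{\mathrm{Ext}}$. Alternatively one can run the same reduction with the bases $\{\mathrm{gch}\,W(\lambda)\}_{\lambda\in\mathring{P}_+}$ and $\{\mathrm{gch}\,W(\mu)_{loc}\}_{\mu\in\mathring{P}_+}$ of $\C(\!(q^{1/2})\!)[\mathring{P}]^{\mathring{W}}$ --- these are bases thanks to Corollary \ref{characterequality2}, the triangularity of the symmetric Macdonald-Koornwinder polynomials, and the freeness of the global Weyl module over its endomorphism ring from \cite{CIK} --- together with Corollary \ref{orthogonality} and Theorem \ref{ext}, which both evaluate $\langle\mathrm{gch}\,W(\lambda),\mathrm{gch}\,W(\mu)_{loc}\rangle$ to $\delta_{\lambda,\mu}$.
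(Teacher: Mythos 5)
Your proof is correct, and it rests on the same key lemma as the paper's — Theorem \ref{compare extension} — together with the bilinearity granted by Proposition \ref{a} and its $\Cg\text{-}\mathrm{mod}^{\mathrm{int}}$ analogue. The difference is in the choice of basis. The paper evaluates both pairings on the pair of bases $\{\mathrm{gch}\,W(\lambda)\}_{\lambda\in\mathring{P}_+}$ (first argument) and $\{\mathrm{gch}\,W(\mu)_{loc}\}_{\mu\in\mathring{P}_+}$ (second argument), appealing to Theorem \ref{compare extension} directly for $M=W(\lambda)$, $N=W(\mu)_{loc}$, and then concludes by bilinearity. Your primary route uses the single basis $\{\mathrm{gch}\,\pi^{*}V(\nu)\}_{\nu\in\mathring{P}_+}$ of finite-dimensional modules for both slots. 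This is a legitimate and in fact slightly cleaner choice: since each $\pi^{*}V(\nu)$ is in $\mathfrak{B}_0$ as well as finite-dimensional in $\Cg\text{-}\mathrm{mod}^{\mathrm{int}}$, both $\langle-,-\rangle_{\mathrm{Ext}}$ and $\langle-,-\rangle_{int}$ are a priori defined on it, and the $\mathrm{gch}\,\pi^{*}V(\nu)$ are literal elements of $\C(\!(q^{1/2})\!)[\mathring{P}]^{\mathring{W}}$ (whereas $\mathrm{gch}\,W(\lambda)$, the global Weyl module being infinite-dimensional, naturally lives in a completion, so the paper's claim that $\{\mathrm{gch}\,W(\lambda)\}$ is a basis of $\C(\!(q^{1/2})\!)[\mathring{P}]^{\mathring{W}}$ involves a mild abuse that you sidestep). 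Your closing paragraph also reconstructs the paper's route verbatim as an alternative, so you have essentially both arguments in hand. The only cosmetic caveat in your primary route is that the paper writes $\pi^{*}V(\lambda+n\Lambda_0+m\delta)$ rather than $\pi^{*}V(\nu)$; taking $n=m=0$ gives a module with $d$-degree $0$ and $K$ acting by $0$, justifying your claim that its graded character is $\mathrm{ch}\,V(\nu)$.
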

\begin{proof}
From Theorem \ref{compare extension}, we have $$\langle \mathrm{gch}\;W(\lambda),\mathrm{gch}\;W(\mu)_{loc}\rangle_{int}=\langle \mathrm{gch}\;W(\lambda),\mathrm{gch}\;W(\mu)_{loc}\rangle_{\mathrm{Ext}}$$ for each $\lambda,$ $\mu\in \mathring{P}_+.$ Since $\{\mathrm{gch}\;W(\lambda)\}_{\lambda\in \mathring{P}_+}$ and $\{\mathrm{gch}\;W(\lambda)_{loc}
\}_{\lambda\in \mathring{P}_+}$ are $\C(\!(q^{1/2})\!)$-basis of $ \C(\!(q^{1/2})\!)[\mathring{P}]^{\mathring{W}},$ we obtain the assertion.
\end{proof}
\subsubsection{Demazure-Joseph functor}
For each $i=0,...,l$, let $\mathfrak{sl}(2,i)$ be a Lie subalgebra of $\g$ isomorphic to $\mathfrak{sl}_2$ corresponding to $\pm\al_{i}$ and $\mathfrak{p}_{i}:=\bo_{-}+\mathfrak{sl}(2,i)$. For each $i=0,...,l$ and a $\bo_{-}$-module $M$ with semisimple $\h$-action, $\mathcal{D}_i(M)$ is the unique maximal $\mathfrak{sl}(2,i)$-integrable quotient of $U(\mathfrak{p}_i)\underset{U(\bo_{-})}{\otimes}M$. Then $\mathcal{D}_i$ defines a functor called Demazure-Joseph functor (\cite{Jos}).

\begin{thm}[\cite{Jos}]\label{demazurefunctor}
For each $i=0,...,l$ and a $\h$-semisimple $\bo_{-}$-module $M$, the following hold:
\item $\mathrm{(1)}$ The functors $\{\mathcal{D}_i\}_{i=0,...,l}$ satisfy braid relations of $W$;
\item $\mathrm{(2)}$ There is a natural transformation $\mathrm{Id}\to\mathcal{D}_{i}$; 
\item $\mathrm{(3)}$ If $M$ is an $\mathfrak{sl}(2,i)$-integrable $\mathfrak{p}_i$-module then $\mathcal{D}_{i}(M)\cong M$;
\item $\mathrm{(4)}$ If $N$ is an $\mathfrak{sl}(2,i)$-integrable $\mathfrak{p}_i$-module then $\mathcal{D}_i(M\otimes N)\cong\mathcal{D}_i(M)\otimes N$;
\item $\mathrm{(5)}$ The functor $\df_i$ is right exact.
\end{thm}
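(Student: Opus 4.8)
Since this statement is essentially Joseph's theorem \cite{Jos}, the plan is to recall the architecture of the proof and point out which steps are purely formal and which transfer unchanged to type $A_{2l}^{(2)}$. The first step is to factor $\mathcal{D}_i=\mathcal{Q}_i\circ\mathrm{Ind}_i$, where $\mathrm{Ind}_i(M):=U(\mathfrak{p}_i)\otimes_{U(\bo_-)}M$ and $\mathcal{Q}_i$ sends a $\mathfrak{p}_i$-module with semisimple $\h$-action to its maximal $\mathfrak{sl}(2,i)$-integrable quotient. I would first record that $U(\mathfrak{p}_i)$ is free over $U(\bo_-)$ by PBW, so $\mathrm{Ind}_i$ is exact, and that the full subcategory of $\mathfrak{sl}(2,i)$-integrable modules is closed under subobjects and quotients; since the submodules $N$ of a given $P$ with $P/N$ integrable are stable under arbitrary intersection (as $P/\bigcap_k N_k$ embeds into $\prod_k P/N_k$), a smallest such $N$ exists, so $\mathcal{Q}_i$ is well defined and is left adjoint to the inclusion of the integrable subcategory. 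Right exactness of $\mathcal{D}_i$ (claim (5)) is then immediate, being a composite of an exact functor and a left adjoint; and (2) is obtained by composing the canonical $\bo_-$-map $m\mapsto 1\otimes m$ with the unit $\mathrm{Ind}_i(M)\twoheadrightarrow\mathcal{Q}_i(\mathrm{Ind}_i(M))$ of that adjunction.

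For (3), given an $\mathfrak{sl}(2,i)$-integrable $\mathfrak{p}_i$-module $M$, I would note that the multiplication map $\mathrm{Ind}_i(M)\twoheadrightarrow M$ has integrable target, hence factors through a $\mathfrak{p}_i$-map $\varepsilon\colon\mathcal{D}_i(M)\to M$, and then check that $\varepsilon$ is inverse to the natural map $\eta_M\colon M\to\mathcal{D}_i(M)$ of (2): one composite is the identity by construction, and the other is the identity because $\mathcal{D}_i(M)$ is generated as a $\mathfrak{p}_i$-module by $\eta_M(M)$. In particular $\mathcal{D}_i\circ\mathcal{D}_i\cong\mathcal{D}_i$. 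For (4), the plan is to invoke the tensor identity for induced modules, giving a natural isomorphism $\mathrm{Ind}_i(M\otimes N)\cong\mathrm{Ind}_i(M)\otimes N$ of $\mathfrak{p}_i$-modules for any $\mathfrak{p}_i$-module $N$, and then to check that when $N$ is in addition $\mathfrak{sl}(2,i)$-integrable the functor $-\otimes N$ both preserves integrability and commutes with $\mathcal{Q}_i$: concretely, $\ker(P\twoheadrightarrow\mathcal{Q}_i(P))\otimes N$ is annihilated by every $\mathfrak{p}_i$-map to an integrable module, using the local finiteness of the $\mathfrak{sl}(2,i)$-action on $N$. Applying this with $P=\mathrm{Ind}_i(M)$ yields (4).

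The substantive part is (1), the braid relations, which is the core of Joseph's theorem. The relation $s_i^2=e$ is the idempotency $\mathcal{D}_i^2\cong\mathcal{D}_i$ just established. For $i\neq j$ with $m_{ij}:=\mathrm{ord}(s_is_j)<\infty$, I would reduce the identity $\mathcal{D}_i\mathcal{D}_j\mathcal{D}_i\cdots=\mathcal{D}_j\mathcal{D}_i\mathcal{D}_j\cdots$ ($m_{ij}$ factors each side) to a rank-two statement about the Lie subalgebra generated by $\bo_-$, $\mathfrak{sl}(2,i)$ and $\mathfrak{sl}(2,j)$, since the composite functors involve only these data. When $\alpha_i$ and $\alpha_j$ are non-adjacent ($m_{ij}=2$) the two copies of $\mathfrak{sl}_2$ commute and so do $\mathrm{Ind}_i$ and $\mathrm{Ind}_j$, giving $\mathcal{D}_i\mathcal{D}_j\cong\mathcal{D}_j\mathcal{D}_i$ directly; for adjacent nodes the relevant finite-type rank-two root subsystem is of type $A_2$ or $C_2$ (the Coxeter exponents $m_{ij}$ occurring in the Dynkin diagram of $A_{2l}^{(2)}$ with $l\geq 2$ lie in $\{2,3,4\}$, and for $l=1$ one has $m_{01}=\infty$, so nothing beyond $s_i^2=e$ is to be checked), and the identity is verified by analysing the iterated induction through its PBW filtration together with the decomposition into $\mathfrak{sl}(2,i)$- and $\mathfrak{sl}(2,j)$-strings, exactly as in \cite{Jos}. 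Since only rank-two Dynkin subdiagrams of finite type occur, this argument applies verbatim in type $A_{2l}^{(2)}$.

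The main obstacle is therefore (1): parts (2)--(5) are formal consequences of the adjunction defining $\mathcal{Q}_i$ and of the tensor identity for induction, whereas the braid relation needs the rank-two reduction together with Joseph's delicate module-theoretic verification in types $A_2$ and $C_2$. A secondary technicality I would be careful about is checking that $\mathcal{Q}_i$ is genuinely well defined on the categories of weight modules with semisimple $\h$-action under consideration --- i.e.\ that maximal $\mathfrak{sl}(2,i)$-integrable quotients exist there --- and that $\h$-semisimplicity is preserved by $\mathrm{Ind}_i$, by $\mathcal{Q}_i$, and by the tensor operations used in (4).
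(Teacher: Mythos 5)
The paper itself gives no proof of Theorem~\ref{demazurefunctor}; it is cited to Joseph~\cite{Jos}, and the surrounding text ("Then $\mathcal{D}_i$ defines a functor called Demazure-Joseph functor (\cite{Jos})") relies on that reference in full. Your proposal is therefore not being measured against a proof in this paper but against Joseph's argument, and as an outline it is faithful: the factorisation $\mathcal{D}_i=\mathcal{Q}_i\circ\mathrm{Ind}_i$, the adjunction formalism giving (2), (3), (5), the tensor identity giving (4), the identification $\mathcal{D}_i^2\cong\mathcal{D}_i$ and the rank-two reduction for (1) are all as in \cite{Jos}, and your list of rank-two Coxeter data ($m_{ij}\in\{2,3,4\}$ for $l\ge 2$, $m_{01}=\infty$ for $l=1$, rank-two subsystems of types $A_2$ and $C_2$) is correct for $A_{2l}^{(2)}$.

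One step as written is however not correct and would need replacing. You justify the existence of $\mathcal{Q}_i$ (a left adjoint to the inclusion of the $\mathfrak{sl}(2,i)$-integrable subcategory) by asserting that a submodule of an arbitrary product of integrable $\mathfrak{sl}(2,i)$-modules is again integrable, so that the intersection of all kernels of integrable quotients works. That assertion is false without a finiteness constraint: take $V_n$ the irreducible $\mathfrak{sl}_2$-module of dimension $2n+1$ and let $w_n\in V_n$ be its weight-$0$ vector. Then $w=(w_n)_n\in\prod_n V_n$ is an $\h$-weight vector, yet $e^N w\neq 0$ for every $N$, so $U(\mathfrak{sl}_2)w$ is infinite-dimensional and the submodule it generates is not integrable. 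Concretely, the "maximal integrable quotient" of an $\h$-semisimple $\mathfrak{sl}_2$-module need not exist (already for $U(\mathfrak{sl}_2)\otimes_{U(\h)}\C_0$ there are finite-dimensional integrable quotients of unbounded dimension), so the intersection argument cannot be carried out blindly. Joseph's actual construction either describes the kernel directly in terms of the $\mathfrak{sl}(2,i)$-string structure of $\mathrm{Ind}_i(M)$ (the ideal generated by over-length primitive vectors, iterated), or works in a restricted weight category for which the $\check\alpha_i$-weights of $\mathrm{Ind}_i(M)$ behave well enough that the intersection stabilises; neither is the one-line embed-into-a-product argument. You do flag this in your final paragraph as a technicality to be careful about, which is the right instinct, but the specific argument offered for it is the one point in the proposal that does not go through.
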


For a reduced expression $w=s_{i_1}s_{i_2}\cdots s_{i_n}\in W$, we define $$\mathcal{D}_w:=\mathcal{D}_{i_1}\circ\mathcal{D}_{i_2}\circ\cdots\circ\mathcal{D}_{i_n}.$$ This is well-defined by Theorem $\ref{demazurefunctor}$ (1). 
\begin{thm}\label{thin demazure joseph functor}
For each $\Lambda\in P_+$, $w\in W$ and $i\in \{0,...,l\}$, we have
$$\mathcal{D}_i(D_{w\Lambda})=\begin{cases}D_{w\Lambda}&(w\geq s_iw)\\ D_{s_iw\Lambda}&(w<s_iw).
\end{cases}$$
\end{thm}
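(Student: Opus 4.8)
The plan is to adapt the classical argument of Joseph (see \cite{Jos}, and \cite{CK} for the surrounding circle of ideas), arguing by induction on $l(w)$ and distinguishing cases by the sign of $\langle w\Lambda,\check{\al}_i\rangle$. First I would reduce to the case that $w$ is the minimal-length representative of the coset $wW_\Lambda$: this is harmless since $D_{w\Lambda}$ depends only on the weight $w\Lambda$, and after this normalization $s_iw<w\iff\langle w\Lambda,\check{\al}_i\rangle<0$; if $\langle w\Lambda,\check{\al}_i\rangle=0$ then $s_iw\Lambda=w\Lambda$ and the claimed identity reads $\mathcal{D}_i(D_{w\Lambda})=D_{w\Lambda}$ in either form; and if $\langle w\Lambda,\check{\al}_i\rangle>0$ then $w<s_iw$ with $s_iw$ again a minimal representative and $l(s_iw)>l(w)$. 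Accordingly I treat the two cases $\langle w\Lambda,\check{\al}_i\rangle\le0$ and $\langle w\Lambda,\check{\al}_i\rangle>0$; I would also record the standard fact that minimal coset representatives are stable under $w\mapsto s_iw$ whenever $s_iw<w$, which makes the induction well-founded.

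Suppose first $\langle w\Lambda,\check{\al}_i\rangle\le0$. I claim $D_{w\Lambda}$ is already a $\mathfrak{p}_i$-submodule of $L(\Lambda)^{\vee}$. Its generator $v_{w\Lambda}^{*}$ has $\h$-weight $-w\Lambda$ with $\langle -w\Lambda,\check{\al}_i\rangle\ge0$, so by the $\mathfrak{sl}(2,i)$-theory of extremal weight vectors (Lemma \ref{extremal vector}, applied in the integrable module $L(\Lambda)^{\vee}$) it is annihilated by $\g_{\al_i}$; since $[\g_{\al_i},\bo_-]\subseteq\bo_-+\g_{\al_i}$, a PBW rewriting then gives $e_{\al_i}\,U(\bo_-)v_{w\Lambda}^{*}\subseteq U(\bo_-)v_{w\Lambda}^{*}=D_{w\Lambda}$, so $D_{w\Lambda}$ is $\g_{\al_i}$-stable. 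Being a submodule of the integrable $\g$-module $L(\Lambda)^{\vee}$, it is an $\mathfrak{sl}(2,i)$-integrable $\mathfrak{p}_i$-module, and Theorem \ref{demazurefunctor}(3) gives $\mathcal{D}_i(D_{w\Lambda})\cong D_{w\Lambda}$, which matches the assertion (here $D_{s_iw\Lambda}=D_{w\Lambda}$ when $s_iw>w$).

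Now suppose $\langle w\Lambda,\check{\al}_i\rangle>0$, so $w<s_iw$. Applying the previous case to $s_iw$ (note $s_i(s_iw)=w$, with $\langle s_iw\Lambda,\check{\al}_i\rangle<0$) shows $D_{s_iw\Lambda}$ is an $\mathfrak{sl}(2,i)$-integrable $\mathfrak{p}_i$-module. By Lemma \ref{extremal vector} in $L(\Lambda)^{\vee}$ one has $v_{w\Lambda}^{*}\in\g_{-\al_i}^{\langle w\Lambda,\check{\al}_i\rangle}v_{s_iw\Lambda}^{*}$ and $v_{s_iw\Lambda}^{*}\in\g_{\al_i}^{\langle w\Lambda,\check{\al}_i\rangle}v_{w\Lambda}^{*}$, whence $D_{w\Lambda}\subseteq D_{s_iw\Lambda}$ and the $\mathfrak{p}_i$-submodule of $L(\Lambda)^{\vee}$ generated by $D_{w\Lambda}$ is exactly $D_{s_iw\Lambda}$. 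Since $D_{s_iw\Lambda}$ is integrable, the tautological $\mathfrak{p}_i$-morphism $U(\mathfrak{p}_i)\otimes_{U(\bo_-)}D_{w\Lambda}\to L(\Lambda)^{\vee}$, whose image is $D_{s_iw\Lambda}$, factors through the maximal integrable quotient, producing a surjection $q\colon\mathcal{D}_i(D_{w\Lambda})\twoheadrightarrow D_{s_iw\Lambda}$ that is compatible, via Theorem \ref{demazurefunctor}(2), with the inclusion $D_{w\Lambda}\hookrightarrow D_{s_iw\Lambda}$. To upgrade $q$ to an isomorphism I would apply the right-exact functor $\mathcal{D}_i$ (Theorem \ref{demazurefunctor}(5)) to the short exact sequence $0\to D_{w\Lambda}\to D_{s_iw\Lambda}\to Q\to0$ with $Q:=D_{s_iw\Lambda}/D_{w\Lambda}$; writing $\mathcal{D}_i^{(\bullet)}$ for its left derived functors (which exist since $\mathfrak{B}$ has enough projectives by Proposition \ref{enough projective of b}), the tail of the long exact sequence is $\mathcal{D}_i^{(1)}(Q)\to\mathcal{D}_i(D_{w\Lambda})\xrightarrow{\,q\,}D_{s_iw\Lambda}\to\mathcal{D}_i(Q)\to0$, so it suffices to prove $\mathcal{D}_i^{(1)}(Q)=0$, after which $q$ is an isomorphism and the induction closes.

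The step I expect to be the main obstacle is precisely this vanishing $\mathcal{D}_i^{(1)}(Q)=0$, i.e.\ the $\mathcal{D}_i$-acyclicity of the subquotient $Q=D_{s_iw\Lambda}/D_{w\Lambda}$ of a thin Demazure module — the type $A_{2l}^{(2)}$ instance of the excellent-filtration phenomenon for Demazure modules. I would attack it by reducing to an explicit $\mathfrak{sl}_2$-calculation, using the observation that $\mathcal{D}_i$ and its left derived functors depend on a $\bo_-$-module only through its restriction to $\h\oplus\g_{-\al_i}$: decomposing $D_{w\Lambda}$ and $D_{s_iw\Lambda}$ into $\g_{-\al_i}$-Jordan strings and evaluating $\mathcal{D}_i$, $\mathcal{D}_i^{(1)}$ on such strings should make the vanishing transparent. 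Apart from this, the argument is routine bookkeeping with Lemma \ref{extremal vector} and the structure of $L(\Lambda)^{\vee}$.
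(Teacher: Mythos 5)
Your handling of the case $s_iw\le w$ and your construction of the surjection $\mathcal{D}_i(D_{w\Lambda})\twoheadrightarrow D_{s_iw\Lambda}$ when $s_iw>w$ are essentially the same as the paper's proof; the divergence, and the gap, lies in how you promote that surjection to an isomorphism. The paper finishes in one line: by Kashiwara's Demazure character formula (\cite[Proposition~3.3.4]{Kas} and its thick analogue) one has $\mathrm{gch}\;\mathcal{D}_i(D_{w\Lambda})=\mathrm{gch}\;D_{s_iw\Lambda}$, and a surjection between modules with finite-dimensional weight spaces and equal graded characters is automatically an isomorphism. You instead want to derive injectivity from a left-derived-functor vanishing $\mathcal{D}_i^{(1)}(Q)=0$ with $Q=D_{s_iw\Lambda}/D_{w\Lambda}$, but you do not prove this vanishing, and it is not ``routine bookkeeping'': it is essentially the $\mathcal{D}_i$-acyclicity (excellent-filtration) property of the $\bo_-$-subquotient $Q$ of the thin Demazure module, a substantive structural statement of roughly the same depth as the theorem itself. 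The heuristic you offer in its support --- that $\mathcal{D}_i$ and its derived functors see a $\bo_-$-module only through its restriction to $\h\oplus\g_{-\al_i}$ --- is false as stated for $\mathcal{D}_i$ itself, since the output carries a full $\mathfrak{p}_i$-module structure visibly sensitive to the rest of the $\bo_-$-action; a version of it for the higher $\mathcal{D}_i^{(k)}$ might be defensible but would itself need an argument rather than an assertion. Unless you are prepared to redevelop the $\mathcal{D}_i$-acyclicity theory for $A_{2l}^{(2)}$ Demazure subquotients from scratch, the short and intended route to close the argument is the character comparison via \cite[Proposition~3.3.4]{Kas}, exactly as the paper does.
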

\begin{proof}
By Lemma \ref{extremal vector} and the PBW theorem, $D_{w\Lambda}$ has an integrable $\mathfrak{sl}_{2(i)}$-action if $w\geq s_iw$. Hence Theorem \ref{demazurefunctor} (3) implies $\mathcal{D}_i(D_{w\Lambda})=D_{w\Lambda}$ if $w\geq s_iw$. If $w<s_iw$, then $D_{s_iw\Lambda}$ is a $\mathfrak{p}_i$-module with an integrable $\mathfrak{sl}(2,i)$ action by Lemma \ref{extremal vector} and the PBW theorem, and we have an inclusion $D_{w\Lambda}\to D_{s_iw\Lambda}.$ Hence we have a morphism of $\mathfrak{p}_i$-module $U(\mathfrak{p_i})\underset{U(\bo_-)}{\otimes}D_{w\Lambda}\to D_{s_iw\Lambda}.$ This morphism is surjective since $D_{s_iw\Lambda}$ is generated by a vector with its weight $w\Lambda$ as $\mathfrak{p}_i$-module. Therefore we obtain a surjection $\mathcal{D}_i(D_{w\Lambda})\to D_{s_iw\Lambda}$ by taking a maximal $\mathfrak{sl}(2,i)$-integrable quotient. By \cite[Proposition 3.3.4]{Kas}, we have $\mathrm{gch}\;\mathcal{D}_i(D_{w\Lambda})=\mathrm{gch}\;D_{s_iw\Lambda}$. Hence the above surjection is an isomorphism. 
\end{proof}
We set $\df_i^{\#}:=\vee\circ\df_i\circ\vee.$

\begin{prop}[\cite{FKM} Proposition 5.7]{\label{dual}}
For each $i=0,1,...,l$, $n\in \mathbb{Z}_+,$ $M\in \mathfrak{B}^{\prime},$ $N\in \mathfrak{B}_0,$ we have
$$\mathrm{Ext}^{n}_{\mathfrak{B}}(\df_i(M),N)\cong\mathrm{Ext}^{n}_{\mathfrak{B}}(M,\df_i^{\#}(N))\;\;\; n\in \mathbb{Z}_+.$$
\end{prop}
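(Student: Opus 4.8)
The plan is to deduce the proposition from an adjoint triple relating $\mathfrak{B}$ to the category $\mathfrak{P}_i$ of $\mathfrak{sl}(2,i)$-integrable $\mathfrak{p}_i$-modules lying in $\mathfrak{B}$. By construction $\df_i(M)$ is the maximal $\mathfrak{sl}(2,i)$-integrable quotient of $U(\mathfrak{p}_i)\otimes_{U(\bo_-)}M$, which together with Theorem~\ref{demazurefunctor}~(3) says exactly that $\df_i\colon\mathfrak{B}\to\mathfrak{P}_i$ is left adjoint to the restriction functor $\mathrm{res}_i\colon\mathfrak{P}_i\to\mathfrak{B}$. The functor $\mathrm{res}_i$ is fully faithful --- a $\bo_-$-linear map between $\mathfrak{sl}(2,i)$-integrable modules is automatically $\mathfrak{sl}(2,i)$-linear, the raising operator of $\mathfrak{sl}_2$ being determined by $\bo_-$ on finite-dimensional $\mathfrak{sl}_2$-modules --- so $\mathfrak{P}_i$ is a reflective full subcategory. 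Dualizing and using that $\df_i$ preserves $\mathfrak{sl}(2,i)$-integrability, $\df_i^{\#}=\vee\circ\df_i\circ\vee$ provides a right adjoint of $\mathrm{res}_i$ on finite-dimensional objects, i.e. $\mathrm{Hom}_{\mathfrak{B}}(\mathrm{res}_i P,N)\cong\mathrm{Hom}_{\mathfrak{P}_i}(P,\df_i^{\#}N)$ for $P\in\mathfrak{P}_i$, $N\in\mathfrak{B}_0$. Feeding $P=\df_i M$ into the two adjunctions gives the $n=0$ case:
\[
\mathrm{Hom}_{\mathfrak{B}}(\df_i M,N)\;\cong\;\mathrm{Hom}_{\mathfrak{P}_i}(\df_i M,\df_i^{\#}N)\;\cong\;\mathrm{Hom}_{\mathfrak{B}}(M,\df_i^{\#}N).
\]

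For general $n$ I would show that both sides equal $\mathrm{Ext}^n_{\mathfrak{P}_i}(\df_i M,\df_i^{\#}N)$. Since $\mathrm{res}_i$ is exact, $\df_i$ carries projectives of $\mathfrak{B}$ to projectives of $\mathfrak{P}_i$ and $\df_i^{\#}$ carries injectives of $\mathfrak{B}$ to injectives of $\mathfrak{P}_i$. Applying $\df_i$ to a projective resolution $P^{\bullet}\to M$ obtained from Proposition~\ref{enough projective of b} gives a complex of projectives in $\mathfrak{P}_i$ computing $L_{\ast}\df_i(M)$; if this is concentrated in degree $0$ it is a projective resolution of $\df_i M$, and the adjunction $\mathrm{Hom}_{\mathfrak{B}}((-),\df_i^{\#}N)\cong\mathrm{Hom}_{\mathfrak{P}_i}(\df_i(-),\df_i^{\#}N)$ yields $\mathrm{Ext}^n_{\mathfrak{B}}(M,\df_i^{\#}N)\cong\mathrm{Ext}^n_{\mathfrak{P}_i}(\df_i M,\df_i^{\#}N)$. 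Dually, $\mathrm{res}_i\dashv\df_i^{\#}$ and preservation of injectives give $\mathrm{Ext}^n_{\mathfrak{B}}(\df_i M,N)\cong\mathrm{Ext}^n_{\mathfrak{P}_i}(\df_i M,\df_i^{\#}N)$ once $N$ is acyclic for $\df_i^{\#}$. (Equivalently, in the style of Theorem~\ref{compare extension}, one fixes $N$, notes that $\{\mathrm{Ext}^n_{\mathfrak{B}}(\df_i(-),N)\}_n$ and $\{\mathrm{Ext}^n_{\mathfrak{B}}((-),\df_i^{\#}N)\}_n$ are contravariant $\delta$-functors agreeing in degree $0$, and invokes Theorem~\ref{grothendieck} after checking coeffaceability in positive degrees.)

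The main obstacle is the exactness gap: $\df_i$ is only right exact (Theorem~\ref{demazurefunctor}~(5)), so $\df_i P^{\bullet}$ genuinely computes $L_{\ast}\df_i(M)$ and $\{\mathrm{Ext}^n_{\mathfrak{B}}(\df_i(-),N)\}_n$ is not a priori a $\delta$-functor. One must prove that $L_{>0}\df_i$ vanishes on the objects in play --- equivalently $R_{>0}\df_i^{\#}$ vanishes on $\mathfrak{B}_0$. For this I would combine the idempotency $\df_i\circ\df_i\cong\df_i$ and the unit $\mathrm{Id}\to\df_i$ from Theorem~\ref{demazurefunctor}~(2),(3) with the exact behaviour of $\df_i$ on thin Demazure modules recorded in Theorem~\ref{thin demazure joseph functor} (the algebraic counterpart of the Kumar--Mathieu cohomology vanishing), and perform a d\'evissage over $\h$-weight spaces in the spirit of the proof of Proposition~\ref{a}: a relevant module is an increasing union of submodules whose subquotients are $\df_i$-acyclic, and the finiteness built into Proposition~\ref{a} permits passing to the limit. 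Once this acyclicity is in hand, the comparisons of the previous paragraph go through and yield $\mathrm{Ext}^n_{\mathfrak{B}}(\df_i(M),N)\cong\mathrm{Ext}^n_{\mathfrak{B}}(M,\df_i^{\#}(N))$ for all $n\in\mathbb{Z}_+$.
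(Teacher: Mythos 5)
The paper offers no proof of this proposition; it is imported wholesale from \cite{FKM} (Proposition 5.7), so there is no in-paper argument to compare against. Judging your proposal on its own terms: the adjunction framework is the right one, and your $n=0$ case is soundly reduced to the two adjunctions $\df_i\dashv\mathrm{res}_i$ (Frobenius reciprocity combined with the universal property of the maximal $\mathfrak{sl}(2,i)$-integrable quotient) and $\mathrm{res}_i\dashv\df_i^{\#}$ (dualization, using that $\h$-weight modules with finite weight spaces are $\vee$-reflexive, so the argument applies even though $\df_i M$ need not be finite dimensional). You also correctly flag the obstacle for $n>0$: since $\df_i$ is only right exact, $\{\mathrm{Ext}^n_{\mathfrak{B}}(\df_i(-),N)\}_n$ is not a priori a $\delta$-functor, and some acyclicity must be supplied before either the universal $\delta$-functor comparison or the derived adjunction can be run.

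The gap is that this acyclicity is never established, and it is the entire mathematical content of the proposition once the $n=0$ adjunction is in hand. The ingredients you cite do not obviously combine to give it. Theorem~\ref{demazurefunctor}(2),(3) only constrain $\df_i$ on modules that are already $\mathfrak{sl}(2,i)$-integrable, where nothing needs proving; Theorem~\ref{thin demazure joseph functor} concerns thin Demazure modules $D_{w\Lambda}$, which are not the composition factors of a general $N\in\mathfrak{B}_0$ or $M\in\mathfrak{B}^{\prime}$ --- those are the one-dimensional $\C_{\Lambda}$, and neither idempotency, the unit, nor that theorem tells you $L_{1}\df_i(\C_{\Lambda})=0$; and the finiteness device of Proposition~\ref{a} only bounds weight supports, it gives no handle on higher derived functors of $\df_i$. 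Note also that $\df_i$ does not carry projectives of $\mathfrak{B}$ to projectives of $\mathfrak{B}$ (only to projectives in the reflective subcategory), so one needs not just $L_{>0}\df_i M=0$ but also that $\df_i P(\Lambda)$ is $\mathrm{Ext}_{\mathfrak{B}}(-,N)$-acyclic, and your sketch does not separate these two requirements. As written, the ``d\'evissage'' paragraph restates the problem rather than solving it; the missing step is precisely the computation that \cite{FKM} supplies.
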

\subsubsection{Realization of global Weyl module}

For each $\lambda\in \mathring{P}_{+}$, we define $$\mathrm{Gr}^{\lambda}D:=D^{\lambda}/\underset{\lambda\succ\mu, \mu\notin\mathring{W}\lambda}{\sum}D^{\mu}.$$ From the PBW theorem and Lemma \ref{extremal vector}, $D^{\lambda}$ and $\underset{\lambda\succ\mu, \mu\notin\mathring{W}\lambda}{\sum}D^{\mu}$ are stable under the action of $\Cg.$ Hence $\mathrm{Gr}^{\lambda}D$ admits a $\Cg$-module structure.

\begin{prop}\label{filtration}
Let $\lambda\in \mathring{P}_+,$ Then $\mathrm{Gr}^{\lambda}D$ has a filtration of $\bo_{-}$-submodules 
$$\{0\}=F_0\subset F_{1}\subset F_{2}\subset \cdots \subset F_{N-1}\subset F_{N}=\mathrm{Gr}^{\lambda}D$$ such that $$\{F_{i}/F_{i-1}\}_{i=1,...,N}=\{\D^{\mu}\}_{\mu\in \mathring{W}\lambda}.$$
\end{prop}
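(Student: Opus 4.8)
The plan is to realise the filtration concretely, by taking the images in $\mathrm{Gr}^{\lambda}D$ of the thick Demazure submodules $D^{\mu}$ with $\mu\in\mathring{W}\lambda$, and identifying the successive subquotients with Demazure slices via Corollary \ref{quotient of demazure}. (An alternative would be to build the filtration by applying the Demazure--Joseph functors of Theorem \ref{demazurefunctor} to the slice attached to the antidominant element of the orbit, using the natural transformations $\mathrm{Id}\to\df_i$; I will describe the direct route.)

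First I would enumerate $\mathring{W}\lambda=\{\mu_1,\dots,\mu_N\}$ as a linear extension of the Macdonald order $\succeq$ restricted to the orbit, so that $\mu_i\prec\mu_j$ forces $i<j$; thus $\mu_1=\lambda_-$ and $\mu_N=\lambda$. Two order-theoretic facts enter. Since $\lambda\in\mathring{P}_+$ is $\succeq$-maximal in its orbit, the lemma preceding Theorem \ref{characterequality} gives $D^{\mu_i}\subseteq D^{\lambda}$ for every $i$. Moreover, for any $\xi\notin\mathring{W}\lambda$ and any $\mu\in\mathring{W}\lambda$ one has $\mu\succ\xi\Leftrightarrow\xi_{+}\succ\lambda_{+}\Leftrightarrow\lambda\succ\xi$, so the submodule $N:=\sum_{\lambda\succ\mu,\,\mu\notin\mathring{W}\lambda}D^{\mu}$ coincides with $\sum_{\zeta\prec\mu_k,\,\zeta\notin\mathring{W}\lambda}D^{\zeta}$ for every $k$ and is contained in $D^{\mu_1}=D^{\lambda_-}$, hence in every $D^{\mu_i}$. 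I would then set
$$F_k:=\bigl(\sum\nolimits_{i\le k}D^{\mu_i}+N\bigr)\big/N\ \subseteq\ \mathrm{Gr}^{\lambda}D\qquad(k=0,\dots,N),$$
an ascending chain of $\bo_-$-submodules with $F_0=0$, $F_N=D^{\lambda}/N=\mathrm{Gr}^{\lambda}D$, and
$$F_k/F_{k-1}\ \cong\ D^{\mu_k}\big/\bigl(D^{\mu_k}\cap(\sum\nolimits_{i<k}D^{\mu_i}+N)\bigr)$$
by the isomorphism theorems.

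The heart of the proof is to identify the denominator on the right with $\sum_{\pi_{\mu_k}<v}D^{v\Lambda_0}$, so that $F_k/F_{k-1}\cong\D^{\mu_k}$. The inclusion $\supseteq$ is the routine half: by Lemmas \ref{macdonald}--\ref{cherednik order and bruhat order} every $D^{v\Lambda_0}$ occurring in the definition of $\D^{\mu_k}$ has $v(\!(0)\!)\prec\mu_k$, and such a Demazure module equals $D^{\mu_i}$ for some $i<k$ when $v(\!(0)\!)\in\mathring{W}\lambda$, and is a summand of $N$ otherwise; in either case it lies in $D^{\mu_k}$. For the reverse inclusion I would use the modular law, together with $N=\sum_{\zeta\prec\mu_k,\,\zeta\notin\mathring{W}\lambda}D^{\zeta}\subseteq D^{\mu_k}$, to reduce to showing $D^{\mu_k}\cap\sum_{i\in I}D^{\mu_i}\subseteq\sum_{\zeta\prec\mu_k}D^{\zeta}$ with $I=\{\,i<k:\mu_i\ \succeq\text{-incomparable to }\mu_k\,\}$; then I would appeal to the compatibility of $\cap$ with finite sums inside the family of thick Demazure modules (the lattice-theoretic consequence of \cite{Kat} Corollary 2.22 --- an intersection of thick Demazure modules is a sum of thick Demazure modules --- together with Corollary \ref{quotient of demazure}) to write $D^{\mu_k}\cap\sum_{i\in I}D^{\mu_i}=\sum_{i\in I}(D^{\mu_k}\cap D^{\mu_i})$, and finally note that each $D^{\mu_k}\cap D^{\mu_i}$ ($i\in I$) is a sum of Demazure modules $D^{w}$ with $w(\!(0)\!)$ strictly below $\mu_k$, hence sits inside the denominator.

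Finally I would check that the chain is non-degenerate: since $v_{v\Lambda_0}\in D^{w\Lambda_0}$ forces $v\le w$ (Lemma \ref{demazure}, for minimal coset representatives), no $D^{\nu}$ with $\nu\prec\mu_k$ meets the one-dimensional weight space $L(\Lambda_0)_{\pi_{\mu_k}\Lambda_0}$, so $v_{\pi_{\mu_k}\Lambda_0}$ survives in $\D^{\mu_k}$ and $\D^{\mu_k}\neq 0$. As $k$ runs over $1,\dots,N$ the subquotients $\D^{\mu_k}$ then exhaust $\{\D^{\mu}\}_{\mu\in\mathring{W}\lambda}$, each exactly once, which is the assertion. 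I expect the main obstacle to be precisely that reverse inclusion: everything comes down to controlling $D^{\mu_k}\cap(\text{a finite sum of thick Demazure submodules})$, i.e.\ to the distributive behaviour of thick Demazure modules, extracted from \cite{Kat} Corollary 2.22 and Corollary \ref{quotient of demazure}, combined with the dictionary between the Macdonald order and the inclusion order of Demazure modules supplied by Lemma \ref{demazure} and Lemma \ref{cherednik order and bruhat order}.
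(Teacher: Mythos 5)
Your filtration is the one the paper builds: $F_k$ is the image in $\mathrm{Gr}^{\lambda}D$ of $\sum_{i\le k}D^{\mu_i}+N$ for a linear extension of the Macdonald order on $\mathring{W}\lambda$, while the paper takes a total order $\ge'$ on $W$ refining the weak Bruhat order and sets $F_w=(\sum_{v\ge'w}D^{v\Lambda_0}+N)/N$; by Lemma~\ref{cherednik order and bruhat order} these are the same filtration once the trivial steps (those $v$ with $v(\!(0)\!)\notin\mathring{W}\lambda$, which are absorbed into $N$) are discarded. The two order-theoretic observations you record are exactly what justifies $N\subseteq D^{\mu_i}$ for all $i$ and the absorption of the out-of-orbit $D^{v\Lambda_0}$. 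The nondegeneracy argument via the one-dimensionality of $L(\Lambda_0)_{\pi_{\mu_k}\Lambda_0}$ and Lemma~\ref{demazure}(2) is also correct.

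The step I would not accept as written is the distributivity $D^{\mu_k}\cap\sum_{i\in I}D^{\mu_i}=\sum_{i\in I}(D^{\mu_k}\cap D^{\mu_i})$. The quoted result from [Kat] (Corollary 2.22: an intersection of thick Demazure modules is again a sum of thick Demazure modules) does \emph{not} by itself imply that the lattice of such submodules is distributive; a modular-to-distributive law is never free of charge, and nothing you cite establishes it. The paper does not take this route: it hands the whole subquotient identification to Corollary~\ref{quotient of demazure}, which is precisely a statement about $D^{w\Lambda}\cap(\text{a sum of thick Demazure modules containing }\sum_{u>w}D^{u\Lambda})$ and says the resulting quotient is $\D^{w\Lambda}$ or $\{0\}$ without any distributivity detour. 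You should replace your explicit computation of the denominator by that citation: since the denominator of $F_k/F_{k-1}$ contains $\sum_{u>\pi_{\mu_k}}D^{u\Lambda_0}$, the step is a quotient of $\D^{\mu_k}$ which Corollary~\ref{quotient of demazure} identifies as $\D^{\mu_k}$ or $\{0\}$, and your nondegeneracy check rules out $\{0\}$. Reorganized this way, your proof coincides with the paper's.
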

\begin{proof}
Let $\geq^{\prime}$ be a total order on $W$ such that if $w\geq v$ then $w\geq^{\prime} v$. For each $w\geq \pi_{\lambda}$, define
$$F_{w}:=(\underset{v\geq^{\prime}w}{\sum}D^{w\Lambda_{0}}+\underset{\lambda\succ\mu, \mu\notin\mathring{W}\lambda}{\sum}D^{\mu})/\underset{\lambda\succ\mu, \mu\notin\mathring{W}\lambda}{\sum}D^{\mu}.$$ This is a $\bo_{-}$-submodule of $\mathrm{Gr}^{\lambda}D$ and 
$$F_{w}\subseteq F_{v}\;\;\;\text{if}\;\;\;w\geq^{\prime}v.$$ By Corollary $\ref{quotient of demazure}$, $\{F_{w}\}_{w\in W}$ gives the assertion.
\end{proof}
\begin{lem}\label{graded character weyl integrable}
We have the following equality of graded characters. $$\mathrm{gch}\;L(\Lambda_{0})=\underset{\lambda\in \mathring{P}_{+}}{\sum}q^{(\lambda\mid\lambda)/2}\mathrm{gch}\;W(\lambda).$$
\end{lem}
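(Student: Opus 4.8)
\emph{Proof proposal.} The plan is to compute the left-hand side explicitly from the Fock-space realization of $L(\Lambda_{0})$, and then to identify the right-hand side with it by testing both against the $\C(\!(q^{1/2})\!)$-basis of local Weyl characters.

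\emph{Left-hand side.} By Theorem \ref{frenkelkac}, $L(\Lambda_{0})\cong\bigoplus_{\mu\in\mathring{P}}R_{\mu}$ as $\g$-modules, so $\mathrm{gch}\;L(\Lambda_{0})=\sum_{\mu\in\mathring{P}}\mathrm{gch}\;R_{\mu}$. As a module over $\mathring{\h}\oplus\C d$, the space $R_{\mu}$ is the polynomial ring $\C[y_{i,n}]$ on which $\mathring{\h}$ acts by the scalar $\mu$ (so every vector of $R_{\mu}$ has $\mathring{\h}$-weight $\mu$, as $\delta$ vanishes on $\mathring{\h}$), the generators $y_{i,n}$ ($1\le i\le l$, $n\ge1$) act freely, each lowering the $d$-eigenvalue by $n$, and the $d$-highest vector is the extremal weight vector $v_{t_{\mu}\Lambda_{0}}$ of $\h$-weight $t_{\mu}\Lambda_{0}$, contributing $q^{(\mu|\mu)/2}X^{\mu}$ to $\mathrm{gch}$. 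Hence $\mathrm{gch}\;R_{\mu}=q^{(\mu|\mu)/2}X^{\mu}\prod_{n\ge1}(1-q^{n})^{-l}$, and summing over $\mathring{P}=\bigsqcup_{\lambda\in\mathring{P}_{+}}\mathring{W}\lambda$, using that $(\cdot|\cdot)$ is constant on $\mathring{W}$-orbits,
$$\mathrm{gch}\;L(\Lambda_{0})=\Big(\prod_{n\ge1}(1-q^{n})^{-l}\Big)\sum_{\mu\in\mathring{P}}q^{(\mu|\mu)/2}X^{\mu}.$$
In particular this is $\mathring{W}$-invariant, and $L(\Lambda_{0})$, as a $\bo_{-}$-module, lies in $\mathfrak{B}^{\prime}$ by Theorem \ref{Kac}(3), so $\langle\mathrm{gch}\;L(\Lambda_{0}),-\rangle_{\mathrm{Ext}}$ is defined.

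\emph{Right-hand side.} It now suffices to show $\sum_{\lambda\in\mathring{P}_{+}}q^{(\lambda|\lambda)/2}\mathrm{gch}\;W(\lambda)$ equals the displayed product. By Corollary \ref{orthogonality} and Corollary \ref{compare ext-pairing} we have $\langle\mathrm{gch}\;W(\lambda),\mathrm{gch}\;W(\mu)_{loc}\rangle_{\mathrm{Ext}}=\delta_{\lambda,\mu}$ for $\lambda,\mu\in\mathring{P}_{+}$; since the $\mathrm{gch}\;W(\mu)_{loc}$ are unitriangular with respect to the monomial basis of $\C(\!(q^{1/2})\!)[\mathring{P}]^{\mathring{W}}$, an $\mathring{W}$-invariant element of the relevant completion is determined by its pairings against the $\mathrm{gch}\;W(\mu)_{loc}$, $\mu\in\mathring{P}_{+}$. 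The sum in question pairs to $q^{(\mu|\mu)/2}$ against $\mathrm{gch}\;W(\mu)_{loc}$, so it is enough to prove
$$\langle\mathrm{gch}\;L(\Lambda_{0}),\mathrm{gch}\;W(\mu)_{loc}\rangle_{\mathrm{Ext}}=q^{(\mu|\mu)/2}\qquad(\mu\in\mathring{P}_{+}).$$
By Proposition \ref{extnonsym} the left side is the constant term of $\mathrm{gch}\;L(\Lambda_{0})\cdot(\mathrm{gch}\;W(\mu)_{loc})^{\star}\cdot\mathcal{C}|_{t=0}$, with $\mathcal{C}|_{t=0}=\prod_{\al\in\De_{-}}(1-X^{\al})^{\dim\g_{\al}}$; substituting $\mathrm{gch}\;W(\mu)_{loc}=q^{(b|b)/2}\bar{P}_{\mu}(X^{-1},q^{-1})$ from Corollary \ref{characterequality2} and the product formula above, the factor $\prod_{n\ge1}(1-q^{n})^{-l}$ cancels the contribution of the imaginary roots $-n\delta$ to $\mathcal{C}|_{t=0}$; equivalently, $\mathrm{gch}\;L(\Lambda_{0})\cdot\mathcal{C}|_{t=0}$ is the numerator of the Weyl--Kac character formula for $L(\Lambda_{0})$, a $W$-alternating sum, and extracting its constant term against $(\mathrm{gch}\;W(\mu)_{loc})^{\star}$, using $W=\mathring{W}\ltimes\mathring{P}$ and the Hall--Littlewood specialization of $\bar{P}_{\mu}$ at $t=0$, reduces to the classical Macdonald--Koornwinder norm identity with value $q^{(\mu|\mu)/2}$.

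\emph{Main obstacle.} The crux is this final constant-term identity: recognizing the surviving constant term as the (known) Macdonald--Koornwinder norm of $\bar{P}_{\mu}$ in the $t=0$ specialization, equal to $q^{(\mu|\mu)/2}$ in the normalization of Corollary \ref{characterequality2} (see \cite{Mac}, \cite{Ion}); additionally, one must check the completion and unitriangularity bookkeeping allowing the passage from equality of all these pairings to equality of the two infinite graded characters. Alternatively one can bypass the pairing: quoting from \cite{CIK} the graded character of $W(\lambda)$ (via freeness of $W(\lambda)$ over $\mathrm{End}_{\Cg^{\prime}}(W(\lambda))$ together with $\mathrm{gch}\;W(\lambda)_{loc}$), the lemma becomes the Macdonald--Koornwinder summation identity $\sum_{\lambda\in\mathring{P}_{+}}q^{(\lambda|\lambda)/2}\mathrm{gch}\;W(\lambda)=\big(\prod_{n\ge1}(1-q^{n})^{-l}\big)\sum_{\mu\in\mathring{P}}q^{(\mu|\mu)/2}X^{\mu}$, whose essential input is the same.
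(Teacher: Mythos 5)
Your approach is genuinely different from the paper's: you compute $\mathrm{gch}\,L(\Lambda_{0})$ explicitly from the Fock-space model (Theorem \ref{frenkelkac}) and then try to close the argument by a constant-term computation against $\bar{P}_{\mu}$, whereas the paper never writes down $\mathrm{gch}\,L(\Lambda_{0})$ at all. Instead, the paper computes the Ext groups directly: it uses Theorem \ref{demazure=weyl} to rewrite $W(\mu)_{loc}$ as the thin Demazure module $D_{\mu}$ up to a $\delta$-shift, applies Theorem \ref{thin demazure joseph functor} and the adjunction of Proposition \ref{dual} repeatedly to slide $\mu$ down to $0$ (using that $L(\Lambda_{0})$ is $\g$-integrable, hence $\mathcal{D}_{i}$-invariant), and then kills the higher Ext against $D_{0}^{\vee}=\mathbb{C}_{-\Lambda_{0}}$ via the BGG-type projective resolution $P^{\bullet}\to\mathbb{C}_{\Lambda_{0}}$ of \cite[Theorem 3.6]{HK}. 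The orthogonality/basis step at the end is the same in both arguments.

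The problem is the step you yourself flag as the main obstacle: you do not prove the identity
$\langle\mathrm{gch}\;L(\Lambda_{0}),\mathrm{gch}\;W(\mu)_{loc}\rangle_{\mathrm{Ext}}=q^{(\mu|\mu)/2}$,
you only assert that after inserting the Fock-space product and the Weyl--Kac numerator it ``reduces to the classical Macdonald--Koornwinder norm identity.'' It does not: the pairing at hand is not the norm $\langle\bar{P}_{\mu},\bar{P}_{\mu}\rangle$ but the pairing of $\bar{P}_{\mu}$ against the full level-one numerator, and extracting that constant term via $W=\mathring{W}\ltimes\mathring{P}$ and the theta-decomposition is precisely the nontrivial computation that the paper's Demazure--Joseph/BGG argument is designed to replace (and which, in effect, is equivalent to Theorem A itself). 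Neither \cite{Mac} nor \cite{Ion} gives this as a quotable statement; as written the crucial step is a gap, not a citation. A smaller issue to watch: the pairing of the paper is conjugate-linear in $q$ in the second slot (the $\star$ in $\langle-,-\rangle_{nonsym}$ sends $q\mapsto q^{-1}$), so the normalization $q^{(\mu|\mu)/2}$ versus $q^{-(\mu|\mu)/2}$ in your target identity needs to be pinned down; the paper sidesteps this by pairing against the shifted module $\mathbb{C}_{-(\lambda|\lambda)\delta/2}\otimes W(\lambda)_{loc}$ and getting the clean answer $1$. Your suggested bypass via freeness over $\mathrm{End}_{\Cg'}(W(\lambda))$ has the same defect: the resulting Macdonald--Koornwinder summation identity is exactly what one is trying to prove, not a known formula to be cited.

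Your explicit Fock-space computation of $\mathrm{gch}\,R_{\mu}$ is correct and is a nice supplement, but it does not by itself close the gap.
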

\begin{proof}
Let $\lambda\in \mathring{P}_+$ and $k\in \mathbb{Z}_+$. By Theorem \ref{demazure=weyl}, $$\mathrm{Ext}_{\Cg\text{-}\mathrm{mod}^{\mathrm{int}}}^{k}(L(\Lambda_0),(\mathbb{C}_{-(\lambda\mid\lambda)\delta/2}\otimes_{\C} W(\lambda)_{loc})^{\vee})=\mathrm{Ext}_{\Cg\text{-}\mathrm{mod}^{\mathrm{int}}}^{k}(L(\Lambda_0),D_{\lambda}^{\vee}).$$ Applying Theorem \ref{thin demazure joseph functor} and Proposition \ref{dual} repeatedly, we have $$\mathrm{Ext}_{\Cg\text{-}\mathrm{mod}^{\mathrm{int}}}^{k}(L(\Lambda_0),D_{\lambda}^{\vee})=\mathrm{Ext}_{\Cg\text{-}\mathrm{mod}^{\mathrm{int}}}^{k}(L(\Lambda_0),D_{0}^{\vee})\;\;\;k\in\mathbb{Z}_+.$$ Here $D_{0}$ is isomorphic to the trivial $\Cg$-module $\mathbb{C}_{\Lambda_0}$ with its weight $\Lambda_0$. By \cite[Theorem 3.6]{HK}, We have a projective resolution of a $\Cg$-module $$\cdots \to P^1\to P^0 \to \mathbb{C}_{\Lambda_0}\to 0,$$ where $P^{n}=\bigoplus_{w\in W_0,\; l(w)=n}P(w\circ 0+\Lambda_0)_{\mathrm{int}}.$ Since $\mathrm{dim}_{\mathbb{C}}\;\mathrm{Hom}_{\Cg}(P^{n},\mathbb{C}_{\Lambda_0})=\delta_{0,n}$, we obtain $$\mathrm{dim}_{\mathbb{C}}\mathrm{Ext}_{\Cg\text{-}\mathrm{mod}^{\mathrm{int}}}^{k}(L(\Lambda_0),(\mathbb{C}_{-(\lambda\mid\lambda)\delta/2}\otimes_{\C} W(\lambda)_{loc})^{\vee})=\delta_{0,k}\;\;\;k\in\mathbb{Z}_{+}.$$ Therefore, we have $$\langle \mathrm{gch}\;L(\Lambda_{0}),\mathrm{gch}\;(\mathbb{C}_{-(\lambda\mid\lambda)\delta/2}\otimes_{\C} W(\lambda)_{loc})\rangle_{int}=1.$$ By Corollary \ref{characterequality2}, the set of graded characters $\{\mathrm{gch}\;W(\lambda)_{loc}\}_{\lambda\in \mathring{P}_+}$ is an orthogonal $\mathbb{C}(\!(q^{1/2})\!)$-basis of $\mathbb{C}(\!(q^{1/2})\!)[\mathring{P}].$ Hence Corollary \ref{orthogonality} implies the assertion.
\end{proof}
If a $\bo_{-}$-module $M$ admits a finite sequence of $\bo_-$-submodules such that every successive quotient is isomorphic to some $\D^{\mu}$ ($\mu\in P$), then we say $M$ is filtered by Demazure slices. Let $f$, $g\in \C(\!(q^{1/2})\!)[\mathring{P}]$. Here we make a convention that $f\geq g$ means all the coefficients of $f-g$ belong to $\mathbb{Z}_+$.
\begin{thm}\label{realization}
For each $\lambda\in \mathring{P}_+$, the global Weyl module $W(\lambda)\otimes_{\C}\C_{\Lambda_0}$ is isomorphic to $\mathrm{Gr}^{\lambda}D$ as $\Cg$-module. In particular, $W(\lambda)\otimes_{\C}\C_{\Lambda_0}$ is filtered by Demazure slices and each $\D^{\mu}$ $(\mu\in \mathring{W}\lambda)$ appears exactly once as a successive quotient.
\end{thm}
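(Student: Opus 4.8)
The plan is to first produce a surjection of $\Cg$-modules $\Psi\colon W(\lambda)\otimes_{\C}\C_{\Lambda_0}\twoheadrightarrow\mathrm{Gr}^{\lambda}D$ and then to upgrade it to an isomorphism by a global graded character comparison. To build $\Psi$ I would take the extremal weight vector $v_{\pi_{\lambda}\Lambda_0}\in D^{\lambda}\subseteq L(\Lambda_0)$ and denote by $\bar v$ its image in $\mathrm{Gr}^{\lambda}D$. Since $D^{\lambda}=U(\bo_{-})v_{\pi_{\lambda}\Lambda_0}$ and $\bo_{-}\subseteq\Cg$, the vector $\bar v$ generates $\mathrm{Gr}^{\lambda}D$ over $\Cg$, so it is enough to check that $\bar v$ obeys the three relations of Definition \ref{global weyl} (with the evident shift by $\Lambda_0$). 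Relation (1) is the weight condition, immediate from $(\pi_{\lambda}\Lambda_0)|_{\mathring{\h}}=\lambda$. For relation (2) one first uses Lemma \ref{cherednik order and bruhat order} to see that $\pi_{\lambda}$ lies in $W_{0}$ when $\lambda\in\mathring{P}_{+}$, so that $v_{\pi_{\lambda}\Lambda_0}$ is a $\mathring{\g}$-highest weight vector of restricted weight $\lambda$ in the $\mathring{\g}$-integrable module $L(\Lambda_0)$; hence $e_{-\al}^{\langle\lambda,\check{\al}\rangle+1}v_{\pi_{\lambda}\Lambda_0}=0$ already in $L(\Lambda_0)$, and a fortiori in $\mathrm{Gr}^{\lambda}D$.

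Relation (3) is the core: one has to prove $\mathfrak{Cn}_{+}\,v_{\pi_{\lambda}\Lambda_0}\subseteq\sum_{\lambda\succ\mu,\ \mu\notin\mathring{W}\lambda}D^{\mu}$ inside $L(\Lambda_0)$. The loop-degree-zero part of $\mathfrak{Cn}_{+}$ lies in $\mathring{\n}_{+}$ and kills $v_{\pi_{\lambda}\Lambda_0}$ by the previous step. For a root space $\g_{\al}\subseteq\mathfrak{Cn}_{+}$ of negative loop degree put $\beta=-\al\in\De_{+}$; if $\langle\pi_{\lambda}\Lambda_0,\check{\beta}\rangle\le 0$ then $e_{\al}v_{\pi_{\lambda}\Lambda_0}=0$ by Lemma \ref{extremal vector} and $\mathfrak{sl}_{2}$-theory (an extremal vector sits at an end of its $\beta$-string), while if $\langle\pi_{\lambda}\Lambda_0,\check{\beta}\rangle>0$ one argues, using Lemma \ref{extremal vector} and the explicit list of real roots of $A_{2l}^{(2)}$, that the weight of $e_{\al}v_{\pi_{\lambda}\Lambda_0}$ restricts on $\mathring{\h}$ to an element $\nu$ with $\lambda\succ\nu\notin\mathring{W}\lambda$ — for instance, for $\beta=n\delta-\gamma$ with $\gamma\in\mathring{\De}_{s+}$ and $n\ge1$ one gets $\nu=\lambda+\gamma$, and a short computation with the Macdonald order via Lemma \ref{macdonald} gives $\lambda\succ\lambda+\gamma$ — and then that $e_{\al}v_{\pi_{\lambda}\Lambda_0}$ lies in $\sum_{\lambda\succ\mu,\ \mu\notin\mathring{W}\lambda}D^{\mu}$. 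This is the argument of \cite{CK} transplanted to the twisted datum, where one additionally has to handle the half-integral imaginary shifts $\frac{1}{2}(\mathring{\De}_{l}+(2\mathbb{Z}+1)\delta)$ and the short/long dichotomy of $C_{l}$. This produces $\Psi$.

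To see $\Psi$ is an isomorphism I would compare graded characters globally. By Proposition \ref{filtration}, $\mathrm{gch}\,\mathrm{Gr}^{\lambda}D=\sum_{\mu\in\mathring{W}\lambda}\mathrm{gch}\,\D^{\mu}$. Running the proof of Proposition \ref{filtration} with $L(\Lambda_0)=D^{\Lambda_0}$ in place of $D^{\lambda}$ — which uses only Corollary \ref{quotient of demazure} and the fact from \cite{Kat} that a finite intersection of thick Demazure modules is a sum of such — produces a filtration of $L(\Lambda_0)$ whose successive quotients are exactly the $\D^{\mu}$, $\mu\in\mathring{P}$, each once; grouping by $\mathring{W}$-orbits yields $\mathrm{gch}\,L(\Lambda_0)=\sum_{\lambda\in\mathring{P}_{+}}q^{(\lambda|\lambda)/2}\mathrm{gch}\,\mathrm{Gr}^{\lambda}D$ (the $q$-shifts recording the $d$-eigenvalues of the vectors $v_{\pi_{\mu}\Lambda_0}$). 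On the other hand Lemma \ref{graded character weyl integrable} gives $\mathrm{gch}\,L(\Lambda_0)=\sum_{\lambda\in\mathring{P}_{+}}q^{(\lambda|\lambda)/2}\mathrm{gch}\,W(\lambda)$, and $\mathrm{gch}\,W(\lambda)=\mathrm{gch}\,(W(\lambda)\otimes_{\C}\C_{\Lambda_0})$ because $\Lambda_0$ vanishes on $\mathring{\h}\oplus\C d$. Since $\Psi$ gives the coefficientwise inequality $\mathrm{gch}\,(W(\lambda)\otimes_{\C}\C_{\Lambda_0})\ge\mathrm{gch}\,\mathrm{Gr}^{\lambda}D$ for every $\lambda$, and all weight spaces involved are finite dimensional, subtracting the two identities forces $\mathrm{gch}\,(W(\lambda)\otimes_{\C}\C_{\Lambda_0})=\mathrm{gch}\,\mathrm{Gr}^{\lambda}D$ for each $\lambda$. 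A weight-preserving surjection in $\mathfrak{B}'$ between objects with equal graded characters is an isomorphism, so $\Psi$ is an isomorphism, and the last assertion of the theorem is Proposition \ref{filtration} transported along $\Psi$.

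The step I expect to be the main obstacle is the negative-loop-degree part of relation (3) — showing that $\mathfrak{Cn}_{+}$ annihilates the image of the extremal vector in $\mathrm{Gr}^{\lambda}D$. This is precisely where type $A_{2l}^{(2)}$ departs from the dual untwisted setting of \cite{CK}: one must combine the $\mathfrak{sl}_{2}$-theory of extremal vectors with Lemma \ref{macdonald} and the explicit $A_{2l}^{(2)}$ root data recorded in Section 1.
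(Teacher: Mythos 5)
Your overall strategy — build a surjection $\Psi\colon W(\lambda)\otimes_{\C}\C_{\Lambda_0}\twoheadrightarrow\mathrm{Gr}^{\lambda}D$ by checking the three defining relations, then promote it to an isomorphism by the global graded-character identity for $L(\Lambda_0)$ — is exactly the paper's strategy, and your character comparison paragraph is a faithful reconstruction of the paper's (using Proposition \ref{filtration} and Lemma \ref{graded character weyl integrable}). The place where your proof and the paper's proof diverge is the verification of relation (3), and there your sketch has a genuine gap.

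You correctly identify relation (3) as the main obstacle, but the route you propose does not close it. Write $\beta=-\al\in\De_{+}$. The $\mathfrak{sl}_{2}$-theory of Lemma \ref{extremal vector} places $v_{\pi_{\lambda}\Lambda_0}$ at the \emph{top} of its $\beta$-string when $N:=\langle\pi_{\lambda}\Lambda_0,\check{\beta}\rangle>0$, so $e_{\al}v_{\pi_{\lambda}\Lambda_0}$ lies one step down the string, i.e.\ in $U(\g_{\beta})\cdot v_{s_{\beta}\pi_{\lambda}\Lambda_0}$ — a $\g_{\beta}\subset\n_{+}$ module, not a $\bo_{-}$-module. Knowing the $\mathring{\h}$-weight of $e_{\al}v_{\pi_{\lambda}\Lambda_0}$ restricts to $\nu=\lambda+\gamma$ (or computing $\lambda\succ\nu$ via Lemma \ref{macdonald}) does \emph{not} place the vector inside any thick Demazure module $D^{\mu}$: $D^{\mu}$ is defined as $U(\bo_{-})v_{\pi_{\mu}\Lambda_0}$, a $\bo_{-}$-submodule, not a weight subspace. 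Your phrase ``and then that $e_{\al}v_{\pi_{\lambda}\Lambda_0}$ lies in $\sum_{\lambda\succ\mu,\,\mu\notin\mathring{W}\lambda}D^{\mu}$'' is precisely the step that needs an argument, and Lemma \ref{extremal vector} plus the Macdonald-order combinatorics do not supply one.

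What the paper does instead, and what your proposal is missing, is Theorem \ref{frenkelkac}: the vertex-operator realization $L(\Lambda_0)\cong\bigoplus_{\mu\in\mathring{P}}R_{\mu}$ with $R_{\mu}=U(\Cg_{im})|\mu\rangle$. This realization immediately gives $e_{\al+n\delta}|\lambda\rangle\in R_{\lambda+\al}=U(\Cg_{im})|\lambda+\al\rangle$, and then, because any $\mathring{\g}$-highest weight vector of $\mathring{\h}$-weight $\nu$ lies in $R_{\nu}\subset D^{\nu}$ and because $\mathring{\n}_{-},\Cg_{im}\subset\bo_{-}$, one deduces $e_{\al+n\delta}|\lambda\rangle\in D^{\nu}$ with $\nu-\lambda\in\mathring{Q}'_{+}$, hence $\lambda\succ\nu$ and $\nu\notin\mathring{W}\lambda$. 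That is the input that actually places the vector inside the ideal being quotiented out. Your proposal never invokes the realization, so this step remains unjustified; the Macdonald-order computations you outline (which are themselves fine and do handle the half-integral imaginary shifts of $A_{2l}^{(2)}$) would become relevant only \emph{after} one already knows the vector sits in some $D^{\nu}$.
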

\begin{proof}
First, we show that there exists a surjection $W(\lambda)\otimes_{\C}\C_{\Lambda_0}\to\mathrm{Gr}^{\lambda}D$. Let $v_{\lambda}\in \mathrm{Gr}^{\lambda}D$ be the nonzero cyclic vector with its weight $\lambda+\Lambda_0-\frac{(\lambda|\lambda)}{2}\delta.$ We check $v_{\lambda}$ satisfies Definition $\ref{global weyl}$ (1), (2), (3). The condition (1) is trivial from the definition of $v_{\lambda}$. Since $L(\Lambda_0)$ is an integrable $\g$-module, $v_{\lambda}$ is an extremal weight vector. This implies the condition (2). We check the condition (3) in the sequel. Since $\langle\lambda,\check{\al}\rangle\geq0$ and $v_{\lambda}$ is an extremal weight vector, we have $e_{\al}v_{\lambda}=0$ for $\al\in \mathring{\De}_{+}$. For each $\mu\in \mathring{P},$ we set $|\mu\rangle:=1\in R_{\mu}.$ For each $\beta=\al+n\delta \in \De$ with $n\in -\mathbb{Z}_+/2$ and $\al \in \mathring{\De}_{+}\cup \frac{1}{2}\mathring{\De}_{l+},$ we have $v:=e_{\al+n\delta}\vl\in U(\Cg_{im})|\lambda+\al\rangle$ by Theorem \ref{frenkelkac}. Since $U(\mathring{\g})v$ is finite dimensional,  $U(\mathring{\g})v$ has a highest weight vector whose weight is $\nu$. Then, $v\in U(\Cg_{im})U(\mathring{\mathfrak{n}}_-)|\nu\rangle\subset D^{\nu}$. Hence $\nu-\lambda\in \mathring{Q}_+^{\prime}.$ Since $\lambda$ and $\nu$ is dominant, we have $\lambda\succ\nu.$ Therefore $D^{\nu}$ is $0$ in $\gr$ as $|\nu\rangle\in D^{\nu}$. This implies $v=0$ and we have the desired surjection. In particular, we have an inequality
$$
q^{(\lambda,\lambda)/2}\mathrm{gch}\;W(\lambda)\geq\mathrm{gch}\;\gr.$$
On the other hand, we have, 
$$\mathrm{gch}\;L(\Lambda_{0})=\underset{\lambda\in \mathring{P}_{+}}{\sum}\mathrm{gch}\;\mathrm{Gr}^{\lambda}D$$ and 
$$\mathrm{gch}\;L(\Lambda_{0})=\underset{\lambda\in \mathring{P}_{+}}{\sum}q^{(\lambda\mid\lambda)/2}\mathrm{gch}\;W(\lambda)$$ by Lemma \ref{graded character weyl integrable}.
 Thus the above inequality is actually an equality and the assertion follows. 
\end{proof}

\subsection{Extensions between $\D^{\lambda}$ and $D_{\mu}$}
\subsubsection{Demazure-Joseph functor and Demazure slices}
\begin{thm}\label{thick demazure joseph functor}
For each $w\in W$ and $i\in\{0,...,l\}$, we have the following:
\begin{equation*}
\mathcal{D}_{i}(D^{w})=
\begin{cases}
D^{s_iw}&if\;s_iw<w\\
D^{w}&if\;s_iw\geq w.
\end{cases}
\end{equation*}
\end{thm}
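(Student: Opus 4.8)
The plan is to follow the proof of the thin analogue, Theorem~\ref{thin demazure joseph functor}, the one new ingredient being a description of how the simple root vector $e_{\al_i}$ acts on a thick Demazure module. We argue with $D^{w\Lambda}$ for an arbitrary $\Lambda\in P_+$, so that the asserted $D^{w}=D^{w\Lambda_0}$ is the case $\Lambda=\Lambda_0$. The computational core is the identity
$$e_{\al_i}\,U(\bo_-) \subseteq U(\bo_-)+U(\bo_-)\,e_{\al_i}\qquad\text{in }U(\mathfrak{p}_i),$$
which follows from the PBW decomposition $U(\mathfrak{p}_i)=\bigoplus_{k\ge 0}U(\bo_-)e_{\al_i}^{k}$ together with $\mathrm{ad}(e_{\al_i})(\n_-)\subseteq\n_-+\C\check\al_i$ and $\mathrm{ad}(e_{\al_i})(\h)\subseteq\C e_{\al_i}$; for the first inclusion one uses that $\gamma-\al_i$ is never a negative root when $\gamma$ is positive and $\al_i$ is simple. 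The consequence we need is that a cyclic $\bo_-$-submodule $U(\bo_-)v$ of a $\g$-module with $e_{\al_i}v=0$ is automatically $\mathfrak{p}_i$-stable.

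\emph{The case $s_iw\ge w$.} Here $\langle w\Lambda,\check\al_i\rangle\ge 0$, and since $v_{w\Lambda}$ is an extremal weight vector of $L(\Lambda)$ (a standard property of integrable highest weight modules; cf.\ Lemma~\ref{extremal vector}) this forces $e_{\al_i}v_{w\Lambda}=0$. Hence $D^{w\Lambda}=U(\bo_-)v_{w\Lambda}$ is a $\mathfrak{p}_i$-submodule of the $\mathfrak{sl}(2,i)$-integrable module $L(\Lambda)$, so it is itself $\mathfrak{sl}(2,i)$-integrable, and Theorem~\ref{demazurefunctor}~(3) gives $\mathcal{D}_i(D^{w\Lambda})=D^{w\Lambda}$.

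\emph{The case $s_iw<w$.} Then $\langle w\Lambda,\check\al_i\rangle\le 0$. If $\langle w\Lambda,\check\al_i\rangle=0$ then $s_iw\Lambda=w\Lambda$, hence $D^{s_iw\Lambda}=D^{w\Lambda}$, while $e_{\al_i}v_{w\Lambda}=0$ by extremality, so the argument above gives $\mathcal{D}_i(D^{w\Lambda})=D^{w\Lambda}=D^{s_iw\Lambda}$. So assume $m:=-\langle w\Lambda,\check\al_i\rangle>0$. Three facts: (i) by Lemma~\ref{extremal vector}, $v_{s_iw\Lambda}\in\g_{\al_i}^{m}v_{w\Lambda}=\C e_{\al_i}^{m}v_{w\Lambda}$; (ii) $D^{w\Lambda}\subseteq D^{s_iw\Lambda}$ by Lemma~\ref{demazure}~(1), as $s_iw\le w$; and (iii) $D^{s_iw\Lambda}$ is a $\mathfrak{p}_i$-stable $\mathfrak{sl}(2,i)$-integrable module by the previous case applied to $s_iw$ (note $s_i(s_iw)=w\ge s_iw$). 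By (ii) and (iii) the image of the canonical $\mathfrak{p}_i$-morphism $\Phi\colon U(\mathfrak{p}_i)\otimes_{U(\bo_-)}D^{w\Lambda}\to L(\Lambda)$ is contained in $D^{s_iw\Lambda}$, and by (i) it contains $U(\bo_-)v_{s_iw\Lambda}=D^{s_iw\Lambda}$; hence $\mathrm{im}\,\Phi=D^{s_iw\Lambda}$. Since this image is $\mathfrak{sl}(2,i)$-integrable, $\Phi$ factors through the maximal $\mathfrak{sl}(2,i)$-integrable quotient, producing a surjection $\psi\colon\mathcal{D}_i(D^{w\Lambda})\twoheadrightarrow D^{s_iw\Lambda}$.

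The one remaining point — that $\psi$ is injective, equivalently that $D^{s_iw\Lambda}$ is \emph{already} the maximal $\mathfrak{sl}(2,i)$-integrable quotient of $U(\mathfrak{p}_i)\otimes_{U(\bo_-)}D^{w\Lambda}$ — is where I expect the real work to lie, and I would settle it by a graded-character comparison, exactly as the thin case invokes Kashiwara's \cite[Proposition~3.3.4]{Kas}. That is: compute $\mathrm{gch}\,D^{s_iw\Lambda}$ from the thick Demazure character formula of \cite{CK} (or from the realization of $L(\Lambda_0)$ in Theorem~\ref{frenkelkac}), and prove $\mathrm{gch}\,\mathcal{D}_i(D^{w\Lambda})=\mathrm{gch}\,D^{s_iw\Lambda}$; the equality then forces the surjection $\psi$ to be an isomorphism. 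The decisive input for this last equality is a thick analogue of the acyclicity of Demazure modules for the Demazure--Joseph functor (so that $\mathrm{gch}\,\mathcal{D}_i$ is computed by the Demazure operator on $\mathrm{gch}$), which I would establish using the PBW filtration on $U(\mathfrak{p}_i)\otimes_{U(\bo_-)}D^{w\Lambda}$ and the vanishing $f_{\al_i}v_{w\Lambda}=0$ that holds in the present case. Specialising $\Lambda=\Lambda_0$ then gives the theorem.
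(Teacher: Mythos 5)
Your proposal follows essentially the same route as the paper's: the paper simply states that the proof is identical to that of Theorem~\ref{thin demazure joseph functor} (PBW plus Lemma~\ref{extremal vector} for $\mathfrak{p}_i$-stability when $e_{\al_i}v_{w\Lambda}=0$; inclusion $D^{w\Lambda}\subseteq D^{s_iw\Lambda}$ yielding a surjection from $\mathcal{D}_i(D^{w\Lambda})$; character comparison to conclude), invoking the thick analogue of Kashiwara \cite[Proposition 3.3.4]{Kas} (cf.\ \cite[\S4]{Kas}) for the last step exactly as you propose. Your write-up is somewhat more detailed about the PBW mechanism and the degenerate case $\langle w\Lambda,\check\al_i\rangle=0$, but the structure and the key inputs match.
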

\begin{proof}
The proof is the same as proof of  Theorem \ref{thin demazure joseph functor} using the analog of \cite[Proposition 3.3.4]{Kas} for thick Demazure modules (cf. \cite[\S4]{Kas}).
\end{proof}

For each $c\in \mathring{P}$, let $W(c)$ be the image of $D^{c}$ in $\mathrm{Gr}^{c_+}D$. Here $c_{+}$ is a unique dominant integrable weight in $\mathring{W}c$. From Theorem \ref{realization}, the global Weyl module is isomorphic to the image of $D^{c_+}$ as $\Cg^{\prime}$-module. Hence this notation is consistent with the previous notation and we use the same notation.

\begin{prop}[\cite{CK} Proposition 4.13]\label{generalizedweyl}
For each $c\in \mathring{P}$ and $i\in\{1,...,l\}$, we have
\begin{equation*}
\mathcal{D}_i(W(c))=
\begin{cases}
W(s_ic)&(s_ic\succeq c)\\
W(c)&(s_ic\nsucceq c)
\end{cases}.
\end{equation*}
\end{prop}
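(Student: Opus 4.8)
The plan is to deduce the behavior of $\mathcal{D}_i$ on the generalized Weyl module $W(c)$ from the already established behavior of $\mathcal{D}_i$ on thick Demazure modules (Theorem \ref{thick demazure joseph functor}), together with the right-exactness of $\mathcal{D}_i$ (Theorem \ref{demazurefunctor} (5)) and the compatibility of $\mathcal{D}_i$ with quotients. Recall that $W(c)$ is by definition the image of $D^{c}$ in $\mathrm{Gr}^{c_+}D = D^{c_+}/\sum_{c_+\succ\mu,\;\mu\notin\mathring{W}c}D^{\mu}$; equivalently $W(c) = D^{c}/(D^{c}\cap\sum_{c_+\succ\mu,\;\mu\notin\mathring{W}c}D^{\mu})$. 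Since $i\in\{1,\dots,l\}$, the reflection $s_i$ lies in $\mathring{W}$, so $s_i$ permutes $\mathring{W}c$ and fixes $c_+$; hence $\mathcal{D}_i$ preserves $\mathrm{Gr}^{c_+}D$ as a $\mathfrak{p}_i$-equivariant operation, and the point is to track what it does to the submodule $W(c)$ inside it.

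First I would dispose of the case $s_ic \nsucceq c$. Writing $b = s_i(\!(c)\!)$ — which here, since $s_i\in\mathring{W}$, is literally $s_ic$ — Lemma \ref{cherednik order and bruhat order} (2) gives that $s_ic\nsucceq c$ forces $\pi_{s_ic} < \pi_c$, hence by Lemma \ref{demazure} $D^{c}\subseteq D^{s_ic}$, and moreover (by Lemma \ref{extremal vector} and the PBW theorem, exactly as in the proof of Theorem \ref{thin demazure joseph functor}) $D^{c}$ already carries an integrable $\mathfrak{sl}(2,i)$-action. This integrability descends to the quotient $W(c)$, so $W(c)$ is an $\mathfrak{sl}(2,i)$-integrable $\mathfrak{p}_i$-module and Theorem \ref{demazurefunctor} (3) gives $\mathcal{D}_i(W(c)) = W(c)$, which is the second case.

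For the case $s_ic \succeq c$, i.e.\ $\pi_{s_ic} = s_i\pi_c > \pi_c$: apply $\mathcal{D}_i$ to the surjection $D^{c}\twoheadrightarrow W(c)$. By right-exactness $\mathcal{D}_i(D^{c})\twoheadrightarrow \mathcal{D}_i(W(c))$, and Theorem \ref{thick demazure joseph functor} identifies $\mathcal{D}_i(D^{c}) = D^{s_ic}$; thus we get a surjection $D^{s_ic}\twoheadrightarrow\mathcal{D}_i(W(c))$. I would then argue that this surjection factors through $W(s_ic)$, i.e.\ that the kernel $D^{s_ic}\cap\sum_{c_+\succ\mu,\;\mu\notin\mathring{W}c}D^{\mu}$ maps to zero: each generator $D^{\mu}$ in that sum has $\mathcal{D}_i$-image again of the form $D^{\mu}$ or $D^{s_i\mu}$ by Theorem \ref{thick demazure joseph functor}, and since $s_i\in\mathring{W}$ fixes $c_+$ and preserves the Macdonald order among weights with $c_+\succ\mu$, $\mu\notin\mathring{W}c$, the image stays inside $\sum_{c_+\succ\nu,\;\nu\notin\mathring{W}c}D^{\nu}$, which is zero in $\mathrm{Gr}^{c_+}D$. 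Hence $W(s_ic)\twoheadrightarrow\mathcal{D}_i(W(c))$. Conversely, applying $\mathcal{D}_i$ to the inclusion $D^{c}\hookrightarrow D^{s_ic}$ and passing to quotients produces a natural map $\mathcal{D}_i(W(c))\to W(s_ic)$ (using that the target is already $\mathfrak{sl}(2,i)$-integrable, so the universal property of $\mathcal{D}_i$ applies), and one checks it is inverse on the cyclic vector. To finish, compare graded characters: since $\mathcal{D}_i$ does not change the graded character beyond the $s_i$-symmetrization already encoded in $D^{c}\mapsto D^{s_ic}$ (via the analog of \cite[Proposition 3.3.4]{Kas}), one gets $\mathrm{gch}\,\mathcal{D}_i(W(c)) = \mathrm{gch}\,W(s_ic)$, forcing the surjection to be an isomorphism.

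The main obstacle I expect is the bookkeeping in the factorization step: verifying that $\mathcal{D}_i$ sends the "lower" sum $\sum_{c_+\succ\mu,\;\mu\notin\mathring{W}c}D^{\mu}$ into the corresponding sum for $s_ic$ (equivalently, that it stays zero in $\mathrm{Gr}^{c_+}D$) requires knowing that $s_i$ interacts well with the Macdonald order on the relevant weights, and that $\mathcal{D}_i$ of a sum of submodules is the sum of the $\mathcal{D}_i$-images — the latter needs a small argument since $\mathcal{D}_i$ is only right exact, so one should realize the sum as a quotient of a direct sum and use right-exactness plus the compatibility with the ambient integrable module $\mathrm{Gr}^{c_+}D$. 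Everything else is a formal consequence of Theorems \ref{demazurefunctor}, \ref{thick demazure joseph functor} and \ref{realization}, following the template of \cite{CK}.
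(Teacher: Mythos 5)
Your high-level plan matches the shape of the paper's proof (deduce the action of $\mathcal{D}_i$ on $W(c)$ from its action on thick Demazure modules plus right-exactness), but there are two substantive problems.

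\emph{Sign error in the first case.} You assert that $s_ic\nsucceq c$ forces $\pi_{s_ic}<\pi_c$. The direction is reversed. If $s_ic\nsucceq c$ (and $s_ic\neq c$) then $\langle c,\check\alpha_i\rangle>0$, hence $c\succ s_ic$, and Lemma~\ref{cherednik order and bruhat order}~(2) applied with $b=s_ic$ gives $\pi_{s_ic}=s_i\pi_c>\pi_c$. In particular $D^{s_ic}\subseteq D^{c}$, not the other way round. The conclusion you want --- $D^{c}=D^{\pi_c}$ is $\mathfrak{sl}(2,i)$-integrable --- is still true, but it holds precisely \emph{because} $s_i\pi_c\geq\pi_c$ (this is exactly the ``$s_iw\geq w$'' branch of Theorem~\ref{thick demazure joseph functor}); with your claimed inequality $s_i\pi_c<\pi_c$ you would instead be in the branch where $\mathcal{D}_i(D^c)=D^{s_ic}\neq D^c$, which contradicts the integrability you then invoke. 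So the step is internally inconsistent as written.

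\emph{The second case is the real content and your argument is circular.} You produce a surjection $W(s_ic)\twoheadrightarrow\mathcal{D}_i(W(c))$ by right-exactness and then close it up by a graded-character comparison. But the equality $\mathrm{gch}\,\mathcal{D}_i(W(c))=\mathrm{gch}\,W(s_ic)$ is not a formal consequence of the Kashiwara character formula for $D^c\mapsto D^{s_ic}$: since $\mathcal{D}_i$ is only right exact, the long exact sequence for the derived functor gives an extra contribution from $\mathbb{L}^{-1}\mathcal{D}_i(W(c))$, and its vanishing is exactly what has to be established. Likewise the proposed ``converse map'' $\mathcal{D}_i(W(c))\to W(s_ic)$ obtained by ``applying $\mathcal{D}_i$ to the inclusion and passing to quotients'' is not well-defined without knowing more: $\mathcal{D}_i$ does not preserve injections and does not commute with quotients in general. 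The paper avoids both issues by working with the short exact sequence $0\to M_c\to D^c+M_c\to W(c)\to 0$ where $M_c:=\sum_{c_-\succ a}D^a$ is $\mathcal{D}_i$-invariant; applying $\mathcal{D}_i$ and its left derived functor, the crucial step is to show that the induced map $M_c\to D^{c'}+M_c$ is \emph{injective}, which follows from Joseph's Lemma~2.8(1) because $L(\Lambda_0)$ is completely reducible as an $\mathfrak{sl}(2,i)$-module and $D^c+M_c$ is a submodule of it. This injectivity kills the $\mathbb{L}^{-1}$ term in one stroke and gives $\mathcal{D}_i(W(c))\cong(D^{c'}+M_c)/M_c$ on the nose. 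That complete-reducibility input is the missing idea in your proposal; without it, your ``two surjections plus characters'' scheme begs the question.

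Your first-case argument (once the inequality is corrected) is a legitimate, slightly different packaging of the same fact, but for the second case you should adopt the paper's exact-sequence-plus-Joseph's-lemma strategy rather than trying to match characters.
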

\begin{proof}
We set $M_{c}:=\underset{c_{-}\succ a}{\sum}D^{a}.$ We have a short exact sequence
$$0\to M_c\to D^{c}+M_c\to W(c)\to 0.$$
The module $M_c$ is invariant under $\df_i$ by Theorem \ref{thick demazure joseph functor},  and hence we obtain a following exact sequence 
$$\mathbb{L}^{-1}\mathcal{D}_i(W(c))\to M_c\to D^{c^{\prime}}+M_c\to \df_i(W(c))\to 0.$$
Here 
\begin{equation*}
c^{\prime}=
\begin{cases}
s_ic&(s_ic\succeq c)\\
c&(s_ic\nsucceq c)
\end{cases}
\end{equation*} and $\mathbb{L}^{\bullet}\mathcal{D}_i$ is the left derived functor of $\mathcal{D}_i$. By Theorem \ref{demazurefunctor} (2), we have the following commutative diagram \[
  \begin{CD}
     M_c @>>> D^c+M_c \\
  @|    @VVV \\
     M_c  @>>>  D^{c^\prime}+M_c
  \end{CD}.
\] Since $L(\Lambda_0)$ is completely reducible as a $\mathfrak{sl}(2,i)$-module and $D^{c}+M_c$ is a $\bo_-$-submodule of $L(\Lambda_0)$, the above morphism $D^{c}+M_c\to D^{c^{\prime}}+M_c$ is injective by \cite[Lemma 2.8 (1)]{Jos}. Hence $M_c\to D^{c^{\prime}}+M_c$ is injective. Therefore we obtain $\df_i(W(c))\cong (D^{c^{\prime}}+M_c)/M_c$ from the above exact sequence.
\end{proof}

\begin{prop}[\cite{CK} Corollary 4.15]\label{exactsequence}
Let $i\in\{0,1,...,l\}$ and $c\in\mathring{P}$. If $s_ic\succ c$, then we have an exact sequence
$$0\to\D^c\to\df_i(\D^c)\to\D^{s_ic}\to 0$$ and $\df_{i}(\D^{s_i c})=\{0\}$.
\end{prop}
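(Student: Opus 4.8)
The plan is to follow the proof of \cite[Corollary 4.15]{CK}, working inside $L(\Lambda_0)$ and playing the Demazure--Joseph functor $\df_i$ against the nesting of thick Demazure modules. First I would translate the hypothesis into the Bruhat order: by Lemma \ref{cherednik order and bruhat order} (2), $s_ic\succ c$ is equivalent to $\pi_c=s_i\pi_{s_ic}>\pi_{s_ic}$. Setting $w:=\pi_c$ and $u:=\pi_{s_ic}$, we then have $u=s_iw<w$, $\D^c=\D^{w\Lambda_0}$, $\D^{s_ic}=\D^{u\Lambda_0}$, and $n:=\langle u\Lambda_0,\check\al_i\rangle>0$ (from $s_iu>u$ and the fact that $n=0$ would force $u\Lambda_0=w\Lambda_0$, contradicting $(u\Lambda_0)(\!(0)\!)=s_ic\neq c=(w\Lambda_0)(\!(0)\!)$). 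Throughout I would use Theorem \ref{thick demazure joseph functor}, Lemma \ref{demazure}, and the principle --- already used in the proof of Proposition \ref{generalizedweyl} via \cite[Lemma 2.8]{Jos} and complete reducibility of $L(\Lambda_0)$ over $\mathfrak{sl}(2,i)$ --- that for any $\bo_-$-submodule $M\subseteq L(\Lambda_0)$ one has $\mathbb L^{-k}\df_i(M)=\{0\}$ for $k>0$ and $\df_i(M)$ is canonically the $\mathfrak{sl}(2,i)$-submodule $U(\mathfrak{sl}(2,i))M$ of $L(\Lambda_0)$ generated by $M$.

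For the vanishing $\df_i(\D^{s_ic})=\{0\}$ I would argue with the cyclic generator. Let $\bar v$ be the image of $v_{u\Lambda_0}$ in $\D^{u\Lambda_0}=D^{u\Lambda_0}/K_u$, where $K_u:=\sum_{v>u}D^{v\Lambda_0}$. By Lemma \ref{extremal vector}, $e_{-\al_i}^{\,n}v_{u\Lambda_0}$ is a nonzero multiple of $v_{w\Lambda_0}$, which lies in $D^{w\Lambda_0}\subseteq K_u$ because $w>u$; hence $e_{-\al_i}^{\,n}\bar v=0$ in $\D^{u\Lambda_0}$. Since $\D^{u\Lambda_0}$ is generated over $\bo_-$ by $\bar v$, the module $\df_i(\D^{u\Lambda_0})$ is generated over $\mathfrak p_i$ by the image of $\bar v$, a weight vector of $\check\al_i$-weight $n>0$ annihilated by $e_{-\al_i}^{\,n}$; in an $\mathfrak{sl}(2,i)$-integrable module such a vector is zero, so $\df_i(\D^{s_ic})=\{0\}$.

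For the exact sequence I would apply $\df_i$ to $0\to K\to D^{w\Lambda_0}\to\D^{w\Lambda_0}\to 0$ with $K:=\sum_{v>w}D^{v\Lambda_0}$. Here $\df_i(D^{w\Lambda_0})=D^{u\Lambda_0}$ by Theorem \ref{thick demazure joseph functor}; the $\mathbb L^{-1}$-terms vanish and $\df_i(K)\to\df_i(D^{w\Lambda_0})$ becomes the inclusion $U(\mathfrak{sl}(2,i))K\subseteq U(\mathfrak{sl}(2,i))D^{w\Lambda_0}=D^{u\Lambda_0}$, so the long exact sequence collapses to $\df_i(\D^{w\Lambda_0})\cong D^{u\Lambda_0}/\df_i(K)$. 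It then remains to prove the two submodule identities
$$D^{w\Lambda_0}+\df_i(K)=K_u\qquad\text{and}\qquad D^{w\Lambda_0}\cap\df_i(K)=K;$$
granting these, $D^{u\Lambda_0}/\df_i(K)$ surjects onto $D^{u\Lambda_0}/K_u=\D^{s_ic}$ with kernel $K_u/\df_i(K)\cong D^{w\Lambda_0}/(D^{w\Lambda_0}\cap\df_i(K))=\D^c$, yielding $0\to\D^c\to\df_i(\D^c)\to\D^{s_ic}\to 0$ with the injection the natural transformation of Theorem \ref{demazurefunctor} (2). For the first identity I would write $\df_i(K)=\sum_{v>w}D^{(v\wedge s_iv)\Lambda_0}$, where $v\wedge s_iv$ denotes whichever of $v,s_iv$ has smaller length, and use the lifting property of the Bruhat order for the simple reflection $s_i$ (recall $s_iw<w$): for every $v>w$ one gets $v\wedge s_iv>u$, so $\df_i(K)\subseteq K_u$; and for every $v'>u$ that is incomparable to $w$ one checks $s_iv'>v'$ and $s_iv'>w$, whence $D^{v'\Lambda_0}=U(\mathfrak{sl}(2,i))D^{s_iv'\Lambda_0}\subseteq\df_i(K)$. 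Since the elements $>u$ are exactly $w$, the elements $>w$, and the elements incomparable to $w$, these inclusions together with $D^{w\Lambda_0}\subseteq K_u$ give $K_u=D^{w\Lambda_0}+\df_i(K)$.

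The main obstacle is the second identity $D^{w\Lambda_0}\cap\df_i(K)=K$ --- equivalently, injectivity of the natural map $\D^c\to\df_i(\D^c)$; the inclusion $K\subseteq D^{w\Lambda_0}\cap\df_i(K)$ is clear, and the content is that the $\mathfrak{sl}(2,i)$-saturation $\df_i(K)$ must not already swallow $D^{w\Lambda_0}$ itself (which is exactly what makes $\df_i(\D^c)$ a genuine, nonsplit extension of $\D^{s_ic}$ by $\D^c$ rather than merely $\D^{s_ic}$). Following \cite{CK}, I would deduce it from the realization of $\D^{w\Lambda_0}$ afforded by Corollary \ref{quotient of demazure}: $\D^{w\Lambda_0}$ embeds, as a subquotient of $L(\Lambda_0)$, into a quotient of a single thick Demazure module $D^{v\Lambda_0}$ (for suitable $v$ with $s_iv>v$) by an $\mathfrak{sl}(2,i)$-stable submodule, hence into an $\mathfrak{sl}(2,i)$-integrable module; by the universal property of $\df_i$ the natural map $\D^{w\Lambda_0}\to\df_i(\D^{w\Lambda_0})$ then factors through this embedding and is therefore injective. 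Choosing the right $v$ (so that the relevant intersections with $K$ are themselves $\mathfrak{sl}(2,i)$-stable) and carrying out the Bruhat-order lifting arguments are the two places where care is needed; both are the $A_{2l}^{(2)}$-analogues of the corresponding steps in \cite{CK}, and \cite[Corollary 2.22]{Kat} (intersections of thick Demazure modules are again sums of thick Demazure modules) is used in both.
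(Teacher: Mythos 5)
Your argument for the vanishing $\df_i(\D^{s_ic})=\{0\}$ is correct and self-contained, and it is genuinely different from the paper's: you argue directly that the cyclic image of $v_{\pi_{s_ic}\Lambda_0}$ in $\df_i(\D^{s_ic})$ has $\check\alpha_i$-weight $n>0$ and is killed by $e_{-\alpha_i}^{\,n}$ (since $e_{-\alpha_i}^{\,n}v_{\pi_{s_ic}\Lambda_0}$ is a nonzero multiple of $v_{\pi_c\Lambda_0}\in D^{\pi_c\Lambda_0}\subseteq K_u$), hence is zero in an integrable module; the paper instead gets vanishing as a by-product by applying $\df_i$ to the exact sequence and using Theorem \ref{demazurefunctor}(3). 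Your Bruhat-order translation and the verification of the first identity $D^{w\Lambda_0}+\df_i(K)=K_u$ via the lifting property are essentially correct (one should say ``elements incomparable to $w$ \emph{that are} $>u$'').

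The gap is precisely where you flag it: you never actually prove $D^{w\Lambda_0}\cap\df_i(K)=K$, equivalently injectivity of $\D^c\to\df_i(\D^c)$. Your sketch --- realize $\D^c$ via Corollary \ref{quotient of demazure} inside a quotient $D^{v\Lambda_0}/N$ with $N$ being $\mathfrak{sl}(2,i)$-stable --- is circular unless one can specify the $\mathfrak{sl}(2,i)$-stable $N$ with $N\cap D^{w\Lambda_0}=K$, which is essentially the statement you are trying to prove. Your choice of auxiliary submodule $K=\sum_{v>\pi_c}D^{v\Lambda_0}$ is the natural one but not $s_i$-stable, and that is what creates the problem: $\df_i(K)$ acquires new pieces $D^{\min(v,s_iv)\Lambda_0}$ whose intersection with $D^{w\Lambda_0}$ you cannot control (intersection does not distribute over the sum). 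The paper avoids this entirely by choosing the larger $s_i$-\emph{invariant} module $M=\sum_{v\in S}D^{v\Lambda_0}$ with $S=\{v\mid v\nleq\pi_c,\ s_iv\nleq\pi_c\}$. Because $s_iS=S$, one has $\df_i(M)\cong M$; because $D^{\pi_c\Lambda_0}\cap M=K$, one gets $\D^c\cong(D^{\pi_c\Lambda_0}+M)/M$ and, after applying $\df_i$, $\df_i(\D^c)\cong(D^{\pi_{s_ic}\Lambda_0}+M)/M$. Injectivity of $\D^c\to\df_i(\D^c)$ is then the obvious inclusion $(D^{\pi_c\Lambda_0}+M)/M\hookrightarrow(D^{\pi_{s_ic}\Lambda_0}+M)/M$ of subquotients of $L(\Lambda_0)$, and the cokernel is computed to be $\D^{s_ic}$ by one more Bruhat computation. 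So the single idea your proposal is missing is to enlarge $K$ to this $s_i$-stable $M$; once that is done, both of your identities become trivial and the argument closes.
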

\begin{proof}
We set $S:=\{w\in W|w\nleq\pi_c,\;s_iw\nleq\pi_c\}$ and $M:=\underset{w\in S}{\sum}D^{w}$. Then we have $D^{c}\cap M=\underset{\pi_c<w}{\sum}D^{w}$. Hence we have an exact sequence
$$0\to M\to D^{c}+M\to \D^{c}\to 0.$$ As $s_i(S)\subset S$, we have $\df_i(M)\cong M$. By the same argument as in the proof of Proposition \ref{generalizedweyl}, applying $\df_i$, we obtain
$$0\to M\to D^{s_ic}+M\to \df_i(\D^{c})\to 0.$$ In particular, we have $$\D^c\cong (D^c+M)/M\;\;\mathrm{and}\;\;\df_i(\D^{c})\cong (D^{s_ic}+M)/M.$$
Hence we have
$$0\to \D^{c}\to\df_{i}(\D^{c})\to (D^{s_ic}+M)/(D^{c}+M)\to0.$$
Here $(D^{s_ic}+M)/(D^{c}+M)\cong D^{s_ic}/(D^{s_ic}\cap (D^{c}+M))$ is isomorphic to $\D^{s_ic}$ since $D^{s_ic}\cap (D^{c}+M)=\sum_{w>s_i\pi_c}D^w$. Hence the first assertion follows.
Applying $\df_i$ to the last exact sequence, from right exactness of $\df_i$, we have an exact sequence
$$\df_i(\D^{c})\to \df_i^2(\D^{c})\to\df_i(\D^{s_ic})\to0.$$ From Theorem \ref{demazurefunctor} (3), the above homomorphism $\df_i(\D^{c})\to \df_i^2(\D^{c})$ is an isomorphism.
This implies the second assertion.
\end{proof}

\subsubsection{Calculation of $\mathrm{Ext}_{\mathfrak{B}}^n(\D^{\lambda}\otimes_{\C}\C_{m\delta+k\Lambda_0},D_{\mu}^{\vee}$)}
The following theorem is an $A_{2l}^{(2)}$ version of \cite[Theorem 4.18]{CK}.
\begin{thm}\label{extduality}
For each $\lambda$, $\mu\in \mathring{P}$, $m\in\mathbb{Z}/2$ and $k\in \mathbb{Z}$, we have
$$\mathrm{dim}_{\mathbb{C}}\;\mathrm{Ext}_{\mathfrak{B}}^n(\D^{\lambda}\otimes_{\C}\C_{m\delta+k\Lambda_0},D_{\mu}^{\vee})=\delta_{n,0}\delta_{m,0}\delta_{k,0}\delta_{\lambda,\mu}\;\;\;n\in\Z_+.$$
\end{thm}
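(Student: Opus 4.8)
The plan is to carry out the $A_{2l}^{(2)}$ analogue of the proof of \cite[Theorem 4.18]{CK}: a length induction driven by the Demazure--Joseph functors $\df_i$. I would first dispose of the parameters $k$ and $m$. Both $\D^\lambda$ and $D_\mu^\vee$ are subquotients of $L(\Lambda_0)$ and of $L(\Lambda_0)^\vee$ respectively, on which the central element $K$ acts by $1$; since $\Lambda_0(K)=1$, it acts on $\D^\lambda\otimes_\C\C_{m\delta+k\Lambda_0}$ by $1+k$, so $\mathrm{Ext}^n_{\mathfrak{B}}$ vanishes unless $k=0$. For the rest of the argument I carry the twist $\C_{m\delta}$ along: it is $\mathfrak{sl}(2,i)$-trivial for every $i$ because $\langle\delta,\check\al_i\rangle=0$, so $\df_i(M\otimes_\C\C_{m\delta})\cong\df_i(M)\otimes_\C\C_{m\delta}$ by Theorem \ref{demazurefunctor}(4), and the constraint $m=0$ will fall out of the induction. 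I would then prove the statement by induction on $l(\pi_\mu)$, the base case being $\mu=0$.

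For the inductive step, assume $\mu\neq0$ and choose $i$ with $s_i\pi_\mu<\pi_\mu$. Put $\mu':=s_i(\!(\mu)\!)$; by the combinatorics behind Lemma \ref{cherednik order and bruhat order}, $\pi_{\mu'}=s_i\pi_\mu$ is again a minimal coset representative with $l(\pi_{\mu'})=l(\pi_\mu)-1$, and Theorem \ref{thin demazure joseph functor} gives $D_\mu\cong\df_i(D_{\mu'})$, hence $\df_i^\#(D_{\mu'}^\vee)\cong D_\mu^\vee$. Proposition \ref{dual} then yields
\[
\mathrm{Ext}^n_{\mathfrak{B}}(\D^\lambda\otimes_\C\C_{m\delta},\,D_\mu^\vee)\;\cong\;\mathrm{Ext}^n_{\mathfrak{B}}\bigl(\df_i(\D^\lambda)\otimes_\C\C_{m\delta},\,D_{\mu'}^\vee\bigr).
\]
Now $\df_i(\D^\lambda)$ is one of three things, according to $\pi_\lambda$: it equals $\D^\lambda$ if $s_i\pi_\lambda\Lambda_0=\pi_\lambda\Lambda_0$ (a degenerate case of Proposition \ref{exactsequence}); it is $0$ if $s_i\pi_\lambda>\pi_\lambda$ and $s_i\pi_\lambda\Lambda_0\neq\pi_\lambda\Lambda_0$ (apply Proposition \ref{exactsequence} to $c=s_i(\!(\lambda)\!)$, which then satisfies $s_i\pi_c<\pi_c$); and it is the middle term of a short exact sequence $0\to\D^\lambda\to\df_i(\D^\lambda)\to\D^{\lambda''}\to0$ with $\lambda''=s_i(\!(\lambda)\!)$ and $l(\pi_{\lambda''})=l(\pi_\lambda)-1$ if $s_i\pi_\lambda<\pi_\lambda$. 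In the first two cases the right-hand side above is $\mathrm{Ext}^n_{\mathfrak{B}}(\D^\lambda\otimes_\C\C_{m\delta},D_{\mu'}^\vee)=\delta_{n,0}\delta_{m,0}\delta_{\lambda,\mu'}$, respectively $0$; one checks from the relations between $\pi_\lambda$, $\pi_\mu$, $\pi_{\mu'}$ that $\lambda\neq\mu$ and $\lambda\neq\mu'$ necessarily hold here, so the answer $0$ matches. In the third case I feed the short exact sequence, tensored by $\C_{m\delta}$, into the long exact sequence for $\mathrm{Ext}_{\mathfrak{B}}(-,D_{\mu'}^\vee)$; its flanking terms $\mathrm{Ext}^n_{\mathfrak{B}}(\D^{\lambda''}\otimes_\C\C_{m\delta},D_{\mu'}^\vee)=\delta_{n,0}\delta_{m,0}\delta_{\lambda'',\mu'}$ and $\mathrm{Ext}^n_{\mathfrak{B}}(\D^\lambda\otimes_\C\C_{m\delta},D_{\mu'}^\vee)=\delta_{n,0}\delta_{m,0}\delta_{\lambda,\mu'}$ are known by the inductive hypothesis; since $\lambda''=\mu'\iff\lambda=\mu$ while $\lambda\neq\mu'$ always, the sequence collapses to $\mathrm{Ext}^n_{\mathfrak{B}}(\D^\lambda\otimes_\C\C_{m\delta},D_\mu^\vee)=\delta_{n,0}\delta_{m,0}\delta_{\lambda,\mu}$.

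For the base case $\mu=0$ (and all $\lambda$) the functors $\df_i$ do not help, since $\pi_0=e$ has no left descent; instead I would use $L(\Lambda_0)$ globally. Restricted to $\bo_-$, the terms $M(w\cdot\Lambda_0)$ of the BGG resolution of $L(\Lambda_0)$ are (direct sums of) the projectives $P(w\cdot\Lambda_0)$ of Proposition \ref{projective cover of b}, so the BGG resolution is a projective resolution in $\mathfrak{B}$ with each cohomological term a finite direct sum; since $\Lambda_0-m\delta$ occurs among the weights $w\cdot\Lambda_0$ only for $w=e,\,m=0$, this gives $\mathrm{Ext}^n_{\mathfrak{B}}(L(\Lambda_0)\otimes_\C\C_{m\delta},D_0^\vee)=\delta_{n,0}\delta_{m,0}\C$ (here $D_0=D_{e\Lambda_0}\cong\C_{-\Lambda_0}$, so $D_0^\vee\cong\C_{\Lambda_0}$). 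On the other hand $L(\Lambda_0)$ admits an exhaustive decreasing filtration whose successive subquotients are exactly the slices $\D^\mu$, $\mu\in\mathring{P}$, each occurring once — the thick counterpart of Proposition \ref{filtration}, assembled via Corollary \ref{quotient of demazure} — and a finiteness argument as in Proposition \ref{a} shows that only finitely many of these contribute to $\mathrm{Ext}^\bullet_{\mathfrak{B}}(-,D_0^\vee)$, so the two computations can be compared; together with the surjection $\D^0\twoheadrightarrow\C_{\Lambda_0}\cong D_0^\vee$ (the generator of $\D^0$ has weight $\Lambda_0$) this forces $\mathrm{Ext}^n_{\mathfrak{B}}(\D^\mu\otimes_\C\C_{m\delta},D_0^\vee)=\delta_{n,0}\delta_{m,0}\delta_{\mu,0}$, as needed. (Alternatively, the single case $\lambda=\mu=0$ can be read off from Theorem \ref{ext} via Theorems \ref{realization}, \ref{demazure=weyl} and \ref{compare extension}.) The main obstacle is the bookkeeping in the inductive step: ensuring one can choose $i$ so that $\df_i$ genuinely shortens $D_\mu$, tracking which of the three cases $\df_i(\D^\lambda)$ occupies, and verifying the numerology $\lambda''=\mu'\Leftrightarrow\lambda=\mu$ together with $\lambda\neq\mu'$ — all of which rely on the precise behaviour of the minimal representatives $\pi_\bullet$ under $s_i$ and on Lemma \ref{cherednik order and bruhat order}. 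A secondary technical point is the justification, sketched above, that the infinite Demazure-slice filtration of $L(\Lambda_0)$ computes $\mathrm{Ext}^\bullet_{\mathfrak{B}}(-,D_0^\vee)$ correctly.
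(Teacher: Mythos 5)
Your treatment of $k$ (by level), of the twist $\C_{m\delta}$ (via Theorem \ref{demazurefunctor}(4)), and the inductive step are all essentially the paper's argument: you induct on $l(\pi_\mu)$, which by Lemma \ref{cherednik order and bruhat order} is the same as the paper's decreasing induction in $\succ$; you use the adjunction of Proposition \ref{dual} to move a descent of $\pi_\mu$ over to the left slot; you split into the three cases for $\df_i(\D^\lambda)$ via Proposition \ref{exactsequence} and feed them into the long exact sequence. Your bookkeeping there is if anything more explicit than what appears in the paper, and it is correct.

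The base case $\mu=0$ is where you diverge, and where there is a genuine gap. You dismiss the Demazure--Joseph functors because ``$\pi_0=e$ has no left descent,'' but that only rules out applying $\df_i$ \emph{to $\mu$}; the paper applies them \emph{to $\lambda$}, using $\df_i^\#(D_0^\vee)=D_0^\vee$ (valid since $D_0\cong\C_{\Lambda_0}$ is $\mathfrak{sl}(2,i)$-integrable for every $i$). Concretely: if $\lambda$ is not anti-dominant, choose $i\in\{1,\dots,l\}$ with $\langle\lambda,\check\alpha_i\rangle>0$; then $\lambda\succ s_i(\!(\lambda)\!)$ and the final clause of Proposition \ref{exactsequence} gives $\df_i(\D^\lambda)=\{0\}$, whence by Proposition \ref{dual} the Ext group vanishes. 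If $\lambda$ is anti-dominant, Proposition \ref{generalizedweyl} gives $\df_{w_0}(\D^\lambda)\cong W(\lambda_+)\otimes_\C\C_{\Lambda_0}$, and Theorems \ref{demazure=weyl} and \ref{ext} then finish the computation, yielding $\delta_{n,0}\delta_{m,0}\delta_{\lambda,0}$.

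Your substitute argument does not close. The BGG computation of $\mathrm{Ext}^\bullet_{\mathfrak{B}}(L(\Lambda_0)\otimes_\C\C_{m\delta},D_0^\vee)$ is correct, and so is the weight argument for $n=0$. But ``comparing the two computations'' through the (infinite) Demazure-slice filtration of $L(\Lambda_0)$ does not \emph{force} $\mathrm{Ext}^n_{\mathfrak{B}}(\D^\mu\otimes_\C\C_{m\delta},D_0^\vee)=0$ for $n\ge1$: even granting that only finitely many slices can contribute in a fixed degree and weight window, what the filtration yields is a spectral sequence or iterated long exact sequence, which only constrains an alternating sum and \emph{a priori} admits cancelling higher $\mathrm{Ext}$ for several $\mu$. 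You would need to control the differentials or supply an independent vanishing statement. You acknowledge this as a ``secondary technical point,'' but it is in fact the whole content of the base case, which the paper's $\df_i$-on-$\lambda$ argument handles cleanly and which your proof leaves unfinished.
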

\begin{proof}
By comparing the level, the extension vanishes if $k\neq0$.
We prove the assertion by induction on $\mu$ with respect to $\succ$. By Theorem \ref{demazurefunctor} (3), we have $\df_{w}(D_{0})=D_{0}$ for all $w\in \mathring{W}$. If $\lambda$ is not anti-dominant, then there exists $i\in\{1,...,l\}$ such that $s_i\lambda>\lambda$. Hence 
\begin{eqnarray*}
\mathrm{Ext}_{\mathfrak{B}}^n(\D^{\lambda}\otimes_{\C}\C_{m\delta+k\Lambda_0},D_{0}^{\vee})&=&\mathrm{Ext}_{\mathfrak{B}}^n(\D^{\lambda}\otimes_{\C}\C_{m\delta+k\Lambda_0},\df^{\#}_i(D_{0}^{\vee}))\\&=&\mathrm{Ext}_{\mathfrak{B}}^n(\mathcal{D}_i(\D^{\lambda}\otimes_{\C}\C_{m\delta+k\Lambda_0}),D_{0}^{\vee})\\&=&\{0\}.
\end{eqnarray*} Here we used Proposition \ref{dual} in the second equality and Proposition \ref{exactsequence} in the third equality.
If $\lambda$ is anti-dominant, then we have $\df_{w_0}(\D^{\lambda})=W(\lambda_+)\otimes_{\C}\C_{\Lambda_0}$ for the longest element $w_0$ of $\mathring{W}$ by Proposition $\ref{generalizedweyl}$. Hence we have $\mathrm{Ext}_{\mathfrak{B}}^n(\D^{\lambda}\otimes_{\C}\C_{m\delta+k\Lambda_0},D_{0}^{\vee})=\mathrm{Ext}_{\mathfrak{B}}^n(W(\lambda_+)\otimes_{\C}\C_{m\delta+k\Lambda_0},W(0)_{loc}^{\vee})$ by Theorem $\ref{demazure=weyl}.$ From Theorem $\ref{ext}$, the assertion follows in this case. \\Let $s_i\mu\succ \mu$. We set $\D_{\lambda}^{\prime}:=\D_{\lambda}\otimes_{\C}\C_{m\delta+k\Lambda_0}$ for $\lambda\in P.$ By Proposition $\ref{exactsequence}$, we have the following exact sequence
\begin{alignat*}{2}
0&
\to \mathrm{Ext}_{\mathfrak{B}}^0(\D_{s_i\lambda}^{\prime},D_{\mu}^{\vee})\to\mathrm{Ext}_{\mathfrak{B}}^0(\df_i(\D_{\lambda}^{\prime}),D_{\mu}^{\vee})\to\mathrm{Ext}_{\mathfrak{B}}^0(\D_{\lambda}^{\prime},D_{\mu}^{\vee})\to\\
        \cdots &\rightarrow \mathrm{Ext}_{\mathfrak{B}}^n(\D_{s_i\lambda}^{\prime},D_{\mu}^{\vee})\to\mathrm{Ext}_{\mathfrak{B}}^n(\df_{i}(\D_{\lambda}^{\prime}),D_{\mu}^{\vee})\to\mathrm{Ext}_{\mathfrak{B}}^n(\D_{\lambda}^{\prime},D_{\mu}^{\vee})\to &\cdots.
\end{alignat*}From Theorem \ref{thin demazure joseph functor} and Proposition $\ref{dual}$, we have $$\mathrm{Ext}_{\mathfrak{B}}^n(\df_{i}(\D_{\lambda}\otimes_{\C}\C_{m\delta+k\Lambda_0}),D_{\mu}^{\vee})\cong \mathrm{Ext}_{\mathfrak{B}}^n(\D_{\lambda}\otimes_{\C}\C_{m\delta+k\Lambda_0},\df^{\#}_i(D_{\mu}^{\vee}))\cong\mathrm{Ext}_{\mathfrak{B}}^n(\D_{\lambda}\otimes_{\C}\C_{m\delta+k\Lambda_0},D_{s_i\mu}^{\vee}).$$ Therefore, the assertion follows from the induction hypothesis and the long exact sequence.

\end{proof}

\begin{cor}\label{extorthogonality}
For each $\lambda$, $\mu\in\mathring{P}$, we have
$$\langle\mathrm{gch}\;\D^{\lambda},q^{\frac{(b|b)}{2}}\bar{E}_{\mu}(X^{-1},q^{-1})\rangle_{\mathrm{Ext}}=\delta_{\lambda,\mu}.$$
\end{cor}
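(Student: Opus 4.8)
The plan is to read the corollary off directly from Theorem \ref{extduality} once the right-hand side is recognized as the graded character of a thin Demazure module; the genuine representation-theoretic content has already been established in Theorem \ref{extduality}, so what remains is a bookkeeping argument using that the Euler--Poincar\'e pairing factors through graded characters.

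First I would invoke Theorem \ref{characterequality}, which identifies $q^{(b|b)/2}\bar{E}_{\mu}(X^{-1},q^{-1})$ with $\mathrm{gch}\;D_{\mu}$, the graded character of the level one thin Demazure module attached to $\mu\in\mathring{P}$. Since $D_{\mu}$ is finite dimensional it is an object of $\mathfrak{B}_0$, and $\D^{\lambda}$ is an object of $\mathfrak{B}^{\prime}$: its weights are bounded from above and its weight spaces are finite dimensional, as one sees from its realization as a subquotient inside $L(\Lambda_0)$ (Theorem \ref{Kac} (3)), or alternatively from Theorem \ref{realization}. Hence the pairing $\langle\D^{\lambda},D_{\mu}\rangle_{\mathrm{Ext}}$ is defined, and by Proposition \ref{a} (2) it depends only on $\mathrm{gch}\;\D^{\lambda}$ and $\mathrm{gch}\;D_{\mu}$, so
$$\langle\mathrm{gch}\;\D^{\lambda},\,q^{(b|b)/2}\bar{E}_{\mu}(X^{-1},q^{-1})\rangle_{\mathrm{Ext}}=\langle\D^{\lambda},D_{\mu}\rangle_{\mathrm{Ext}}=\sum_{p\in\mathbb{Z}_+,\,m\in\mathbb{Z}/2}(-1)^p q^m\dim_{\mathbb{C}}\mathrm{Ext}_{\mathfrak{B}}^p(\D^{\lambda}\otimes_{\C}\C_{m\delta},D_{\mu}^{\vee}).$$
Next I would substitute Theorem \ref{extduality} with $k=0$, giving $\dim_{\mathbb{C}}\mathrm{Ext}_{\mathfrak{B}}^p(\D^{\lambda}\otimes_{\C}\C_{m\delta},D_{\mu}^{\vee})=\delta_{p,0}\delta_{m,0}\delta_{\lambda,\mu}$; the alternating sum then collapses to the single term $p=m=0$, yielding $\delta_{\lambda,\mu}$.

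I do not anticipate a genuine obstacle: the statement is a formal consequence of Theorem \ref{extduality}. The only point needing a line of justification is that $\D^{\lambda}$ indeed lies in $\mathfrak{B}^{\prime}$, so that Proposition \ref{a} applies and the Euler--Poincar\'e sum is a well-defined element of $\mathbb{C}(\!(q^{1/2})\!)$; this is exactly the boundedness-from-above and finiteness of weight spaces recorded above.
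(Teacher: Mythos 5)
Your proposal is correct and matches the paper's proof: the paper first applies Theorem \ref{extduality} to get $\langle\mathrm{gch}\;\D^{\lambda},\mathrm{gch}\;D_{\mu}\rangle_{\mathrm{Ext}}=\delta_{\lambda,\mu}$ and then invokes Theorem \ref{characterequality} to replace $\mathrm{gch}\;D_{\mu}$ by $q^{(b|b)/2}\bar{E}_{\mu}(X^{-1},q^{-1})$, which is the same two ingredients you use in the opposite order. The extra lines you add (that $D_{\mu}\in\mathfrak{B}_0$, $\D^{\lambda}\in\mathfrak{B}^{\prime}$, and Proposition \ref{a}(2) letting the pairing depend only on characters) are sound and make the argument more self-contained than the paper's terse version.
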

\begin{proof}
By Theorem $\ref{extduality}$, we have $$\langle\mathrm{gch}\;\D^{\lambda},\mathrm{gch}\;D_{\mu}\rangle_{\mathrm{Ext}}=\delta_{\lambda,\mu}.$$
Using Theorem $\ref{characterequality}$, we obtain the assertion.
\end{proof}
\begin{cor}\label{character of demazure slice}
For each $\lambda\in\mathring{P}$, we have
$$\mathrm{gch}\;\D^{\lambda}=q^{\frac{(b|b)}{2}}{E}^{\dag}_{\lambda}(X^{-1},q^{-1})/\langle \bar{E}_{\lambda},E^{\dag}_{\lambda}\rangle_{\mathrm{Ext}}.$$
\end{cor}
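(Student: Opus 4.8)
The goal is to derive the character formula for $\mathrm{gch}\,\D^{\lambda}$ from Corollary \ref{extorthogonality}, which already tells us that $\langle \mathrm{gch}\,\D^{\lambda}, q^{(b|b)/2}\bar{E}_{\mu}(X^{-1},q^{-1})\rangle_{\mathrm{Ext}} = \delta_{\lambda,\mu}$. The plan is to combine this orthogonality with the known orthogonality properties of the nonsymmetric Macdonald--Koornwinder polynomials and the identification of $\langle -,-\rangle_{\mathrm{Ext}}$ with $\langle -,-\rangle_{nonsym}$ from Proposition \ref{extnonsym}.

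First I would record that, by Proposition \ref{extnonsym}, the pairing $\langle -,-\rangle_{\mathrm{Ext}}$ restricted to graded characters coincides with $\langle -,-\rangle_{nonsym}$, the $t=0$ specialization of the Macdonald--Koornwinder inner product $\langle -,-\rangle'_{nonsym}$. The key input from Macdonald--Koornwinder theory is that the family $\{\bar{E}_{\mu}\}$ (the $t\to 0$ limit of $E_{\mu}$) and the family $\{E^{\dag}_{\mu}\}$ (the $t\to 0$ limit of $E^{\star}_{\mu}$) are dual bases with respect to $\langle -,-\rangle_{nonsym}$: namely $\langle \bar{E}_{\mu}, E^{\dag}_{\nu}\rangle_{nonsym}$ vanishes unless $\mu=\nu$. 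This follows from the orthogonality $\langle E_{\mu}, E_{\nu}\rangle'_{nonsym}=0$ for $\mu\neq\nu$ together with the fact that $E^{\dag}_{\nu}$ is, up to the involution $\star$, the limit of $E_{\nu}$, and the $\star$-compatibility of the pairing; taking $t\to 0$ preserves this (one must check the relevant limits exist, which is part of the cited results of Ion and Sahi). Write $N_{\mu} := \langle \bar{E}_{\mu}, E^{\dag}_{\mu}\rangle_{nonsym} = \langle \bar{E}_{\mu}, E^{\dag}_{\mu}\rangle_{\mathrm{Ext}}$ for the diagonal norms.

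Next I would expand $\mathrm{gch}\,\D^{\lambda}$ in terms of a suitable basis. Since $\mathrm{gch}\,\D^{\lambda}$ lies in $\mathbb{C}(\!(q^{1/2})\!)[\mathring{P}]$ and $\{q^{(b|b)/2}E^{\dag}_{\mu}(X^{-1},q^{-1})\}_{\mu\in\mathring{P}}$ is a basis (being a triangular-with-respect-to-$\succ$ deformation of the monomial basis, mirroring the $\bar{E}$ case used implicitly throughout), write $\mathrm{gch}\,\D^{\lambda} = \sum_{\mu} a_{\mu}\, q^{(b|b)/2} E^{\dag}_{\mu}(X^{-1},q^{-1})$. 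Pairing against $q^{(b|b)/2}\bar{E}_{\nu}(X^{-1},q^{-1})$ and using Corollary \ref{extorthogonality} on the left and the dual-basis property on the right gives $\delta_{\lambda,\nu} = a_{\nu} \cdot c \cdot N_{\nu}$ for the appropriate normalization constant $c$ coming from the $q^{(b|b)/2}$ factors and the change of variables $X\mapsto X^{-1}$, $q\mapsto q^{-1}$ in the pairing. Solving forces $a_{\mu}=0$ for $\mu\neq\lambda$ and $a_{\lambda} = 1/\langle \bar{E}_{\lambda}, E^{\dag}_{\lambda}\rangle_{\mathrm{Ext}}$, which is exactly the claimed formula.

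The main obstacle is the bookkeeping of normalization and specialization. Concretely: (i) verifying that $\langle \bar{E}_{\mu}, E^{\dag}_{\nu}\rangle_{nonsym}$ is genuinely diagonal — i.e. that the $\star$-involution intertwines the $\bar{E}$ and $E^{\dag}$ families correctly after the $t\to 0$ limit, and that all limits in question converge in $\mathbb{C}(\!(q^{1/2})\!)$ rather than blowing up — which should be extracted cleanly from \cite{Ion} and \cite{Sahi2000}; and (ii) tracking the powers of $q^{1/2}$ and the inversion $X\mapsto X^{-1}$ through the definition of $\langle -,-\rangle'_{nonsym}$ (constant term of $fg^{\star}\mathcal{C}$) so that the constant $c$ above is exactly $1$, making the bracket in the denominator literally $\langle \bar{E}_{\lambda},E^{\dag}_{\lambda}\rangle_{\mathrm{Ext}}$ as written. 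Once these normalizations are pinned down, the argument is a short dualization. I would also double-check that $\mathrm{gch}\,\D^{\lambda}\in\mathfrak{B}'$ so that the Euler--Poincaré pairing and Proposition \ref{a} apply, which is immediate since $\D^{\lambda}$ is a subquotient of $L(\Lambda_0)$ and hence has finite-dimensional weight spaces bounded above.
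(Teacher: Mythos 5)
Your proposal follows essentially the same route as the paper's own proof: expand $\mathrm{gch}\;\D^{\lambda}$ in the basis $\{E^{\dag}_{\mu}(X^{-1},q^{-1})/\langle\bar{E}_{\mu},E^{\dag}_{\mu}\rangle_{\mathrm{Ext}}\}$, use $\langle-,-\rangle'_{nonsym}|_{t=0}=\langle-,-\rangle_{\mathrm{Ext}}$ together with the Macdonald--Koornwinder orthogonality to show the $\bar E$- and $E^{\dag}$-families pair diagonally, and then pair against $\mathrm{gch}\;D_{\mu}=q^{(b|b)/2}\bar{E}_{\mu}(X^{-1},q^{-1})$ (Theorem~\ref{characterequality}) and invoke Corollary~\ref{extorthogonality} to pin the coefficients. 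Your extra caveats about normalization and $t\to 0$ convergence are sensible bookkeeping, but they match points the paper treats implicitly via the same references, so there is no substantive difference in the argument.
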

\begin{proof}Since $\{{E}^{\dag}_{\lambda}(X^{-1},q^{-1})/\langle \bar{E}_{\lambda},E^{\dag}_{\lambda}\rangle_{\mathrm{Ext}}\}_{\mu\in\mathring{P}}$ is a $\mathbb{C}(\!(q^{1/2})\!)$-basis of $\mathbb{C}(\!(q^{1/2})\!)[\mathring{P}]$, we have $$\mathrm{gch}\;\D^{\lambda}=\sum_{\mu\in\mathring{P}}a_{\mu}{E}^{\dag}_{\lambda}(X^{-1},q^{-1})/\langle \bar{E}_{\lambda},E^{\dag}_{\lambda}\rangle_{\mathrm{Ext}}$$ for some $a_{\mu}\in\mathbb{C}(\!(q^{1/2})\!)$. Since $\langle-,-\rangle_{nonsym}^{\prime}|_{t=0}=\langle-,-\rangle_{\mathrm{Ext}},$ and $\{E_{\lambda}(X,q)\}_{\lambda\in\mathring{P}}$ are orthogonal with respect to $\langle-,-\rangle_{nonsym}^{\prime}$ each other, we have $$\langle\bar{E}_{\lambda}(X^{-1},q^{-1}), E^{\dag}_{\mu}(X^{-1},q^{-1})\rangle_{\mathrm{Ext}}/\langle \bar{E}_{\lambda},E^{\dag}_{\lambda}\rangle_{\mathrm{Ext}}=\delta_{\lambda,\mu}.$$ Hence we have $\langle\mathrm{gch}\;\D^{\lambda},\mathrm{gch}\;D_{\mu}\rangle_{\mathrm{Ext}}=a_{\mu}$ by Theorem \ref{characterequality}. Therefore the assertion follows from Corollary \ref{extorthogonality}.
\end{proof}

\section{Weyl module for special current algebra of $A_{2l}^{(2)}$}
We continue to work in the setting of the previous section.
\subsection{Special current algebra of $A_{2l}^{(2)}$}
In this subsection, we refer for general terminologies to \cite[Chapter 2]{FK}, \cite[\S2.2]{FM} and \cite[Appendix]{Car}.
We set $$\mathring{\h}^{\dag}:=\bigoplus_{i=0}^{l-1}\mathbb{C}\al_i,\;\;\mathring{\De}^{\dag}:=\De\cap \mathring{Q}^{\dag}\;\; \mathrm{and}\;\;\mathring{\g}^{\dag}:=(\underset{\al\in \mathring{\De}^{\dag}}{\bigoplus}\g_{\al})\oplus\mathring{\h}^{\dag}.$$ Then $\mathring{\g}^{\dag}$ is a finite dimensional simple Lie algebra of type $B_l.$ The subalgebra $\mathring{\h}^{\dag}$ is a Cartan subalgebra of $\mathring{\g}^{\dag},$ and $\mathring{\De}^{\dag}$ is the set of roots of $\mathring{\g}^{\dag}$ with respect to $\mathring{\h}^{\dag}$. Using the standard basis $\nu_1,...,\nu_l$ of $\mathbb{R}^{l}$, we have :
$$\mathring{\De}^{\dag}=\{\pm(\nu_{i}\pm\nu_{j}),\;\pm\nu_{i}|\;i,j=1,...,l\}.$$
We denote the set of short roots of $\mathring{\g}^{\dag}$ by $\mathring{\De}^{\dag}_{s}$ and the set of long roots of $\mathring{\g}^{\dag}$ by $\mathring{\De}^{\dag}_l.$ Let $\{\al_1^{\dag},...,\al_l^{\dag}\}$ be a set of simple roots of $\mathring{\g}^{\dag}.$ We set $\mathring{Q}^{\dag}_+:=\bigoplus_{i=1,...,l}\mathbb{Z}_+\al^{\dag}.$ We have $$\De_{re}=(\mathring{\De}^{\dag}+\mathbb{Z}\delta)\cup(2\mathring{\De}^{\dag}_s+(2\mathbb{Z}+1)\delta).$$
The special current algebra $\Cg^{\dag}$ is the maximal parabolic subalgebra of $\g$ that contains $\mathring{\g}^{\dag}$. We have $\Cg^{\dag}=\mathring{\g}^{\dag}+\bo_{-}.$ We set $$\Cg_{im}^{\dag}:=\Cg_{im},\;\; \Cg^{\dag\prime}:=[\Cg^{\dag},\Cg^{\dag}]$$ and $$\mathfrak{Cn}_{+}^{\dag}:=\underset{\al\in (\mathring{\De}^{\dag}_+-\mathbb{Z}_+\delta)\cup(2\mathring{\De}^{\dag}_{s+}-(2\mathbb{Z}_++1)\delta)}{\bigoplus}\g_{\al}$$ Let $\mathring{P}^{\dag}$ be the integral weight lattice of $\mathring{\g}^{\dag}$ and $\mathring{P}_{+}^{\dag}$ be the set of dominant integral weights of $\mathring{\g}^{\dag}.$ Let $\varpi_{i}^{\dag}$ $(i=1,...,l)$ be the fundamental weights of $\mathring{\g}^{\dag}$. We identify  $\mathring{P}^{\dag}$ and $\mathbb{Z}\varpi_1^{\dag}\oplus \cdots \oplus \mathbb{Z}\varpi_{l-1}^{\dag}\oplus \mathbb{Z}\varpi_l^{\dag}$ by $\varpi_i^{\dag}=\Lambda_{l-i}-\Lambda_l$ for $i\neq l$ and $\varpi_l^{\dag}=\Lambda_0-\Lambda_l/2$. Let $\mathring{W}^{\dag}$ be the subgroup of $W$ generated by $\{s_{\al}\}_{\al\in \mathring{\De}^{\dag}}.$ 

\subsection{Realization of $\Cg^{\dag}$}
We refer to \cite[\S4.6]{CIK} in this subsection. Let $X_{i,j}$ be a $(2l+1)\times (2l+1)$ matrix unit whose $ij$-entry is one. We set $H_i=X_{i,i}-X_{i+1,i+1}$ $(i=1,...,2l)$. The Lie algebra $\sll$ is spaned by $X_{i,j}$ $(i\neq j)$ and $H_i$ $(i=1,...,2l)$. The assignment
$$X_{i,i+1}\to X_{2l+1-i,2l+2-i},\;X_{i+1,i}\to X_{2l+2-i,2l+1-i}$$ extends on $\sll$ as a Lie algebra automorphism. We write this automorphism by $\sigma$. Let $L(\sll)=\sll\otimes_{\C} \C[t^{\pm1}]$ be the loop algebra corresponding to $\sll$ and extend $\sigma$ on $L(\sll)$ by $\sigma(X\otimes f(t))=\sigma(X)\otimes f(-t)$. We denote the fixed point of $\sigma$ in $\sll\otimes_{\C}\C[t]$ by $(\sll\otimes_{\C}\C[t])^{\sigma}.$

\begin{prop}[see \cite{Kac} Theorem 8.3]
The Lie algebra $(\sll\otimes_{\C}\C[t])^{\sigma}$ is isomorphic to $\Cg^{\dag\prime}$.
\end{prop}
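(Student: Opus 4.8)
The plan is to deduce the proposition from Kac's twisted–loop realization of affine Lie algebras \cite[Chapter~8]{Kac}; essentially all the work is matching that realization with the conventions of \S\ref{Affine Kac-Moody algebra of type $A_{2l}^{(2)}$} and with the description $\Cg^{\dag\prime}=[\mathring{\g}^{\dag}+\bo_-,\ \mathring{\g}^{\dag}+\bo_-]$. First I would observe that the map in the statement is the order--two diagram automorphism of $\sll$: since $X_{i,i+1}\mapsto X_{2l+1-i,2l+2-i}$ realizes the flip $i\leftrightarrow 2l+1-i$ of the $A_{2l}$ Dynkin diagram, it is an honest Lie algebra automorphism (it is defined on Chevalley generators and preserves the Serre relations of $A_{2l}$), $\sigma^{2}=\mathrm{id}$, and its fixed subalgebra $(\sll)^{\sigma}$ is the simple Lie algebra $\mathfrak{so}_{2l+1}$ of type $B_l$. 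Writing $\g_{\bar 0}$ and $\g_{\bar 1}$ for the $(+1)$- and $(-1)$-eigenspaces of $\sigma$, Kac's theorem then identifies $\g$ with $\bigl(\bigoplus_{j\in\Z}\g_{\bar j}\otimes t^{j}\bigr)\oplus\C K\oplus\C d$, where $d$ acts (up to relabelling the loop variable) as the degree derivation $t\tfrac{d}{dt}$, the Levi $\g_{\bar 0}\otimes 1$ is identified with $\mathring{\g}^{\dag}$, the Cartan $\mathring{\h}^{\dag}$ with the degree--$0$ torus, and the two root spaces $\g_{\pm\al_0}$ with suitable lines in $\g_{\bar 1}\otimes t^{\mp 1}$; one checks this matches the root-system datum $\De_{re}=(\mathring{\De}^{\dag}+\mathbb{Z}\delta)\cup(2\mathring{\De}^{\dag}_s+(2\mathbb{Z}+1)\delta)$ recorded above.

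Next I would translate $\bo_-$ and $\Cg^{\dag}$ through this isomorphism. Since $\bo_-=\h\oplus\bigoplus_{\al\in\De_-}\g_\al$, and, once the labelling is fixed so that $e_{-\al_0}\in\g_{\bar 1}\otimes t$, the negative roots of $\g$ are exactly the negative roots of $\mathring{\g}^{\dag}$ together with the roots of strictly positive $t$-degree, the image of $\bo_-$ is $\mathring{\bo}^{\dag}_{-}\oplus\C K\oplus\C d\oplus\bigoplus_{j\geq 1}\g_{\bar j}\otimes t^{j}$, where $\mathring{\bo}^{\dag}_{-}$ is the lower Borel of $\mathring{\g}^{\dag}$. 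Adding $\mathring{\g}^{\dag}=\g_{\bar 0}\otimes 1$ gives $\Cg^{\dag}\cong\C K\oplus\C d\oplus\bigoplus_{j\geq 0}\g_{\bar j}\otimes t^{j}=\C K\oplus\C d\oplus(\sll\otimes_{\C}\C[t])^{\sigma}$. Finally, $(\sll\otimes_{\C}\C[t])^{\sigma}$ is a perfect Lie algebra: it is generated by $\g_{\bar 0}$ (perfect, being semisimple) together with $\g_{\bar 1}\otimes t$, and $\g_{\bar 1}\otimes t=[\g_{\bar 0},\,e_{-\al_0}]$; hence taking derived subalgebras yields $\Cg^{\dag\prime}=[\Cg^{\dag},\Cg^{\dag}]=(\sll\otimes_{\C}\C[t])^{\sigma}$, which is the assertion.

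The main obstacle is purely bookkeeping about conventions: the presentation fixed in \S\ref{Affine Kac-Moody algebra of type $A_{2l}^{(2)}$} uses the $C_l$--end labelling of the affine diagram (with $\al_0$ short and $\delta=2\al_0+\al_1+\cdots+\al_l$), whereas the matrix realization of $\Cg^{\dag}$ forces the $B_l$--end labelling, so one must carefully verify that the affine simple root vector $e_{-\al_0}$ of the special current algebra corresponds to $\g_{\bar 1}\otimes t$ rather than $\g_{\bar 1}\otimes t^{-1}$ — this is exactly what makes $\C[t]$, and not $\C[t^{-1}]$, appear — and that $\sigma(X\otimes f(t))=\sigma(X)\otimes f(-t)$ reproduces the twisted loop algebra underlying $A_{2l}^{(2)}$ in Kac's normalization, as in \cite[\S4.6]{CIK}. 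Once these identifications are pinned down the proposition is immediate; alternatively one can sidestep the sign tracking by checking directly that both sides are generated by a copy of $\mathfrak{so}_{2l+1}$ and one extra lowest--weight vector subject to the same Serre relations, which determines the isomorphism.
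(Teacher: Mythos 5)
The paper does not prove this proposition; it is stated with a bare pointer to Kac's twisted loop realization \cite[Theorem 8.3]{Kac} and to \cite[\S 4.6]{CIK}, so there is no internal proof to compare against. Your sketch is a correct and appropriate fleshing out of that citation: the three ingredients you isolate — Kac's identification of $\g$ with $\bigl(\bigoplus_{j}\g_{\bar j}\otimes t^{j}\bigr)\oplus\C K\oplus\C d'$ for $\sigma$ the diagram flip of $A_{2l}$ with $\g_{\bar 0}\cong\mathfrak{so}_{2l+1}$, the identification of $\Cg^{\dag}$ with $\C K\oplus\C d\oplus\bigoplus_{j\geq 0}\g_{\bar j}\otimes t^{j}$, and perfectness of $(\sll\otimes\C[t])^{\sigma}$ so that passing to the derived subalgebra kills exactly $\C K\oplus\C d$ — are exactly what the statement requires.

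One piece of bookkeeping is mislabelled, though, and you should correct it rather than just flag it. In the conventions fixed in \S\ref{Affine Kac-Moody algebra of type $A_{2l}^{(2)}$}, $\mathring{Q}^{\dag}$ is spanned by $\al_0,\dots,\al_{l-1}$, so $\al_0$ \emph{is} a simple root of $\mathring{\g}^{\dag}\cong\mathfrak{so}_{2l+1}$ and $e_{\pm\al_0}$ lies in $\g_{\bar 0}\otimes 1$, not in $\g_{\bar 1}\otimes t^{\mp 1}$. The simple root that acquires nonzero loop degree is $\al_l$, and it is $e_{-\al_l}$ whose image lies in $\g_{\bar 1}\otimes t$. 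Correspondingly, the degree derivation of the loop realization adapted to $\mathring{\g}^{\dag}$ is an element $d^{\dag}$ with $\al_i(d^{\dag})=\delta_{i,l}$, which differs from the paper's $d$ (satisfying $\al_i(d)=\delta_{i,0}$) by an element of $\mathring{\h}^{\dag}$; since $\mathring{\h}^{\dag}\subset\g_{\bar 0}\otimes 1\subset(\sll\otimes\C[t])^{\sigma}$ this shift does not affect the identification of $\Cg^{\dag}$ with $\C K\oplus\C d\oplus(\sll\otimes\C[t])^{\sigma}$, but writing $\al_0$ where the paper's labelling demands $\al_l$ would, taken literally, contradict $\al_0\in\mathring{\De}^{\dag}$. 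Once you replace $\al_0$ by $\al_l$ in the loop-degree bookkeeping (and note that the choice between $\C[t]$ and $\C[t^{-1}]$ is immaterial since $t\mapsto t^{-1}$ commutes with $\sigma$), the argument is complete and your perfectness step — $\g_{\bar 1}\otimes t=[\g_{\bar 0}\otimes 1,\,\g_{\bar 1}\otimes t]$ because $\g_{\bar 1}$ is an irreducible nontrivial $\g_{\bar 0}$-module, and $\g_{\bar 0}$ is simple — closes it.
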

 
\subsection{Weyl module for $\Cg^{\dag}$}

\begin{defi}\label{special global weyl}
For each $\lambda\in \mathring{P}^{\dag}_+$, the global Weyl module is a cyclic $\Cg^{\dag}$-module $W(\lambda)^{\dag}$ generated by $v_{\lambda}$ that satisfies the following relations:
\item $\mathrm{(1)}$ $hv_{\lambda}=\lambda(h)v_{\lambda}$ for each $h\in \h$;
\item $\mathrm{(2)}$ $e_{-\alpha}^{\langle\lambda,\check{\al}\rangle+1}v_{\lambda}=0$ for each $\al\in\mathring{\De}_{+}^{\dag}$;
\item $\mathrm{(3)}$ $\mathfrak{Cn}_+^{\dag}v_{\lambda}=0.$
\end{defi}

\begin{defi}\label{special local weyl}For each $\lambda\in \mathring{P}$, the local Weyl module is a cyclic $\Cg^{\dag}$-module $W(\lambda)_{loc}^{\dag}$ generated by $v_{\lambda}$ satisfies relations $\mathrm{(1)}$, $\mathrm{(2)}$, $\mathrm{(3)}$ of Definition \ref{special global weyl} and 
\item $\mathrm{(4)}$ $Xv_{\lambda}=0$ for each $X\in \Cg_{im}^{\dag}$.
\end{defi}

\begin{thm}[\cite{FK} Corollary 6.0.1 and \cite{FM} Corollary 2.19]\label{dimension formula}
For each $\lambda\in \mathring{P}_+^{\dag}$, we have
\item $\mathrm{(1)}$ If $\lambda=\sum_{i=1}^{l-1}m_i\varpi_i^{\dag}+(2k-1)\varpi_l^{\dag}$, then 
$$\mathrm{dim}_{\C}\;W(\lambda)^{\dag}_{loc}=\left(\prod_{i=1}^{l-1}\binom{2l+1}{i}^{m_i}\right)\binom{2l+1}{l}^{k-1}2^l;$$
\item $\mathrm{(2)}$ If $\lambda=\sum_{i=1}^{l-1}m_i\varpi_i^{\dag}+2m_l\varpi_l^{\dag}$, then
$$\mathrm{dim}_{\C}\;W(\lambda)^{\dag}_{loc}=\prod_{i=1}^{l}\binom{2l+1}{i}^{m_i}.$$
\end{thm}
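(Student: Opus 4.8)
The statement is taken from \cite{FK} and \cite{FM}; I indicate how I would prove it directly. The plan is to sandwich $\dim_{\C}W(\lambda)^{\dag}_{loc}$ between a lower and an upper bound both equal to the stated product. The natural building blocks are the weights $\varpi_1^{\dag},\dots,\varpi_{l-1}^{\dag}$ together with $2\varpi_l^{\dag}$ and $\varpi_l^{\dag}$, and I would decompose $\lambda=\sum_{i<l}m_i\varpi_i^{\dag}+m\varpi_l^{\dag}$ \emph{greedily} as a sum of such weights: $m_i$ copies of $\varpi_i^{\dag}$ for $i<l$, and in the spin direction $\lfloor m/2\rfloor$ copies of $2\varpi_l^{\dag}$ together with $m-2\lfloor m/2\rfloor$ copies of $\varpi_l^{\dag}$ — so the two cases in the statement correspond exactly to $m$ even and $m$ odd. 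For the lower bound, observe first that for each building block $\mu$ the local Weyl module $W(\mu)^{\dag}_{loc}$ surjects onto the pullback of the finite-dimensional irreducible $\mathring{\g}^{\dag}$-module $V(\mu)$ along the canonical projection $\Cg^{\dag}\to\mathring{\g}^{\dag}$ (defined analogously to $\pi\colon\Cg\to\mathring{\g}$); all relations of Definition \ref{special local weyl} hold there, the loop directions and $\Cg^{\dag}_{im}$ acting by zero. Since $\mathring{\g}^{\dag}$ is of type $B_l$ one has $V(\varpi_i^{\dag})\cong\bigwedge^{i}\C^{2l+1}$ for $i<l$, $V(2\varpi_l^{\dag})\cong\bigwedge^{l}\C^{2l+1}$, and $V(\varpi_l^{\dag})$ the spin module, of dimensions $\binom{2l+1}{i}$, $\binom{2l+1}{l}$ and $2^l$. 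Then, checking the defining relations on the cyclic vector $\bigotimes_j v_{\mu_j}$, one shows that $W(\lambda)^{\dag}_{loc}$ surjects onto the Feigin--Loktev fusion product $*_j W(\mu_j)^{\dag}_{loc}$ for the greedy decomposition; as a fusion product has the dimension of the corresponding tensor product, this yields $\dim_{\C}W(\lambda)^{\dag}_{loc}\geq$ the claimed value. The reason the decomposition must stop at $2\varpi_l^{\dag}$ rather than split into two $\varpi_l^{\dag}$'s is the presence in $\De_{re}$ of the doubled short real roots $2\mathring{\De}^{\dag}_{s}+(2\Z+1)\delta$: the corresponding relations fail on $v_{\varpi_l^{\dag}}\otimes v_{\varpi_l^{\dag}}$, and this is precisely the origin of the parity split.

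For the upper bound I would use the realization $\Cg^{\dag\prime}\cong(\sll\otimes_{\C}\C[t])^{\sigma}$ and the PBW theorem to present $W(\lambda)^{\dag}_{loc}$ as the span of ordered monomials $e_{-\beta_1}\cdots e_{-\beta_s}v_{\lambda}$ in negative root vectors, with the $\beta_j$ running over the loop directions $\De_{+}\setminus\mathring{\De}^{\dag}_{+}$ and followed by a factor from $U\big(\bigoplus_{\al\in\mathring{\De}^{\dag}_{+}}\g_{-\al}\big)$. Using relations (2)--(4), the $\mathfrak{sl}(2,i)$-integrability for $i=0,\dots,l-1$, and Garland-type identities in $\sll\otimes\C[t]$, one bounds the exponents of the $e_{-\beta_j}$ tightly enough that the admissible monomials number at most the claimed product; equivalently, the associated graded of $W(\lambda)^{\dag}_{loc}$ for the loop filtration is a quotient of the fusion product above. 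Matching the two bounds then forces equalities throughout, identifies $W(\lambda)^{\dag}_{loc}$ with that fusion product, and gives the dimension formula.

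I expect the main obstacle to be the combinatorial spanning-set estimate at the spin node: the doubled short roots $2\mathring{\De}^{\dag}_{s}+(2\Z+1)\delta$ make the relevant Garland-type relations parity-sensitive, and obtaining the sharp bound that separates $(2k-1)\varpi_l^{\dag}$ from $2m_l\varpi_l^{\dag}$ is the delicate step. A possibly shorter alternative would be to bypass the combinatorics by identifying $W(\lambda)^{\dag}_{loc}$, up to a degree shift, with a Demazure module in an integrable highest-weight $\g$-module — of level one when the spin coefficient of $\lambda$ is even, and a level-two or generalized Demazure module when it is odd — and then reading the dimension off the Demazure character formula, the parity split reflecting the mismatch between level one and the short/spin node, exactly as in the hyperspecial case (Theorem \ref{demazure=weyl}).
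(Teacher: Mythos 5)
The paper does not prove this theorem; it imports it as a black box, citing \cite{FK} Corollary 6.0.1 and \cite{FM} Corollary 2.19, and then uses it downstream (e.g.\ in Corollary \ref{dimension equality}), so there is no internal argument to compare your sketch against. Your outline follows the Chari--Loktev paradigm (fusion-product lower bound plus Garland/PBW spanning upper bound), and the structural facts you identify are correct: the greedy decomposition with atoms $\varpi_i^{\dag}$ $(i<l)$, $2\varpi_l^{\dag}$, $\varpi_l^{\dag}$; the identifications $V(\varpi_i^{\dag})\cong\bigwedge^{i}\C^{2l+1}$, $V(2\varpi_l^{\dag})\cong\bigwedge^{l}\C^{2l+1}$, and $V(\varpi_l^{\dag})$ the spin module; and the doubled real roots $2\mathring{\De}^{\dag}_{s}+(2\Z+1)\delta$ as the source of the parity split (indeed $2^l\cdot 2^l>\binom{2l+1}{l}$, so a further split at the spin node would overshoot the claimed dimension, and the relations attached to those roots are exactly what blocks the surjection onto a fusion of two spins). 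Two caveats worth flagging: first, the spanning-set estimate you describe as delicate is where essentially all the substance lives, so your sketch as written delivers only the lower bound; second, in the twisted setting evaluation at a nonzero point factors through $\sll$ rather than $\mathring{\g}^{\dag}$, so the modules being fused are either graded $\Cg^{\dag}$-modules (as in your $*_j W(\mu_j)^{\dag}_{loc}$) or $\sll$-modules, and checking relation (3) of Definition \ref{special local weyl} on the fusion cyclic vector is correspondingly more involved than in the untwisted case. The alternative you mention at the end---matching $W(\lambda)^{\dag}_{loc}$ up to a grading shift with a Demazure or generalized Demazure module and reading the dimension from a Demazure character formula, with the level/genuineness depending on the parity of the spin coefficient---is in fact the strategy of \cite{FK}, so that is the closest match to the cited proofs.
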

\subsection{The algebra $\mathbf{A}_{\lambda}$}
Let $\lambda\in \mathring{P}^{\dag}_+$. We set $$\mathrm{Ann}(v_{\lambda}):=\{X\in U(\Cg_{im}^{\dag})\;|\;Xv_{\lambda}=0\}\;\; \mathrm{and}\;\; \mathbf{A}_{\lambda}:=U(\Cg_{im}^{\dag})/\mathrm{Ann}(v_{\lambda}),$$ where $v_{\lambda}$ is the cyclic vector of $W(\lambda)_{loc}^{\dag}$ in Definition \ref{special global weyl}.

\begin{prop}[\cite{CIK} \S7.2]
For each $\lambda\in \mathring{P}_{+}^{\dag}$, the algebra $\mathbf{A}_{\lambda}$ acts on $W(\lambda)^{\dag}$ by $$X.Yv_{\lambda}:=YXv_{\lambda}\;\;for\;\; X\in \mathbf{A}_{\lambda}\;and\;Y\in U(\Cg^{\dag\prime}).$$
\end{prop}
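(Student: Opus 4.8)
The plan is to follow \cite[\S7.2]{CIK}, transported to type $A_{2l}^{(2)}$. Everything reduces to one assertion, which I will call $(\ast)$: for every $X\in U(\Cg_{im}^{\dag})$ and $Y\in U(\Cg^{\dag\prime})$ with $Yv_{\lambda}=0$ in $W(\lambda)^{\dag}$, one has $YXv_{\lambda}=0$. Granting $(\ast)$, the proposition is formal. The expression $YXv_{\lambda}$ depends only on the vector $Yv_{\lambda}$ (apply $(\ast)$ to $Y-Y'$), and only on the class of $X$ modulo $\mathrm{Ann}(v_{\lambda})$ (if $Xv_{\lambda}=0$ then $YXv_{\lambda}=0$ for all $Y$, $\mathrm{Ann}$ being taken with respect to the cyclic vector of $W(\lambda)^{\dag}$ as in Definition \ref{special global weyl}). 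Moreover $\Cg_{im}^{\dag}$ is abelian (the brackets $[\g_{n\delta},\g_{m\delta}]$ with $n,m<0$ vanish) and contained in $\Cg^{\dag\prime}$, so $U(\Cg_{im}^{\dag})$ is commutative and a direct computation gives $X.(X'.(Yv_{\lambda}))=(XX').(Yv_{\lambda})$, as well as commutation with the left $U(\Cg^{\dag\prime})$-action; hence $\mathbf{A}_{\lambda}$ acts.

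To prove $(\ast)$ I would show that $Xv_{\lambda}$ itself satisfies the defining relations of a global Weyl module, so that it generates a $\Cg^{\dag\prime}$-module quotient of $W(\lambda)^{\dag}$. First I would collect three structural facts. (i) $W(\lambda)^{\dag}=U(\Cg^{\dag\prime})v_{\lambda}$: since $[\h,\g_{\beta}]=\g_{\beta}$ for every root $\beta$, one has $\Cg^{\dag}=\Cg^{\dag\prime}+\h$, and $\h$ acts on $v_{\lambda}$ by a character, so the claim follows from the PBW theorem. (ii) $W(\lambda)^{\dag}$ is an integrable $\mathring{\g}^{\dag}$-module: $U(\mathring{\g}^{\dag})v_{\lambda}$ is a finite-dimensional highest weight module by Definition \ref{special global weyl} (2),(3) (note $\mathring{\n}_+^{\dag}\subseteq\mathfrak{Cn}_+^{\dag}$), and $\mathrm{ad}(\mathring{\g}^{\dag})$ is locally finite on $\Cg^{\dag\prime}$ because every $\delta$-level component of $\g$ is finite-dimensional, so $W(\lambda)^{\dag}=U(\Cg^{\dag\prime})v_{\lambda}$ is a union of finite-dimensional $\mathring{\g}^{\dag}$-submodules. (iii) $[\Cg_{im}^{\dag},\mathfrak{Cn}_+^{\dag}]\subseteq\mathfrak{Cn}_+^{\dag}$: this is read off from $[\g_{n\delta},\g_{\al}]\subseteq\g_{\al+n\delta}$ ($n<0$) together with the explicit root sets defining $\mathfrak{Cn}_+^{\dag}$, keeping track of the odd/even $\delta$-levels that occur in the twisted root system of $A_{2l}^{(2)}$.

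Now fix $X\in U(\Cg_{im}^{\dag})$, which I may assume homogeneous of $\h$-weight $-k\delta$, $k\in\mathbb{Z}_+$. Using (iii), commuting any element of $\mathfrak{Cn}_+^{\dag}$ to the right past $X$ keeps it in $U(\Cg_{im}^{\dag})\mathfrak{Cn}_+^{\dag}$, so $\mathfrak{Cn}_+^{\dag}Xv_{\lambda}=0$; in particular $\mathring{\n}_+^{\dag}Xv_{\lambda}=0$, so $Xv_{\lambda}$ is an $\mathring{\n}_+^{\dag}$-highest weight vector of $\mathring{\h}^{\dag}$-weight $\lambda$, and by (ii) it generates a finite-dimensional $\mathring{\g}^{\dag}$-module, whence $e_{-\al}^{\langle\lambda,\check{\al}\rangle+1}Xv_{\lambda}=0$ for all $\al\in\mathring{\De}_+^{\dag}$. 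Thus $Xv_{\lambda}$ obeys Definition \ref{special global weyl} (1)--(3) verbatim except that its $\h$-weight is $\lambda-k\delta$; since the character $-k\delta$ of $\h$ extends by zero on root spaces to a character of $\Cg^{\dag}$ trivial on $\Cg^{\dag\prime}$, and since $U(\Cg^{\dag\prime})(Xv_{\lambda})$ is $d$-stable (as $\mathrm{ad}\,d$ preserves $\Cg^{\dag\prime}$) hence a $\Cg^{\dag}$-submodule, the universal property of the global Weyl module produces a $\Cg^{\dag\prime}$-linear surjection $W(\lambda)^{\dag}\twoheadrightarrow U(\Cg^{\dag\prime})(Xv_{\lambda})$ carrying $v_{\lambda}$ to $Xv_{\lambda}$. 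Applying this surjection to $Y$ gives $YXv_{\lambda}=0$ whenever $Yv_{\lambda}=0$, i.e. $(\ast)$.

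The hard part will be $(\ast)$, and inside it the verification that $Xv_{\lambda}$ inherits relations (2) and (3): relation (3) rests on the upper-triangularity $[\Cg_{im}^{\dag},\mathfrak{Cn}_+^{\dag}]\subseteq\mathfrak{Cn}_+^{\dag}$, where one must handle the parity constraints in the definition of $\mathfrak{Cn}_+^{\dag}$ specific to $A_{2l}^{(2)}$, and relation (2) rests on the $\mathring{\g}^{\dag}$-integrability of $W(\lambda)^{\dag}$, which in turn uses the local finiteness of $\mathrm{ad}(\mathring{\g}^{\dag})$ on $\Cg^{\dag\prime}$. The remaining steps (cyclicity over $\Cg^{\dag\prime}$, the twist by $-k\delta$, and the formal deduction that $\mathbf{A}_{\lambda}$ acts) are routine.
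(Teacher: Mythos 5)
Your proof is correct and follows the standard argument that the paper delegates to \cite[\S 7.2]{CIK}: the essential point is that $[\Cg_{im}^{\dag},\mathfrak{Cn}_+^{\dag}]\subseteq\mathfrak{Cn}_+^{\dag}$ (with the parity check for the twisted root system) combined with $\mathring{\g}^{\dag}$-integrability of $W(\lambda)^{\dag}$, so that $Xv_{\lambda}$ again satisfies the defining relations of $W(\lambda)^{\dag}$ up to the $-k\delta$ twist, giving the $U(\Cg^{\dag\prime})$-linear surjection that yields $(\ast)$. You have also correctly resolved the slight ambiguity in the surrounding text by reading $\mathrm{Ann}(v_{\lambda})$ as the annihilator of the cyclic vector of the \emph{global} Weyl module, consistent with Definition \ref{special global weyl} and with \cite{CIK}.
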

\subsubsection{Generator of $\mathbf{A}_{\lambda}$}
For $i=1,...,l-1$, we set $$h_{i,0}:=H_{i}+H_{2l+1-i},\;\;h_{i,1}:=H_{i}-H_{2l+1-i},$$ $$x_{i,0}:=X_{i,i+1}+X_{2l+1-i,2l+2-i},\;\;x_{i,1}:=X_{i,i+1}-X_{2l+1-i,2l+2-i},$$ $$y_{i,0}:=X_{i+1,i}+X_{2l+2-i,2l+1-i},\;\;y_{i,1}:=X_{i+1,i}-X_{2l+2-i,2l+1-i}$$ and $$h_{l,0}=2(H_{l}+H_{l+1}),\;\;h_{l,1}=H_{l}-H_{l+1},$$ $$x_{l,0}:=\sqrt{2}(X_{l,l+1}+X_{l+1,l+2}),\;\;x_{l,1}:=-\sqrt{2}(X_{l,l+1}-X_{l+1,l+2}),$$ $$y_{l,0}:=\sqrt{2}(X_{l+1,l}+X_{l+2,l+1}),\;\;y_{l,1}:=-\sqrt{2}(X_{l+1,l}-X_{l+2,l+1}).$$ The Lie algebra generated by $\{x_{i,0},\;y_{i,0},\;h_{i,0}\}_{i=1,...,l}$ is isomorphic to the simple Lie algebra of type $B_l,$ and $\{h_{i,0}\}_{i=1,...,l}$ is the set of its simple coroots \cite[Theorem 9.19]{Car}. We set $z_{l,1}:=\frac{1}{4}[y_{l,0},y_{l,1}].$  As in \cite[\S3.3]{CFS}, we define $p_{i,r}\in U(\Cg_{im}^{\dag})$ $(i=1,..,l\;and\;r\in \mathbb{Z}_{+})$ by 
$$\underset{r\in\mathbb{Z}_{+}}{\sum}p_{i,r}z^r:=\mathrm{exp}\left(-\sum_{k=1}^{\infty}\sum_{\varepsilon=0}^{1}\frac{h_{i,\varepsilon}\otimes t^{-2k+\varepsilon}}{2k-\varepsilon}z^{2k-\varepsilon}\right)$$ for $i\neq l$ and 
$$\underset{r\in\mathbb{Z}_{+}}{\sum}p_{l,r}z^r:=\mathrm{exp}\left(-\sum_{k=1}^{\infty}\frac{h_{l,0}/2\otimes t^{-2k}}{2k}z^{2k}+\sum_{k=1}^{\infty}\frac{h_{l,1}\otimes t^{-2k+1}}{2k-1}z^{2k-1}\right).$$
\begin{prop}\label{generator}
The algebra $U(\Cg_{im}^{\dag})$ is isomorphic to the polynomial ring $\C[p_{i,r} |i=1,...,l,\;r\in \mathbb{Z}_+]$.
\end{prop}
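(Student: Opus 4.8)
The plan is to reduce the statement to the classical change of variables between power sums and complete homogeneous symmetric functions, carried out in parallel for each index $i=1,\dots,l$.

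\textbf{Step 1 (abelianness and the polynomial structure).} First I would record that $\Cg_{im}^{\dag}=\bigoplus_{n\geq 1}\g_{-n\delta}$ is an abelian Lie algebra: for $m,n\geq 1$ the commutator $[\g_{-m\delta},\g_{-n\delta}]$ lies in $\g_{-(m+n)\delta}$, but under the Heisenberg identification of Proposition \ref{heisenberg} it is spanned by the brackets $[x_{i,-m},x_{j,-n}]=(-m)\delta_{i,j}\delta_{-m,n}K=0$. Hence $U(\Cg_{im}^{\dag})=S(\Cg_{im}^{\dag})$ is a polynomial ring on any vector-space basis of $\Cg_{im}^{\dag}$; assigning $\g_{-n\delta}$ the degree $n$ (the $(-d)$-eigenvalue), it becomes a $\mathbb{Z}_{+}$-graded ring with finite-dimensional homogeneous components and $U(\Cg_{im}^{\dag})_{0}=\mathbb{C}$, so that exponentials of elements of positive degree converge componentwise and the $p_{i,r}$ are genuine homogeneous elements of degree $r$, with $p_{i,0}=1$.

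\textbf{Step 2 (an explicit graded basis).} Next I would pin down a basis of each $\g_{-n\delta}$. In the realization $\Cg^{\dag\prime}\cong(\sll\otimes_{\C}\C[t])^{\sigma}$, the element $h_{i,\varepsilon}\otimes t^{-n}$ lies in the Cartan of $\sll$ tensored with $t^{-n}$, commutes with $\mathring{\h}^{\dag}$, and is $\sigma$-fixed precisely when $\varepsilon\equiv n\pmod 2$; when it is, it is homogeneous of the correct degree and hence lies in $\g_{-n\delta}$. Thus for $n$ even the vectors $h_{1,0}\otimes t^{-n},\dots,h_{l-1,0}\otimes t^{-n},\tfrac12 h_{l,0}\otimes t^{-n}$ lie in $\g_{-n\delta}$, and for $n$ odd so do $h_{1,1}\otimes t^{-n},\dots,h_{l,1}\otimes t^{-n}$. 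The first family is the set of simple coroots of $\mathring{\g}^{\dag}=B_l$ (\cite[Theorem 9.19]{Car}) tensored with $t^{-n}$, hence linearly independent; in the second family the $h_{i,1}=H_i-H_{2l+1-i}$ involve the disjoint index pairs $\{i,2l+1-i\}$ $(1\leq i\leq l)$, which partition $\{1,\dots,2l\}$, hence they too are linearly independent. Since $\dim_{\C}\g_{-n\delta}=l$ in type $A_{2l}^{(2)}$, these $l$ vectors form a basis of $\g_{-n\delta}$ in each degree. Writing $a_{i,n}$ for the basis vector in degree $n$ proportional to $h_{i,0}\otimes t^{-n}$ (for $n$ even) or $h_{i,1}\otimes t^{-n}$ (for $n$ odd), normalized so that for every $i$ the defining generating function reads $\sum_{r\geq 0}p_{i,r}z^{r}=\exp\!\bigl(-\sum_{n\geq 1}\tfrac1n a_{i,n}z^{n}\bigr)$ (this absorbs the factors $\tfrac12$ and the sign in the $i=l$ formula), we obtain $U(\Cg_{im}^{\dag})=\bigotimes_{i=1}^{l}\C[a_{i,n}\mid n\geq 1]$, a polynomial ring on the algebraically independent set $\{a_{i,n}\}$.

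\textbf{Step 3 (the triangular substitution).} Finally, for each fixed $i$, logarithmic differentiation of $P_i(z):=\sum_{r\geq 0}p_{i,r}z^{r}$ gives $P_i'(z)=-P_i(z)\sum_{n\geq 1}a_{i,n}z^{n-1}$, i.e.\ the Newton recursion $r\,p_{i,r}=-\sum_{n=1}^{r}a_{i,n}\,p_{i,r-n}$ with $p_{i,0}=1$; by induction $p_{i,r}=-\tfrac1r a_{i,r}+(\text{a polynomial in }a_{i,1},\dots,a_{i,r-1})$, a homogeneous change of generators that is unitriangular up to the nonzero scalars $-1/r$, hence invertible over $\C$, with $a_{i,r}$ a polynomial in $p_{i,1},\dots,p_{i,r}$. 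Therefore $\C[a_{i,n}\mid n\geq 1]=\C[p_{i,r}\mid r\geq 1]$ with the $p_{i,r}$ algebraically independent, and tensoring over $i=1,\dots,l$ identifies $U(\Cg_{im}^{\dag})$ with the polynomial ring on $\{p_{i,r}\mid 1\leq i\leq l,\ r\geq 1\}$ (the index $r=0$ contributing nothing, since $p_{i,0}=1$). The only genuinely delicate point is Step 2 — describing the homogeneous pieces of $\Cg_{im}^{\dag}$ inside the twisted loop realization and checking that the $h_{i,1}$ really span the odd-degree pieces — after which the argument is just the standard power-sum versus complete-homogeneous dictionary applied $l$ times.
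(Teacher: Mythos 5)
Your proof is correct and follows essentially the same strategy as the paper's: identify an explicit basis of the abelian Lie algebra $\Cg_{im}^{\dag}$ consisting of the $h_{i,\varepsilon}\otimes t^{-n}$ (with $\varepsilon\equiv n\pmod 2$), and observe that the defining generating functions express the $p_{i,r}$ in terms of these by a unitriangular substitution, hence $U(\Cg_{im}^{\dag})=\C[p_{i,r}]$ as polynomial rings. Your write-up is more explicit than the paper's (which leaves abelianness, the basis verification in the twisted loop realization, and the algebraic independence of the $p_{i,r}$ tacit, giving only the inductive inclusion argument), but the underlying idea is the same power-sum-to-complete-homogeneous triangular change of variables carried out coordinatewise in $i$.
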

\begin{proof}
We have $\C[p_{i,r} |i=1,...,l,\;r\in \mathbb{Z}_+]\subset U(\Cg_{im}^{\dag})$. The set of generators of $U(\Cg_{im}^{\dag})$ is $\{h_{i,\varepsilon}\otimes t^{-2k+\varepsilon}|\;n\in \{1,..,l\}$, $k\in \mathbb{N}\;and\; \varepsilon\in \{0,1\}\}.$ It suffices to see that $h_{n,\varepsilon}\otimes t^{-2k+\varepsilon}\in\C[p_{i,r} |i=1,...,l,\;r\in \mathbb{Z}_+]$ for each $i\in \{1,..,l\}$, $k\in \mathbb{N}$ and $\epsilon\in \{0,1\}.$ We have $h_{i,1}\otimes t^{-1}=p_{i,1}$ up to a constant multiple. By definition, $p_{i,2k-\varepsilon}+(h_{i,\varepsilon}\otimes t^{-2k+\varepsilon})/(2k-\varepsilon)$ is an element of $\mathbb{Q}[h_{i,s}|s<2k-\varepsilon]$ if $i\neq l$, and $p_{l,2k-\varepsilon}-(-1)^{\varepsilon+1}(h_{l,\varepsilon}/2^{1-\varepsilon}\otimes t^{-2k+\varepsilon})/(2k-\varepsilon)$ is an element of $\mathbb{Q}[h_{l,s}|s<2k-\varepsilon].$ The assertion follows by induction on $2k-\varepsilon.$ 
\end{proof}
\begin{lem}[\cite{CFS} Lemma 3.2, Lemma 3.3 (iii) (b) and \cite{CP} Lemma 1.3 (ii)]\label{technical identity}Let $V$ be a $\Cg^{\dag}$-module and $v\in V$ be a nonzero vector such that $\mathfrak{Cn}_+v=0.$ We have the following: 
\item $\mathrm{(1)}$ For $i\neq l,$ we have $(x_{i,1}\otimes t^{-1})^{(r)}(y_{i,0})^{(r)}v=(-1)^rp_{i,r}v$ for $r\in \mathbb{N}$;
\item $\mathrm{(2)}$ We have $(x_{l,0})^{(2r)}(z_{l,1}\otimes t^{-1})^{(r)}v=(-1)^rp_{l,r}v$ for $r\in \mathbb{N}.$
\end{lem}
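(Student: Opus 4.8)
The plan is to recognize both formulas as Garland-type identities: a product of lowering operators followed by a product of raising operators, evaluated on a vector killed by $\mathfrak{Cn}_+^{\dag}$, collapses to a polynomial in the Cartan currents $h_{i,\varepsilon}\otimes t^{-2k+\varepsilon}\in U(\Cg_{im}^{\dag})$, and that polynomial is exactly the one repackaged by the generating function defining $p_{i,r}$. Since no property of $v$ beyond $\mathfrak{Cn}_+^{\dag}v=0$ enters, it suffices to work inside the rank-one subalgebra $\mathfrak{a}_i\subseteq\Cg^{\dag}$ attached to the node $i$, to check that the raising elements occurring in the computation lie in $\mathfrak{Cn}_+^{\dag}$ (hence annihilate $v$), and to quote the corresponding rank-one identity of \cite{CP} and \cite{CFS}.

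For $i\ne l$, take $\mathfrak{a}_i$ to be generated by $x_{i,0},y_{i,0},h_{i,0}$ together with $x_{i,\varepsilon}\otimes t^{-2k+\varepsilon}$, $y_{i,\varepsilon}\otimes t^{-2k+\varepsilon}$, $h_{i,\varepsilon}\otimes t^{-2k+\varepsilon}$ for $k\ge 1$, $\varepsilon\in\{0,1\}$. Since for $i\ne l$ the matrices $X_{i,i+1}$ and $X_{2l+1-i,2l+2-i}$, together with their transposes, span two mutually commuting copies of $\mathfrak{sl}_2$, a direct bracket check shows $\mathfrak{a}_i\cong\mathfrak{sl}_2\otimes\C[s]$ with $s=t^{-1}$: here $y_{i,0}$ is the degree-$0$ lowering generator, $x_{i,1}\otimes t^{-1}$ the raising element in the first mode, $x_{i,0}$ the raising element in degree $0$, and the Cartan-current modes are precisely the elements $h_{i,\varepsilon}\otimes t^{-2k+\varepsilon}$ appearing in $\sum_r p_{i,r}z^r$. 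Reading off $\h$-weights against $(\mathring{\De}^{\dag}_+-\mathbb{Z}_+\delta)\cup(2\mathring{\De}^{\dag}_{s+}-(2\mathbb{Z}_++1)\delta)$ shows $x_{i,0}$ and all $x_{i,\varepsilon}\otimes t^{-2k+\varepsilon}$ lie in $\mathfrak{Cn}_+^{\dag}$, so they kill $v$; with this the asserted equality becomes the Garland identity $(x_{i,1}\otimes t^{-1})^{(r)}(y_{i,0})^{(r)}v=(-1)^r p_{i,r}v$ of \cite[Lemma 3.2]{CFS} (equivalently \cite[Lemma 1.3 (ii)]{CP}) transported through this isomorphism, the sign $(-1)^r$ accounting for the normalization $x_{i,1}=X_{i,i+1}-X_{2l+1-i,2l+2-i}$ relative to $x_{i,0}$.

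For $i=l$, the subalgebra $\mathfrak{a}_l$ generated by the node-$l$ elements $x_{l,\varepsilon},y_{l,\varepsilon},h_{l,\varepsilon}$ and their products with powers of $t$ is (a subalgebra of) the twisted current algebra of $\mathfrak{sl}_3$, i.e.\ the $A_2^{(2)}$-block at the end of the Dynkin diagram; here $x_{l,0}$ is the short simple raising generator and $z_{l,1}=\tfrac14[y_{l,0},y_{l,1}]=-X_{l+2,l}$ is, up to sign, the long-root lowering vector, while the Cartan currents of $\mathfrak{a}_l$, with the $\sqrt2$-normalizations of $x_{l,\varepsilon},y_{l,\varepsilon}$, produce exactly the exponent in the definition of $p_{l,r}$, including the sign asymmetry between its even and odd modes. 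Again $x_{l,0}$ and the positive-mode node-$l$ elements lie in $\mathfrak{Cn}_+^{\dag}$ and annihilate $v$, whereas $z_{l,1}\otimes t^{-1}$ is a negative-mode element, so $(x_{l,0})^{(2r)}(z_{l,1}\otimes t^{-1})^{(r)}v=(-1)^r p_{l,r}v$ follows from \cite[Lemma 3.3 (iii)(b)]{CFS} combined with \cite[Lemma 1.3 (ii)]{CP} read through this identification.

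The main obstacle is bookkeeping rather than conceptual content: one must pin down the isomorphisms $\mathfrak{a}_i\cong$ (twisted) $\mathfrak{sl}_2$- respectively $\mathfrak{sl}_3$-current algebra precisely enough that the cited Garland identities reproduce the exact exponents written in the definition of $p_{i,r}$ --- matching the $\varepsilon$-dependent signs when $i=l$ and the overall factor $(-1)^r$ --- and one must verify membership of every auxiliary raising element in $\mathfrak{Cn}_+^{\dag}$ by computing its $\h$-weight against the description of $\mathfrak{Cn}_+^{\dag}$ (and, if needed, reordering the two factors, which is harmless on $v$). Once these normalizations are fixed, the statement reduces entirely to the classical rank-one computations of \cite{CP,CFS}.
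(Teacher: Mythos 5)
The paper gives no proof of this lemma: it is stated purely as a citation of \cite{CFS} Lemma 3.2, Lemma 3.3~(iii)(b) and \cite{CP} Lemma~1.3~(ii). Your sketch unwinds that citation in exactly the expected way --- isolating the node-$i$ subalgebra ($\mathfrak{sl}_2\otimes\C[s]$ with $s=t^{-1}$ for $i\neq l$, the $A_2^{(2)}$-block at the node $l$), verifying that the relevant raising currents lie in $\mathfrak{Cn}_+^{\dag}$ so that they kill $v$, and matching the exponential generating function defining $p_{i,r}$ with the Cartan currents in the cited rank-one Garland identities --- so it is the same approach, simply carried one step further than the paper bothers to write.
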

\begin{prop}\label{annihilator}
Let $\lambda\in \mathring{P}_{+}^{\dag},$ $i\in\{1,...,l-1\}$ and $v_{\lambda}$ be the cyclic vector of $W(\lambda)^{\dag}$ with its weight $\lambda$. We have $p_{i,r}v_{\lambda}=0$ for $r>\langle\lambda,\check{\al}_{i}^{\dag}\rangle$, and $p_{l,r}v_{\lambda}=0$ for $r>\lfloor\frac{\langle\lambda,\check{\al}_{l}^{\dag}\rangle }{2}\rfloor$.
\end{prop}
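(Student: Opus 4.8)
The plan is to read off both statements from Lemma~\ref{technical identity} together with the defining relations of $W(\lambda)^{\dag}$ in Definition~\ref{special global weyl}. Write $v_{\lambda}$ for the cyclic generator; by Definition~\ref{special global weyl}~(3) it satisfies $\mathfrak{Cn}_{+}^{\dag}v_{\lambda}=0$, so Lemma~\ref{technical identity} applies with $v=v_{\lambda}$.

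First consider $i\in\{1,\dots,l-1\}$. By Lemma~\ref{technical identity}~(1) we have $p_{i,r}v_{\lambda}=(-1)^{r}(x_{i,1}\otimes t^{-1})^{(r)}(y_{i,0})^{(r)}v_{\lambda}$, so it is enough to show $(y_{i,0})^{(r)}v_{\lambda}=0$ for $r>\langle\lambda,\check{\al}_{i}^{\dag}\rangle$. By \cite[Theorem~9.19]{Car} the elements $\{x_{i,0},y_{i,0},h_{i,0}\}$ form a Chevalley system for $\mathring{\g}^{\dag}$ with $h_{i,0}=\check{\al}_{i}^{\dag}$; in particular $y_{i,0}$ spans $\g_{-\al_{i}^{\dag}}$, so Definition~\ref{special global weyl}~(2) yields $(y_{i,0})^{\langle\lambda,\check{\al}_{i}^{\dag}\rangle+1}v_{\lambda}=0$, whence $(y_{i,0})^{(r)}v_{\lambda}=0$ for every $r\ge\langle\lambda,\check{\al}_{i}^{\dag}\rangle+1$. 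Substituting this into the formula gives $p_{i,r}v_{\lambda}=0$ for $r>\langle\lambda,\check{\al}_{i}^{\dag}\rangle$.

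Now consider $i=l$. Lemma~\ref{technical identity}~(2) gives $p_{l,r}v_{\lambda}=(-1)^{r}(x_{l,0})^{(2r)}(z_{l,1}\otimes t^{-1})^{(r)}v_{\lambda}$, so it suffices to prove $(z_{l,1}\otimes t^{-1})^{(r)}v_{\lambda}=0$ whenever $2r>\langle\lambda,\check{\al}_{l}^{\dag}\rangle$. Since $z_{l,1}=\frac{1}{4}[y_{l,0},y_{l,1}]$, and both $y_{l,0}$ and $y_{l,1}$ lie in the $\mathring{\h}^{\dag}$-weight space of weight $-\al_{l}^{\dag}$ (which one checks from $h_{l,0}=\check{\al}_{l}^{\dag}$, using $[h_{l,0},y_{l,1}]=-2y_{l,1}$ and the fact that $y_{l,0}$ and $y_{l,1}$ are built from the same root spaces of $\sll$), the element $z_{l,1}\otimes t^{-1}$ has $\mathring{\h}^{\dag}$-weight $-2\al_{l}^{\dag}$, and hence $(z_{l,1}\otimes t^{-1})^{(r)}v_{\lambda}$, if nonzero, has $\mathring{\h}^{\dag}$-weight $\lambda-2r\al_{l}^{\dag}$. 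Now $W(\lambda)^{\dag}$ is integrable over $\mathring{\g}^{\dag}$ — the subspace of $\mathring{\g}^{\dag}$-locally finite vectors is a $\Cg^{\dag\prime}$-submodule, because $\Cg^{\dag\prime}$ is $\mathring{\g}^{\dag}$-locally finite for the adjoint action, and it contains $v_{\lambda}$ — and all its $\mathring{\h}^{\dag}$-weights lie in $\lambda-\mathring{Q}^{\dag}_{+}$. If $\lambda-2r\al_{l}^{\dag}$ were a weight of $W(\lambda)^{\dag}$, then by $\mathring{W}^{\dag}$-invariance of the weight set so would be $s_{\al_{l}^{\dag}}(\lambda-2r\al_{l}^{\dag})=\lambda+(2r-\langle\lambda,\check{\al}_{l}^{\dag}\rangle)\al_{l}^{\dag}$, which forces $\langle\lambda,\check{\al}_{l}^{\dag}\rangle-2r\ge 0$, i.e. $r\le\lfloor\langle\lambda,\check{\al}_{l}^{\dag}\rangle/2\rfloor$. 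Therefore $(z_{l,1}\otimes t^{-1})^{(r)}v_{\lambda}=0$ for $r>\lfloor\langle\lambda,\check{\al}_{l}^{\dag}\rangle/2\rfloor$, and consequently $p_{l,r}v_{\lambda}=0$ in that range.

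The reductions through Lemma~\ref{technical identity} are routine; the content sits in the $i=l$ case, where the factor $2$ in the weight $-2\al_{l}^{\dag}$ of $z_{l,1}$ is exactly what produces the floor, and where one invokes the (standard but not purely formal) integrability of the global Weyl module over $\mathring{\g}^{\dag}$. As an alternative to the weight/integrability step, one could instead identify $z_{l,1}\otimes t^{-1}$ with a root vector attached to the real root $2\al_{l}^{\dag}+\delta\in\De_{re}$ and extract the bound from the corresponding $\mathfrak{sl}_{2}$-string through $v_{\lambda}$; I expect the weight argument to be the shorter route.
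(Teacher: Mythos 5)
Your proposal is correct and follows essentially the same route as the paper's proof: reduce via Lemma~\ref{technical identity} to killing $(y_{i,0})^{(r)}v_{\lambda}$ and $(z_{l,1}\otimes t^{-1})^{(r)}v_{\lambda}$, then use the defining relations of $W(\lambda)^{\dag}$ for the $i\neq l$ case and the $\mathring{\g}^{\dag}$-integrability of $W(\lambda)^{\dag}$ together with the weight constraint $\mathrm{wt}\;W(\lambda)^{\dag}\subset\lambda-\mathring{Q}^{\dag}_{+}$ for the $i=l$ case. You spell out a few steps that the paper compresses (the local-finiteness argument for integrability, the explicit $\mathring{W}^{\dag}$-reflection producing the floor), but the underlying mechanism is identical.
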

\begin{proof}
Definition \ref{special global weyl} (3) implies the set of $\mathring{\h}^{\dag}$-weights of $W(\lambda)^{\dag}$ is the subset of $\lambda-\mathring{Q}^{\dag}_{+}.$ From Definition \ref{special global weyl} (2) and Lemma \ref{technical identity} (1), we get $p_{i,r}v_{\lambda}=0$ for $r>\langle\lambda,\check{\al}_{i}^{\dag}\rangle.$ By Definition \ref{special global weyl} (2), $W(\lambda)^{\dag}$ is an $\mathring{\g}^{\dag}$-integrable module. Since the set of $\mathring{\h}^{\dag}$-weights of $W(\lambda)^{\dag}$ is contained in $\lambda-\mathring{Q}_+^{\dag},$ this implies $\lambda-k\al_l^{\dag}$ for $k>\langle \lambda,\check{\al}^{\dag}_l\rangle$ is not a weight of a vector of $W(\lambda)^{\dag}.$ Since $(z_{l,1}\otimes t)$ is a root vector corresponding to $2\al_l^{\dag}-\delta,$ we obtain $p_{l,r}v_{\lambda}=0$ for $r>\lfloor\frac{\langle\lambda,\check{\al}_{l}^{\dag}\rangle }{2}\rfloor.$  
\end{proof}
We set $$\mathbf{A}_{\lambda}^{\prime}:=\C[p_{i,r}|1\leq r\leq \langle\lambda,\check{\al}_{i}^{\dag}\rangle\;\mathrm{for}\;i\neq l,\;1\leq r\leq \lfloor\frac{\langle\lambda,\check{\al}_{l}^{\dag}\rangle }{2}\rfloor\;\mathrm{for}\;i=l].$$
\begin{cor}\label{surjection}
For each $\lambda\in \mathring{P}_+^{\dag},$ there exists a $\mathbb{C}$-algebra surjection $\mathbf{A}_{\lambda}^{\prime}\to\mathbf{A}_{\lambda}$.
\end{cor}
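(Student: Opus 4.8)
The plan is to exhibit the surjection as the composite of the obvious inclusion $\mathbf{A}_{\lambda}^{\prime}\hookrightarrow U(\Cg_{im}^{\dag})$ with the canonical projection $U(\Cg_{im}^{\dag})\twoheadrightarrow \mathbf{A}_{\lambda}=U(\Cg_{im}^{\dag})/\mathrm{Ann}(v_{\lambda})$, and then to check that this composite is onto.

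First I would use Proposition \ref{generator} to identify $U(\Cg_{im}^{\dag})$ with the polynomial ring $\C[\,p_{i,r}\mid i=1,\dots,l,\ r\in\Z_+\,]$. Under this identification $\mathbf{A}_{\lambda}^{\prime}$ is, by definition, the polynomial subalgebra generated by the subset of variables consisting of the $p_{i,r}$ with $1\le r\le\langle\lambda,\check{\al}_i^{\dag}\rangle$ for $i\neq l$ and $1\le r\le\lfloor\langle\lambda,\check{\al}_l^{\dag}\rangle/2\rfloor$ for $i=l$; in particular the inclusion $\mathbf{A}_{\lambda}^{\prime}\hookrightarrow U(\Cg_{im}^{\dag})$ is a morphism of $\C$-algebras, so composing with the projection gives a $\C$-algebra homomorphism $\varphi\colon\mathbf{A}_{\lambda}^{\prime}\to\mathbf{A}_{\lambda}$.

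Next I would check surjectivity of $\varphi$. Being a quotient of $\C[\,p_{i,r}\,]$, the algebra $\mathbf{A}_{\lambda}$ is generated over $\C$ by the images $\bar p_{i,r}$ of all the $p_{i,r}$. By Proposition \ref{annihilator}, $p_{i,r}v_{\lambda}=0$ whenever $r>\langle\lambda,\check{\al}_i^{\dag}\rangle$ ($i\neq l$) or $r>\lfloor\langle\lambda,\check{\al}_l^{\dag}\rangle/2\rfloor$ ($i=l$) --- that is, for exactly those $(i,r)$ not occurring among the chosen generators of $\mathbf{A}_{\lambda}^{\prime}$ --- so each such $p_{i,r}$ lies in $\mathrm{Ann}(v_{\lambda})$ and hence has image $0$ in $\mathbf{A}_{\lambda}$. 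Consequently $\mathbf{A}_{\lambda}$ is already generated by the finitely many $\bar p_{i,r}$ with $r$ in the allowed range, each of which lies in the image of $\varphi$; therefore $\varphi$ is surjective.

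The statement is genuinely just a corollary of Propositions \ref{generator} and \ref{annihilator}, so I do not expect any real obstacle; the one point deserving a sentence of care is that the $p_{i,r}$ entering the definition of $\mathbf{A}_{\lambda}^{\prime}$ form a subset of an algebraically independent generating set of $U(\Cg_{im}^{\dag})$, so that $\mathbf{A}_{\lambda}^{\prime}$ is a bona fide polynomial ring sitting inside $U(\Cg_{im}^{\dag})$ rather than merely a collection of elements --- this is exactly what Proposition \ref{generator} provides. Upgrading $\varphi$ to an isomorphism, which is presumably the route to Theorem D via the dimension formula of Theorem \ref{dimension formula}, is the substantive remaining work, but it is not claimed here.
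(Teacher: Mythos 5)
Your proof is correct and follows exactly the paper's argument: identify $U(\Cg_{im}^{\dag})$ with $\C[p_{i,r}]$ via Proposition \ref{generator}, observe via Proposition \ref{annihilator} that the $p_{i,r}$ outside the defining range of $\mathbf{A}_{\lambda}^{\prime}$ already lie in $\mathrm{Ann}(v_{\lambda})$, and conclude that the remaining generators surject onto $\mathbf{A}_{\lambda}$. No discrepancy with the paper's proof.
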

\begin{proof}
By Proposition \ref{annihilator}, we have $p_{i,r}$, $p_{l,k}\in \mathrm{Ann}(v_{\lambda})$ for each $r>\langle\lambda,\check{\al}_{i}^{\dag}\rangle$ ($i\neq l$) and each $k>\lfloor\frac{\langle\lambda,\check{\al}_{n}^{\dag}\rangle }{2}\rfloor$. Hence we have a surjection $\mathbf{A}_{\lambda}^{\prime}\to \mathbf{A}_{\lambda}$ by Proposition \ref{generator}.
\end{proof}
We set $\mathring{P}^{\dag\prime}_+:=\{\lambda \in \mathring{P}^{\dag}_+\;|\;\langle \lambda,\check{\al}_l\rangle\in 2\mathbb{Z}_+\}$.
\begin{thm}[\cite{CIK} \S5.6 and Theorem 1]\label{chari}
For each $\lambda\in \mathring{P}_{+}^{\dag\prime}$ and nonzero element $f\in\mathbf{A}_{\lambda}^{\prime},$ there exists a quotient of $W(\lambda)^{\dag}$ such that $f$ acts nontirivially on the image of the cyclic vector $v_{\lambda}$ of $W(\lambda)^{\dag}.$ In particular $\mathbf{A}_{\lambda}\cong \mathbf{A}_{\lambda}^{\prime}.$
\end{thm}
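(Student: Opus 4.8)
The plan is to promote the surjection $\mathbf{A}_{\lambda}^{\prime}\twoheadrightarrow\mathbf{A}_{\lambda}$ of Corollary \ref{surjection} to an isomorphism by producing enough quotients of $W(\lambda)^{\dag}$ to force its injectivity. Let $I$ denote the kernel of $\mathbf{A}_{\lambda}^{\prime}\to\mathbf{A}_{\lambda}$, i.e.\ the ideal of those $f\in\mathbf{A}_{\lambda}^{\prime}$ with $fv_{\lambda}=0$ in $W(\lambda)^{\dag}$. Since $\mathbf{A}_{\lambda}^{\prime}$ is a polynomial ring by Proposition \ref{generator}, it suffices to prove: for every nonzero $f\in\mathbf{A}_{\lambda}^{\prime}$ there is a quotient $M$ of $W(\lambda)^{\dag}$ (as a $\Cg^{\dag\prime}$-module) on which $f$ acts by a nonzero scalar on the image of $v_{\lambda}$; such an $M$ witnesses $f\notin I$, so, $f$ being arbitrary, $I=0$ and both assertions follow. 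I would construct a Zariski-dense family of such quotients out of evaluation modules, using the realization $\Cg^{\dag\prime}\cong(\sll\otimes_{\C}\C[t])^{\sigma}$.

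For $a\in\C^{\times}$ the evaluation $X\otimes g(t)\mapsto g(a)X$ restricts to a surjection $\mathrm{ev}_{a}\colon(\sll\otimes_{\C}\C[t])^{\sigma}\twoheadrightarrow\sll$; for a finite-dimensional $\sll$-module $V$, write $\mathrm{ev}_{a}^{*}V$ for the corresponding $\Cg^{\dag\prime}$-module. Write $\lambda=\sum_{i=1}^{l-1}m_{i}\varpi_{i}^{\dag}+2n_{l}\varpi_{l}^{\dag}$, which is possible precisely because $\lambda\in\mathring{P}^{\dag\prime}_{+}$. For a tuple $\mathbf{a}=(a^{(i)}_{k})$ of nonzero complex numbers that are pairwise distinct and satisfy $a^{(i)}_{k}\neq-a^{(i')}_{k'}$ whenever $(i,k)\neq(i',k')$, set
\[
M_{\mathbf{a}}:=\Bigl(\,\bigotimes_{i=1}^{l-1}\,\bigotimes_{k=1}^{m_{i}}\mathrm{ev}_{a^{(i)}_{k}}^{*}\wedge^{i}\C^{2l+1}\Bigr)\otimes\bigotimes_{k=1}^{n_{l}}\mathrm{ev}_{a^{(l)}_{k}}^{*}\wedge^{l}\C^{2l+1}.
\]
Because $\wedge^{i}\C^{2l+1}$ restricts to $\mathring{\g}^{\dag}$ as the irreducible of highest weight $\varpi_{i}^{\dag}$ for $i<l$ and of highest weight $2\varpi_{l}^{\dag}$ for $i=l$, the module $M_{\mathbf{a}}$ is $\mathring{\g}^{\dag}$-integrable of highest weight $\lambda$. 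The crucial point, valid for parameters in general position as above, is that the tensor $v_{\mathbf{a}}$ of the highest weight vectors generates $M_{\mathbf{a}}$ over $\Cg^{\dag\prime}$ and is annihilated by $\mathfrak{Cn}_{+}^{\dag}$; granting this, $v_{\mathbf{a}}$ satisfies the defining relations of Definition \ref{special global weyl} (in the form appropriate to $\Cg^{\dag\prime}$-modules), so $M_{\mathbf{a}}$ is a quotient of $W(\lambda)^{\dag}$ as a $\Cg^{\dag\prime}$-module. As $\Cg_{im}^{\dag}\subset\Cg^{\dag\prime}$ is commutative and acts by scalars on the weight vectors of each evaluation module, $U(\Cg_{im}^{\dag})v_{\mathbf{a}}=\C v_{\mathbf{a}}$, so $\mathbf{A}_{\lambda}^{\prime}\subset U(\Cg_{im}^{\dag})$ acts on $v_{\mathbf{a}}$ through a character $\chi_{\mathbf{a}}$, and the image of $fv_{\lambda}$ in $M_{\mathbf{a}}$ equals $\chi_{\mathbf{a}}(f)v_{\mathbf{a}}$ for $f\in\mathbf{A}_{\lambda}^{\prime}$.

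It then remains to compute $\chi_{\mathbf{a}}$ and check density. On the highest weight vector of $\mathrm{ev}_{a}^{*}\wedge^{i}\C^{2l+1}$ each $h_{i,\varepsilon}\otimes t^{m}$ acts by a scalar, and the pairings of the relevant highest weight with $h_{i,0},h_{i,1}$ (for $i<l$) and with $h_{l,0}/2,h_{l,1}$ (for $i=l$) all equal $1$; substituting into the generating-function definitions of the $p_{i,r}$ and summing the resulting series (geometric for $i<l$, a geometric/$\mathrm{arctanh}$ combination for $i=l$) gives, on $v_{\mathbf{a}}$,
\[
\sum_{r\ge0}\chi_{\mathbf{a}}(p_{i,r})z^{r}=\prod_{k=1}^{m_{i}}(1-a^{(i)}_{k}z)\quad(i<l),\qquad
\sum_{r\ge0}\chi_{\mathbf{a}}(p_{l,r})z^{r}=\prod_{k=1}^{n_{l}}(1+a^{(l)}_{k}z),
\]
so $\chi_{\mathbf{a}}(p_{i,r})$ is, up to sign, the $r$-th elementary symmetric function in the block-$i$ parameters (a different normalization of the realization merely replaces each parameter by its inverse). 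Hence, as $\mathbf{a}$ runs over admissible tuples, the point $(\chi_{\mathbf{a}}(p_{i,r}))_{i,r}$ sweeps out a Zariski-dense subset of the affine space $\operatorname{Spec}\mathbf{A}_{\lambda}^{\prime}$: each block gives a finite surjective morphism onto its factor (Vieta), and admissibility removes only a proper closed subvariety. Therefore every nonzero $f\in\mathbf{A}_{\lambda}^{\prime}$ satisfies $\chi_{\mathbf{a}}(f)\neq0$ for a suitable $\mathbf{a}$, and $f$ then acts on the nonzero vector $v_{\mathbf{a}}$ by the nonzero scalar $\chi_{\mathbf{a}}(f)$; this is the desired statement, and yields $I=0$, hence $\mathbf{A}_{\lambda}\cong\mathbf{A}_{\lambda}^{\prime}$. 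One can also note $\dim_{\C}M_{\mathbf{a}}=\prod_{i<l}\binom{2l+1}{i}^{m_{i}}\binom{2l+1}{l}^{n_{l}}=\dim_{\C}W(\lambda)^{\dag}_{loc}$ by Theorem \ref{dimension formula}(2), so each $M_{\mathbf{a}}$ is in fact the whole local Weyl module attached to $\chi_{\mathbf{a}}$, matching the argument of \cite{CIK}.

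The main obstacle is the cyclicity-and-annihilation claim flagged in the second paragraph: in the twisted setting one must verify that the tensor product of $\sll$-evaluation modules at parameters in general position is cyclic on the tensor of highest weight vectors and, crucially, is killed by $\mathfrak{Cn}_{+}^{\dag}$, so that $M_{\mathbf{a}}$ is a quotient of the \emph{global} (not just local) Weyl module. This is where the diagram automorphism $\sigma$ intervenes: one must track which $\sll$-fundamental sits at the folded node $\al_{l}$, use the identification $\mathrm{ev}_{-a}^{*}V\cong\mathrm{ev}_{a}^{*}(\sigma^{*}V)$ (which is why the evaluation points must be distinct modulo $\pm1$, whence the condition $a^{(i)}_{k}\neq-a^{(i')}_{k'}$), and match the $\varepsilon$-asymmetric generating function for the $p_{l,r}$ with elementary symmetric functions. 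These are the points at which the $A_{2l}^{(2)}$ case genuinely departs from the $\mathfrak{sl}_{n}$-current computation of \cite{CIK}; the density argument and the deduction $I=0$ are then formal, and the auxiliary identities are of the type already appearing in Lemma \ref{technical identity}.
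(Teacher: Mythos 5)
This theorem is a \emph{cited} result (attributed to \cite{CIK}, \S5.6 and Theorem~1); the paper gives no proof of its own, so there is no internal proof against which to compare. Your reconstruction follows the standard Chari--Pressley-style template used in \cite{CIK} and \cite{CFS}: realize $\Cg^{\dag\prime}$ as a twisted current algebra, build tensor products of $\sll$-evaluation modules at generic points, check that the tensor of highest weight vectors satisfies the defining relations, compute the resulting character of $\mathbf{A}_{\lambda}^{\prime}$ on that vector via the generating series for the $p_{i,r}$, and observe that these characters sweep a Zariski-dense subset of $\operatorname{Spec}\mathbf{A}_{\lambda}^{\prime}$. Your computation of the generating functions is correct: the $\varepsilon$-weighted series collapse to $\log(1\mp az)$ on the highest weight vector, so each block contributes elementary symmetric functions of the evaluation parameters, and the restriction to $\mathring{P}_{+}^{\dag\prime}$ is exactly what guarantees every factor comes from a genuine $\sll$-module $\wedge^{i}\C^{2l+1}$ rather than the spin module. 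The dimension count matching Theorem~\ref{dimension formula}(2) is also a useful sanity check.

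You correctly flag the one substantive gap: showing $M_{\mathbf{a}}$ is a quotient of the \emph{global} Weyl module, which requires (i) cyclicity of the tensor product on $v_{\mathbf{a}}$ and (ii) $\mathfrak{Cn}_{+}^{\dag}v_{\mathbf{a}}=0$, for parameters in general position. Point (ii) is quick: $\mathfrak{Cn}_{+}^{\dag}$ evaluates into the positive nilradical of $\sll$, which kills each highest weight vector, and the coproduct preserves this. Point (i) is the genuine Chari--Pressley-type argument in the twisted setting; it is exactly what makes the $\pm$-distinctness condition necessary (via $\mathrm{ev}_{-a}^{*}V\cong\mathrm{ev}_{a}^{*}\sigma^{*}V$) and is handled in \cite{CFS} for twisted loop algebras, so you should cite that rather than leave it as a flag. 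One small point worth making explicit: $M_{\mathbf{a}}$ carries only a $\Cg^{\dag\prime}$-action (no $d$), but this suffices because $\mathbf{A}_{\lambda}$ is a quotient of $U(\Cg_{im}^{\dag})\subset U(\Cg^{\dag\prime})$, so a $\Cg^{\dag\prime}$-quotient on which $f$ acts nontrivially already proves $f\notin\operatorname{Ann}(v_{\lambda})$.
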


\begin{lem}[\cite{CIK} Lemma 5.4]\label{p}
For each $1\leq s\leq k$, let $V_s$ be representations of $\Cg^{\dag}$ and let $v_s$ be vectors  of $V_s$ such that $\mathfrak{Cn}_{+}^{\dag}v_s=0.$ We have
$$p_{i,r}(v_1\otimes \cdots \otimes v_k)=\sum_{r=j_1+\cdots+j_k,\;j_i\geq0}p_{i,j_1}v_1\otimes\cdots\otimes p_{i,j_k}v_k$$ for all $1\leq i \leq l$ and $r\in \mathbb{Z}_+$.
\end{lem}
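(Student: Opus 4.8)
The plan is to deduce the formula from the fact that the generating series $P_i(z):=\sum_{r\in\mathbb{Z}_+}p_{i,r}z^{r}$ is \emph{group-like} for the coproduct $\Delta\colon U(\g)\to U(\g)\otimes U(\g)$, that is $\Delta(P_i(z))=P_i(z)\otimes P_i(z)$, and then to read off the claim by iterating $\Delta$ and comparing coefficients of $z^{r}$. First I would let $H_i(z)$ denote the formal power series in $z$ with zero constant term appearing in the exponent of the defining formula for $P_i(z)$, so that $P_i(z)=\exp H_i(z)$; since $H_i(z)$ has no constant term, the coefficient of $z^{r}$ in $\exp H_i(z)$ is a finite $\mathbb{Q}$-linear combination of products of coefficients of $H_i(z)$, so no completion is needed and $p_{i,r}$ is the honest element of $U(\Cg^{\dag}_{im})$ that Proposition \ref{generator} says it is. The point I would then isolate is that every coefficient of $H_i(z)$ is a scalar multiple of an element $h_{i,\varepsilon}\otimes t^{m}\in\g$ — this is exactly where the explicit realization of $\Cg^{\dag\prime}$ as the $\sigma$-fixed points of a loop algebra is used — hence is primitive: $\Delta(h_{i,\varepsilon}\otimes t^{m})=(h_{i,\varepsilon}\otimes t^{m})\otimes1+1\otimes(h_{i,\varepsilon}\otimes t^{m})$.

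Granting this, the computation is one line. Applying $\Delta$ coefficientwise gives $\Delta(H_i(z))=H_i(z)\otimes1+1\otimes H_i(z)$, and, since $\Delta$ is an algebra homomorphism,
\begin{align*}
\Delta(P_i(z))&=\exp\bigl(H_i(z)\otimes1+1\otimes H_i(z)\bigr)\\
&=\exp\bigl(H_i(z)\otimes1\bigr)\exp\bigl(1\otimes H_i(z)\bigr)\\
&=\bigl(P_i(z)\otimes1\bigr)\bigl(1\otimes P_i(z)\bigr)=P_i(z)\otimes P_i(z),
\end{align*}
where the middle equality uses $\exp(X+Y)=\exp X\exp Y$ for the commuting elements $X=H_i(z)\otimes1$ and $Y=1\otimes H_i(z)$ of $U(\g)\otimes U(\g)$. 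Comparing coefficients of $z^{r}$ gives $\Delta(p_{i,r})=\sum_{a+b=r}p_{i,a}\otimes p_{i,b}$ in $U(\g)\otimes U(\g)$, hence in $U(\Cg^{\dag})\otimes U(\Cg^{\dag})$; by coassociativity, iterating yields $\Delta^{(k-1)}(p_{i,r})=\sum_{j_1+\cdots+j_k=r}p_{i,j_1}\otimes\cdots\otimes p_{i,j_k}$. Since the $\Cg^{\dag}$-module structure on $V_1\otimes\cdots\otimes V_k$ is given by $\Delta^{(k-1)}$, evaluating this identity on $v_1\otimes\cdots\otimes v_k$ is precisely the asserted equality. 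In this approach the hypothesis $\mathfrak{Cn}_{+}^{\dag}v_s=0$ plays no role and is recorded only because that is the setting in which the lemma will be applied; as a cross-check (and the route closer to \cite{CFS,CP}) one can instead note that $\mathfrak{Cn}_{+}^{\dag}$ acts by derivations, so $\mathfrak{Cn}_{+}^{\dag}$ kills $v_1\otimes\cdots\otimes v_k$, rewrite $p_{i,r}$ on such a vector via Lemma \ref{technical identity} as a signed product of divided powers of genuine root vectors, and expand using the divided-power Leibniz rule $X^{(r)}(a\otimes b)=\sum_{p+q=r}X^{(p)}a\otimes X^{(q)}b$.

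The only genuine work — and I expect it to be fiddly rather than deep — is the preliminary bookkeeping: setting up the $\exp$-identities and the coefficient extraction rigorously as manipulations of formal power series over $U(\g)$ and its tensor powers, and checking, against the $A_{2l}^{(2)}$-realization, that the exponents in the definitions of both $p_{i,r}$ (for $i\neq l$) and $p_{l,r}$ really are sums of honest elements of $\g$, so that primitivity applies to them verbatim; the $i=l$ case, where the exponent mixes $h_{l,0}\otimes t^{-2k}$ with $h_{l,1}\otimes t^{-2k+1}$, needs a separate but identical check. Once this is in place, the lemma follows immediately.
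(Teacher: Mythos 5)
Your coproduct argument is correct and complete. The key points all check out: in both the $i\neq l$ and $i=l$ cases the defining exponent $H_i(z)$ has, as each $z^n$-coefficient, a rational multiple of a single Lie algebra element $h_{i,\varepsilon}\otimes t^{m}\in\Cg_{im}^{\dag}\subset\g$, so it is primitive in $U(\Cg^{\dag})$; $H_i(z)$ has no constant term, so $\exp H_i(z)$ is a bona fide power series with coefficients in $U(\Cg_{im}^{\dag})$ and the manipulations are all coefficientwise and finitary; $\Delta$ is an algebra map, $H_i(z)\otimes1$ and $1\otimes H_i(z)$ commute, and $\exp$ turns the primitive coproduct of $H_i(z)$ into the group-like property of $P_i(z)$; and iterating $\Delta$ via coassociativity and evaluating on $v_1\otimes\cdots\otimes v_k$ gives exactly the stated identity. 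The paper itself gives no internal proof (it cites [CIK, Lemma~5.4]), so there is nothing to compare against line by line, but this Hopf-theoretic route is the cleanest one available and is what actually underlies the statement.

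The one substantive remark worth making is the one you already made: in this argument the hypothesis $\mathfrak{Cn}_{+}^{\dag}v_s=0$ is genuinely never used — the identity $\Delta^{(k-1)}(p_{i,r})=\sum_{j_1+\cdots+j_k=r}p_{i,j_1}\otimes\cdots\otimes p_{i,j_k}$ holds in $U(\Cg^{\dag})^{\otimes k}$ unconditionally, hence on arbitrary vectors of arbitrary tensor products. By contrast, the route through Lemma~\ref{technical identity} (rewriting $p_{i,r}$ as a signed product of divided powers and applying the divided-power Leibniz rule) does need the hypothesis, both to invoke Lemma~\ref{technical identity} on $v_1\otimes\cdots\otimes v_k$ and to kill the cross terms $a\neq c$ in the resulting double sum, which is a weight/integrability argument rather than a one-liner; your description of that cross-check is a little compressed on this last point, but since you present it only as a sanity check and not as the proof, that is fine.
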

\subsubsection{Dimension inequalities}For each maximal ideal $\mathbf{I}$ of $\mathbf{A}_{\lambda}$, we define $$W(\lambda,\mathbf{I})^{\dag}:=(\mathbf{A}_{\lambda}/\mathbf{I})\underset{\mathbf{A}_{\lambda}}{\otimes}W(\lambda)^{\dag}.$$ Let $U(\Cg_{im})_+$ be the argumentation ideal of $U(\Cg_{im})$ and $\mathbf{I}_{\lambda,0}$ be a maximal ideal of $\mathbf{A}_{\lambda}$ defined by $(U(\Cg_{im})_++\mathrm{Ann}(v_{\lambda}))/\mathrm{Ann}(v_{\lambda})$. 
\begin{prop}
For each $\lambda\in \mathring{P}^{\dag}_+$, we have $W(\lambda)_{loc}^{\dag}\cong W(\lambda,\mathbf{I}_{\lambda,0})^{\dag}$. 
\end{prop}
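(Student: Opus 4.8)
The plan is to identify both $W(\lambda,\mathbf{I}_{\lambda,0})^{\dag}$ and $W(\lambda)_{loc}^{\dag}$ with the same quotient of the global Weyl module $W(\lambda)^{\dag}$. First I would record two elementary facts. By Proposition \ref{generator} the algebra $\mathbf{A}_\lambda$ is commutative, so for any ideal $\mathbf{I}\subseteq\mathbf{A}_\lambda$ the right exactness of $-\otimes_{\mathbf{A}_\lambda}W(\lambda)^{\dag}$ gives $W(\lambda,\mathbf{I})^{\dag}=(\mathbf{A}_\lambda/\mathbf{I})\otimes_{\mathbf{A}_\lambda}W(\lambda)^{\dag}\cong W(\lambda)^{\dag}/(\mathbf{I}\cdot W(\lambda)^{\dag})$, where $\mathbf{I}$ acts through the $\mathbf{A}_\lambda$-module structure $X.(Yv_\lambda)=YXv_\lambda$ recalled above. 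Second, an element of $U(\Cg_{im}^{\dag})$ with nonzero constant term sends $v_\lambda$ to a vector whose $\lambda$-weight component is a nonzero multiple of $v_\lambda$, so $\mathrm{Ann}(v_\lambda)\subseteq U(\Cg_{im}^{\dag})_+$; hence $\mathbf{I}_{\lambda,0}=U(\Cg_{im}^{\dag})_+/\mathrm{Ann}(v_\lambda)$ and $\mathbf{A}_\lambda/\mathbf{I}_{\lambda,0}\cong\C$. Combining the two, $W(\lambda,\mathbf{I}_{\lambda,0})^{\dag}\cong W(\lambda)^{\dag}/(\mathbf{I}_{\lambda,0}W(\lambda)^{\dag})$.

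Next I would compute the submodule $\mathbf{I}_{\lambda,0}W(\lambda)^{\dag}$. Since $W(\lambda)^{\dag}=U(\Cg^{\dag\prime})v_\lambda$ (as in the $\Cg$-case, $\Cg^{\dag}=\Cg^{\dag\prime}\oplus\C d\oplus\C K$ and $d,K$ act on $v_\lambda$ by scalars), and using $U(\Cg_{im}^{\dag})_+=U(\Cg_{im}^{\dag})\,\Cg_{im}^{\dag}$ together with $\Cg_{im}^{\dag}\subseteq\Cg^{\dag\prime}$, I obtain
\[
\mathbf{I}_{\lambda,0}W(\lambda)^{\dag}=U(\Cg^{\dag\prime})\,U(\Cg_{im}^{\dag})_+\,v_\lambda=U(\Cg^{\dag\prime})\,\Cg_{im}^{\dag}\,v_\lambda=U(\Cg^{\dag})\,\Cg_{im}^{\dag}\,v_\lambda=:N,
\]
the last equality holding because $d$ and $K$ preserve the weight-graded $\Cg^{\dag\prime}$-submodule on the left. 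Thus $\mathbf{I}_{\lambda,0}W(\lambda)^{\dag}$ is exactly the $\Cg^{\dag}$-submodule of $W(\lambda)^{\dag}$ generated by $\Cg_{im}^{\dag}v_\lambda$.

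Finally, by Definition \ref{special local weyl} the local Weyl module $W(\lambda)_{loc}^{\dag}$ is precisely the cyclic $\Cg^{\dag}$-module obtained from $W(\lambda)^{\dag}$ by imposing relation (4), i.e.\ the quotient $W(\lambda)^{\dag}/N$. Comparing with the previous paragraph, $W(\lambda,\mathbf{I}_{\lambda,0})^{\dag}\cong W(\lambda)^{\dag}/N\cong W(\lambda)_{loc}^{\dag}$ as $\Cg^{\dag}$-modules, as claimed. The argument is essentially formal, so I do not anticipate a genuine obstacle; the only points needing a little care are the commutativity of $\mathbf{A}_\lambda$ (Proposition \ref{generator}), the $\Cg^{\dag\prime}$-equivariance of the $\mathbf{A}_\lambda$-action on $W(\lambda)^{\dag}$ --- which is what makes $\mathbf{I}_{\lambda,0}W(\lambda)^{\dag}$ a $\Cg^{\dag}$-submodule --- and the harmless bookkeeping between $\Cg^{\dag\prime}$ and $\Cg^{\dag}$. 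The real content of this section lies in Theorem \ref{chari} and Proposition \ref{generator} rather than in this proposition.
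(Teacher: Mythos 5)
Your argument is correct and is precisely a detailed unpacking of the paper's one-line proof, which simply observes that the claim follows from relation (4) in Definition \ref{special local weyl}. Both proofs hinge on the same identification of $\mathbf{I}_{\lambda,0}W(\lambda)^{\dag}$ with the $\Cg^{\dag}$-submodule generated by $\Cg_{im}^{\dag}v_{\lambda}$, so there is no substantive difference in approach — you have just made the bookkeeping explicit.
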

\begin{proof}
The assertion follows from Definition \ref{special local weyl} (4).
\end{proof} 
\begin{prop}[\cite{CIK} Proposition 6.4 and 6.5]\label{dimension inequality}Let $\lambda\in \mathring{P}_+^{\dag}$ and let $\mathbf{I}$ be a maximal ideal of $\mathbf{A}_{\lambda}.$
\item $\mathrm{(1)}$ If $\mu\in\mathring{P}^{\dag}_+$ satisfies $\lambda-\mu\in \mathring{P}^{\dag\prime}_{+}$, then we have $$\mathrm{dim}_{\C}\;W(\lambda,\mathbf{I})^{\dag}\geq \mathrm{dim}_{\C}\;W(\mu)_{loc}^{\dag}\left(\prod_{i=1}^{l-1}\binom{2l+1}{i}^{(\lambda-\mu)(\check{\al}_i)}\right)\binom{2l+1}{l}^{(\lambda-\mu)(\check{\al}_l/2)}.$$
\item $\mathrm{(2)}$ We have $$\mathrm{dim}_{\C}\;W(\lambda)_{loc}^{\dag}\geq \mathrm{dim}_{\C}\;W(\lambda,\mathbf{I})^{\dag}.$$
\end{prop}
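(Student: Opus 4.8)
The plan is to follow the arguments of \cite[Propositions 6.4 and 6.5]{CIK}, substituting for the evaluation modules of the hyperspecial current algebra the evaluation modules arising from the realization $\Cg^{\dag\prime}\cong(\sll\otimes_{\C}\C[t])^{\sigma}$, and keeping track of the doubled node $\al_l$. The preliminary observation, used in both parts, is that $W(\lambda)^{\dag}$ is finitely generated over $\mathbf{A}_{\lambda}$: by Theorem \ref{chari} the ring $\mathbf{A}_{\lambda}\cong\mathbf{A}_{\lambda}^{\prime}$ is a finitely generated polynomial ring, it is connected and non-positively graded for the $d$-grading, $W(\lambda)^{\dag}$ is a graded $\mathbf{A}_{\lambda}$-module bounded above, and $W(\lambda)^{\dag}/\mathbf{I}_{\lambda,0}W(\lambda)^{\dag}\cong W(\lambda)_{loc}^{\dag}$ is finite dimensional by Theorem \ref{dimension formula}; graded Nakayama then gives the claim. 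In particular every $W(\lambda,\mathbf{I})^{\dag}$ is finite dimensional, and $\mathbf{I}\mapsto\dim_{\C}W(\lambda,\mathbf{I})^{\dag}$ is upper semicontinuous on $\mathrm{Spec}\,\mathbf{A}_{\lambda}$.

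For (2), I would filter $W(\lambda,\mathbf{I})^{\dag}=(\mathbf{A}_{\lambda}/\mathbf{I})\otimes_{\mathbf{A}_{\lambda}}W(\lambda)^{\dag}$ by the $d$-grading of $W(\lambda)^{\dag}$. Its associated graded is a cyclic graded $\Cg^{\dag}$-module generated by the symbol $\bar{v}_{\lambda}$ of $v_{\lambda}$; since $p_{i,r}v_{\lambda}$, being a scalar multiple of $v_{\lambda}$, lies in $d$-degree $0$ while $p_{i,r}$ has $d$-degree $-r<0$ for $r\geq1$, its symbol in degree $-r$ vanishes, i.e.\ $p_{i,r}\bar{v}_{\lambda}=0$. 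Hence $\mathrm{gr}\,W(\lambda,\mathbf{I})^{\dag}$ is a quotient of $W(\lambda)_{loc}^{\dag}$ and $\dim_{\C}W(\lambda,\mathbf{I})^{\dag}=\dim_{\C}\mathrm{gr}\,W(\lambda,\mathbf{I})^{\dag}\leq\dim_{\C}W(\lambda)_{loc}^{\dag}$. (Equivalently, the grading defines a $\C^{\times}$-action on $\mathrm{Spec}\,\mathbf{A}_{\lambda}$ contracting every closed point to $\mathbf{I}_{\lambda,0}$, and one invokes semicontinuity.)

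For (1), write $\lambda-\mu=\sum_{i=1}^{l-1}p_{i}\varpi_{i}^{\dag}+2p_{l}\varpi_{l}^{\dag}$, where the $l$-th coordinate is even because $\lambda-\mu\in\mathring{P}^{\dag\prime}_{+}$, so $p_{i}=(\lambda-\mu)(\check{\al}_{i})$ for $i<l$ and $p_{l}=(\lambda-\mu)(\check{\al}_{l}/2)$. For $a\in\C^{\times}$ let $\mathrm{ev}_{a}^{*}\Lambda^{i}\C^{2l+1}$ denote the pullback of the $i$-th fundamental representation of $\sll$ along the evaluation $X\otimes f\mapsto f(a)X$ on $(\sll\otimes_{\C}\C[t])^{\sigma}\cong\Cg^{\dag\prime}$; restricted to $\mathring{\g}^{\dag}$ this is the irreducible module of highest weight $\varpi_{i}^{\dag}$ for $i<l$ and $2\varpi_{l}^{\dag}$ for $i=l$, of dimension $\binom{2l+1}{i}$. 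Choosing pairwise distinct nonzero parameters $a_{i,s}$, I would set
$$M:=W(\mu)_{loc}^{\dag}\otimes_{\C}\bigotimes_{i=1}^{l-1}\bigotimes_{s=1}^{p_{i}}\mathrm{ev}_{a_{i,s}}^{*}\Lambda^{i}\C^{2l+1}\otimes_{\C}\bigotimes_{s=1}^{p_{l}}\mathrm{ev}_{a_{l,s}}^{*}\Lambda^{l}\C^{2l+1}.$$
By the standard cyclicity of tensor products of cyclic highest weight modules at pairwise distinct points, $M$ is generated by $v:=v_{\mu}\otimes(\text{highest weight vectors})$, which has $\mathring{\h}^{\dag}$-weight $\lambda$ and satisfies the relations of Definition \ref{special global weyl}; hence $M$ is a quotient of $W(\lambda)^{\dag}$. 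Since the cyclic vector of a local Weyl module is annihilated by $\Cg_{im}^{\dag}$, Lemma \ref{p} together with Lemma \ref{technical identity} shows that $p_{i,r}$ acts on $v$ by the $r$-th elementary symmetric function of the $p_{i,1}$-eigenvalues on the $\Lambda^{i}$-pieces (those eigenvalues being nonconstant functions of the $a_{i,s}$ read off from the exponential generating function defining $p_{i,r}$), so $v$ is an $\mathbf{A}_{\lambda}$-eigenvector, $M$ is a quotient of $W(\lambda,\mathbf{I}_{M})^{\dag}$ for the corresponding maximal ideal $\mathbf{I}_{M}$, and $\dim_{\C}W(\lambda,\mathbf{I}_{M})^{\dag}\geq\dim_{\C}M=\dim_{\C}W(\mu)_{loc}^{\dag}\prod_{i=1}^{l-1}\binom{2l+1}{i}^{p_{i}}\binom{2l+1}{l}^{p_{l}}$.

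It remains to pass from $\mathbf{I}_{M}$ to an arbitrary $\mathbf{I}$. Taking $\mu=0$ if $\langle\lambda,\check{\al}_{l}\rangle$ is even and $\mu=\varpi_{l}^{\dag}$ if it is odd, the computation above makes $p_{i,r}v$ vanish in exactly the degrees $r$ carrying no generator of $\mathbf{A}_{\lambda}$, so $\mathbf{I}_{M}$ is then a generic maximal ideal; hence the generic fibre of $W(\lambda)^{\dag}$ over $\mathbf{A}_{\lambda}$ has dimension at least $\dim_{\C}W(\mu)_{loc}^{\dag}\prod_{i}\binom{2l+1}{i}^{p_{i}}$, which equals $\dim_{\C}W(\lambda)_{loc}^{\dag}$ by Theorem \ref{dimension formula}. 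Combined with (2) and upper semicontinuity this forces $\dim_{\C}W(\lambda,\mathbf{I})^{\dag}=\dim_{\C}W(\lambda)_{loc}^{\dag}$ for every $\mathbf{I}$, and for general $\mu$ with $\lambda-\mu\in\mathring{P}^{\dag\prime}_{+}$ Theorem \ref{dimension formula} yields the elementary identity $\dim_{\C}W(\mu)_{loc}^{\dag}\prod_{i=1}^{l-1}\binom{2l+1}{i}^{p_{i}}\binom{2l+1}{l}^{p_{l}}=\dim_{\C}W(\lambda)_{loc}^{\dag}$ (the factor $2^{l}$ occurring on both sides or on neither, according to the common parity of $\langle\mu,\check{\al}_{l}\rangle$ and $\langle\lambda,\check{\al}_{l}\rangle$), so the right-hand side of (1) equals $\dim_{\C}W(\lambda,\mathbf{I})^{\dag}$. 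The step I expect to demand the most care is the identification of $\mathrm{ev}_{a}^{*}\Lambda^{i}\C^{2l+1}$ as a $\Cg^{\dag}$-module whose highest weight vector has the asserted weight and is annihilated by $\mathfrak{Cn}_{+}^{\dag}$ — this requires matching the Chevalley generators $x_{i,\varepsilon},y_{i,\varepsilon},z_{l,1}$ to the matrix form of $\sigma$, with the usual doubling phenomenon at $i=l$ — together with the verification that the $p_{i,1}$-eigenvalues sweep out a Zariski-dense subset of the parameter space, so that $\mathbf{I}_{M}$ is genuinely generic; the remaining ingredients (semicontinuity, cyclicity of tensor products at distinct points, and the binomial bookkeeping) are routine.
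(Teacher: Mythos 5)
The paper states this proposition as a quotation from \cite[Propositions 6.4 and 6.5]{CIK} and does not reproduce a proof, so there is no internal argument to compare with; your plan of reworking CIK for the special current algebra is the natural one. Your argument for (2) --- pass to the associated graded of the image filtration, observe that $p_{i,r}\bar{v}_{\lambda}=0$ by a degree count, and conclude that the associated graded is a quotient of $W(\lambda)_{loc}^{\dag}$ --- is correct and is the expected argument; the only adjustments are that $W(\lambda,\mathbf{I})^{\dag}$ is a priori only a $\Cg^{\dag\prime}$-module unless $\mathbf{I}$ is graded (so one filters by images of the graded pieces of $W(\lambda)^{\dag}$, as you in effect do), and that the Noetherianness of $\mathbf{A}_{\lambda}$ needed for the graded Nakayama step should be sourced from Corollary \ref{surjection}, which holds for all $\lambda\in\mathring{P}^{\dag}_{+}$, rather than from Theorem \ref{chari}, whose hypothesis $\lambda\in\mathring{P}^{\dag\prime}_{+}$ you cannot assume and whose removal (Theorem \ref{polynomialring}) is proved downstream of this proposition.

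For (1) there is a genuine gap. The assertion is a lower bound for \emph{every} maximal ideal $\mathbf{I}$, but your tensor-product construction produces the bound only at the special ideals $\mathbf{I}_{M}$ attached to your choices of parameters $a_{i,s}$. The bridge you propose --- upper semicontinuity of $\mathbf{I}\mapsto\dim_{\C}W(\lambda,\mathbf{I})^{\dag}$ together with the claim that the $\mathbf{I}_{M}$ are Zariski-dense in $\mathrm{Spec}\,\mathbf{A}_{\lambda}$ --- is exactly the content you defer, and in the twisted setting it is not routine: one must check (i) that the cyclicity lemma for tensor products applies, where ``pairwise distinct'' must be read modulo the $\sigma$-action $a\mapsto -a$ since evaluation at $a$ and $-a$ yield isomorphic $\Cg^{\dag\prime}$-modules, and (ii) that the assignment $(a_{i,s})\mapsto(p_{i,r}\text{-eigenvalues on }v)$ is dominant onto $\mathrm{Spec}\,\mathbf{A}_{\lambda}$, bearing in mind that those eigenvalues are functions of the $a_{i,s}^{2}$ and that for $i=l$ one works with the squared parameter through $z_{l,1}\otimes t^{-1}$. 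Until (i) and (ii) are carried out the inequality for arbitrary $\mathbf{I}$ --- which is exactly how Corollary \ref{dimension equality} invokes the proposition --- is not established, and this is presumably why the paper cites \cite{CIK} here rather than reproving the result.
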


\begin{cor}[\cite{CIK} Theorem 10 when $\lambda\in\mathring{P}^{\dag\prime}_+$]\label{dimension equality}
For each $\lambda\in \mathring{P}_{+}^{\dag}$ and each maximal ideal $\mathbf{I}$ of $\mathbf{A}_{\lambda}$, the dimension $\mathrm{dim}_{\C}\;W(\lambda,\mathbf{I})^{\dag}$ does not depend on $\mathbf{I}$ and is given by Theorem \ref{dimension formula}.
\end{cor}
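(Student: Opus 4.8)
The plan is to sandwich $\dim_{\C} W(\lambda,\mathbf{I})^{\dag}$ between the two bounds provided by Proposition \ref{dimension inequality} and to identify both of them with the number furnished by Theorem \ref{dimension formula}. Part (2) of Proposition \ref{dimension inequality} already gives, for every maximal ideal $\mathbf{I}$ of $\mathbf{A}_{\lambda}$, the upper bound
$$\dim_{\C} W(\lambda,\mathbf{I})^{\dag}\leq\dim_{\C} W(\lambda)_{loc}^{\dag},$$
so the real content is to establish the matching lower bound $\dim_{\C} W(\lambda,\mathbf{I})^{\dag}\geq\dim_{\C} W(\lambda)_{loc}^{\dag}$, valid for every $\mathbf{I}$, out of part (1).

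To this end I would split into the two cases of Theorem \ref{dimension formula} according to the parity of $\langle\lambda,\check{\al}_l^{\dag}\rangle$. When $\lambda=\sum_{i=1}^{l-1}m_i\varpi_i^{\dag}+2m_l\varpi_l^{\dag}$, I would apply Proposition \ref{dimension inequality} (1) with $\mu=0$: then $\lambda-\mu=\lambda\in\mathring{P}^{\dag\prime}_+$, the module $W(0)_{loc}^{\dag}$ is $1$-dimensional, and the right-hand side of the inequality equals $\big(\prod_{i=1}^{l-1}\binom{2l+1}{i}^{m_i}\big)\binom{2l+1}{l}^{m_l}$, which is exactly $\dim_{\C} W(\lambda)_{loc}^{\dag}$ by Theorem \ref{dimension formula} (2). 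When $\lambda=\sum_{i=1}^{l-1}m_i\varpi_i^{\dag}+(2k-1)\varpi_l^{\dag}$ with $k\geq1$, I would apply Proposition \ref{dimension inequality} (1) with $\mu=\varpi_l^{\dag}$: then $\lambda-\mu=\sum_{i=1}^{l-1}m_i\varpi_i^{\dag}+(2k-2)\varpi_l^{\dag}$ is dominant and lies in $\mathring{P}^{\dag\prime}_+$, $\dim_{\C} W(\varpi_l^{\dag})_{loc}^{\dag}=2^l$ by Theorem \ref{dimension formula} (1), and the right-hand side equals $2^l\big(\prod_{i=1}^{l-1}\binom{2l+1}{i}^{m_i}\big)\binom{2l+1}{l}^{k-1}$, which is again $\dim_{\C} W(\lambda)_{loc}^{\dag}$ by Theorem \ref{dimension formula} (1). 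In both cases the upper and lower bounds agree, so $\dim_{\C} W(\lambda,\mathbf{I})^{\dag}=\dim_{\C} W(\lambda)_{loc}^{\dag}$ for every maximal ideal $\mathbf{I}$, and this common value is the one predicted by Theorem \ref{dimension formula}, which is the assertion.

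I do not anticipate a substantive obstacle, since Proposition \ref{dimension inequality} carries all the weight and what remains is bookkeeping. The only points to watch are the translation of the coweight evaluations $(\lambda-\mu)(\check{\al}_i)$ and $(\lambda-\mu)(\check{\al}_l/2)$ appearing on the right-hand side of Proposition \ref{dimension inequality} (1) into the exponents $m_i$, $m_l$ (respectively $k-1$) of the dimension formulas, and the verification in the odd case that $\lambda-\varpi_l^{\dag}$ remains dominant, which is automatic because $2k-1\geq1$ forces $k\geq1$.
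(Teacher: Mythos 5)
Your proposal is correct and follows essentially the same route as the paper: sandwich $\dim_{\C} W(\lambda,\mathbf{I})^{\dag}$ between Proposition \ref{dimension inequality} (2) above and Proposition \ref{dimension inequality} (1) below, split on the parity of $\langle\lambda,\check{\al}_l^{\dag}\rangle$, and match both bounds to the formula of Theorem \ref{dimension formula}. The paper leaves the choice of $\mu$ implicit, but it is precisely your $\mu=0$ (even case) and $\mu=\varpi_l^{\dag}$ (odd case), so there is no substantive difference.
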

\begin{proof}
If $\lambda=\sum_{i=1}^{l-1}m_i\varpi_i^{\dag}+2m_l\varpi_l^{\dag}$, then we have $$\mathrm{dim}_{\C}\;W(\lambda)_{loc}^{\dag}\geq\mathrm{dim}_{\C}\;W(\lambda,\mathbf{I})^{\dag}\geq \prod_{i=1}^{l}\binom{2l+1}{i}^{m_i}$$ by Proposition \ref{dimension inequality}. From Theorem $\ref{dimension formula}$ (2), this inequality is actually equality. If $\lambda=\sum_{i=1}^{l-1}m_i\varpi_i^{\dag}+(2k-1)\varpi_l^{\dag}$, then we have $$\mathrm{dim}_{\C}\;W(\lambda)_{loc}^{\dag}\geq\mathrm{dim}_{\C}\;W(\lambda,\mathbf{I})^{\dag}\geq\mathrm{dim}_{\C}\;W(\varpi_{l})_{loc}^{\dag}\left(\prod_{i=1}^{l-1}\binom{2l+1}{i}^{\lambda(m_i)}\right)\binom{2l+1}{l}^{k-1}$$ by Proposition \ref{dimension inequality}. From Theorem $\ref{dimension formula}$ (1), this inequality is actually equality. Hence the assertion follows.
\end{proof}
\subsection{Freeness of $W(\lambda)^{\dag}$ over $\mathbf{A}_{\lambda}$}
In this subsection, we prove the following theorem
\begin{thm}\label{freeness}
For each $\lambda \in \mathring{P}^{\dag}_{+}$, the global Weyl module $W(\lambda)^{\dag}$ is free over $\mathbf{A}_{\lambda}$.
\end{thm}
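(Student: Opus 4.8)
The plan is to reduce Theorem~\ref{freeness} to an elementary statement about graded modules and feed it the dimension equality of Corollary~\ref{dimension equality}. Precisely, I will use: if $A$ is a reduced finitely generated graded $\C$-algebra with $A_{0}=\C$ and $M$ is a graded $A$-module generated by $N$ elements with $\dim_{\C}(M\otimes_{A}A/\mathbf{m})=N$ for every maximal ideal $\mathbf{m}\subset A$, then $M$ is free of rank $N$. Applied with $A=\mathbf{A}_{\lambda}$, $M=W(\lambda)^{\dag}$ and $N=\dim_{\C}W(\lambda)^{\dag}_{loc}$, this reduces everything to three points: (i) $W(\lambda)^{\dag}$ is generated over $\mathbf{A}_{\lambda}$ by $N$ elements; (ii) $\mathbf{A}_{\lambda}$ is a polynomial ring, hence reduced; (iii) $\dim_{\C}W(\lambda,\mathbf{m})^{\dag}=N$ for all $\mathbf{m}$, which is Corollary~\ref{dimension equality}.

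For (i), note that $W(\lambda)^{\dag}/\mathbf{I}_{\lambda,0}W(\lambda)^{\dag}=W(\lambda,\mathbf{I}_{\lambda,0})^{\dag}$ is isomorphic to $W(\lambda)^{\dag}_{loc}$, which is finite-dimensional of dimension $N$ by Theorem~\ref{dimension formula}. Using the natural grading --- for which $W(\lambda)^{\dag}$ has finite-dimensional graded pieces bounded below, $v_{\lambda}$ lies in degree $0$, and $\mathbf{A}_{\lambda}$ is graded with degree-zero part $\C$ and irrelevant ideal $\mathbf{I}_{\lambda,0}$ --- graded Nakayama shows that any lift of a homogeneous basis of $W(\lambda)^{\dag}_{loc}$ generates $W(\lambda)^{\dag}$ over $\mathbf{A}_{\lambda}$.

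For (ii): when $\lambda\in\mathring{P}^{\dag\prime}_{+}$ this is Theorem~\ref{chari}. For the remaining weights $\langle\lambda,\check{\al}_{l}^{\dag}\rangle$ is odd, so one may write $\lambda=\lambda'+\varpi_{l}^{\dag}$ with $\lambda'\in\mathring{P}^{\dag\prime}_{+}$; a direct check of the definitions shows $\mathbf{A}^{\prime}_{\varpi_{l}^{\dag}}=\C$ and that $\mathbf{A}^{\prime}_{\lambda}$ and $\mathbf{A}^{\prime}_{\lambda'}$ are polynomial rings on the same generators $p_{i,r}$. By the standard tensor-product argument $v_{\lambda'}\otimes v_{\varpi_{l}^{\dag}}$ satisfies the defining relations of $W(\lambda)^{\dag}$ (relations (1) and (3) of Definition~\ref{special global weyl} since $\mathfrak{Cn}_{+}^{\dag}$ acts by derivations, and (2) by the usual $\mathfrak{sl}_{2}$-argument for tensor products of integrable modules), giving a surjection of $\Cg^{\dag}$-modules $W(\lambda)^{\dag}\twoheadrightarrow U(\Cg^{\dag\prime})\bigl(v_{\lambda'}\otimes v_{\varpi_{l}^{\dag}}\bigr)\subseteq W(\lambda')^{\dag}\otimes_{\C}W(\varpi_{l}^{\dag})^{\dag}$. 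Since $p_{i,b}v_{\varpi_{l}^{\dag}}=0$ for $b\geq1$ by Proposition~\ref{annihilator}, Lemma~\ref{p} gives $p_{i,r}\cdot(v_{\lambda'}\otimes v_{\varpi_{l}^{\dag}})=p_{i,r}v_{\lambda'}\otimes v_{\varpi_{l}^{\dag}}$, so the annihilator of $v_{\lambda}$ in $\mathbf{A}^{\prime}_{\lambda}$ is contained in the annihilator of $v_{\lambda'}$ in $\mathbf{A}^{\prime}_{\lambda'}$, which vanishes by Theorem~\ref{chari}. Together with the surjection $\mathbf{A}^{\prime}_{\lambda}\twoheadrightarrow\mathbf{A}_{\lambda}$ of Corollary~\ref{surjection} and the faithfulness of the $\mathbf{A}_{\lambda}$-action on $W(\lambda)^{\dag}$, this gives $\mathbf{A}^{\prime}_{\lambda}\iso\mathbf{A}_{\lambda}$.

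Finally, with (i)--(iii) in hand, let $F\twoheadrightarrow W(\lambda)^{\dag}$ be a graded surjection from a free $\mathbf{A}_{\lambda}$-module of rank $N$, with kernel $R$. Tensoring over $\mathbf{A}_{\lambda}$ with $\mathbf{A}_{\lambda}/\mathbf{m}$ yields a surjection $(\mathbf{A}_{\lambda}/\mathbf{m})^{N}=F/\mathbf{m}F\twoheadrightarrow W(\lambda,\mathbf{m})^{\dag}$ between spaces of the same dimension $N$, hence an isomorphism, so $R\subseteq\mathbf{m}F$; as this holds for all $\mathbf{m}$ and $\mathbf{A}_{\lambda}$ is a reduced finitely generated $\C$-algebra (so $\bigcap_{\mathbf{m}}\mathbf{m}=0$), we get $R\subseteq\bigl(\bigcap_{\mathbf{m}}\mathbf{m}\bigr)F=0$ and $W(\lambda)^{\dag}\cong F$ is free. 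The main obstacle is step (ii) for $\langle\lambda,\check{\al}_{l}^{\dag}\rangle$ odd, which is not covered by Theorem~\ref{chari} and requires carefully matching the generators $p_{i,r}$ of $\mathbf{A}^{\prime}_{\lambda}$ and $\mathbf{A}^{\prime}_{\lambda'}$ through the tensor product via Lemma~\ref{p}; the rest is routine bookkeeping around Corollary~\ref{dimension equality} and graded Nakayama.
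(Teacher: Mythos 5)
Your proposal is correct, and it follows the same overall strategy as the paper: first establish that $\mathbf{A}_{\lambda}\cong\mathbf{A}^{\prime}_{\lambda}$ is a polynomial ring, then combine this with the constant fiber dimension of Corollary~\ref{dimension equality}. Your step (ii) is essentially a re-proof of Theorem~\ref{polynomialring}, using the same mechanism as the paper (mapping $W(\lambda)^{\dag}$ to a tensor product with $W(\varpi_l^{\dag})^{\dag}$, then applying Lemma~\ref{p} and Theorem~\ref{chari}); your variant reads off the annihilator directly from $p_{i,r}(v_{\lambda'}\otimes v_{\varpi_l^{\dag}})=p_{i,r}v_{\lambda'}\otimes v_{\varpi_l^{\dag}}$, whereas the paper first passes to the local Weyl module $W(\varpi_l^{\dag})^{\dag}_{loc}$, but both are the same argument.

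Where you genuinely diverge from the paper is the final deduction of freeness. The paper runs: ordinary Nakayama to get local generation by $N$ elements, a matrix-coefficients-in-the-Jacobson-radical argument to conclude local freeness, hence flatness, hence projectivity (citing Jothilingam), hence freeness by Quillen--Suslin. You instead use graded Nakayama to produce a single global surjection $\mathbf{A}_{\lambda}^{\oplus N}\twoheadrightarrow W(\lambda)^{\dag}$, then observe that the kernel $R$ lies in $\mathfrak{m}^{\oplus N}$ for every maximal ideal $\mathfrak{m}$, and since $\mathbf{A}_{\lambda}$ is a reduced finitely generated $\C$-algebra (so $\bigcap_{\mathfrak{m}}\mathfrak{m}=0$) this forces $R=0$. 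This is cleaner and more self-contained: it bypasses both Jothilingam's flat-implies-projective criterion and the Quillen--Suslin theorem. The only input you need beyond what the paper uses is the graded structure of $W(\lambda)^{\dag}$ over $\mathbf{A}_{\lambda}$ with finite-dimensional, bounded-below graded pieces — but the paper already relies on essentially the same fact implicitly when it applies Nakayama to conclude local finite generation, so this is not an additional hypothesis. It would be worth stating explicitly (as you gesture at) why the $d$-graded pieces of $W(\lambda)^{\dag}$ are finite dimensional: $W(\lambda)^{\dag}=U(\mathfrak{Cn}_{-}^{\dag}\oplus\Cg_{im}^{\dag})v_{\lambda}$ by PBW, the $\mathring{\h}^{\dag}$-weights are confined to the finite convex hull of $\mathring{W}^{\dag}\lambda$ by integrability, and each weight space of the free algebra is finite dimensional.
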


To prove this theorem, we need the following preparatory result:
\begin{thm}\label{polynomialring}
For each $\lambda \in \mathring{P}^{\dag}_{+}$, the algebra $\mathbf{A}_{\lambda}$ is isomorphic to $\mathbf{A}^{\prime}_{\lambda}$.
\end{thm}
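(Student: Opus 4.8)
Since Corollary~\ref{surjection} already yields a surjective $\C$-algebra homomorphism $\mathbf{A}_{\lambda}^{\prime}\to\mathbf{A}_{\lambda}$, the plan is to prove that this map is injective; when $\langle\lambda,\check{\al}_{l}^{\dag}\rangle$ is even, i.e.\ $\lambda\in\mathring{P}^{\dag\prime}_{+}$, injectivity is exactly the second assertion of Theorem~\ref{chari}, so it remains to treat the case $\langle\lambda,\check{\al}_{l}^{\dag}\rangle=2k-1$ with $k\geq 1$. First I would reduce this to the even case by setting $\mu:=\lambda-\varpi_{l}^{\dag}$. Because $\langle\varpi_{l}^{\dag},\check{\al}_{i}^{\dag}\rangle=\delta_{i,l}$, the weight $\mu$ is again dominant and $\langle\mu,\check{\al}_{l}^{\dag}\rangle=2k-2$, so $\mu\in\mathring{P}^{\dag\prime}_{+}$; moreover $\lfloor(2k-1)/2\rfloor=\lfloor(2k-2)/2\rfloor=k-1$, so the generating sets prescribed in the definitions of $\mathbf{A}_{\lambda}^{\prime}$ and $\mathbf{A}_{\mu}^{\prime}$ coincide and $\mathbf{A}_{\lambda}^{\prime}=\mathbf{A}_{\mu}^{\prime}$ as subalgebras of $U(\Cg_{im}^{\dag})$.

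Next I would link $W(\lambda)^{\dag}$ with $W(\mu)^{\dag}\otimes_{\C}W(\varpi_{l}^{\dag})^{\dag}$. Every $X\in\Cg^{\dag}$ acts on a tensor product of $\Cg^{\dag}$-modules as $X\otimes 1+1\otimes X$, and $\mathring{\n}_{+}^{\dag}\subseteq\mathfrak{Cn}_{+}^{\dag}$, so $v_{\mu}\otimes v_{\varpi_{l}^{\dag}}$ is annihilated by $\mathfrak{Cn}_{+}^{\dag}$ and is an $\mathring{\g}^{\dag}$-highest weight vector of weight $\lambda$; since $W(\mu)^{\dag}\otimes W(\varpi_{l}^{\dag})^{\dag}$ is $\mathring{\g}^{\dag}$-integrable (a tensor product of $\mathring{\g}^{\dag}$-integrable modules, cf.\ the proof of Proposition~\ref{annihilator}), $\mathfrak{sl}_{2}$-theory shows that $v_{\mu}\otimes v_{\varpi_{l}^{\dag}}$ also satisfies relation~(2) of Definition~\ref{special global weyl}. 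By the universal property of $W(\lambda)^{\dag}$ this produces a homomorphism of $\Cg^{\dag}$-modules
$$\psi\colon W(\lambda)^{\dag}\longrightarrow W(\mu)^{\dag}\otimes_{\C}W(\varpi_{l}^{\dag})^{\dag},\qquad v_{\lambda}\longmapsto v_{\mu}\otimes v_{\varpi_{l}^{\dag}}.$$

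I would then observe that all the generator ranges defining $\mathbf{A}_{\varpi_{l}^{\dag}}^{\prime}$ are empty, so $\mathbf{A}_{\varpi_{l}^{\dag}}^{\prime}=\C$, whence $\mathbf{A}_{\varpi_{l}^{\dag}}=\C$ by Corollary~\ref{surjection}; equivalently, using that $\mathrm{Ann}(v_{\varpi_{l}^{\dag}})$ is $\mathrm{ad}(d)$-stable, $U(\Cg_{im}^{\dag})$ acts on $v_{\varpi_{l}^{\dag}}$ through its augmentation, so $p_{i,r}v_{\varpi_{l}^{\dag}}=0$ for all $i$ and all $r\geq 1$. Since each $h_{i,\varepsilon}\otimes t^{j}$ lies in $\Cg^{\dag}$ and is therefore primitive, the defining exponential formulas show that the series $\sum_{r}p_{i,r}z^{r}$ is grouplike, i.e.\ $\Delta(p_{i,r})=\sum_{j+k=r}p_{i,j}\otimes p_{i,k}$, which is the two-factor instance of Lemma~\ref{p}. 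Iterating this, for any $f\in U(\Cg_{im}^{\dag})$ the Sweedler expansion $f\cdot(v_{\mu}\otimes v_{\varpi_{l}^{\dag}})=\sum f_{(1)}v_{\mu}\otimes f_{(2)}v_{\varpi_{l}^{\dag}}$ has each second leg $f_{(2)}$ a polynomial in the $p_{i,r}$, so $f_{(2)}v_{\varpi_{l}^{\dag}}=0$ unless $f_{(2)}$ is a scalar; therefore $f\cdot(v_{\mu}\otimes v_{\varpi_{l}^{\dag}})=(fv_{\mu})\otimes v_{\varpi_{l}^{\dag}}$.

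Putting these together: if $f\in\mathbf{A}_{\lambda}^{\prime}$ satisfies $fv_{\lambda}=0$, then $(fv_{\mu})\otimes v_{\varpi_{l}^{\dag}}=\psi(fv_{\lambda})=0$, and since $v_{\varpi_{l}^{\dag}}\neq 0$ and the tensor product is taken over $\C$, we get $fv_{\mu}=0$; because $\mu\in\mathring{P}^{\dag\prime}_{+}$ and $\mathbf{A}_{\mu}^{\prime}=\mathbf{A}_{\lambda}^{\prime}$, Theorem~\ref{chari} forces $f=0$. Hence $\mathbf{A}_{\lambda}^{\prime}\to\mathbf{A}_{\lambda}$ is injective, and combined with its surjectivity it is an isomorphism. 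The step I expect to be the main obstacle is the comultiplication bookkeeping of the third paragraph — verifying precisely that after applying $\psi$ the $U(\Cg_{im}^{\dag})$-action only detects the action on the first tensor factor; a subsidiary point is the careful check of Definition~\ref{special global weyl}~(2),(3) for $v_{\mu}\otimes v_{\varpi_{l}^{\dag}}$, resting on $\mathring{\n}_{+}^{\dag}\subseteq\mathfrak{Cn}_{+}^{\dag}$ and on the $\mathring{\g}^{\dag}$-integrability of the tensor product.
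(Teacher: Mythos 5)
Your proof is correct and follows essentially the same route as the paper: peel off one $\varpi_l^{\dag}$ to land in the case $\mu:=\lambda-\varpi_l^{\dag}\in\mathring{P}^{\dag\prime}_+$ covered by Theorem~\ref{chari}, map $W(\lambda)^{\dag}$ into a tensor product with $W(\mu)^{\dag}$, and use Lemma~\ref{p} (equivalently, grouplikeness of $\sum_r p_{i,r}z^r$) to transfer the $U(\Cg^{\dag}_{im})$-action onto the $\mu$-factor. The one genuine simplification is your observation that $\mathbf{A}_{\varpi_l^{\dag}}^{\prime}=\C$ forces $\mathbf{A}_{\varpi_l^{\dag}}=\C$, so $\Cg^{\dag}_{im}v_{\varpi_l^{\dag}}=0$ already in the global Weyl module and the counit axiom immediately gives $f\cdot(v_\mu\otimes v_{\varpi_l^{\dag}})=(fv_\mu)\otimes v_{\varpi_l^{\dag}}$; the paper instead composes further with a map to $W(\varpi_l^{\dag})^{\dag}_{loc}\otimes_{\C}V$ for an auxiliary quotient $V$ of $W(\mu)^{\dag}$ chosen via Theorem~\ref{chari} before invoking Lemma~\ref{p}, which achieves the same vanishing but with one extra object. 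Your Sweedler paragraph would be cleanest phrased as: since $U(\Cg^{\dag}_{im})$ acts on $v_{\varpi_l^{\dag}}$ through the augmentation $\epsilon$, the tensor action is $(\mathrm{id}\otimes\epsilon)\circ\Delta=\mathrm{id}$ on the first factor, which is the counit axiom rather than an iteration of Lemma~\ref{p} — but the conclusion is the same and the argument is sound.
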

Theorem $\ref{polynomialring}$ and Corollary $\ref{dimension equality}$ imply Theorem $\ref{freeness}$ by $\cite{Sus, Qui}$. We prove Theorem $\ref{freeness}$ after proving Theorem $\ref{polynomialring}.$
\begin{proof}[Proof of Theorem \ref{polynomialring}]
We show that the surjection $\mathbf{A}_{\lambda}^{\prime}\to\mathbf{A}_{\lambda}$ is the isomorphism. We have $\mathrm{dim}_{\C}\;\mathbf{A}_{\varpi_l^{\dag}}^{\prime}=1.$ Since $\mathrm{dim}_{\C}\;\mathbf{A}_{\varpi_l^{\dag}}\geq1.$ Hence $\mathbf{A}_{\varpi_l^{\dag}}^{\prime}\to\mathbf{A}_{\varpi_{l}^{\dag}}$ is the isomorphism. If $\lambda\in \mathring{P}_{+}^{\dag\prime},$ then the assertion is Theorem \ref{chari}. We prove the assertion for $\lambda=\sum_{i=1}^{l-1}m_i\varpi_i^{\dag}+(2m+1)\varpi_{l}^{\dag}$. Let $f\in \mathbf{A}_{\lambda}^{\prime}$ be a nonzero element. It is suffice to show that there exists a quotient of $W(\lambda)^{\dag}$ such that $f$ acts nontrivially on the image of the cyclic vector $v_{\lambda}$ of $W(\lambda)^{\dag}.$ Let $\mu:=\lambda-\varpi_l^{\dag}$. We have $\mathbf{A}_{\lambda}^{\prime}\cong\mathbf{A}_{\mu}^{\prime}.$ By checking the defining relations, we have a homomorphism of $\Cg^{\dag}$-module $$W(\lambda)^{\dag}\to W(\varpi_{l}^{\dag})^{\dag}\otimes_{\C} W(\mu)^{\dag}$$ which maps $v_{\lambda}$ to $v_{\varpi^{\dag}_l}\otimes v_{\mu}$. By Theorem \ref{chari}, we have a quotient module $V$ of $W(\mu)^{\dag}$ such that $f$ acts nontrivially on the image of $v_{\mu}\in W(\mu)^{\dag}.$ We have a homomorphism $$W(\lambda)^{\dag}\to W(\varpi_{l}^{\dag})^{\dag}\otimes_{\C} V \to W(\varpi_{l}^{\dag})^{\dag}_{loc}\otimes_{\C} V.$$ Let $v\in V$ and $w_{\varpi_l^{\dag}}\in W(\varpi_l^{\dag})_{loc}^{\dag}$ be the image of $v_{\mu}$ in $V$ and the image of $v_{\varpi_l^{\dag}}$ in $W(\varpi_l^{\dag})_{loc}^{\dag},$ respectively. By Lemma $\ref{p}$, we have $p_{i,r}(w_{\varpi^{\dag}}\otimes v)=w_{\varpi^{\dag}}\otimes p_{i,r}(v)$ for each $i\in \{1,...,l\}$ and $r\in \mathbb{Z_+}.$ Therefore, $f$ acts nontrivially on the highest weight vector $w_{\varpi^{\dag}}\otimes v$ of $W(\varpi_{l}^{\dag})^{\dag}_{loc}\otimes V$. Hence $fv_{\lambda}\neq 0$. Hence the assertion follows. 
\end{proof}

\begin{proof}[Proof of Theorem \ref{freeness}]
We set $N:=\mathrm{dim}\;W(\lambda)^{\dag}_{loc}.$ Let $\mathfrak{m}$ be a maximal ideal of $\mathbf{A}_{\lambda}$. By Nakayama's lemma $\cite[\mathrm{Lemma\; 1.M}]{Mat}$, there exists $f\notin \mathfrak{m}$ such that $(W(\lambda)^{\dag})_{f}$ is generated by N elements as $(\mathbf{A}_{\lambda})_f$-module, where $(W(\lambda)^{\dag})_{f}$ and $(\mathbf{A}_{\lambda})_f$ are the localization of $W(\lambda)^{\dag}$ and $\mathbf{A}_{\lambda}$ by $f$, respectively. Since $(\mathbf{A}_{\lambda})_f$ is Noetherian, we have an exact sequence $(\mathbf{A}_{\lambda})_f^{\oplus M}\overset{\phi}{\to} (\mathbf{A}_{\lambda})_f^{\oplus N}\overset{\psi}{\to} (W(\lambda)^{\dag})_f\to 0.$ For any maximal ideal $\mathfrak{n}$ such that $f\notin \mathfrak{n}$, the induced morphism $\overline{\psi}: (\mathbf{A}_{\lambda})_f^{\oplus N}/\mathfrak{n}(\mathbf{A}_{\lambda})_f ^{\oplus N}\to  (W(\lambda)^{\dag})_f/\mathfrak{n} (W(\lambda)^{\dag})_f$ is an isomorphism by Corollary \ref{dimension equality}. This implies the matrix coefficient of $\phi$ is contained in the Jacobson radical of $(\mathbf{A}_{\lambda})_f.$ Since $(\mathbf{A}_{\lambda})_f$ is an integral domain and finitely generated over $\C,$ we deduce $\phi=0.$ It follows that $(W(\lambda))^{\dag})$ is flat over $\mathbf{A}_{\lambda}$ by $\cite{Jot}$. Since $\mathbf{A}_{\lambda}$ is a polynomial ring, $(W(\lambda))^{\dag})$ is the projective $\mathbf{A}_{\lambda}$-module. From $\cite{Qui, Sus}$, a projective module over a polynomial ring is free. Hence the assertion follows. 
\end{proof}

\end{document}